\documentclass{amsart}

\usepackage{amsmath,enumerate,enumitem,amscd,amsfonts,amsthm,amssymb,amscd,mathrsfs}
\allowdisplaybreaks[4]
\usepackage[colorlinks,linkcolor=blue,citecolor=red]{hyperref}
\hypersetup{CJKbookmarks}
\usepackage{appendix}
\usepackage{bm}
\usepackage{cite}
\usepackage{enumerate}
\usepackage{comment}

\usepackage{geometry}
\usepackage{graphicx}
\usepackage{subfigure}
\usepackage{setspace}
\usepackage{stmaryrd}
\usepackage{url}
\usepackage{float}

\usepackage{tikz}
\usetikzlibrary{math}
\usepackage{pgfplots}
\pgfplotsset{compat=1.10}
\usepgfplotslibrary{fillbetween}
\usetikzlibrary{patterns}
\usepackage{caption} 
\newtheorem{theorem}{Theorem}[section]
\newtheorem{lemma}[theorem]{Lemma}
\newtheorem{proposition}[theorem]{Proposition}
\newtheorem{corollary}[theorem]{Corollary}
\newtheorem{example}[theorem]{Example}
\newtheorem{notation}[theorem]{Notation}
\theoremstyle{definition}
\newtheorem{definition}[theorem]{Definition}
\newtheorem{remark}[theorem]{Remark}

\newtheorem*{note*}{Notation}
\newtheorem*{convention*}{Convention}

\numberwithin{equation}{section}

\DeclareMathOperator{\diff}{d}
\DeclareMathOperator{\Op}{\mathbf{Op}}

\DeclareFontEncoding{FMS}{}{}
\DeclareFontSubstitution{FMS}{futm}{m}{n}
\DeclareFontEncoding{FMX}{}{}
\DeclareFontSubstitution{FMX}{futm}{m}{n}
\DeclareSymbolFont{fouriersymbols}{FMS}{futm}{m}{n}
\DeclareSymbolFont{fourierlargesymbols}{FMX}{futm}{m}{n}
\DeclareMathDelimiter{\VERT}{\mathord}{fouriersymbols}{152}{fourierlargesymbols}{147}

\newcommand{\A}{\mathfrak{A}}
\newcommand{\sgn}{\mathrm{sgn}}
\newcommand{\size[1]}{\langle{#1}\rangle}
\newcommand{\E[1]}{\big\VERT{#1}\big\VERT}
\newcommand{\T}{\mathsf{T}}
\newcommand{\proj}{\bm{\Pi}}

\newcommand{\PM}{\mathrm{PM}}
\newcommand{\PL}{\mathrm{PL}}

\newcommand{\DN}{\mathrm{DN}}

\newcommand{\N}{\mathcal{N}}
\newcommand{\R}{\mathcal{R}}

\newcommand{\Ta}{\mathcal{T}}
\newcommand{\M}{\mathcal{M}}
\newcommand{\DG}{\mathrm{DG}}
\newcommand{\GU}{\mathrm{GU}}
\newcommand{\Id}{\mathrm{Id}}

\newcommand{\Ext}{\mathrm{Ext}}

\newcommand{\grow}{\mathtt{g}}
\newcommand{\low}{\mathtt{low}}
\newcommand{\high}{\mathtt{high}}
\newcommand{\inv}{\mathtt{inv}}

\def\st{\mathtt{s}}
\def\unst{\mathtt{u}}
\def\disp{\mathtt{d}}
\def\cen{\mathtt{c}}

\def\RE{\mathtt{Re}}
\def\IM{\mathtt{Im}}

\def\der{\mathbf{d}}
\def\del{\bm{\delta}}
\def\1{\mathbf{1}}

\def\xZ{\mathbb{Z}}
\def\xC{\mathbb{C}}
\def\xG{\mathbb{G}}
\def\DuG{\widehat{\mathbb{G}}}
\def\xR{\mathbb{R}}
\def\xS{\mathcal{S}}
\def\xN{\mathbb{N}}
\def\xT{\mathbb{T}}

\newcommand{\Ea[1]}{\big\VERT{#1}\big\VERT}

\def\Id{\mathrm{Id}}

\def\tame{\mathrm{tame}}

\def\loc{\rm{loc}}

\def\dx{\mathrm{d}\, \! x}

\def\dt{\mathrm{d}\, \! t}
\def\dtau{\mathrm{d}\, \! \tau}
\def\dxi{\mathrm{d}\, \! \xi}

\title{Local Invariant Structures in the Dynamics of Capillary Water Jet}

\author{Chengyang Shao}
\address{Institut des Hautes Études Scientifiques}
\email{shao@ihes.fr}

\author{Haocheng Yang}
\address{Department of Mathematics, New York University Abu Dhabi}
\email{hy3448@nyu.edu}

\begin{document}
\begin{spacing}{1.2}
\begin{abstract}
Physical experiments show that a capillary water jet is exponentially unstable under long wave perturbations, while remaining stable under short wave perturbations. Measurements indicate that the exponential growth rate in the long wave regime agrees quantitatively with the classical predictions of Rayleigh and Plateau, known as \emph{Rayleigh-Plateau instability}. In this paper, we provide a mathematical justification of these experimental observations. The motion of the water jet is modeled by an axisymmetric irrotational Eulerian free-boundary system governed by surface tension. We prove that the (un)stable directions in the linearized system, corresponding to long wave perturbations, are indeed tangent to a (un)stable invariant manifold of the full nonlinear system. On the other hand, the elliptic directions, corresponding to short wave perturbations, are indeed tangent to a center invariant set in a generalized sense. These results give a positive answer to the question raised by Lin-Zeng concerning the existence of invariant manifolds for Eulerian free-boundary systems. A notable finding is the existence of infinite dimensional hyperbolic invariant manifolds in the dynamics of infinitely long capillary water jet, in absence of spectral gap. The major methodological contribution is the construction of ``paradifferential propagator" corresponding to linear paradifferential hyperbolic systems, effectively balancing the loss of regularity due to quasilinearity. The method can be generalized to a broad class of quasilinear problems.
\end{abstract}
\maketitle

\section{Introduction}\label{Sec1}
\subsection{Equations Governing a Capillary Fluid Jet}
Consider the motion of an axially symmetric jet of incompressible, irrotational ideal fluid under zero gravity, either infinitely long or under $2\pi$-periodic boundary condition. We are interested in the local invariant structures in the dynamics of the corresponding partial differential system. 

Suppose the fluid has constant density $\varrho_0$, and the surface tension coefficient at the fluid-gas interface is a constant $\sigma_0$. Suppose also, with out loss of generality, that the air pressure of the ambient space is zero. Denote by $\Omega_t$ the spatial region occupied by the fluid at time $t$, and $M_t=\partial \Omega_t$ its boundary. We shall use $\bm{N}$ to denote the outward pointing normal vector field of $M_t$. Since the fluid is incompressible and irrotational, there is a harmonic velocity potential $\Psi(t)$ inside the region $\Omega_t$. The free boundary naturally imposes the kinematic condition
$$
\frac{\partial M_t}{\partial t}\cdot\bm{N}=\bar\nabla\Psi\cdot\bm{N}.
$$
Here $\partial M_t/\partial t$ denotes the velocity vector field on the interface $M_t$. Integrating the potential Euler system along any path in the fluid domain, we obtain the Bernoulli equation. Restricting to the boundary, taking into account the Young-Laplace law for mean curvature and boundary pressure jump, we obtain the pressure balance condition (also known as dynamic condition),
$$
\left.\left( \frac{\partial\Psi}{\partial t}+\frac{1}{2}|\bar\nabla\Psi|^2 \right)\right|_{M_t}
=-\frac{\sigma_0}{\varrho_0} H(M_t)+\text{function of time}.
$$
Here $\bar\nabla\Psi$ denotes the (3-dimensional) gradient of the scalar function $\Psi$, and $H(M_t)$ denotes the (scalar) mean curvature of $M_t$. 

\begin{figure}[h]
\centering
\includegraphics[width=0.6\textwidth,angle=0]{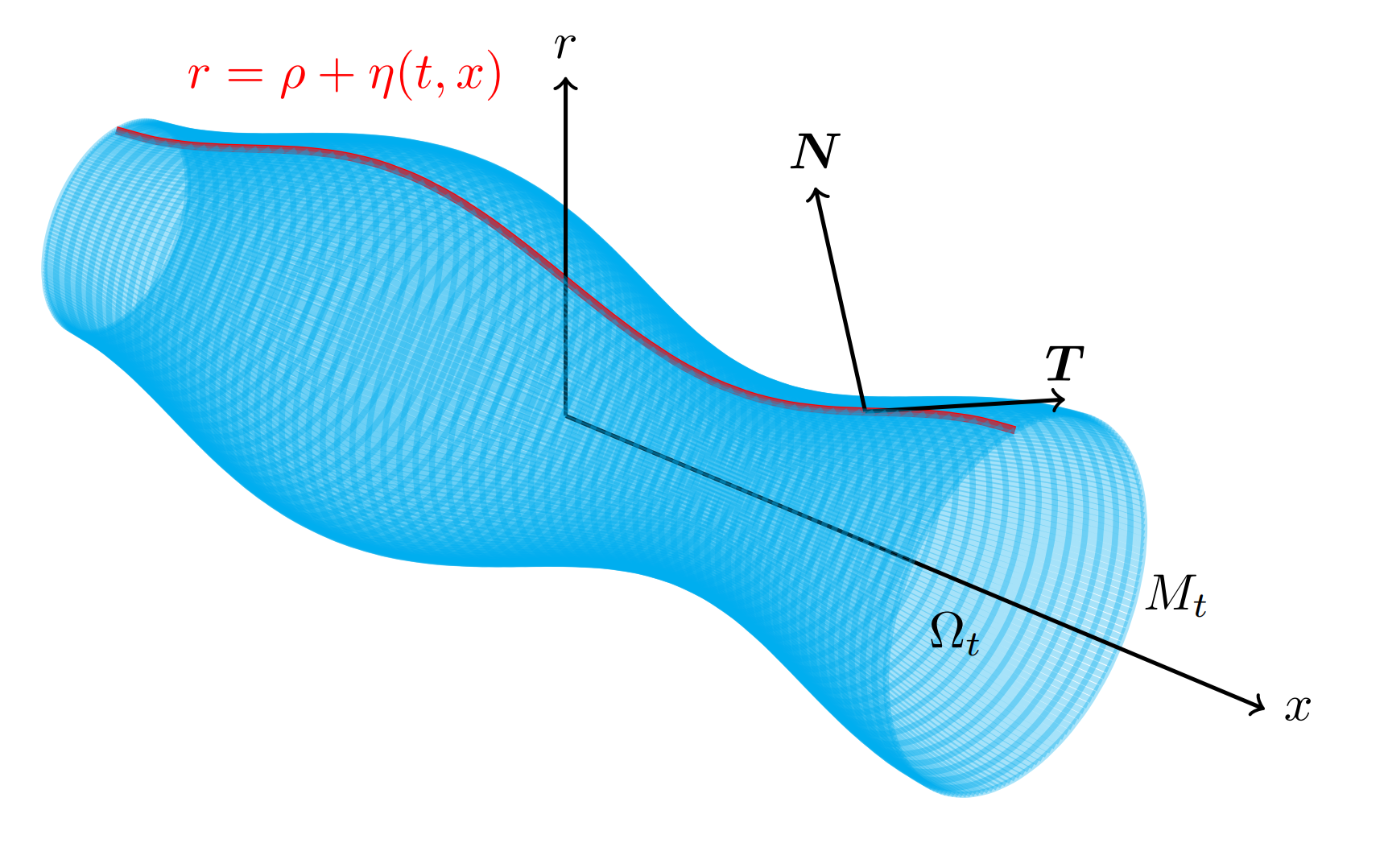}
\caption{Geometry of the water jet. Code adapted from \cite{HK2023}.}
\label{Cylinder}
\end{figure}

We consider the perturbation of the steady solution, i.e. a horizontal perfect cylinder of jet  moving at a uniform speed. Suppose the unperturbed cylinder has radius $\rho$. We shall set a cylindrical coordinate in the 3D space so that the axial coordinate $x$ coincides with the axis of the cylinder, and the radial coordinate $r$ coincides with the radial direction of the unperturbed cylinder. Dependence on the angular coordinate is abbreviated due to axial symmetry. The perturbed interface shall be the revolution surface generated by the graph $r=\rho+\eta(t,x)$, where $\eta$ has small magnitude. If $(\bm{T},\bm{N})$ is the tangent-outward normal frame in compatible orientation with the coordinate frame $(\bm{e}_x,\bm{e}_r)$, then
$$
\bm{T}[\eta]=\frac{1}{\sqrt{1+|\partial_x\eta|^2}}\bm{e}_x
+\frac{\partial_x\eta}{\sqrt{1+|\partial_x\eta|^2}}\bm{e}_r,
\quad
\bm{N}[\eta]=\frac{-\partial_x\eta}{\sqrt{1+|\partial_x\eta|^2}}\bm{e}_x
+\frac{1}{\sqrt{1+|\partial_x\eta|^2}}\bm{e}_r.
$$
See Figure \ref{Cylinder} for the geometry. The (scalar) mean curvature in this coordinate system then reads
$$
H[\eta]=-\frac{\partial_x^2\eta}{(1+|\partial_x\eta|^2)^{3/2}}+\frac{1}{(\rho+\eta)\sqrt{1+|\partial_x\eta|^2}}.
$$

We introduce the \emph{Dirichlet-Neumann operator} that maps the Dirichlet boundary value of $\Psi$ to the normal derivative $\bar\nabla\Psi|_{M_t}\cdot\bm{N}$. Under this given parameterization, the velocity potential is considered as a function $\Psi(t,x,r)$. We write $\psi(t,x)=\Psi(t,x,\rho+\eta(x))$ for its trace on the boundary $M_t$, and write
$$
G[\eta]:\psi\mapsto \sqrt{1+|\partial_x\eta|^2}(\bar\nabla\Psi)(t,x,\rho+\eta(x))\cdot \bm{N}[\eta(t,x)].
$$
The kinematic condition then becomes
$$
\partial_t\eta=G[\eta]\psi.
$$

To convert the pressure balance condition into an explicit form, we shall express the value of $\partial\Psi/\partial t$ and $\bar\nabla\Psi$ on the boundary in terms of $\partial_t\psi$, $\partial_x\psi$ and $G[\eta]\psi$. Restricting to the interface $(x,\eta(x,t))$, there holds
$$
\bar\nabla\Psi\cdot \bm{N}
=\frac{-\partial_x\eta}{\sqrt{1+|\partial_x\eta|^2}}\partial_x\Psi
+\frac{1}{\sqrt{1+|\partial_x\eta|^2}}\partial_r\Psi,
$$
while by the chain rule,
$$
\partial_x\psi(t,x)=\partial_x\Psi+\partial_r\Psi\partial_x\eta.
$$
Thus we can solve, on the interface $M_t$,
$$
\partial_r\Psi
=\frac{1}{1+|\partial_x\eta|^2}\left(\partial_x\Psi\partial_x\eta+G[\eta]\psi\right).
$$
Again by the chain rule,
$$
\partial_x\Psi
=\partial_x\psi-\partial_r\Psi\partial_x\eta,
\quad
\partial_t\Psi=\partial_t\psi-\partial_r\Psi\partial_t\eta.
$$
Substituting these back to the pressure balance condition, we arrive at the \emph{capillary water jet system}:
\begin{equation}\label{EQ}
\left\{
\begin{aligned}
\partial_t\eta&=G[\eta]\psi,\\
\partial_t\psi&+\frac{1}{2}|\partial_x\psi|^2
-\frac{\left(\partial_x\psi\partial_x\eta+G[\eta]\psi\right)^2}{2(1+|\partial_x\eta|^2)}
=\frac{1}{\sqrt{1+|\partial_x\eta|^2}}\left(\frac{\partial_x^2\eta}{1+|\partial_x\eta|^2 }-\frac{1}{\rho+\eta}\right)+\frac{1}{\rho}.
\end{aligned}
\right.
\end{equation}
Here we rescaled $t$ and $\psi$ suitably to make the ratio $\sigma_0/\varrho_0=1$. The addendum $\rho^{-1}$ is to make the right-hand side small when $\eta\simeq0$, and it results in a mere shift of $\psi$ by a function of $t$. 

The formulation (\ref{EQ}) is inspired by Zakharov-Craig-Sulem; see \cite{CSS1992,DKSZ1996}. The local theory of (\ref{EQ}) does not essentially differ from the well-known 2D capillary water waves system except for the curvature term. Local well-posedness for (\ref{EQ}) can either be inferred from the general local theory for free-boundary Eulerian systems as in \cite{CoutandShkoller2007,SZ2008a,SZ2008b,ShatahZeng2011}, or developed modeling that of capillary water waves as in \cite{HK2023,Yang2024}. Local theory for water waves system under different geometric and capillarity assumptions have been established in \cite{BG1998,Wu1997,Wu1999,Lannes2005,MingZhang2009,ABZ2011,ABZ2014,Shao2026Cauchy}, and may apply to (\ref{EQ}) after suitable adaptation.

However, due to the curved geometric nature of the cylinder, the long time behavior of (\ref{EQ}) is very different from the one in capillary water waves system for water level. In particular, the so-called \emph{Rayleigh-Plateau instability} occurs in the system under long wave perturbation. It is the goal of this paper to provide a rigorous mathematical justification of such instability.

\subsection{Experimental Facts and Formal Explanation}\label{Facts}
A number of experiments have been conducted to investigate the stability and instability of water jet under perturbations. See, for example, \cite{DG1966,GY1970,Lafrance1975} for experimental results, or Section 3 of \cite{EgVi2008} for a comprehensive review of related topics and experiments. For recent advances and engineering applications of the linear theory, see \cite{MKM2012, ZSL2019, Lohse2022}.

In these experiments, the water jet is disturbed by single-frequency waves of varying wave lengths. The temporal evolution of the jet profile is captured via high-speed photography. At a given time $t$, the deformation of the jet is measured by the difference $\Delta\eta(t):=\eta_{\max}(t,x)-\eta_{\min}(t,x)$.
Experimental observations consistently demonstrate an exponential dependence of $\Delta\eta(t)$ on time:  
$$
\Delta\eta(t)\simeq e^{\omega_et},
\quad \text{for some }\omega_e\geq0.
$$
The rate $\omega_e$ depends on the wave length -- or equivalently the wave number -- of the initial perturbation. If the wave length is $>2\pi\rho$, the ``characteristic length" of the unperturbed water jet, then $\omega_e>0$, indicating that the water jet is very unstable. Indeed, it is observed that the jet breaks up within only logarithmic time after disturbance. In contrast, if the wave length is $<2\pi\rho$, the disturbance will not be amplified within a very long time, and the jet is very stable. Typical profiles of the jet under long wave and short wave perturbations are illustrated in Figure \ref{Fig2}. Figure \ref{Fig3} compares the theoretical relation between $\omega_e$ and the dimensionless wave number $k=\rho\xi$ with experimental measurements.

A formal explanation of these experimental facts at the \emph{linear} level is available since the time of Rayleigh \cite{Rayleigh1878} and Plateau \cite{Plateau1973}, and is extensively cited in physical literature. See for example, the well-known textbooks or lecture notes \cite{Bush2013} (Section 11.2), \cite{Chandrasekhar} (Section 111), \cite{Lamb1932} (Section 273-274). For generalizations to more complicated settings, see for example the stablizing effect of vorticity \cite{LM1991}, or the review \cite{EgVi2008}. We sketch the analysis in the sequel.

\newpage
\begin{figure}[h]
\centering
\begin{subfigure}
\centering
\includegraphics[width=0.6\textwidth,angle=0]{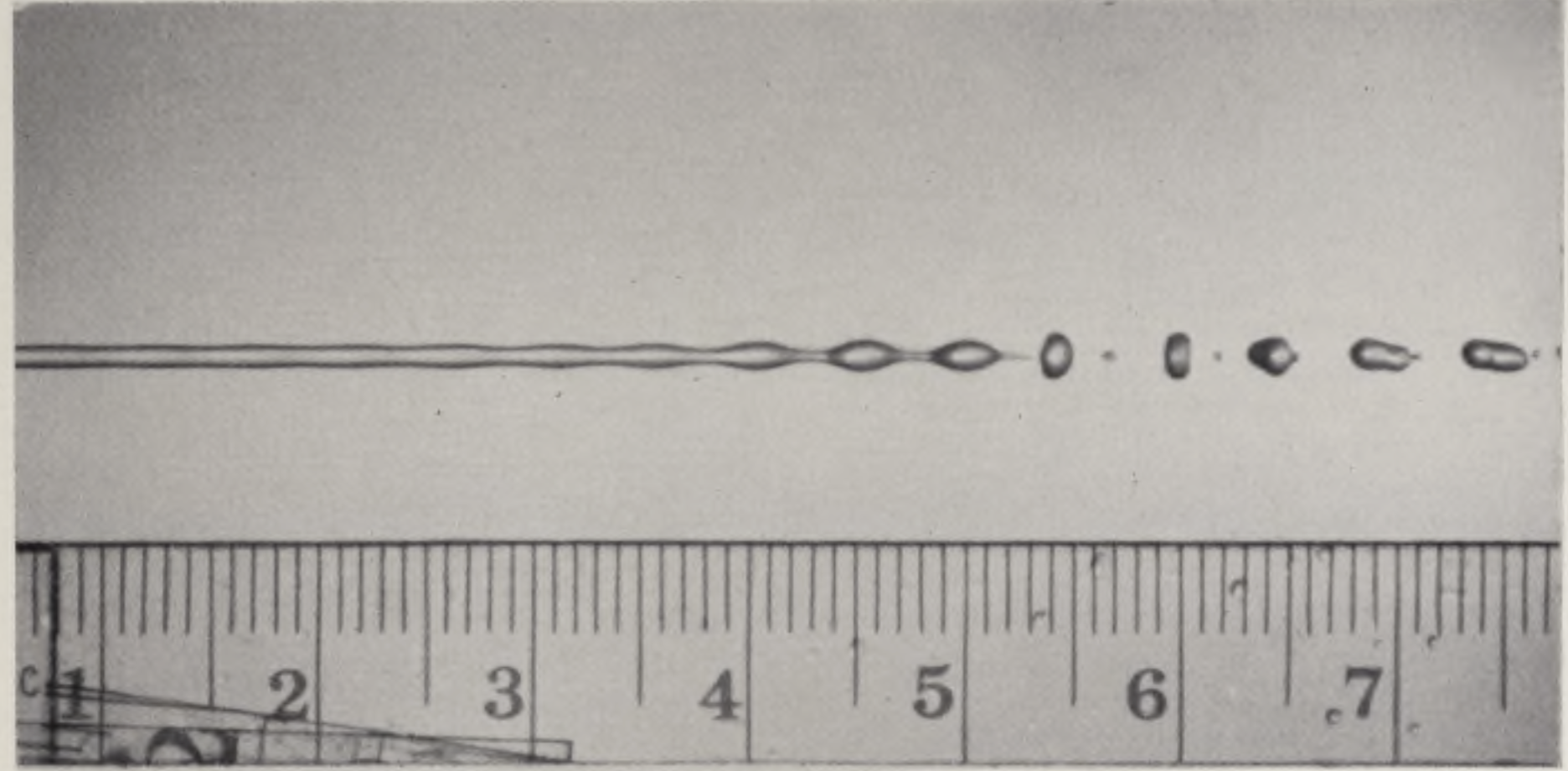}
\end{subfigure}\\
\begin{subfigure}
\centering
\includegraphics[width=0.6\textwidth,angle=0]{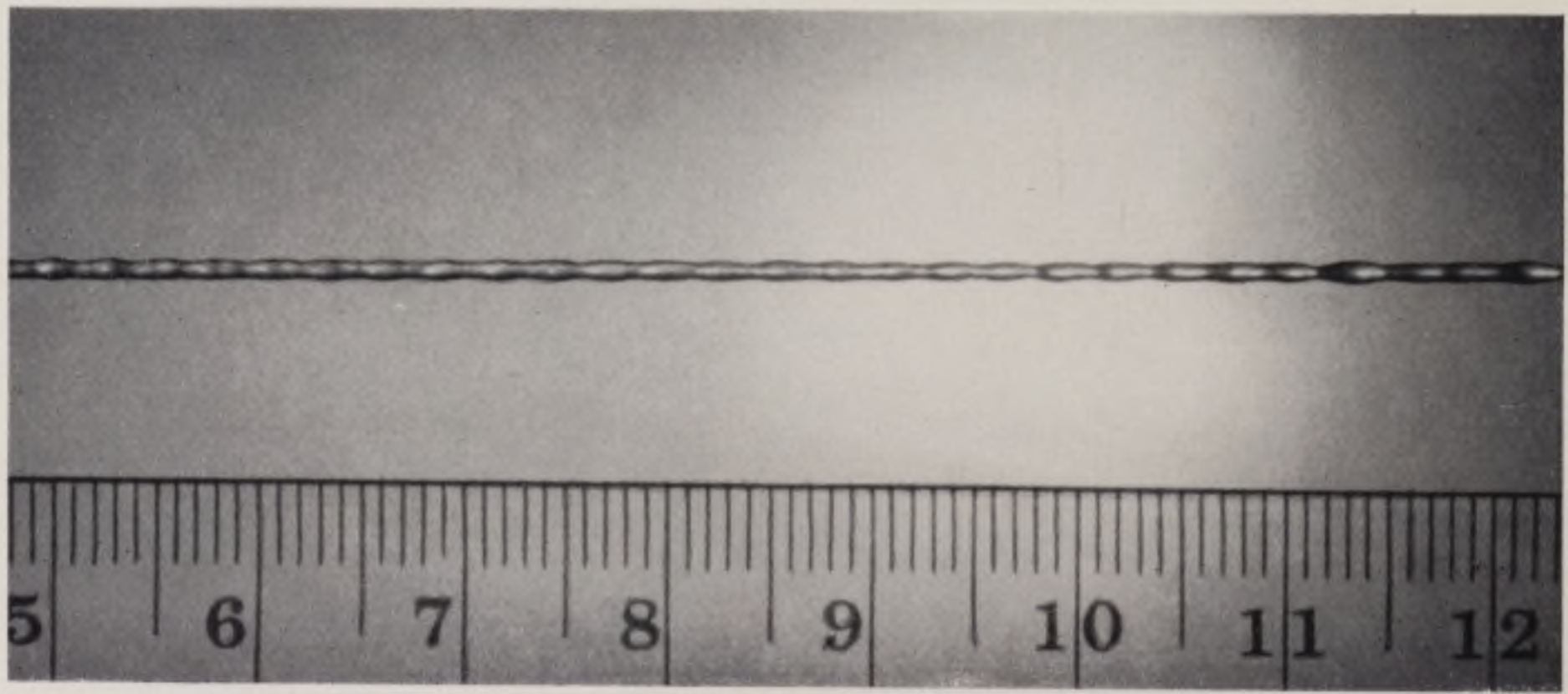}
\end{subfigure}
\caption{Above: amplifying disturbance near the most unstable mode leading to finite time break-up, corresponding to $\rho\xi\simeq0.678$. Below: non-amplifying disturbance, corresponding to $\rho\xi\simeq1.07$. Reprint from figure 8 and 10 of \cite{DG1966}.}
\label{Fig2}
\end{figure}

\begin{figure}[h]
\centering
\includegraphics[width=0.7\textwidth,angle=0]{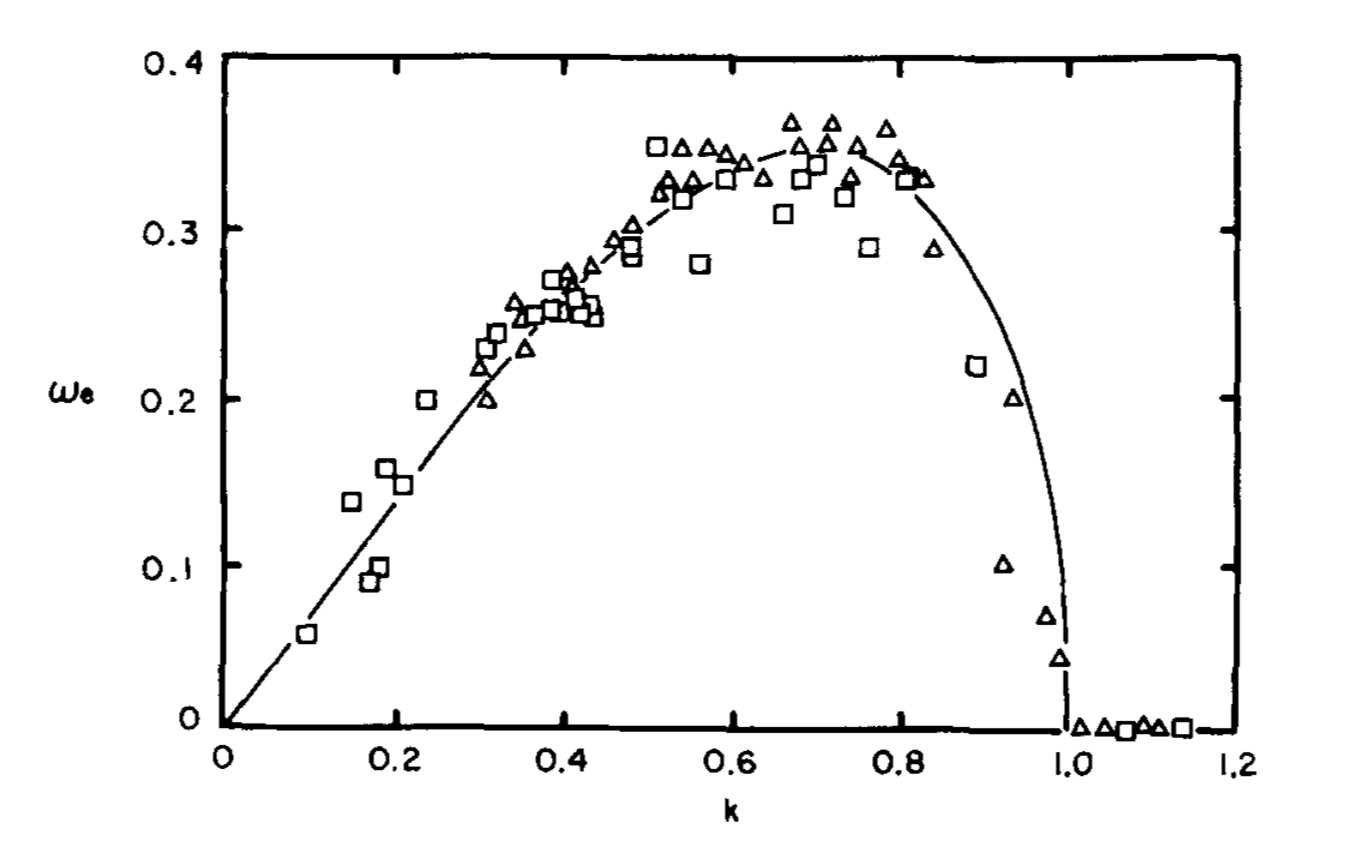}
\caption{Dependence of the exponential growth rate $\omega_e$ on the wave number $k=\rho\xi$. Solid line: Rayleigh's theoretical prediction \cite{Rayleigh1878}, which is the rescaled graph of $\Lambda_\grow(\xi)$, see (\ref{Disp_lh}). $\square$: data from \cite{DG1966}. $\triangle$: data from \cite{GY1970}. Reprint from Figure 3 of \cite{Lafrance1975}.}
\label{Fig3}
\end{figure}

\newpage
The water jet will be described by the Zakharov-Craig-Sulem system (\ref{EQ}). The system is defined for either $x\in\xR$ (infinitely long case) or $x\in\xT$ ($2\pi$-periodic case). We shall write $\xG$ for the physical space, which is either $\xR$ or $\xT$; and $\DuG$ for the frequency space, which is either $\xR$ or $\xZ$, respectively. Let us linearize the system (\ref{EQ}) around the steady solution $(\eta,\psi)=(0,0)$. To find the linearization of $G[\eta]\psi$, we must start from the Laplace equation under the cylindrical coordinate $(r,x)$:
$$
\Delta\Psi:=\left(\frac{\partial^2}{\partial r^2}+\frac{1}{r}\frac{\partial}{\partial r}+\frac{\partial^2}{\partial x^2}\right)\Psi=0.
$$
We can take Fourier transform with respect to $x\in\xG$ and find
$$
\frac{\diff^2}{\diff\!r^2}\hat\Psi(r,\xi)+\frac{1}{r}\frac{\diff}{\diff\!r}\hat\Psi(r,\xi)-\xi^2\hat\Psi(r,\xi)=0,
\quad \xi\in\DuG.
$$
The only solution smooth at $r=0$ is given by the modified Bessel function of order zero:
$$
\hat\Psi(r,\xi)=C_\xi I_0(r\xi),
\quad\text{with}\quad
I_0(r)=\sum_{k=0}^\infty\frac{r^{2k}}{4^k(k!)^2}.
$$
Therefore, the solution of the Dirichlet problem $\Delta\Psi=0,\,\Psi(\rho,x)=\psi(x)$ is formally represented as
$$
\Psi(r,x)=\int_{\DuG}\frac{I_0(r\xi)}{I_0(\rho\xi)}\hat \psi(\xi)e^{ix\xi}\dxi,
\quad \text{where }
\int_{\DuG}\cdot\dxi=\text{Haar integral on }\DuG.
$$
Consequently, the Neumann boundary value is just $\partial_r\Psi(r,x)|_{r=\rho}$, which formally equals
$$
\int_{\DuG}\frac{I_1(\rho\xi)}{I_0(\rho\xi)}\xi \hat \psi(\xi)e^{ix\xi}\dxi,
\quad\text{with}\quad
I_1(r)=I_0'(r)=\sum_{k=0}^\infty\frac{r^{2k+1}}{2^{2k+1}k!(k+1)!},
$$
provided that the Fourier integral converges in $L^2$. Standard asymptotic formula for modified Bessel functions shows that the multiplier $I_1(\rho\xi)\xi/{I_0(\rho\xi)}$ is a positive, analytic, even function in $\xi$, and equals $|\xi|+O(1)$ as $|\xi|\to\infty$. We thus find that $G[0]$ is a Fourier multiplier of order 1:
$$
G[0]\psi=\frac{I_1(\rho|D_x|)}{I_0(\rho|D_x|)}|D_x|\psi.
$$

On the other hand, linearization of the mean curvature operator $-H[\eta]$ at $\eta=0$ is simply $-|D_x|^2+\rho^{-2}$. Therefore, the linearized equation of (\ref{EQ}) at $(0,0)$ reads
\begin{equation}\label{LEQ(0,0)}
\frac{\partial }{\partial t}
\left(
\begin{matrix}
\eta\\
\psi
\end{matrix}
\right)
=L(D_x)\left(
\begin{matrix}
\eta\\
\psi
\end{matrix}
\right),
\quad L(D_x):=\left(\begin{matrix}
0 & G[0]\\
-H'[0] & 0
\end{matrix}\right).
\end{equation}
Passing to the Fourier side, the operator $L(D_x)$ in (\ref{LEQ(0,0)}) becomes a multiplier:
\begin{equation}\label{LEQ_Coeff}
L(\xi)=\left(\begin{matrix}
0 & \displaystyle{\frac{I_1(\rho|\xi|)}{I_0(\rho|\xi|)}|\xi|}\\
\displaystyle{\left(-\xi^2+\frac{1}{\rho^2}\right)} & 0
\end{matrix}\right).
\end{equation}
Let us then introduce the ``growing" and ``dispersive" eigenspeeds
\begin{equation}\label{Disp_lh}
\begin{aligned}
\Lambda_\grow(\xi)=\sqrt{\frac{I_1(\rho|\xi|)}{I_0(\rho|\xi|)}|\xi|\left(\frac{1}{\rho^2}-\xi^2\right)},
\quad
\Lambda_\disp(\xi)=\sqrt{\frac{1}{\rho^3}}\sqrt{\frac{I_1(\rho|\xi|)}{I_0(\rho|\xi|)}|\xi|\left(\xi^2-\frac{1}{\rho^2}\right)},
\end{aligned}
\end{equation}
The eigenvalues of matrix (\ref{LEQ_Coeff}) are $\pm\Lambda_\grow(\xi)$ when $0\leq|\xi|< \rho^{-1}$, and are $\pm i\Lambda_\disp(\xi)$ when $|\xi|\geq \rho^{-1}$. The solid line in Figure \ref{Fig3} sketches the graph of $\Lambda_\grow(\xi)$ for $0\leq|\xi|< \rho^{-1}$. 

Computing the matrix exponential $e^{tL(\xi)}$, the solution of (\ref{LEQ(0,0)}) is easily identified as
\begin{equation}\label{Sol_Lin}
\begin{aligned}
\begin{pmatrix}
\eta(t) \\ \psi(t)
\end{pmatrix}
&=\begin{pmatrix}
\cosh(t\Lambda_\grow(D_x)) 
& \displaystyle{G[0]}\frac{\sinh(t\Lambda_\grow(D_x))}{\Lambda_\grow(D_x)} \\ 
\displaystyle{-H'[0]}\frac{\sinh(t\Lambda_\grow(D_x))}{\Lambda_\grow(D_x)} &
\cosh(t\Lambda_\grow(D_x))
\end{pmatrix}
\1_{|\xi|<\rho^{-1}}(D_x)\begin{pmatrix}
\eta(0) \\ \psi(0)
\end{pmatrix}\\
&\quad+
\begin{pmatrix}
\cos(t\Lambda_\disp(D_x)) 
& \displaystyle{G[0]}\frac{\sin(t\Lambda_\disp(D_x))}{\Lambda_\disp(D_x)} \\ 
\displaystyle{-H'[0]}\frac{\sin(t\Lambda_\disp(D_x))}{\Lambda_\disp(D_x)} &
\cos(t\Lambda_\disp(D_x))
\end{pmatrix}
\1_{|\xi|\geq\rho^{-1}}(D_x)\begin{pmatrix}
\eta(0) \\ \psi(0)
\end{pmatrix}
\end{aligned}
\end{equation}
Here we take on the convention $z^{-1}\sinh(tz)\big|_{z=0}=z^{-1}\sin(tz)\big|_{z=0}=t$.

Now we define
\begin{equation}\label{Max_Lambda}
\lambda:=\max_{\xi\in(0,\rho^{-1})}\Lambda_\grow(\xi),
\end{equation}
and fix a real number $\mu\in(0,\lambda)$. Let $0<\xi_\mu<\xi_\mu'<\rho^{-1}$ be the two positive roots of the equation $\Lambda_\grow(\cdot)=\mu$. Corresponding to the sign of the real part of spectrum, we introduce the following $L(D_x)$-invariant closed subspaces of $L^2(\xG,\xR^2)$: the full and restricted \emph{stable subspace}
\begin{equation}\label{Stable_Space}
\begin{aligned}
E_\st&:=\left\{(\eta,\psi)\in L^2(\xG,\xR^2):\,(\hat{\eta},\hat{\psi})(\xi) \text{ supported in }
\,0<|\xi|<\frac{1}{\rho},\,\sqrt{-H'[0]}\eta+\sqrt{G[0]}\psi=0\right\},\\
E_\st^\mu&:=E_\st\cap\left\{(\hat{\eta},\hat{\psi})(\xi) \text{ supported in }
\,\xi_\mu\leq|\xi|\leq\xi_\mu'\right\};
\end{aligned}
\end{equation}
the full and restricted \emph{unstable subspace}
\begin{equation}\label{Unstable_Space}
\begin{aligned}
E_\unst&:=\left\{(\eta,\psi)\in L^2(\xG,\xR^2):\,(\hat{\eta},\hat{\psi})(\xi) \text{ supported in }
\,0<|\xi|<\frac{1}{\rho},\,\sqrt{-H'[0]}\eta-\sqrt{G[0]}\psi=0\right\},\\
E_\unst^\mu&:=E_\unst\cap\left\{(\hat{\eta},\hat{\psi})(\xi) \text{ supported in }
\,\xi_\mu\leq|\xi|\leq\xi_\mu'\right\};
\end{aligned}
\end{equation}
and the \emph{dispersive(center) subspace}
\begin{equation}\label{Disp_Space}
E_\disp:=\left\{(\eta,\psi)\in L^2(\xG,\xR^2):\,(\hat{\eta},\hat{\psi})(\xi) \text{ supported in }|\xi|=0\text{ or }
|\xi|\geq\frac{1}{\rho}\right\}.
\end{equation}

\begin{remark}\label{RT_Case}
If $\xG=\xR$, i.e. the infinitely long jet case, the spectrum of $L(D_x)$ is a continuum, whence there is \emph{no spectral gap} between the stable, unstable and elliptic part of it. If $\xG=\xT$, i.e. the periodic boundary case, the frequencies $\xi$ only take on integer values, and the stable, unstable and elliptic spectra stay away from each other. In that case, if 
$$
\mu\leq \min_{\xi\in\xZ,0<|\xi|<\rho^{-1}}\Lambda_\grow(\xi),
$$
then $E_\st^\mu=E_\st$ and $E_\unst^\mu=E_\unst$, and they are all finite-dimensional. The requirement of including zero frequency in (\ref{Disp_Space}) makes sense only when $\xG=\xT$.
\end{remark}

We then directly conclude from (\ref{Sol_Lin}) the following:
\begin{enumerate}[label=\textbf{(Lin\arabic*)}]
\item\label{Lin1} If the initial data is in $E_\st^\mu$, then the solution to (\ref{LEQ(0,0)}) stays in $E_\st^\mu$, decays like $e^{-\mu t}$ as $t\to+\infty$ while grows like $e^{\mu |t|}$ as $t\to-\infty$.

\item\label{Lin2} If the initial data is in $E_\unst^\mu$, then the solution to (\ref{LEQ(0,0)}) stays in $E_\unst^\mu$, decays like $e^{-\mu|t|}$ as $t\to-\infty$ while grows like $e^{\mu t}$ as $t\to+\infty$.

\item\label{Lin3} If the initial data is in $E_\disp$, then the solution to (\ref{LEQ(0,0)}) stays in $E_\disp$. Furthermore, if the frequency support of the initial data stays away from $0$ and $\pm\rho^{-1}$, then the solution remains bounded as $t\to\pm\infty$ and is oscillatory.
\end{enumerate}

In other words, the subspaces $E_\st,E_\unst$ are respectively the \emph{stable} and \emph{unstable} subspaces for the linear system (\ref{LEQ(0,0)}), and the system (\ref{LEQ(0,0)}) is purely \emph{dispersive} on the subspace $E_\disp$. Furthermore, the mode that grows the fastest appears at the maxima of the function $\Lambda_\grow(\xi),\,0<|\xi|<\rho^{-1}$, which corresponds to $\xi \rho\simeq 0.678$. The existence of positive spectral points is a consequence of the linearized mean curvature operator, thus physically a consequence of surface tension. 

The dispersive relation (\ref{Disp_lh}) was first obtained by Rayleigh \cite{Rayleigh1878}. The formal linear analysis above turns out to match the experimental observations very well, except near the break-up time. Indeed, as indicated in Figure \ref{Fig3}, the relation between $\omega_e$ and the wave number $\rho\xi$ measured in the experiments conducted by \cite{DG1966,GY1970} is in great consistency with (\ref{Disp_lh}): when $0<\rho\xi<1$, the data points $(\rho\xi,\omega_e)$ concentrate near the graph of the function $\Lambda_\grow(\xi)$, and when $\rho\xi>1$, the data points $(\rho\xi,\omega_e)$ simply have $\omega_e=0$, meaning that the disturbance does not amplify within a very long time.

\subsection{The Main Results}

Our main results consist of two mathematical theorems that rigorously justify the quantitative experimental facts related to water jet stability and instability. 

\begin{theorem}\label{Main1}
    Consider the water jet system~\eqref{EQ} for an infinitely long jet or under $2\pi$-periodic boundary condition. Fix the unperturbed radius $0<\rho<1$ and a reference Lyapunov exponent 
    $$
        0<\mu<\lambda:=\max_{0<|\xi|<\rho^{-1}}\Lambda_\grow(\xi).
    $$
    For any $s_0>5$, there exists a small real number $\varepsilon>0$, and a unique finite-dimensional $C^\infty$ submanifold $M_\st^\mu$ (resp. $M_\unst^\mu$) of $H^{s_0+1}\times H^{s_0+1/2}$, invariant under the flow map of~\eqref{EQ} within the $\varepsilon$-neighborhood of $(0,0)$, satisfying the following properties:
    \begin{itemize}
        \item $M_\st^\mu$ (resp. $M_\unst^\mu$) contains the point $(0,0)$ and its tangent space at $(0,0)$ coincides with the stable subspace $E_\st^\mu$ (resp. unstable subspace $E_\unst^\mu$), defined by \eqref{Stable_Space} (resp. \eqref{Unstable_Space}). 
        \item $(\eta_0,\psi_0)\in H^{s_0+1}\times H^{s_0+1/2}$ belongs to $M_\st^\mu$ (resp. $M_\unst^\mu$) if and only if the orbit $(\eta(t),\psi(t))$ starting from $(\eta_0,\psi_0)$ is attracted to zero with speed $O(e^{-\mu t})$ in     $H^{s_0+1}\times H^{s_0+1/2}$ as $t \to +\infty$ (resp. $-\infty$).
        \item For all $s\geq s_0$, $M_\st^\mu$ (resp. $M_\unst^\mu$) is a $C^\infty$ submanifold of $H^{s+1}\times H^{s+1/2}$, and the exponential decay holds in $H^{s+1}\times H^{s+1/2}$. Namely, $M_\st^\mu$ (resp. $M_\unst^\mu$) consists only of smooth functions and the attraction occurs with respect to all the Sobolev norms.
    \end{itemize}
\end{theorem}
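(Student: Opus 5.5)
We will construct $M_\unst^\mu$ by a Lyapunov--Perron fixed point; $M_\st^\mu$ then follows by time reversal, since (\ref{EQ}) is reversible under $(t,\psi)\mapsto(-t,-\psi)$, which exchanges $E_\st^\mu$ and $E_\unst^\mu$.

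\textbf{Step 1: reduction.} We paralinearize (\ref{EQ}) around a solution $(\eta,\psi)$ in the standard Alinhac good-unknown fashion. After symmetrization, the system becomes a scalar complex equation
$$\partial_t u + T_V\partial_x u + i T_\gamma u = L_{\mathrm{low}} u + R(u),$$
where $\gamma$ is a symbol of order $3/2$ whose principal part is $(1+|\partial_x\eta|^2)^{-3/4}|\xi|^{3/2}$. The term $L_{\mathrm{low}}$ is bounded, of order $\le 0$, and contains the linear curvature term $\rho^{-2}$ responsible for the instability. The remainder $R$ is quadratic and smoothing. We then split $u$ with the spectral projectors of $L(D_x)$ onto $E_\unst^\mu$ and onto the complement. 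Note that $E_\unst^\mu$ is frequency localized in $\xi_\mu\le|\xi|\le\xi_\mu'$, so on it all Sobolev norms are equivalent. It is finite-dimensional in the periodic case; in the case $\xG=\xR$ it is still a bounded-frequency band, and I will treat it as compactly supported in frequency. Because of this frequency localization, the nonlinearity projected onto $E_\unst^\mu$ suffers no derivative loss. The derivative loss only affects the complementary component.

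\textbf{Step 2: paradifferential propagator.} For the complementary component I need a linear propagator $S(t,\tau)$ for the paradifferential operator $\partial_t+T_V\partial_x+iT_\gamma - L_{\mathrm{low}}$, with coefficients given by a trajectory of small size decaying like $e^{\mu t}$ as $t\to-\infty$. The required bound is
$$\|S(t,\tau)P_{\neq}\|_{H^s\to H^s}\lesssim e^{\mu'(t-\tau)}\quad\text{with }\mu'<\mu\text{ for } t\ge\tau,$$
at least on the modes with frequencies outside $[\xi_\mu,\xi_\mu']$. These modes are either dispersive or grow at rate less than $\mu$, but there is no spectral gap in the line case. The operator $iT_\gamma$ is skew-adjoint modulo order-0 terms, so energy estimates give $H^s$ bounds with a loss of only $e^{C\varepsilon t}$. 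The unstable part of the complement, with rates in $(0,\mu)$ at $|\xi|<\xi_\mu$ or $|\xi|\in(\xi_\mu',\rho^{-1})$, has rates strictly less than $\mu$; this is what is needed. The main obstacle is precisely this step. The propagator must commute, up to acceptable errors, with the Fourier projections, although the coefficients are variable. My first attempt would be a modified energy $\|T_{p}u\|_{L^2}$ with a symbol $p$ adapted to $\gamma$. I would show that the commutators with frequency cutoffs are of order $\le 1/2$ with small coefficients, and absorb them into the gap $\mu-\mu'$, which can be taken small but positive.

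\textbf{Step 3: fixed point and regularity.} On the space of trajectories $u$ on $(-\infty,0]$ with $\sup_t e^{-\mu t}\|u(t)\|_{H^{s_0+1}\times H^{s_0+1/2}}$ small, I will define the Lyapunov--Perron map. The unstable component is the given $u_0^\unst\in E_\unst^\mu$ plus a Duhamel integral from $0$. The complementary component is given by the Duhamel integral from $-\infty$ via $S$. Since the propagator $S$ itself depends on $u$, contraction will be proved in a weaker norm, say $H^{s_0-1}$, while boundedness is proved in the strong norm. This is the standard quasilinear device, and it is why we take $s_0>5$, which leaves room for the losses. The fixed point gives a graph $\Phi:E_\unst^\mu\to E_\unst^{\mu,\perp}$ with $\Phi(0)=0$ and $D\Phi(0)=0$; this is the tangency claim. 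The characterization of $M_\unst^\mu$ via exponential decay follows because any decaying orbit satisfies the same integral equation, and uniqueness holds in the weak norm. Smoothness of $\Phi$ ($C^k$ for all $k$) comes from differentiating the fixed-point equation and repeating the estimates at rates $k\mu'<\mu$, or from fiber-contraction arguments. Finally, higher regularity for $s\ge s_0$ follows from tame estimates: the $H^s$ norm of a solution is propagated with the same exponential weight by the energy estimate of Step 2, with constants depending only on the $H^{s_0}$ norm. Local invariance is immediate from the fact that time translation preserves the characterization.
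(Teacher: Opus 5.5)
The overall Lyapunov--Perron architecture, the time reversal, and your observation that $E_\unst^\mu$ is frequency-localized all match the paper. There are, however, two genuine gaps.

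\textbf{Step 2 relies on a spectral gap the statement explicitly lacks.} For $\xG=\xR$ the complement of $E_\unst^\mu$ contains frequencies $|\xi|\uparrow\xi_\mu$ and $|\xi|\downarrow\xi_\mu'$ with $\Lambda_\grow(\xi)\uparrow\mu$. Each rate is below $\mu$, but their supremum is $\mu$. So even the constant-coefficient flow $e^{tL(D_x)}$ admits no bound $e^{\mu'(t-\tau)}$ with $\mu'<\mu$ on the complement, and there is no margin $\mu-\mu'$ into which commutator errors could be absorbed.

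The same issue breaks your claim that every orbit with $u(t)=O(e^{\mu t})$ solves the integral equation. With rate exactly $\mu$ on the complement, the boundary term $e^{(t-t_0)L(D_x)}P_{\neq}u(t_0)$ is only $O(e^{\mu t})$ as $t_0\to-\infty$ and need not vanish.

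The paper avoids needing any gap:
\begin{itemize}
\item It uses only the non-strict bound $e^{\mu|t-\tau|}$ on the complement (Lemma \ref{exp(tL)}). This suffices because the remainder is quadratic and hence decays like $e^{-2\mu|\tau|}$ along the trajectory.
\item For the characterization, it first projects with $\proj_\st^{\mu-\delta}$ (resp. $\proj_\unst^{\mu-\delta}$), which does leave a margin $\delta$. It then lets $\delta\downarrow0$, using left continuity of $\mu\mapsto\proj_\st^\mu$ (Lemma \ref{Int_Eq_Stab}).
\item It never builds one variable-coefficient propagator on the whole complement, which would have to reproduce sharp hyperbolic rates for the non-normal block $L(\xi)$. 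The quasilinear propagator $\bm{F}(u;t,\tau)$ acts only on $|\xi|\geq2\rho^{-1}$, where it need only be uniformly bounded along decaying trajectories (Lemma \ref{Size_Fundamental}).
\item All of $|\xi|<2\rho^{-1}$ evolves by the explicit $e^{tL(D_x)}$. Every variable-coefficient interaction touching low frequencies goes into the remainder, which is quadratic and infinitely smoothing there. Consequently no commutator with the spectral projections ever arises.
\end{itemize}

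\textbf{Step 3 does not yield the $C^\infty$ structure.} A contraction in a weaker norm gives at best a Lipschitz graph. Differentiating the fixed-point equation then runs into the loss of $3k/2$ derivatives in $\der_u^k$ of the propagator (Item \ref{F3}). Conditions like $k\mu'<\mu$ do not address this loss.

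The missing idea is that the remainder gains derivatives, $\R_\Ext:H^s\to H^{s+s_0-2}$, and this gain pays for the loss. With it, $u\mapsto\mathscr{A}^\mu(u;\cdot)$ is $C^k$ from a single weighted space $\mathfrak{E}_\mu$ to itself whenever $s_0-2>3k/2$ (see (\ref{Ineq_d_uA_d(u)})). The ordinary implicit function theorem then applies directly, with no weak/strong norm device.

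Your fixed-point space also needs to weight $\|\partial_tu\|_{H^{s_0-3/2}}$, as $\mathfrak{E}_\mu$ does. The propagator bounds depend on $\int\|\partial_tu\|_{H^3}$, so an arbitrary trajectory in your space does not even come with a controlled propagator.

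Finally, $C^\infty$ regularity in every $H^{s+1}\times H^{s+1/2}$ is obtained by redoing the construction with larger $s_0$ and transporting through the flow map. The flow map's derivative loss is harmless here, because the manifold is a graph over the frequency-localized space $E_\st^\mu$ (resp. $E_\unst^\mu$), where all Sobolev norms are equivalent.
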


\begin{corollary}\label{cor:Main}
    In the case of $2\pi$-periodic boundary condition, under the same assumptions as in Theorem~\ref{Main1}, the manifolds $M_\st^\mu$ (resp. $M_\unst^\mu$) are identical for
    $$
        0<\mu\leq\min_{\xi\in\xZ,0<|\xi|<\rho^{-1}}\Lambda_\grow(\xi).
    $$
    This gives the full stable manifold $M_\st$ (resp. unstable manifold $M_\unst$), which is finite dimensional with
    $$
        \dim M_\st = \dim E_\st =  \dim M_\unst = \dim E_\unst = 2\times\text{the largest integer }<\rho^{-1}.
    $$
\end{corollary}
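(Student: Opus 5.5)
The corollary should follow from Theorem~\ref{Main1} together with Remark~\ref{RT_Case}, by comparing the dynamical characterizations of $M_\st^\mu$ for different $\mu$. Set $\mu_*:=\min_{\xi\in\xZ,0<|\xi|<\rho^{-1}}\Lambda_\grow(\xi)$. The plan has three steps.

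\textbf{Step 1: the tangent spaces do not depend on $\mu$.} When $\xG=\xT$, the frequencies are integers. For every $\mu\le\mu_*$, each integer $\xi$ with $0<|\xi|<\rho^{-1}$ satisfies $\Lambda_\grow(\xi)\ge\mu$. Since $\Lambda_\grow$ is increasing and then decreasing on $(0,\rho^{-1})$, this means $\xi_\mu\le|\xi|\le\xi_\mu'$. Hence $E_\st^\mu=E_\st$ and $E_\unst^\mu=E_\unst$ for all such $\mu$, as noted in Remark~\ref{RT_Case}.

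These spaces have the claimed dimension. For each integer $k$ with $1\le k<\rho^{-1}$, the frequencies $\pm k$ carry real functions $\eta$ spanned by $\cos kx$ and $\sin kx$. The relation $\sqrt{-H'[0]}\eta\pm\sqrt{G[0]}\psi=0$ determines $\psi$ from $\eta$, so each such $k$ contributes two dimensions. Therefore $\dim E_\st=\dim E_\unst=2\times(\text{largest integer}<\rho^{-1})$.

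\textbf{Step 2: identifying the manifolds for different $\mu$.} Take $0<\mu_1<\mu_2\le\mu_*$, and apply Theorem~\ref{Main1} to both with a common $s_0$ and the smaller of the two $\varepsilon$'s.

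The inclusion $M_\st^{\mu_2}\subset M_\st^{\mu_1}$ is immediate. Decay at rate $O(e^{-\mu_2 t})$ implies decay at rate $O(e^{-\mu_1 t})$, so the characterization in Theorem~\ref{Main1} places every point of $M_\st^{\mu_2}$ in $M_\st^{\mu_1}$.

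For the reverse inclusion, both $M_\st^{\mu_1}$ and $M_\st^{\mu_2}$ are $C^\infty$ submanifolds through $(0,0)$ with the same tangent space $E_\st$, which has finite dimension $n$ by Step 1. Near $0$, each is a graph over $E_\st$ of a map into a complement of $E_\st$. An injective smooth map between $n$-dimensional manifolds is a local diffeomorphism at points where its differential is invertible, by invariance of domain. The inclusion $M_\st^{\mu_2}\hookrightarrow M_\st^{\mu_1}$ has the identity as differential at $0$, so near $0$ the two graphs have the same domain and therefore coincide.

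\textbf{Step 3: main obstacle.} The delicate point is making Step 2 hold on the whole $\varepsilon$-neighborhood rather than on an unspecified smaller one. The uniqueness clause of Theorem~\ref{Main1} should settle this. I would first check that the uniqueness is stated with respect to the decay characterization. If so, an alternative route is to show directly that any orbit decaying at rate $e^{-\mu_1 t}$ with $\mu_1\le\mu_*$ actually decays at rate $e^{-\mu_* t}$.

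This upgrade follows from the spectral gap available in the periodic case. The linear flow has no spectrum with real part in $(-\mu_*,0)$, apart from the neutral dispersive part. The nonlinearity is quadratic, so the Duhamel terms decay at least like $e^{-2\mu_1 t}$. Bootstrapping this bound repeatedly improves the decay rate up to $\mu_*$.

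Finally, the unstable case $M_\unst$ follows by the same argument with time reversed.
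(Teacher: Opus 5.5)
Your proposal is correct. Step~1 is exactly what the paper takes from Remark~\ref{RT_Case}, including the count of two real dimensions ($\cos kx$, $\sin kx$) for each integer $1\le k<\rho^{-1}$.

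Where you differ is in identifying the manifolds. The paper treats this as immediate from the unified construction of Section~\ref{Sec5}. For every $\mu\le\mu_*:=\min_{\xi\in\xZ,0<|\xi|<\rho^{-1}}\Lambda_\grow(\xi)$ one has $\proj_\st^\mu=\proj_\st$. Hence the Lyapunov--Perron equation (\ref{Stab_Int}) is literally the same equation for all such $\mu$, and only the weighted space $\mathfrak{E}_\mu$ changes. These spaces are nested: the $\mathfrak{E}_{\mu_1}$-norm is dominated by the $\mathfrak{E}_{\mu_2}$-norm when $\mu_1<\mu_2$. The uniqueness clause of Proposition~\ref{Exp_Contraction} then gives $\xS_\st^{\mu_1}(f)=\xS_\st^{\mu_2}(f)$ for small $f\in E_\st$, so the graphs coincide.

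You instead use Theorem~\ref{Main1} as a black box, through its decay characterization and a comparison of two graphs over $E_\st$. This works, provided the characterization is read, as in Proposition~\ref{Exp_Contraction}, for orbits that are small in $\mathfrak{E}_\mu$. Two comments on this route:
\begin{itemize}
\item The appeal to invariance of domain is unnecessary, and what you actually state is the inverse function theorem. Once both manifolds are graphs over the same ball in $E_\st$, applying $\proj_\st$ to the inclusion $M_\st^{\mu_2}\subset M_\st^{\mu_1}$ shows the two graph maps agree pointwise. So the obstacle you raise in Step~3 does not arise.
\item Your bootstrap is therefore an addition, not a needed repair, but it is a correct one. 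Feed an $O(e^{-\mu_1 t})$ orbit into (\ref{Decay_Int}). The stable spectrum lies in $(-\infty,-\mu_*]$. The neutral modes inside $\proj_{\low}-\proj_\st$ (frequency $0$ and $\rho^{-1}\le|\xi|<2\rho^{-1}$) grow at most linearly under $e^{tL(D_x)}$, which is harmless against $e^{-2\mu_1\tau}$. The propagator $\bm{F}$ is uniformly bounded by Lemma~\ref{Size_Fundamental}. Together these give decay at rate $\min(\mu_*,2\mu_1)$, up to a factor $t$ in the borderline case $2\mu_1=\mu_*$, and iterating reaches $\mu_*$.
\end{itemize}

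What the bootstrap buys is that the dynamical characterization itself is independent of $\mu$ in the periodic case, which the paper never states. If you use it to conclude $M_\st^{\mu_1}\subset M_\st^{\mu_*}$, keep the estimates quantitative. Specifically, bound the $\mathfrak{E}_{\mu_*}$-norm by $\|\proj_\st u(0)\|_{L^2}$ plus the square of the $\mathfrak{E}_{\mu_1}$-norm, so that the upgraded orbit lands in the uniqueness ball of $\mathfrak{E}_{\mu_*}$.
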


\begin{remark}\label{No_Spec_Gap}
A notable feature in Theorem \ref{Main1} is that (un)stable submanifolds can be constructed for continuous spectrum: in the infinitely long jet case, given the Lyapunov exponent $\mu$, there is \emph{no spectral gap} between the stable spectra $[-\lambda,-\mu]$ (corresponding to $E_\st^\mu$) and the complementary center-unstable spectra $(-\mu,\lambda]\cup i\xR$. To the authors' knowledge, this is the first result on existence of hyperbolic submanifolds for a quasilinear problem in absence of both dissipation and spectral gap. 
\end{remark}

\begin{theorem}\label{Main2}
Consider \eqref{EQ} under $2\pi$-periodic boundary condition. Fix the unperturbed radius $0<\rho<1$ and $s_0$ as in Theorem \ref{Main1}. For any $s_1\geq s_0+3/2$, there exists a neighborhood $U^{s_1}$ of $0$ in $H^{s_1+1}\times H^{s_1+1/2}$, and a subset $M_\cen\subset U^{s_1}$, passing through the point $(0,0)$, with the following properties:

\begin{itemize}
\item $M_\cen$ is of first order contact with, or equivalently ``tangent to" the dispersive subspace $E_\disp\cap H^{s_1}$ at $(0,0)$, in the following sense:  
\begin{itemize}
    \item There is a smooth diffeomorphism of $U^{s_1}$ to itself, such that the image of $M_\cen$ under the diffeomorphism has surjective orthogonal projection to $E_\disp\cap U^{s_1}$.
    \item There exists a constant $C_{s_1}>0$ such that $M_\cen$ is contained in the cone defined by (note that $E_\st\oplus E_\unst$ is finite dimensional)
    \begin{equation}\label{Conical}
    \|(\proj_\st+\proj_\unst)(\eta,\psi)\|_{L^2}
    \leq C_{s_1}\|\proj_\disp(\eta,\psi)\|_{H^{s_1+1}\times H^{s_1+1/2}}^2.
    \end{equation}
    Here $\proj_\st,\proj_\unst,\proj_\disp$ are respectively the spectral projections of $L(D_x)$ to $E_\st,E_\unst,E_\disp$, well-defined by Remark \ref{RT_Case}.
\end{itemize} 

\item $M_\cen$ is invariant under the flow map of~\eqref{EQ} within the neighborhood $U^{s_1}$. Furthermore, if an orbit on $M_\cen$ has initial value $(\eta_0,\psi_0)$ satisfying
$$
\|\eta_0\|_{H^{s_1+1}}+\|\psi_0\|_{H^{s_1+1/2}}=\varepsilon\leq\delta_0,
$$
then it stays on $M_\cen$ within the time interval $[-T_\varepsilon,T_\varepsilon]$, where $T_\varepsilon\gtrsim\varepsilon^{-1}$; within this time interval, there holds
$$
\|\eta\|_{H^{s_1+1}_x}+\|\psi\|_{H^{s_1+1/2}_x}\leq C_{s_1}\varepsilon.
$$

\item Any orbit starting from $U^{s_1}\setminus M_\cen$ must exit $U^{s_1}$ at some time. In particular, any orbit starting outside of the cone defined by \eqref{Conical} must exit $U^{s_1}$ at some time, while any global-in-time orbit staying close to the equilibrium must be contained in the invariant set $M_\cen$.
\end{itemize}
\end{theorem}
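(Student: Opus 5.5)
We prove Theorem \ref{Main2} by constructing a center-type invariant set via a Lyapunov–Perron argument for the finitely many hyperbolic modes, combined with quasilinear energy estimates on the dispersive part. Since $\xG=\xT$, Remark \ref{RT_Case} shows that $E_\st\oplus E_\unst$ is finite dimensional and spanned by the frequencies $0<|\xi|<\rho^{-1}$. On these modes every Sobolev norm is equivalent to $L^2$, and the spectral gap $\min_{\xi\in\xZ,0<|\xi|<\rho^{-1}}\Lambda_\grow(\xi)>0$ is available. Write $u=(\eta,\psi)$, $u_h=(\proj_\st+\proj_\unst)u$ and $u_\disp=\proj_\disp u$.

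First, we perform a normal form step to remove the quadratic interactions that feed the hyperbolic modes from dispersive modes. Projecting \eqref{EQ} onto $E_\st\oplus E_\unst$ gives a finite-dimensional ODE $\dot u_h=L u_h+\proj_h N(u)$, where $N(u)=O(\|u\|_{H^{s_1+1}\times H^{s_1+1/2}}^2)$ with values in a fixed finite-dimensional space. Because the frequency-localized projection of $N$ loses no derivatives, the quadratic part $\proj_h N_2(u_\disp,u_\disp)$ can be removed by a bounded bilinear change of variables $u\mapsto u+B(u_\disp,u_\disp)$. This works because the resonance condition $\pm\Lambda_\grow(\xi)=\pm i\Lambda_\disp(\xi_1)\pm i\Lambda_\disp(\xi_2)$ never holds, as a real nonzero number cannot equal a purely imaginary one. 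This change of variables is the smooth diffeomorphism of $U^{s_1}$ in the first bullet, and it accounts for the quadratic cone \eqref{Conical}.

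Second, we build $M_\cen$ by Lyapunov–Perron. We define $M_\cen$ as the set of initial data whose orbit stays in $U^{s_1}$ for the relevant times. Such orbits satisfy the integral equations
$$
u_\st(t)=\int_{-\infty}^t e^{(t-\tau)L}\proj_\st N\,d\tau,\qquad u_\unst(t)=-\int_t^{\infty}e^{(t-\tau)L}\proj_\unst N\,d\tau.
$$
Given a dispersive datum, the dispersive component is propagated by the full quasilinear flow. The available estimate there is the energy estimate $\|u_\disp(t)\|\lesssim\varepsilon$ for $|t|\lesssim\varepsilon^{-1}$, which follows from local well-posedness with a cubic energy. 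This is why we take $T_\varepsilon\gtrsim\varepsilon^{-1}$ and require $s_1\ge s_0+3/2$, since the extra regularity absorbs the loss in the difference estimates. A fixed point in the finite-dimensional $u_h$ variables, using the gap and the contraction of the hyperbolic parts, gives $u_h=\Phi(u_\disp)$ with $\|\Phi(u_\disp)\|_{L^2}\lesssim\|u_\disp\|^2$. This yields the surjective projection and \eqref{Conical}. Invariance follows from uniqueness of the fixed point along time translates.

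Third, for the exit property, take an orbit off $M_\cen$. We consider the finite-dimensional quantity $w=u_h-\Phi(u_\disp)$, or, if $\Phi$ is only defined on the local time window, a cone function such as $\|u_\unst\|^2-\|u_\st\|^2$. A Lyapunov/cone-invariance argument shows that $w$ grows like $e^{ct}$ forward or backward while the quadratic error stays below $C\varepsilon^2$. Hence the orbit must leave $U^{s_1}$. Orbits outside the cone \eqref{Conical} have $w\neq0$ quantitatively and exit immediately. Global orbits staying near $0$ have $w\equiv0$, so they lie in $M_\cen$.

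The main obstacle is the quasilinear difference estimate needed for the contraction and for the smoothness and uniqueness of $\Phi$. Dependence of the flow on data is not Lipschitz at the same regularity, so we will run the contraction in a weaker norm, namely $H^{s_1-1/2}$ type differences, controlled by paradifferential energy estimates. We then recover boundedness in $H^{s_1}$ from the energy bound, and we expect to use the finite dimensionality of $E_\st\oplus E_\unst$ heavily here.
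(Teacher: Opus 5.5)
Your second step has a genuine gap. You obtain $u_h=\Phi(u_\disp)$ from a \emph{contraction} in the finite-dimensional hyperbolic variable, but the map to be contracted,
\[
\mathcal{I}(g,f)=\int_{-\infty}^0 e^{-\tau L(D_x)}\proj_\st N\big(\xS(\tau;g+f)\big)\,d\tau-\int_0^{+\infty}e^{-\tau L(D_x)}\proj_\unst N\big(\xS(\tau;g+f)\big)\,d\tau ,
\]
depends on the quasilinear flow over \emph{infinite} time, along solutions that do not decay.

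\begin{itemize}
\item Its Lipschitz constant in $g$ needs bounds on $\der_g\xS(\tau;g+f)$ for all $\tau$.
\item Because the principal operator depends on $u$, the linearized equation has a source of the form $\proj_\high(\der_u\Gamma\cdot z)\proj_\high u$. This costs $3/2$ derivatives on the \emph{reference} solution (cf.\ Item \ref{F3} of Theorem \ref{Fundamental}).
\item The growth rate of $z$ is therefore proportional to $\|u(\tau)\|_{H^{s_0+3/2}}$. For non-decaying $u$, the energy estimates only give $e^{C\delta_0|\tau|}$ growth of that norm.
\item Gr\"onwall then gives a bound that is not of the form $e^{\nu|\tau|}$ with $\nu<\mu$. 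The gap weight $e^{-\mu|\tau|}$ cannot absorb it.
\end{itemize}
Measuring differences in a weaker norm does not help, because the loss always falls on the high norm of the reference solution. The paper says explicitly that uniqueness, and hence the graph property, is out of reach here.

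The theorem does not need uniqueness. First make the system globally well-posed with the cut-off $\kappa(\delta_0^{-1}\|u\|_{H^{s_0}})$, so that $N$ is replaced by $\R_\Ext$. The solution map is then only continuous from $H^{s_1}$ to $C^0_{t;\loc}H^{s_1-\delta}_x$. That is enough to make $g\mapsto\mathcal{I}(g,f)$ a continuous self-map of the finite-dimensional ball $\{\|g\|_{L^2}\le\delta_0\}$, with $\|\mathcal{I}(g,f)\|_{L^2}\le C\delta_0^2$. Brouwer's theorem then gives a nonempty closed solution set $\mathfrak{F}(f)$, and one sets $M_\cen=\{(f,g):g\in\mathfrak{F}(f)\}$. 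This is where finite dimensionality of $E_\st\oplus E_\unst$ really enters.

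The missing cut-off is a separate problem. Without it, the infinite-time integrals are undefined, since orbits of \eqref{EQ} near $0$ are only controlled for $|t|\lesssim\varepsilon^{-1}$. Defining $M_\cen$ as the data whose orbits stay in $U^{s_1}$, while claiming surjectivity onto $E_\disp$, would amount to finite-codimension global existence for small data on $\xT$, which is open.

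The third step also presupposes the graph: without $\Phi$ there is no $w=u_h-\Phi(u_\disp)$ to run a cone-growth argument on. The exit property follows instead from Lemma \ref{Lem_Center}. An orbit that stays in $U^{s_1}$ for all time solves the extended system with bounded growth. So $g=(\proj_\st+\proj_\unst)u(0)$ solves $g=\mathcal{I}(g,f)$, and $g\in\mathfrak{F}(f)$ because $M_\cen$ contains \emph{every} solution. Invariance likewise needs no uniqueness, since time translates of mild-growth solutions still have mild growth.

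Finally, the normal-form step is harmless but unnecessary, and it is not what produces the cone. The bound \eqref{Conical} comes from the Lyapunov--Perron identity for the hyperbolic part, split at $|\tau|=c\varepsilon^{-1}$. Inside that window the orbit is $O(\varepsilon)$ by the lifespan estimate. Outside it, the cut-off bound $\delta_0^2$ is beaten by $e^{-c\mu/\varepsilon}$. The lifespan bound itself has to be proved together with control of the hyperbolic modes, which the paper does by a dyadic induction. The diffeomorphism in the first bullet is simply the paradifferential change of unknowns $(\eta,\psi)\mapsto u$.
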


\begin{remark}
We are not able to prove that $M_\cen$ in Theorem \ref{Main2} is a submanifold. This does not seem to reflect shortcomings of our approach, but rather an intrinsic difficulty of the problem. See the discussion after Proposition \ref{Brouwer_Set}. It is also not clear if there is \emph{global-in-time} existence for small data with finite codimension under periodic boundary condition. This is open even for purely dispersive quasilinear PDEs.
\end{remark}

Let us explain the physical implications of Theorem~\ref{Main1} and~\ref{Main2} on the water jet system \eqref{EQ}. 

The existence of unstable manifold $M_\unst$ (or $M_\unst^\mu$) implies that the amplitude of a solution starting from $M_\unst$ (or $M_\unst^\mu$), no matter how small the initial size $\varepsilon\simeq0$ is, shall be amplified in logarithmic time $\simeq\log(1/\varepsilon)$, so that the radius of the jet will be of comparable magnitude with $\rho$. This shows that \eqref{EQ} is \emph{unstable under long wave perturbation}. 

Moreover, since e.g. $M_\unst$ in Theorem \ref{Main1} in the periodic boundary case is a smooth graph tangent to $E_\unst$, the restriction of (\ref{EQ}) to $M_\unst$ is smoothly conjugate to an ODE system on $E_\unst$:
\begin{equation}\label{Restricted_Dynamics}
\partial_tz=L(D_x)z+N_\unst(z),
\quad N_\unst:E_\unst \to E_\unst\text{ smooth and quadratic near }z=0.
\end{equation}
By the Hartman-Grobman theorem, the restricted dynamics on $M_\unst$ is topologically conjugate to the linear dynamics indicated by \ref{Lin2}. If we demand this conjugation to be $C^1$, then some non-resonance condition should be imposed; see for example the standard reference \cite{Sternberg1957} and \cite{Hartman1960}. However, (\ref{Restricted_Dynamics}) already implies that for each frequency $\xi\in (0,\rho^{-1})\cap\xN$, there exists an orbit, repelled exponentially fast from the equilibrium, with Lyapunov exponent \emph{exactly equal to} $\Lambda_\grow(\xi)$ (see e.g. Section 4.1 of the textbook \cite{ChiconeODE}). In other words, this justifies the solid curve in Figure \ref{Fig3}. 

The existence of the center invariant set $M_\cen$ shows that \eqref{EQ} is \emph{stable under short wave perturbation}: the lifespan of solutions starting from $M_\cen$ with initial size $\varepsilon\simeq0$ is at least $\simeq1/\varepsilon$, the one predicted by energy inequalities. Indeed, stronger results are to be expected; see Subsection \ref{Perspect}. Such stability is due to the oscillatory (dispersive) nature of the system for high frequency, corresponding to the data points lying on the horizontal axis in \ref{Fig2}. On the other hand, the third item in Theorem \ref{Main2} asserts that the union of small-magnitude, globally existing orbits must be \emph{non-generic} since the conical neighborhood defined by (\ref{Conical}) ``squeezes near $E_\disp\cap H^{s_1}$". Furthermore, it asserts a strong instability result: \emph{any orbit not ``in first order contact" with the center subspace $E_\disp$, no matter how close to the equilibrium initially, will eventually become large in magnitude.}

To summarize, Theorem~\ref{Main1},~\ref{Main2}, and Corollary~\ref{cor:Main} provide mathematical justifications of experimental facts on instability and stability for a perturbed water jet governed by surface tension. 

\subsection{Limitations of Classical Lyapunov-Perron Method}
The linearization (\ref{LEQ(0,0)}) suggests that (\ref{EQ}) is a partially hyperbolic system. Inspired by finite dimensional partially hyperbolic systems, it is natural to ask how much remains true for the quasilinear problem (\ref{EQ}). 

Let us first recall the \emph{Lyapunov-Perron method} for ODEs. Let $A$ be an $n\times n$ real matrix. Consider
\begin{equation}\label{Classical}
x'=Ax+N(x),\quad N:\xR^n\to\xR^n\text{ smooth and quadratic as }x\to0.
\end{equation}
The \emph{stable set}\footnote{From the definition, it is more appropriate to name these subsets as \emph{attracting} or \emph{repelling} subsets. Historical development of the theory of dynamical systems gives this somewhat inappropriate name, which has now become conventional.} of the equilibrium zero is defined to be the set of points in $\xR^n$ whose orbit under (\ref{Classical}) is attracted to zero as $t\to+\infty$. Similarly, the \emph{unstable set} of the equilibrium zero is the set of points in $\xR^n$ whose orbit under (\ref{Classical}) is attracted to zero as $t\to-\infty$. 

Split $\sigma(A)$ into $\sigma_\st(A)\cup\sigma_\unst(A)\cup\sigma_\cen(A)$, collecting eigenvalues of negative, positive and zero real parts. They are called the \emph{stable, unstable and elliptic} parts of $\sigma(A)$ respectively; $\sigma_\st(A)\cup\sigma_\unst(A)$ is called the \emph{hyperbolic} part. The corresponding $A$-invariant subspaces $E_\st,E_\unst,E_\cen$ are defined via spectral projection. The \emph{stable manifold theorem} asserts that the stable and unstable subsets of (\ref{Classical}) turn out to be local submanifolds. It also asserts the existence of a \emph{center submanifold} tangent to $E_\cen$.

\begin{theorem}[Classical Invariant Manifold Theorem]\label{Stab_Mfd_Finite}
Let $N(x)$ be a $C^m$ ($m\geq1$) mapping defined in some neighborhood of $0\in\xR^n$, vanishing quadratically as $x\to0$. The stable and unstable subsets of (\ref{Classical}) are locally $C^m$ submanifolds containing $0$. The tangent spaces of these manifolds at $0$ are $E_\mathrm{s}$ and $E_{\mathrm{u}}$, respectively. These hyperbolic submanifolds are locally unique. On the other hand, there exists a $C^m$ center submanifold for (\ref{Classical}), namely, a local $C^m$ invariant submanifold tangent to $E_\cen$ at 0.
\end{theorem}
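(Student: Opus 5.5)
The plan is the standard Lyapunov--Perron argument in finite dimensions. I would do the stable manifold first, obtain the unstable one by time reversal, and then get the center manifold by a cut-off and global Lyapunov--Perron construction.

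\emph{Setup.} Let $P_\st,P_\unst,P_\cen$ be the spectral projections of $A$. Choose $0<\alpha<\beta$ with
$$\sigma_\st(A)\subset\{\mathrm{Re}<-\beta\},\qquad \sigma_\unst(A)\subset\{\mathrm{Re}>\beta\}.$$
For any fixed $\gamma\in(0,\alpha)$ we then have the dichotomy estimates
$$|e^{tA}P_\st|\lesssim e^{-\beta t}\ (t\ge0),\qquad |e^{tA}P_{\cen}|\lesssim e^{\gamma|t|}\ (t\in\xR),\qquad |e^{tA}P_\unst|\lesssim e^{\beta t}\ (t\le0).$$
The center bound holds for every $\gamma>0$ because the Jordan blocks only produce polynomial growth.

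\emph{Stable manifold.} Replace $N$ by $N_\delta=N\chi(x/\delta)$, where $\chi$ is a smooth cut-off. Since $N$ vanishes quadratically, $N_\delta$ is globally Lipschitz with constant $O(\delta)$. For $\xi\in E_\st$, look for solutions on $[0,\infty)$ in the weighted space $\{x:\sup_{t\ge0}e^{\alpha t}|x(t)|<\infty\}$ that solve the Lyapunov--Perron integral equation
$$x(t)=e^{tA}\xi+\int_0^t e^{(t-\tau)A}P_\st N_\delta(x)\,d\tau-\int_t^\infty e^{(t-\tau)A}(P_\cen+P_\unst)N_\delta(x)\,d\tau .$$
The dichotomy estimates show the right-hand side is a contraction for small $\delta$, giving a unique fixed point $x(\cdot;\xi)$. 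Define
$$h(\xi)=(P_\cen+P_\unst)x(0;\xi)=-\int_0^\infty e^{-\tau A}(P_\cen+P_\unst)N_\delta(x)\,d\tau,$$
and set $M_\st=\mathrm{graph}\,h$. Conversely, any orbit decaying like $e^{-\alpha t}$ must satisfy the integral equation, which gives the characterization and local uniqueness. Invariance follows from uniqueness of the fixed point together with the time-translation structure. The bound $h(0)=0$, $Dh(0)=0$ follows from quadratic vanishing of $N$. Since every orbit decaying to $0$ eventually stays in the $\delta$-ball, where $N_\delta=N$, the stable set of the original system is locally this graph.

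\emph{Smoothness.} For $C^m$ regularity, use the fiber contraction theorem, or differentiate the fixed-point equation in $\xi$. The $k$-th derivative requires the Nemytskii map $x\mapsto N_\delta(x)$ to be differentiable between weighted spaces with weights $e^{\alpha t}$ and $e^{k\alpha' t}$. This works because the spectral gap allows us to pick the weight exponents $\alpha<\beta$ suitably; this is the usual loss-of-weight argument. The unstable manifold follows by applying the same construction to $t\mapsto -t$.

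\emph{Center manifold.} Use the global cut-off $N_\delta$ and work on all of $\xR$ in the space $\{\sup_t e^{-\gamma|t|}|x(t)|<\infty\}$. For $\xi\in E_\cen$, solve
$$x(t)=e^{tA}\xi+\int_0^t e^{(t-\tau)A}P_\cen N_\delta\,d\tau+\int_{-\infty}^t e^{(t-\tau)A}P_\st N_\delta\,d\tau-\int_t^\infty e^{(t-\tau)A}P_\unst N_\delta\,d\tau .$$
This is a contraction provided $\gamma<\beta$ and $\delta$ is small. The graph of $\xi\mapsto (P_\st+P_\unst)x(0;\xi)$ is then a globally invariant manifold for the cut-off flow, and hence locally invariant for the original system.

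\emph{Main obstacle.} The main difficulty is the $C^m$ regularity of the center manifold. This requires a spectral gap condition $m\gamma<\beta$, which can always be arranged because $\gamma>0$ may be taken arbitrarily small. The manifold is then $C^m$ on a neighborhood whose size depends on $m$, which is why only $C^m$ and not $C^\infty$ is claimed. I would handle this by the fiber contraction argument in the scale of weighted spaces, checking $C^k$ inductively for $k\le m$.
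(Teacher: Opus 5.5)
Your proposal is correct and follows the same Lyapunov--Perron route as the paper's sketch. The paper also uses the integral equation over $E_\st$ in an exponentially weighted space, time reversal for the unstable manifold, and a cut-off nonlinearity with a global integral equation for the center manifold; your weight $e^{-\gamma|t|}$ with $m\gamma<\beta$ (and your $\int_{-\infty}^t$ for the stable component) is the standard way to make precise the paper's brief remark about solving in polynomially growing functions.

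One caveat, which the paper's formulation shares: the argument characterizes orbits decaying like $e^{-\alpha t}$, not all orbits attracted to $0$. When $E_\cen\neq\{0\}$ the latter set can be open (e.g.\ $x'=-x$, $y'=-y^3$), so your sentence claiming that every orbit decaying to $0$ lies on the graph should be restricted to exponentially decaying orbits.
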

\begin{proof}
This is exactly the content of the \emph{Lyapunov-Perron method}. See for example, \cite{HP1970}, Chapter IX of \cite{Hartman1964}, Chapter 9 of \cite{Teschl2012}, Chapter 4 of \cite{ChiconeODE}. Here we only provide a sketch of the proof. Let $\proj_{\st},\proj_{\unst},\proj_\cen$ \footnote{In a finite dimensional dynamical system, the invariant subspace associated to purely imaginary eigenvalues is usually called the center space, which is marked by the subscript $\cen$. In this article, this subspace is identical to the dispersive subspace (\ref{Disp_Space}), marked by the subscript $\disp$.} be the complementary $A$-spectral projections to $E_\st,E_{\unst},E_\cen$, respectively. They all commute with $A$. Given $0<\mu<\min|\RE\sigma_\st(A)|$, as a consequence of the classical Duhamel formula, any solution with $x(t)=O(e^{-\mu t}),t\to+\infty$ must solve the integral equation
$$
\begin{aligned}
x(t)&=e^{tA}w+\int_0^te^{(t-\tau)A}\proj_{\st}N(x(\tau))\dtau
-\int_t^{+\infty} e^{(t-\tau)A}(\proj_{\unst}+\proj_\cen)N(x(\tau))\dtau\\
&=:F(x;w),
\end{aligned}
$$
where $w\in E_\mathrm{s}$. If $w\in E_\mathrm{s}$ is sufficiently close to zero, the mapping $F$ is a $C^m$ contraction near 0. Therefore, given any $w\in E_\mathrm{s}$ sufficiently close to 0, there is a unique solution $x(t)=S(t;w)$ to the equation $x=F(x;w)$, with $C^m$ dependence on $w$. The center manifold is then the graph $w\mapsto S(0;w)$. Reverting the time direction, the argument gives the unstable manifold. 

On the other hand, introducing a smooth bump function $\kappa(x)$ near 0, (\ref{Classical}) becomes $x'=Ax+\kappa(x)N(x)$, which is globally well-posed. A center manifold $M^\kappa_\cen$ is then defined via the integral equation
$$
x(t)=e^{tA}w+\int_0^te^{(t-\tau)A}\proj_{\cen}(\kappa N)(x(\tau))\dtau
-\int_t^{+\infty} e^{(t-\tau)A}(\proj_{\st}+\proj_\unst)(\kappa N)(x(\tau))\dtau,
$$
where $w\in E_\cen$. The equation can be solved in the space of polynomially growing functions. The center manifold $M^\kappa_\cen$ is obtained in a similar manner as above. It is an invariant set for (\ref{Classical}) in the domain $\kappa\equiv1$, but different $\kappa$ may lead to different $M^\kappa_\cen$. Therefore, $M^\kappa_\cen$ is not necessarily unique.
\end{proof}

The Lyapunov-Perron method has been substantially generalized to infinite dimensional systems to study instability and invariant manifolds for Eulerian systems with fixed boundary and semilinear dispersive PDEs. The former was addressed by \cite{LZ2013}: a highly non-trivial argument shows that the system is an infinite dimensional ODE on the Lie group of volume preserving diffeomorphisms. Therefore, the Lyapunov-Perron method still applies. On the other hand, since dispersive systems involve neither dissipation nor enough smoothing, the variant of Lyapunov-Perron method for such systems relies on highly non-trivial dispersive (Strichartz) estimates. See for example \cite{BJ1989,BLZ1998,GJLS2000} for the earlier works, and \cite{DM2009,NS20111,NS2012,KNS20121,KNS20122,JLSX2020} for more recent developments. See Chapter 3 of \cite{NS20112} for a detailed review. 

However, similar question for quasilinear equations, proposed in \cite{LZ2013}, remained open until very recently. This is because the quasilinear counterpart of $N$ typically involves loss of derivatives in $u$. Consequently, the right-hand side of the Duhamel formula is of worse regularity than $u$, failing to close a fixed point argument. As discussed in \cite{LZ2013,SZ2008a,SZ2008b,ShatahZeng2011}, Eulerian systems with free boundary are inherently quasilinear. They cannot be considered as ODE due to unboundedness of the infinite dimensional Riemann curvature. Despite this, nonlinear instability for free boundary Eulerian systems is still available; see for example \cite{RT2011,CS2023,NS2023,CNS2025,JRSY2025,BMV2022,BMV2023,BMV2024,guo2003RTI,guo2007unstable,ian2016RTI} for various results on transversal instability and modulation instability of Stokes waves, Rayleigh-Taylor instability, and Kelvin-Helmholtz instability.

After the first version of this paper was completed, we learned of a recent independent preprint by Shatah-Zeng \cite{shatah2026invariant} on construction of hyperbolic invariant submanifolds for a large class of quasilinear PDEs, including the transverse perturbation problem for capillary water waves. Their framework is general, applying to nonlinearities decomposable into the form $\mathcal{A}(u)u+f(u)$, where (1) the linear operator $\mathcal{A}(u)$ enjoys good spectral properties, according to the structural decomposition theorem for Hamiltonian PDEs developed in a previous work~\cite{LZ2022}; (2) the remainder $f(u)$ has the same regularity as in a priori estimates and loses derivatives for each differentiation in $u$. Therefore, a variant of Lyapunov-Perron method works by allowing the topology to vary. A gap between the stable and unstable spectra of $\mathcal{A}(u)$ is still an essential assumption in \cite{shatah2026invariant} (Condition C.4 in Subsection 2.1).

In comparison, the paradifferential method of the present paper provides a systematic procedure of making a more delicate decomposition. Through the paralinearization procedure in Section~\ref{Sec3}, we are able to \emph{gain back regularity} through a quadratically small remainder. See the next subsection for a summary. This refined decomposition also makes it possible to construct hyperbolic invariant submanifolds for systems \emph{with no spectral gap}. Theorem~\ref{Main1} for the infinitely long water jet is a typical example (see Remark \ref{No_Spec_Gap}); generalization to other problems with no spectral gap should follow from the same routine.

\subsection{Idea of the Proof}
We turn to outlining the ideas behind the proof of Theorem~\ref{Main1} and~\ref{Main2}. We summarize this approach as the \emph{paradifferential propagator method}. The strategy to be developed in this paper appears to be quite general and is potentially applicable to a broader class of quasilinear or fully nonlinear evolutionary PDEs. It turns out that the water jet problem is \emph{the one case that rules all} -- each step of the proof can be adjusted to other quasilinear problems without essential change. In this subsection, we describe how the paradifferential propagator method formally works, while outlining its concrete realization for the water jet problem.

\subsubsection{Paralinearization}
The key to the proof of Theorem \ref{Main1} and \ref{Main2} lies in transforming the water jet system (\ref{EQ}) with \emph{paradifferential calculus}, a powerful tool designed to study regularity loss caused by quasilinearity. The transformed system allows a Lyapunov-Perron argument. The nonlinearities can be manipulated as if they were linear: the regularity loss is balanced by gains due to \emph{paralinearization}, while the operators enjoy exactly the same algebraic structure as usual (pseudo)differential operators do. 

Let us formally consider a general autonomous evolutionary problem
\begin{equation}\label{Prototype}
\partial_tu=\N(u)
\end{equation}
on Sobolev spaces $H^s_x$, $s\geq s_0$. Here $s_0$ is a fixed, suitably large index. The spatial dimension could be arbitrary and the domain could be either $\xR^n$ or compact manifolds. The mapping $\N$ might be semilinear, quasilinear or completely nonlinear in $u$, and it is allowed to contain nonlocal derivatives of $u$. 

Let us describe how paradifferential calculus provides a systematic procedure of analyzing the regularity of $\N(u)$. Suppose $0$ is an equilibrium of (\ref{Prototype}). If for each $u\in H^\infty_x$ close to $0$ in the $H^{s_0}_x$ norm, the linearized operator $\N'(u)$ is pseudodifferential (of the usual type (1,0), which is often abbreviated in the literature), then it is possible to define the corresponding \emph{paradifferential operator} $T_{\varsigma[\N'(u)]}$, where $\varsigma[\N'(u)]$ is the symbol of $\N'(u)$. Suppose e.g. it has order $m>0$. The calculus of paradifferential operators yields the following advantages (see Section \ref{Sec:2} for precise account):

\begin{itemize}
\item The bound of $T_{\varsigma[\N'(u)]}:H^s_x\to H^{s-m}_x$ \emph{for any $s\in\xR$} depends only on low regularity of $u$, say $H^{s_0}_x$. 
\item For $s$ suitably large, the paralinearization procedure of \cite{Bony1981} yields the decomposition
$$
\N(u)=T_{\varsigma[\N'(u)]}u+\R(\N;u),
$$
where the remainder $\R(\N;u)$ has regularity $H^{2s-r}_x$ for some fixed $r>0$. In other words, the most irregular part of $\N(u)$ is described by $T_{\varsigma[\N'(u)]}u$.
\item The paradifferential operator $T_{\varsigma[\N'(u)]}$ enjoys exactly the same symbolic calculus of the pseudodifferential operator $\N'(u)$. Therefore, the worst irregularity in $\N(u)$ can be manipulated like linear differential operators, while the remainder gains almost twice the regularity.
\end{itemize}

The first step  then to re-write (\ref{Prototype}) into the paradifferential form
\begin{equation}\label{Prototype_Paradiff}
\partial_tu-T_{\varsigma[\N'(u)]}u=\R(\N;u)
\in H^{2s-r}_x.
\end{equation}
This procedure is usually quite standard in the literature (though not without technicalities), since it is essentially a generalization of the usual paraproduct decomposition (see (\ref{eq:RemPM})).

The paralinearization for water jet system (\ref{EQ}) is done in Section \ref{Sec3}. The paradifferential system is then diagonalized and reduced to a single equation for a complex-valued unknown; see (\ref{EQ_DG}). A truncation is then introduced to extend to the whole space $H^{s_0}$, see (\ref{EQ_Red_Ext}); this resembles the extension argument in Theorem \ref{Stab_Mfd_Finite} as a preparation for the construction of center invariant set.

\subsubsection{Paradifferential Propagator}
This is the key argument. 
Comparing the paradifferential system (\ref{Prototype_Paradiff}) with the classical invariant manifold theorem (Theorem~\ref{Stab_Mfd_Finite}), the major difference lies in the dispersive regime (elliptic directions), which leads to loss of derivatives. However, if for $u$ close to $0$, the operator $\N'(u)$ is \emph{elliptic and anti-self-adjoint (on $L^2_x$)}, then the symbolic calculus for paradifferential operators guarantees \emph{$T_{\varsigma[\N'(u)]}$ to be elliptic and almost anti-self-adjoint (on $L^2_x$)}, in the sense that $T_{\varsigma[\N'(u)]}+T^*_{\varsigma[\N'(u)]}$ is a bounded linear operator on $L^2_x$. 

These features suggest one to study, for any $H^{s_0}_x$-valued function $u:\xR\to H^{s_0}_x$, the linear hyperbolic paradifferential system for any index $s\in\xR$:
\begin{equation}\label{Prototype_Lin}
\partial_tv(t)- \proj_\cen T_{\varsigma[\N'(u(t))]} v(t)=\proj_\cen f(t)\in L^1_{t}H^s_x,\quad v(t_0)\in H^s \text{ given}.
\end{equation}
Here $\proj_\cen$ is the projection to the elliptic directions (center subspace). Paradifferential calculus then enables one to construct the \emph{propagator} $\bm{F}(u;t,t_0):H^s_x\to H^s_x$ for (\ref{Prototype_Lin}), so that the solution to it reads
$$
v(t)=\bm{F}(u;t,t_0)v(t_0)+\int_{t_0}^t\bm{F}(u;t,\tau)f(\tau)\dtau,
$$
verifying energy estimates in $H^s_x$ with coefficients depending \emph{only on the low $H^{s_0}_x$ norm of $u$}. 

For the concrete water jet problem, the construction of this paradifferential propagator $\bm{F}(u;t,t_0)$ is the content of Theorem \ref{Fundamental}. Item \ref{F2} shows how the energy estimate depends only on the low norm of $u$. The quasilinearity is reflected in Item \ref{F3}, where we show that linearization of the propagator in $u$ loses derivatives. This is in contrast to the classical case (\ref{Classical}) as well as semilinear problems, where the propagator does not depend on $u$ at all. In general, if the nonlinearity $\N(u)$ is of order $m$ in $u$, then linearization of $\bm{F}(u;t,t_0)$ in $u$ loses $m$ derivatives, namely $(\der_u\bm{F}\cdot\del u)(u;t,t_0):H^s_x\to H^{s-m}_x$. However, the operator norms still only depend on the $H^{s_0}_x$ norm of $u$.

\subsubsection{Twisted Duhamel Formula, Lyapunov-Perron Method}
The nonlinear system (\ref{Prototype_Paradiff}) becomes
\begin{equation}\label{Prototype_Duhamel}
u(t)=\bm{F}(u;t,t_0)u(t_0)+\int_{t_0}^t\bm{F}(u;t,\tau)\R(\N;u(\tau))\dtau
+\text{solution along hyperbolic directions.}
\end{equation}
We name this as the \emph{twisted Duhamel formula}. If the hyperbolic and elliptic directions of the problem near $0$ are identified, then the Duhamel formula (\ref{Prototype_Duhamel}) will yield integral equations that resemble the ones in Theorem \ref{Stab_Mfd_Finite}. A variant of Lyapunov-Perron method is then available: integral equations for solutions exponentially attracted to $0$ are deduced from (\ref{Prototype_Duhamel}) by pushing $t_0$ to infinity. 

The advantage of the paradifferential propagator method becomes clearer at this stage. Even though linearization in $u$ of the propagator $\bm{F}(u;t,t_0)$ might lose derivatives (i.e. $(\der_u\bm{F}\cdot\del u)(u;t,t_0):H^s_x\to H^{s-m}_x$, but the bounds of this operator depend only on the low $H^{s_0}_x$ norm of $u$), the regularizing nature $\R(\N,u)\in H^{2s-r}_x$ \emph{balances with this loss}, as long as the index $s$ is sufficiently large. This enables one to solve the integral equation using standard implicit function theorems. On the other hand, this delicate decomposition allows the absence of gap between the stable and center-unstable spectra, which was required in most of the literature (see e.g. \cite{LZ2013,shatah2026invariant}).

The concrete argument for water jet problem is presented in Theorem~\ref{GWP_EQ} and in Section~\ref{Sec5}-\ref{Sec6}. The former one focuses on the well-posedness of truncated system~\eqref{EQ_Red_Ext} and Section \ref{Sec5}-\ref{Sec6} provide the Lyapunov-Perron type argument. In particular, behavior along the hyperbolic directions corresponds to lower frequencies and studied in a straightforward manner; see Lemma \ref{exp(tL)}. Equation (\ref{Int_Eq_Stab}) is the integral equation satisfied by exponentially decaying solutions, and (\ref{Int_Eq_Center}) is the one used to construct the center invariant set. The key Proposition \ref{Exp_Contraction} states how the stable manifold arises from implicit function theorem, and the estimate (\ref{Ineq_d_uA_d(u)}) in the proof is where this balance of loss plays a role. It is also noted that the absence of spectral gap does not harm, as can be seen from (\ref{exp(tL)_mu})(\ref{Ineq_duA_su}), since the paradifferential remainders are \emph{both quadratic and smoothing}.

\subsection{Future Perspectives}\label{Perspect}
Theorem \ref{Main1} and \ref{Main2} constitute only one piece of a larger picture. It is noteworthy that, in the dynamics of (\ref{EQ}) for an infinitely long water jet, Theorem \ref{Main1} guarantees the existence of hyperbolic invariant submanifolds, even though \emph{there is no gap between the stable and center-unstable spectra}. The generality of our paradifferential propagator method may allow to construct invariant submanifolds for other quasilinear dispersive PDE systems with this feature.

The diagonalized paradifferential form of water jet system under periodic boundary condition, obtained in Section \ref{Sec3}, sets stage for further investigation into the dynamics on the center invariant set $M_\cen$. There are two types of results to anticipate: special solutions and general dynamics. 

Since we may consider the system under periodic boundary condition as a dispersive PDE when restricted to $M_\cen$, it becomes natural to ask whether periodic or quasiperiodic orbits could exist. There has been a vast literature on the construction of periodic or quasiperiodic free surface waves. Existence of traveling waves for (\ref{EQ}) follows readily from a bifurcation analysis; the method would be largely the same as in, for example, \cite{CN2000,BJL2026}. The existence of periodic standing or quasiperiodic waves is much more difficult. It is usually addressed by KAM or Nash-Moser type iterations; see for example, \cite{PT2001,IPT2005,IP2009,ABHK,AB2015,BBHM-2018,BM2020,BFM2021,FG2024,HHM2025}. However, paradifferential calculus might serve as a feasible alternative, making it possible to construct periodic standing or quasiperiodic waves for (\ref{EQ}) by a standard fixed point method. This has already been addressed as \emph{paradifferential reducibility} for quasilinear hyperbolic PDE systems in \cite{AS2024}. The construction of periodic standing waves for (\ref{EQ}) will be the theme of a forthcoming paper.

On the other hand, the general dynamics of (\ref{EQ}) under periodic boundary condition on the center invariant set is significantly stablized by its oscillatory nature, as seen in Figure \ref{Fig2}. The $1/\varepsilon$ lifespan estimate for an orbit on $M_\cen$ of initial size $\varepsilon$ in Theorem \ref{Main2} is a rough reflection of this feature. Indeed, the \emph{paradifferential descent method} necessary for construction of periodic or quasiperiodic standing waves is also a key step in finding \emph{normal forms} for the problem; see for example \cite{BD2018,BFF2021,BFP2023,BMM2024,BGMS2025} for the discussion on water waves. It is relatively easy to prove that the lifespan on $M_\cen$ is $\gtrsim1/\varepsilon^2$, and it is natural to conjecture the almost global lifespan $\gtrsim_N1/\varepsilon^N$. We will address this issue in a forthcoming paper. 

We end this subsection with a conjecture for (\ref{EQ}) in the infinitely long case. Inspired by the classification results for dynamics of semilinear dispersive PDEs (e.g. Theorem 6.1 of \cite{NS20112} for Klein-Gordon), we propose the following 

\noindent
\textbf{Conjecture.} Consider (\ref{EQ}) for an infinitely long capillary water jet. There are local center-stable and center-unstable \emph{submanifolds} $M_{\st\cen}$ and $M_{\unst\cen}$ for (\ref{EQ}) near the equilibrium. The dynamics of (\ref{EQ}) near the equilibrium is completely classified into several mutually exclusive cases: 
\begin{itemize}
\item The solution stays on $M_{\st\cen}$ or $M_{\unst\cen}$, exists globally along one time direction and develops finite time singularity along the other. 
\item The solution stays on the center manifold $M_{\st\cen}\cap M_{\unst\cen}$ and exists globally along both time directions.
\item The solution develops finite time singularity along both time directions. 
\end{itemize}

The existence of hyperbolic submanifolds, guaranteed by Theorem \ref{Main1}, is the most accessible aspect of the conjecture. To motivate the remaining statements, note that dispersive part of (\ref{EQ}) for an infinitely long capillary water jet enjoys dispersive estimates, yielding $t^{-1/2}$ decay in magnitude, at least at the linear level. The vector field method combined with normal form argument, as in \cite{DIPP2017,IoPu2019}, may be used to study the integral equation (\ref{Int_Eq_Center}) and lead to global-in-time existence. In particular, this could potentially close the fixed point argument to construct $M_{\st\cen}$ and $M_{\unst\cen}$. On the other hand, experimental facts strongly suggest the formation of neck-pinch singularities for (\ref{EQ}) under long wave perturbations. Despite the physical significance and the extensive literature on formal and numerical analysis (e.g. \cite{Eggers2000,EgVi2008,day1998self,ZSL2019}), the mathematical theory remains largely unavailable. Construction of neck-pinch singularity should be a most challenging and exciting project in the study for hydrodynamical free boundary problems.

\section{Toolset of Paradifferential Calculus}\label{Sec:2}
In this section, we review the basic results on \textit{paradifferential calculus}. For the general theory, we refer to Bony's paper \cite{Bony1981}, Meyer's paper \cite{Meyer}, Chapter X of H\"{o}rmander's book \cite{Hormander1997}, or Chapter 4 of M\'{e}tivier's textbook \cite{MePise}; here, we follow the presentation by M\'etivier in \cite{MePise}.

The advantage of applying paradifferential calculus to nonlinear problems is twofold: it preserves all the algebraic structures enjoyed by usual (pseudo)differential operators, while explicitly managing the regularity loss. This enables one to manipulate a nonlinear expression \emph{as if it were linear}. At technical level, paradifferential operators have many good features such as smooth dependence and tame estimates (see Definition~\ref{def:Reg} below). These properties are well-known (so we omit some of the proof), thus usually omitted in many references, while, in this section, we will present them rigorously in a proper framework. 

Throughout the section, the manifold $\xG$ that we are concerned with is the Euclidean space $\xR^n$ or the flat torus $\xT^n \simeq \xR^n/(2\pi\xZ)^n$. The corresponding frequency space $\DuG$ is $\xZ^n$ or $\xR^n$, respectively.

\subsection{Notations and conventions}

We first collect the notations and conventions used in this article.

\begin{notation}[Fr\'{e}chet derivative]
    Given two Banach spaces $X,Y$, open subset $\Omega\subset X$, and an integer $n\in\xN$, we denote by $C^k(\Omega;Y)$ the set of mappings $f:\Omega\to Y$ whose $k$-th derivative $\der^k_x f$ is a continuous map from $\Omega$ to $\mathcal{L}(\otimes^k X;Y)$, where $\der^k_x f$ is defined through iteration: for all $1\leq j \leq k-1$ and $\bm{\delta} x \in X$,
    $$
        \der^{j}_xf(x+\bm{\delta} x) - \der^{j}_xf(x) - \der^{j+1}_xf(x)\cdot\bm{\delta} x = o_{\mathcal{L}(\otimes^jX;Y)}\big( \|\bm{\delta} x\|_{X} \big).
    $$
    We also use the notation $C^\infty(\Omega;Y) := \cap_{k\in\xN} C^k(\Omega;Y)$. In particular, for all $(\bm{\delta} x_1,...\bm{\delta} x_{j+1}) \in \otimes^{j+1} X$, we have the following directional derivative
    $$
        \lim_{\epsilon\to 0} \frac{1}{\epsilon}\Big( \der^j_xf(x+\epsilon\bm{\delta} x_{j+1})\cdot(\bm{\delta} x_1,...\bm{\delta} x_{j}) - \der^j_xf(x)\cdot(\bm{\delta} x_1,...\bm{\delta} x_{j}) \Big) = \der^{j+1}_xf(x)\cdot(\bm{\delta} x_1,...\bm{\delta} x_{j+1}),
    $$
    where the limit exists in the space $Y$. Recall that if a mapping from $\Omega$ to $Y$ is differentiable in all directions $\bm{\delta} x\in X$ and the derivative $x\mapsto \der f(x)$ is continuous, then this mapping belongs to $C^1(\Omega;Y)$.
\end{notation}

\begin{notation}[Multilinear component]
    Given integers $0\leq k < n$ and Banach spaces $X,Y$, we consider a map $\R=\R(u) \in C^n(X;Y)$. Then we denote by $\R^{[k]}=\R^{[k]}(u)$ the $k$-th term in the Taylor expansion of $\R$ at $0$ and by $\R^{[\ge (k+1)]} = \R^{[\ge (k+1)]}(u)$ the corresponding remainder. Namely, we express the Taylor expansion of $\R$ at $0$ as
    $$
        \R(u) = \sum_{j=0}^k \R^{[j]}(u) + \R^{[\ge (k+1)]}(u).
    $$
    Clearly, $\R^{[j]}(u)$ is $j$-multilinear in $u$. Moreover, we denote by $\R^{[\leq k]} = \R^{[\leq k]}(u)$ the sum of terms of order no greater than $k$, namely
    $$
        \R^{[\leq k]} (u) = \sum_{j=0}^k \R^{[j]}(u).
    $$
    In particular, we say that $\R(u)$ decays linearly or quadratically in $u$ (as $u \to 0$), if $\R^{[\leq 1]}=0$ or $\R^{[0]}=0$, respectively.
\end{notation}

\subsection{Preliminaries from Fourier Analysis}\label{subsect-Tool:Pre}

\subsubsection{Functional Spaces and Littlewood-Paley Decomposition}\label{s1}
We introduce the \emph{Littlewood-Paley decomposition} of a tempered distribution on $\xG=\xR^n$ or $\xT^n$, as a standard harmonic analysis construction. 

We represent a tempered distribution $u$ on $\xG^n$ in terms of Fourier transform:
$$
u(x) = \int_{\DuG} e^{i\xi\cdot x} \hat{u}(\xi) \dxi, \quad \hat{u}(\xi)=(2\pi)^{-n}\int_{\DuG}e^{-i\xi\cdot x}u(x)\dx.
$$
Both integrals converge in the sense of tempered distribution.

The Littlewood-Paley decomposition is fixed as follows. Let $\varphi\in C^\infty_0(\xR^n)$ be a function that \emph{has rotation and reflection symmetry}, 
with support in an annulus $\{1/2\leq |\xi|\leq 2\}$, so that
$$
\sum_{j=1}^\infty\varphi(2^{-j}\xi)=1-\chi(\xi),
\quad\mathrm{supp} \chi\subset\{|\xi|<1\}.
$$
We can then decompose any tempered distribution $u$ on $\xG$ as
$$
    u=\Delta_{0}u+\sum_{j\ge1}\Delta_ju,
$$
where
\begin{align*}
    \Delta_0 u(x) =& \chi(D_x)u(x) = (2\pi)^{-n} \int_{\DuG} e^{i\xi\cdot x} \chi(\xi)\hat{u}(\xi) \dxi, \\
    \Delta_j u(x) =& \varphi(2^{-j}D_x)u(x) = (2\pi)^{-n} \int_{\xG} e^{i\xi\cdot x} \varphi(2^{-j}\xi)\hat{u}(\xi) \dxi, \quad j\geq1.
\end{align*}
We then define the \emph{partial sum operator} $S_j$ to be
$$
    S_j:=\sum_{l\leq j}\Delta_l,\quad j\geq0,
$$
while for $j\leq -1$ we just fix $S_j=0$.


For an index $s\in \xR$, the \emph{Sobolev space} $H^s = H^s(\xG)$ consists of those tempered distributions $u$ on $\xG$ such that
$$
    \| u\|_{H^s}:=\left( \int_{\DuG} \big(1+|\xi|^2\big)^s|\hat{u}(\xi)|^2 \dxi \right)^{1/2} <+\infty.
$$
The space $(H^s,\| \cdot\|_{H^s})$ is a Hilbert space. From Plancherel theorem, the Sobolev norm can be characterized through Littlewood-Paley decomposition,
$$
\| u\|_{H^s}^2 \simeq_{s} \sum_{j=0}^{\infty}2^{2js}\| \Delta_j u\|_{L^2}^2.
$$

We will also use \emph{H\"{o}lder spaces} $C^r$ with $r\ge 0$ (also known as \emph{Lipschitz spaces} in the literature) throughout the paper. For integer index~$r\in\xN$, $C^r$ is defined as the usual space of functions on $\xT^n$ whose derivatives of order $\leq r$ are continuous. For non-integer $r>0$, $C^r$ denotes the space of functions whose derivatives up to order $[r]$ are bounded and uniformly H\"older continuous with exponent $r-[r]$. The corresponding H\"{o}lder norm is defined as
$$
    |u|_{C^r}:=|u|_{C^{[r]}} +\sum_{|\alpha|=[r]}\sup_{x,y\in\xG}\frac{\big|\partial^{\alpha}u(x)-\partial^{\alpha}u(y)\big|}{|x-y|^{r-[r]}},
$$
where $[r]$ is the integer part of $r$. The Littlewood-Paley characterization of $C^r$ is as follows: 
$$
|u|_{C^r}\simeq_r
\sup_{j\geq0} 2^{jr}|\Delta_ju|_{L^\infty},
\quad r>0\text{ is not an integer}.
$$
Direct manipulation with series implies $H^s\subset C^{s-n/2}$ for $s>n/2$ with $s-n/2\notin\xN$. Notice that when $r$ is a natural number, the Littlewood-Paley characterization yields the so-called \emph{Zygmund space} $C^r_*$, but we shall not use it within the scope of this paper.

\subsubsection{Regular Mappings}\label{subsubsect:Reg}

As mentioned above, paradifferential operators possess many good properties at technical level. We will describe them as \emph{regular mappings} defined below in this paper.

\begin{definition}[Decreasing Family]
    A family $\{X^s\}_{s\in\xR}$ of semi-normed spaces is said to be \textit{decreasing} if $X^{s_1}\subseteq X^{s_2}$ for all $s_1\ge s_2$.
\end{definition}

\begin{definition}[Regular Mappings]\label{def:Reg}
Consider a non-negative integer $N\in\xN$, a real number $s'_0$ and two decreasing families $\{X^s\}_{s\in\xR}$, $\{Y^s\}_{s\in\xR}$ of normed spaces (e.g. Sobolev spaces). Let $f: u \mapsto f(u)$ be a mapping defined from some open subset $0\in B\subset X^{s'_0}$ to $Y^{s'_0}$. Then we say that $f(u)$ \textit{has $\Ta^N$ dependence on $u$ (from $X^\bullet$ to $Y^{\bullet}$)}, or simply $f(u)$ is \textit{regular (in $u$)}, if there exists $s_0 \ge s'_0$ such that the following properties hold true for all $s \ge s_0$:
\begin{enumerate}
    \item (Smoothness) $f$ belongs to $C^N(B\cap X^{s}; Y^{s})$;
    \item (Tame estimates) For all $u\in B\cap X^{s}$, $n\in\xN$ with $n\leq N$, and $(\bm{\delta} u_1,...\bm{\delta} u_n)\in \otimes^n X^{s}$, there exists a positive non-decreasing function $K_s\colon\xR_+\to\xR_+$ such that
    \begin{equation}\label{eq:Reg-Tame}
        \| \der_u^n f(u) \cdot (\bm{\delta} u_1,...\bm{\delta} u_n) \|_{Y^{s}} \leq  K_s\big( \|u\|_{X^{s_0}} \big) \Big( \|u\|_{X^s} \prod_{l=1}^n \|\bm{\delta} u_l\|_{X^{s_0}} + \sum_\tame \prod_{l=1}^n \|\bm{\delta} u_l\|_{X^{\sigma_l}} \Big),
    \end{equation}
    where the summation is taken over all the couples $(\sigma_1,...\sigma_{n})\in\{s_0,s\}^{n}$, with exactly one component equal to $s$ when $s>s_0$. Moreover, $K_s$ is non-decreasing in $s$.
\end{enumerate}
We denote by $\Ta^N(s_0;X^\bullet,Y^\bullet)$, or simply $\Ta^N(X^\bullet,Y^\bullet)$, the collection of all such mappings $f$. In particular, we use the notation $\Ta^\infty(s_0;X^\bullet,Y^\bullet)$, or simply $\Ta^\infty(X^\bullet,Y^\bullet)$, for the intersection $\cap_{N\in\xN} \Ta^N(s_0;X^\bullet,Y^\bullet)$.
\end{definition}

One can easily check the following properties of the class $\Ta^N$.
\begin{lemma}\label{lem:PropOfReg}
    Let $N\in\xN$ be a non-negative integer, $s_0$ be a real number, and $\{X^s\}_{s\in\xR}$, $\{Y^s\}_{s\in\xR}$, $\{Z^s\}_{s\in\xR}$ be three decreasing families of normed (function) spaces. Then the following properties hold:
    \begin{itemize}
        \item For all positive integer $k\in\xN_+$, and mapping $f = f(u_1,...u_k)$ in the class $\Ta^N(s_0;(X^\bullet)^{\otimes k},Y^\bullet)$, its diagonal $\tilde{f}(u) := f(u,...u)$ belongs to $\Ta^N(s_0;X^\bullet,Y^\bullet)$;
        \item The composition of mappings in $\Ta^N(s_0;X^\bullet,Y^\bullet)$ and $\Ta^N(s_0;Y^\bullet,Z^\bullet)$ is still regular and belongs to the class $\Ta^N(s_0;X^\bullet,Z^\bullet)$.
    \end{itemize}
\end{lemma}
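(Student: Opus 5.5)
The plan is to verify both items directly from Definition \ref{def:Reg}, using the chain rule for Fréchet derivatives and the multilinear structure of the tame estimate \eqref{eq:Reg-Tame}. In both cases the index $s_0$ is kept the same, which is possible because the tame estimate only ever involves the norms at $s_0$ and at $s$.

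\textbf{Diagonal.} Set $\tilde f(u)=f(u,\dots,u)$, so $\tilde f=f\circ\iota$ with $\iota(u)=(u,\dots,u)$. The map $\iota$ is bounded linear from $X^s$ to $(X^s)^{\otimes k}$, normed as a product, for every $s$. Hence $\tilde f\in C^N(B\cap X^s;Y^s)$ for all $s\geq s_0$.

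Its $n$-th derivative is
\[
\der^n\tilde f(u)\cdot(\del u_1,\dots,\del u_n)=\der^n f(u,\dots,u)\cdot(\iota\del u_1,\dots,\iota\del u_n).
\]
Applying \eqref{eq:Reg-Tame} for $f$ gives the bound, since $\|\iota v\|_{(X^\sigma)^{\otimes k}}\simeq_k\|v\|_{X^\sigma}$ and $\|(u,\dots,u)\|_{X^s}\simeq_k\|u\|_{X^s}$. The constant $k$ is absorbed into a new $K_s$, which remains non-decreasing in both its argument and $s$.

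\textbf{Composition.} Let $f\in\Ta^N(s_0;X^\bullet,Y^\bullet)$ and $g\in\Ta^N(s_0;Y^\bullet,Z^\bullet)$, and shrink $B$ so that $f(B)$ lies in the domain of $g$.

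Smoothness is the usual $C^N$ chain rule applied at each level $s$.

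For the tame estimate, use Faà di Bruno:
\[
\der^n(g\circ f)(u)\cdot(\del u_1,\dots,\del u_n)=\sum_{\pi}\der^{|\pi|}g(f(u))\cdot\bigl(\der^{|P|}f(u)\cdot(\del u_P)\bigr)_{P\in\pi},
\]
where $\pi$ runs over partitions of $\{1,\dots,n\}$ into blocks $P$. Apply \eqref{eq:Reg-Tame} for $g$ at level $s$. This produces two kinds of terms:
\begin{itemize}
\item[(a)] $K_s(\|f(u)\|_{Y^{s_0}})\,\|f(u)\|_{Y^s}\prod_P\|\der^{|P|}f(u)\cdot\del u_P\|_{Y^{s_0}}$;
\item[(b)] products with exactly one block $P_0$ measured in $Y^s$ and all the others in $Y^{s_0}$.
\end{itemize}

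\emph{Factors at level $s_0$.} Applying \eqref{eq:Reg-Tame} for $f$ at $s=s_0$ gives $\|\der^{m}f(u)\cdot\del u_P\|_{Y^{s_0}}\le K_{s_0}(\|u\|_{X^{s_0}})\prod_{l\in P}\|\del u_l\|_{X^{s_0}}$. The $n=0$ case bounds $\|f(u)\|_{Y^{s_0}}$ by $K_{s_0}(\|u\|_{X^{s_0}})\|u\|_{X^{s_0}}$, so the constant from $g$ becomes a non-decreasing function of $\|u\|_{X^{s_0}}$. Likewise $\|f(u)\|_{Y^s}\le K_s(\|u\|_{X^{s_0}})\|u\|_{X^s}$.

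\emph{Terms of type (a).} These become $K\,\|u\|_{X^s}\prod_l\|\del u_l\|_{X^{s_0}}$, which is of the required form.

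\emph{Terms of type (b).} The block $P_0$ at level $s$ is estimated by
\[
K_s\Bigl(\|u\|_{X^s}\prod_{l\in P_0}\|\del u_l\|_{X^{s_0}}+\sum_{\tame}\prod_{l\in P_0}\|\del u_l\|_{X^{\sigma_l}}\Bigr).
\]
Multiplying by the $s_0$-bounds on the other blocks yields either $\|u\|_{X^s}\prod\|\del u_l\|_{X^{s_0}}$ or a product with exactly one $\del u_l$ in $X^s$, again of the required form.

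The only point needing care, rather than a genuine obstacle, is this bookkeeping in the tame estimate. One must check that in every term at most one factor carries the high norm $s$. This holds because \eqref{eq:Reg-Tame} for $g$ places $s$ on only one argument, and \eqref{eq:Reg-Tame} for $f$ in turn places it on only one sub-argument. One must also check that every constant is a non-decreasing function of $\|u\|_{X^{s_0}}$ and of $s$. Finally, the number of partitions depends only on $n\le N$, so it is absorbed into the new $K_s$.
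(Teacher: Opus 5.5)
Your proposal is correct and is the direct verification from Definition~\ref{def:Reg} that the paper leaves to the reader; the paper writes no proof and only remarks that these properties are easily checked. As you do, the diagonal reduces to composition with the bounded linear map $\iota$, and the composition follows from the chain rule and Fa\`a di Bruno, with the bookkeeping that the high index $s$ falls on at most one factor.
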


\begin{notation}\label{note:Reg}
    In this paper, we frequently use Definition~\ref{def:Reg} with $X^s$ being some Sobolev spaces or their products. For the sake of simplicity, we introduce the following simplified notations:
    $$
        \Ta^N_{r}(Y^\bullet) = \Ta^N_{r}(s_0;Y^\bullet) := \Ta^N(s_0;H^{\bullet+1}\times H^{\bullet+1/2},Y^\bullet).
    $$
\end{notation}

\subsection{Paradifferential Operators}\label{s2}

\begin{definition}[Symbol class]\label{def:SymCl}
Given~$r\in [0, +\infty)$ and~$m\in\xR$, the symbol class $\Gamma_{r}^{m}$ denotes the space of locally bounded functions~$a(x,\xi)$ on~$\xG\times\DuG$, $C^\infty$ with respect to $\xi \neq 0$, such that
\begin{equation}\label{defi:norms}
\mathcal{M}_{r}^{m}(a)= 
\sup_{|\alpha|\leq 2(n+2) +r}\sup_{|\xi| \ge 1/2~}
\big| (1+|\xi|)^{|\alpha|-m}\partial_\xi^\alpha a(\cdot,\xi)\big|_{C^r_x} < +\infty.
\end{equation}
As we will see later, the low frequency part of the symbol has no contribution in paradifferential operators. Thus, we introduce the equivalent relation
$$
\forall a,b\in\Gamma^m_r, \ a\sim b \Leftrightarrow \  a(\cdot,\xi)=b(\cdot,\xi) \text{ for }|\xi| \ge 1/2.
$$
In this sense, $(\Gamma^m_r,\M^m_r)$ is a Banach space.
\end{definition}

\begin{definition}[Paradifferential Operators]
    Given a symbol~$a\in\Gamma^m_r$, we define the \emph{paradifferential operator} $T_a$ by
    \begin{equation}\label{eq.para}
        \widehat{T_a u}(\xi)=(2\pi)^{-n}\int_{\DuG}
        \chi_0(\xi-\xi',\xi')\widehat{a}(\xi-\xi',\xi')\widehat{u}(\xi') \dxi'.
    \end{equation}
    Here $\widehat{a}(\eta,\xi')$ is the Fourier transform of~$a$ with respect to the spatial variable
    $$
        \widehat{a}(\zeta,\xi')=\int_{\xG} e^{-ix\cdot\eta}a(x,\xi')\dx,
    $$
    and the cut-off function $\chi_0$ is fixed as follows:
    $$
        \chi_0(\zeta,\xi')=\sum_{j\geq0}\left(\sum_{0\leq l \leq j-3}\varphi(2^{-l}\zeta)\right)\varphi(2^{-j}\xi'),
    $$
    with $\varphi$ being the smooth truncation used to define Littlewood-Paley decomposition. It satisfies
    \begin{gather*}
        \chi_0(\zeta,\xi')=1 \quad \text{if}\quad |\zeta|\leq 0.125|\xi'|,\qquad
        \chi(\zeta,\xi')=0 \quad \text{if}\quad |\zeta|\geq 0.5|\xi'|, \\
        |\partial_\zeta^\alpha \partial_{\xi'}^\beta \chi_0(\zeta,\xi')| \leq C_{\alpha,\beta}(1+|\xi'|)^{-|\alpha|-|\beta|} \quad \forall \alpha,\beta\in\xN^n.
    \end{gather*}

    In case the expression for $a$ is lengthy, we shall also use the notation 
    \begin{equation}\label{DefOpPM}
        \Op^\PM(a)=T_a,
    \end{equation}
    where PM is the abbreviation for \emph{paramultiplication}, the terminology used in \cite{Bony1981}. 
\end{definition}

\begin{example}\label{Ex:Paradiff} Some classical operators that can be expressed in terms of paradifferential operators:
    \begin{itemize}
        \item When $a(x,\xi)$ is independent of $\xi$, the paradifferential operator $T_a$ becomes the widely used \emph{paraproduct} operator:
        \begin{equation}\label{T_au}
            T_au=\sum_{j\geq0}S_{j-3}a \Delta_ju.
        \end{equation}
        \item If $a$ is a constant, then $S_{j-3}a = a$ for $j \ge 3$, with $S_{j-3}a = 0$ otherwise, and 
        $$
            T_au=a\sum_{j\geq3}\Delta_ju = a(u - S_2 u).
        $$
        Namely, up to small frequency counterparts, $T_a$ coincides with the multiplication with $a$.
        \item If the symbol $a(x,\xi) = a_0(x)m(\xi)$, then 
        $$
            T_a u = T_{a_0} m(D_x) u = a_0 m(D_x) (u - S_2 u).
        $$
        In particular, for \emph{classical symbols} $a(x,\xi)=\sum_{\alpha}a_\alpha(x)(i\xi)^\alpha$, we have 
        $$
            T_au=\sum_{\alpha}T_{a_\alpha}\partial^\alpha u.
        $$
    \end{itemize}
\end{example}

\subsection{Boundedness and Symbolic Calculus}\label{sec:2.3}
We shall use quantitative results from \cite{MePise} about operator norms estimates in symbolic calculus. The Sobolev spaces $H^s$ are all defined on $\xG$. Since the theory is quite similar either on $\xR^n$ or $\xT^n$, we shall not specify the underlying space in the sequel. 

\begin{convention*}\label{defi:order}
    Let~$m\in\xR$. A linear operator is said to be of order~$m$ if, for all~$s\in\xR$, it is bounded from~$H^{s} $ to~$H^{s-m}$. 
\end{convention*}

The main features of symbolic calculus for paradifferential operators are given by the following propositions, where we can see that the regularity (defined in Definition~\ref{def:Reg}) is always preserved.

\begin{proposition}[Boundedness]\label{prop:PDReg}
    Consider real numbers $s_0,\sigma,m\in\xR$ and a non-negative integer $N\in\xN$. Let $\{X^s\}_{s\in\xR}$ and $\{Y^s\}_{s\in\xR}$ be two decreasing Banach spaces. Assume that $a$ belongs to $\mathcal{T}^N(s_0;X^\bullet,\Gamma^m_0)$ (recall that $\mathcal{M}^m_0$ is defined in \eqref{defi:norms}) and $u$ belongs to $\mathcal{T}^N(s_0;Y^\bullet,H^{\sigma+\bullet})$. Then we have
    $$
     T_a u \in \mathcal{T}^{N}(s_0;X^\bullet \times Y^\bullet,H^{\sigma+\bullet-m}).
    $$
\end{proposition}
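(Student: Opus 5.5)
The plan is to reduce everything to two facts: the classical operator-norm estimate for paradifferential operators, and the bilinearity of $(a,u)\mapsto T_au$. The classical estimate (M\'etivier \cite{MePise}, Theorem 5.1.15) says that for $a\in\Gamma^m_0$ and every $s\in\xR$, $T_a$ is bounded $H^s\to H^{s-m}$ with
$$
\|T_a u\|_{H^{s-m}}\le C_s\,\M^m_0(a)\,\|u\|_{H^s}.
$$
The map $(a,u)\mapsto T_au$ is bilinear, and by this estimate it is continuous $\Gamma^m_0\times H^{\sigma+s}\to H^{\sigma+s-m}$ for every $s$. So it is a bounded bilinear map, hence $C^\infty$, with derivatives
$$
\der(T_au)\cdot(\del a,\del u)=T_{\del a}u+T_a\del u,
$$
and all higher derivatives given by the same Leibniz pattern.

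Next I would compose. Let $F(x,y):=T_{a(x)}u(y)$ on $X^\bullet\times Y^\bullet$. Smoothness, item (1) of Definition~\ref{def:Reg}, follows from the chain rule: $a\in C^N(B\cap X^s;\Gamma^m_0)$, $u\in C^N(B\cap Y^s;H^{\sigma+s})$, and the bilinear map is continuous for each $s\ge s_0$. Here I interpret $\Gamma^m_0$ as the target at every index $s$, since the symbol norm does not grow with $s$. Higher derivatives are then given by the Leibniz formula
$$
\der^n F\cdot(\del_1,\dots,\del_n)=\sum_{A\sqcup B=\{1,\dots,n\}} T_{\der^{|A|}a\cdot(\del x_A)}\,\der^{|B|}u\cdot(\del y_B).
$$

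For the tame estimate, item (2), I would bound each Leibniz term by
$$
C_s\,\M^m_0\big(\der^{|A|}a\cdot\del x_A\big)\,\big\|\der^{|B|}u\cdot\del y_B\big\|_{H^{\sigma+s}}.
$$
The symbol factor is estimated with the tame bound for $a$ at the base index $s_0$ only, which gives $K_{s_0}(\|x\|_{X^{s_0}})\prod_{A}\|\del x_l\|_{X^{s_0}}$. This is the key point: the seminorm $\M^m_0$ sees only low regularity. The $u$ factor is estimated with the tame bound at level $s$, which produces either $\|y\|_{Y^s}$ times low norms of the $\del y_l$, or exactly one $\|\del y_l\|_{Y^s}$. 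Multiplying out gives terms of the required form: either $\|(x,y)\|_{X^s\times Y^s}$ times low norms, or exactly one high-norm increment. The constants are non-decreasing functions of $\|(x,y)\|_{X^{s_0}\times Y^{s_0}}$, and $C_sK_s$ is non-decreasing in $s$ provided we take $C_s$ non-decreasing, for instance by replacing it with $\sup_{s'\le s}C_{s'}$.

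I expect the main obstacle to be the asymmetry in the symbol factor: it never produces high norms of $x$. This is consistent with, indeed stronger than, the tame structure, so we only need to check that the tame sum's index pattern is matched. Terms with all increments at the low norm and no high norm of $(x,y)$ are dominated by $\|(x,y)\|_{s}\cdot(\cdots)$ only if the factor carrying $\|y\|_{Y^s}$ is present, so we should also verify the $B=\emptyset$ terms. When $B=\emptyset$, the $u$ factor is $\|u(y)\|_{H^{\sigma+s}}\le K_s(\|y\|_{s_0})\|y\|_{Y^s}$ by the $n=0$ tame estimate, so these terms still fit the form. Finally, the case $s=s_0$ is covered trivially, since then all norms coincide.
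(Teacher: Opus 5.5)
Your proposal is correct and takes the same route the paper relies on. The paper gives no separate proof of Proposition~\ref{prop:PDReg}. It justifies it by citing M\'etivier's estimate $\|T_au\|_{H^{s-m}}\lesssim \M^m_0(a)\|u\|_{H^s}$, which holds for every $s$ with a symbol seminorm independent of $s$, together with the remark ``so as their differentiations''; your bilinearity, chain-rule/Leibniz and tame-bookkeeping argument is exactly the fleshed-out version of that. One small point to tidy: the monotone constant should be $\sup_{s_0\le s'\le s}C_{\sigma+s'}$, which is finite because M\'etivier's constant is locally bounded in the Sobolev index, not a supremum over all $s'\le s$.
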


\begin{proposition}[Symbolic Calculus]\label{prop:SymReg}
    Consider real numbers $s_0 \ge 0$, $m,m',\sigma\in\xR$, $r>0$, and a non-negative integer $N\in\xN$. Let $\{X^s\}_{s\in\xR}$, $\{Y^s\}_{s\in\xR}$, $\{Z^s\}_{s\in\xR}$ be three decreasing Banach spaces. Assume that symbols $a,b$ belong to $\mathcal{T}^N(s_0;X^\bullet,\Gamma^m_{r})$ and $\mathcal{T}^{N}(s_0;Y^\bullet,\Gamma^{m'}_{r})$, respectively, and that the function $u$ belongs to $\mathcal{T}^N(s_0;Z^\bullet,H^{\sigma+\bullet})$. Then, we have 
    \begin{align*}
        \text{(Composition)} \quad & \big( T_a T_b - T_{a\#_r b} \big) u \in \mathcal{T}^{N}(s_0;X^\bullet \times Y^\bullet \times Z^\bullet,H^{\sigma+\bullet-m-m'+r}), \\
        \text{(Adjoint)} \quad & \big( (T_a)^* - T_{a^{\times;r}} \big) u \in \mathcal{T}^{N}(s_0;X^\bullet \times Z^\bullet,H^{\sigma+\bullet-m+r}).
    \end{align*}
    Here the composition symbol $a\#_r b$ and the adjoint symbol $a^{\times;r}$ are defined respectively by 
    \begin{equation}\label{eq:SymCal}
        a\#_r b= \sum_{|\alpha| < r} \frac{1}{i^{|\alpha|} \alpha !} \partial_\xi^{\alpha} a \partial_{x}^\alpha b, \quad a^{\times;r} = \sum_{|\alpha| < r} \frac{1}{i^{|\alpha|} \alpha !} \partial_\xi^\alpha \partial_x^{\alpha} \bar{a},
    \end{equation}
    which are bilinear in $a,b$, and linear in $a$, respectively. Note that these symbols are still regular, namely 
    $$
        a\#_r b \in \Ta^N(s_0;X^\bullet \times Y^\bullet,\Gamma^{m+m'}_{r-\lceil r \rceil +1}), 
        \quad a^{\times;r} \in \Ta^N(s_0;X^\bullet,\Gamma^{m}_{r-\lceil r \rceil +1}),
    $$
    where $\lceil r \rceil$ is the smallest integer $\geq r$.
\end{proposition}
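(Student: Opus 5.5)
The plan is to reduce both statements to the classical quantitative symbolic calculus of Métivier \cite{MePise}, which supplies the operator-norm bounds, and then bootstrap to $\Ta^N$ regularity. The key observation is multilinearity. The map $(a,u)\mapsto T_au$ is bilinear. The maps $(a,b,u)\mapsto (T_aT_b-T_{a\#_rb})u$ and $(a,u)\mapsto ((T_a)^*-T_{a^{\times;r}})u$ are trilinear and bilinear respectively, up to complex conjugation in $a$, which is harmless for real Fréchet differentiability. So I first establish the \emph{fixed-operator} estimates.

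\textbf{Step 1 (fixed-operator estimates).} For every $\sigma'\in\xR$:
$$
\|T_au\|_{H^{\sigma'-m}}\lesssim \M^m_0(a)\|u\|_{H^{\sigma'}},\qquad
\|(T_aT_b-T_{a\#_rb})u\|_{H^{\sigma'-m-m'+r}}\lesssim \M^m_r(a)\M^{m'}_r(b)\|u\|_{H^{\sigma'}},
$$
together with the analogous bound for the adjoint with constant $\M^m_r(a)$. These are Theorems 5.1.15 and 6.1.4 of \cite{MePise}, and the periodic case follows from the same proof. I would also check that $(a,b)\mapsto a\#_rb$ is a bounded bilinear map $\Gamma^m_r\times\Gamma^{m'}_r\to\Gamma^{m+m'}_{r-\lceil r\rceil+1}$. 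This holds because each term uses at most $\lceil r\rceil-1$ derivatives in $x$ on $b$, and the $\xi$-derivatives on $a$ are controlled by $\M^m_r$ since the index range $2(n+2)+r$ leaves room. The same argument handles $a^{\times;r}$. Hence these symbols are compositions of a regular map with a bounded multilinear map, and by Lemma \ref{lem:PropOfReg} they remain regular.

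\textbf{Step 2 (regularity via Leibniz).} Write $\Phi(x,y,z)=B(a(x),b(y),u(z))$ with $B$ a bounded multilinear form $\Gamma^m_r\times\Gamma^{m'}_r\times H^{\sigma'}\to H^{\sigma'-m-m'+r}$. Every derivative $\der^n\Phi$ is a sum, by the Leibniz rule, of $B(\der^{n_1}a,\der^{n_2}b,\der^{n_3}u)$ with $n_1+n_2+n_3=n$, so $\Phi\in C^N$ follows directly. For the tame estimate at level $s$, take $\sigma'=\sigma+s$. The symbol norms $\M(a)$ do not depend on $s$, because the symbol spaces are not indexed by $\bullet$; only $a\in\Ta^N(X^\bullet,\Gamma^m_r)$ with $s$-dependence through $X^s$ occurs. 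Each factor $\|\der^{n_1}a\cdot(\del x)\|_{\Gamma}$ is therefore bounded by the tame estimate for $a$ at level $s_0$, or at level $s$ where that is needed. The $u$-factor is bounded by its tame estimate at level $s$. Multiplying these out, each product contains at most one high-norm factor $\|\cdot\|_{X^s}$, $\|\cdot\|_{Y^s}$, $\|\cdot\|_{Z^s}$, or $\|\del\cdot\|_{\sigma_l=s}$, and all the others are low norms. This is exactly the form \eqref{eq:Reg-Tame}. Since every $K_s$ is nondecreasing in $s$, so is their product.

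\textbf{Main obstacle.} The delicate point is the bookkeeping of the ``exactly one component equal to $s$'' structure when the symbol factors are themselves differentiated. I would need a symbol estimate at low level only, $\M(a)\le K(\|x\|_{X^{s_0}})$. This does not follow automatically from the definition, because \eqref{eq:Reg-Tame} for $a$ at level $s_0$ yields exactly such a bound. I would therefore use the $s_0$-level tame estimate for every symbol factor, and the $s$-level estimate only for the $u$-factor, whose target index $\sigma+s$ is what varies. Symbols are measured in a fixed space, so they never need high norms, and the tame structure is inherited from $u$ alone. This also covers Proposition \ref{prop:PDReg}, as the case with only $a$ and $u$.
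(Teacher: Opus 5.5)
Your proposal is correct and is essentially the paper's own justification. The paper likewise takes Métivier's quantitative bounds \eqref{eq:PDSymEsti}, valid at every Sobolev index, and notes that because the symbol classes do not depend on $\bullet$, smoothness and tame estimates pass through the multilinear structure (``so as their differentiations''), with every symbol factor controlled at level $s_0$ and the single high norm coming from $u$. Two cosmetic fixes are needed: your sentence saying the low-norm symbol bound ``does not follow automatically'' should say that it \emph{does} follow from the level-$s_0$ tame estimate, and the $s$-dependent constants in Métivier's bounds should be absorbed into a nondecreasing $K_s$ (e.g.\ by taking a supremum over indices in $[s_0,s]$).
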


Another equivalent presentation of Proposition~\ref{prop:PDReg} and ~\ref{prop:SymReg} is as follows: the concerned symbols are regular mappings taking value in symbol classes independent of the index denoted by $\bullet$. That is to say, for arbitrary index $s\in\xR$, we have estimates
\begin{equation}\label{eq:PDSymEsti}
\begin{gathered}
\| T_a u \|_{H^{s-m}} \lesssim \|a\|_{\Gamma^m_0} \|u\|_{H^{s}}, \\
\left\| \big( T_a T_b - T_{a\#_r b} \big) u \right\|_{H^{s-m-m'+r}} \lesssim \|a\|_{\Gamma^m_r} \|b\|_{\Gamma^{m'}_r} \|u\|_{H^{s}}, \\
\left\| \big( (T_a)^* - T_{a^{\times;r}} \big) u \right\|_{H^{s-m+r}} \lesssim \|a\|_{\Gamma^m_r} \|u\|_{H^{s}},
\end{gathered}
\end{equation}
so as their differentiations. 
    
Meanwhile, we observe that the $\#_r$ and $\times;r$ operations are simply cut-offs of the composition and adjoint operations in the symbolic calculus for (1,0) pseudodifferential operators. Hence, we assert that \emph{paradifferential calculus retains the same algebraic structure as standard pseudodifferential calculus, modulo smoothing operators, requiring minimal regularity in the coefficients}. 

\begin{remark}\label{rmk:MultiSymCal}
We have the following corollary: if 
$$
a=\sum_{0\leq j <r}a^{(m-j)}\in \sum_{0\leq j <r} \Gamma^{m-j}_{r-j},\quad 
b=\sum_{0\leq k <r}b^{(m'-k)}\in \sum_{0\leq k <r} \Gamma^{m'-k}_{r-k},
$$
with $m,m'\in\xR$ and $r>0$, then with
$$
    c= \sum_{ |\alpha| +j+k < r} \frac{1}{i^{|\alpha|} \alpha !} \partial_\xi^{\alpha} a^{(m-j)} \partial_{x}^\alpha b^{(m'-k)}, 
    \quad 
    d = \sum_{|\alpha|+j<r} \frac{1}{i^{|\alpha|} \alpha !} \partial_\xi^{\alpha} \partial_{x}^\alpha \overline{a^{(m-j)}}
$$
the operators $T_a T_b -T_{c}$ and $T_a^* - T_{d}$ are of order $m+m'-r$ and $m-r$, respectively. Such argument will be used for the paralinearization of nonlinear operators such as Dirichlet-Neumann operators.
\end{remark}

\begin{remark}[Pluri-homogeneous symbols]\label{rmk:Pluri-homoSym}
In the theory of paradifferential operators, the calculus arises usually when $a^{(m-j)}$'s and $b^{(m'-k)}$'s are homogeneous in $\xi$. This leads to the notion of pluri-homogeneous symbols. We denote by $\dot\Gamma_{r}^{m}$ the space of \emph{homogeneous symbols} of degree $m$, defined as the collection of symbols $a\in\Gamma^m_r$ of the form
$$
a(x,\xi)=|\xi|^ma'\left(x,\frac{\xi}{|\xi|}\right),
$$
where $a'(x,\omega)$ is $C^r$ in $x\in \xT^n$ and smooth in $\omega\in S^{n-1}\subset\xR^n$. Then we define \emph{pluri-homogeneous symbols} of degree $m$ as the symbols of the form
$$
a=\sum_{0\leq j <r}a^{(m-j)}\in \sum_{0\leq j <r}\dot\Gamma^{m-j}_{r-j}.
$$
We say that $a^{(m)}$ is the principal symbol of $a$. Recall from definition~\ref{def:SymCl} that the low frequency part $|\xi|<1/2$ is negligible. The collection of such symbols are denoted by $\Sigma_r^m$. Clearly, Remark~\ref{rmk:MultiSymCal} provides the symbolic calculus for pluri-homogeneous symbols.
\end{remark}

\subsection{Paraproduct and Paralinearization}\label{sec.2.3}
Recall that if $a=a(x)$ is a function of $x$ only, the paradifferential operator $T_a$ is no more than a paraproduct. A key feature of paraproducts is that one can replace 
bilinear expressions by paraproducts up to smoothing operators.

\begin{definition}\label{R_PM}
Given two functions~$a,b$ on $\xT$, we define the paraproduct remainder 
\begin{equation}\label{eq:RemPM}
    R_\PM(a,u) = au-T_a u-T_u a = \sum_{j,k:|j-k|<3}\Delta_j a\Delta_ku.
\end{equation}
Here PM is the abbreviation for \emph{paramultiplication}.
\end{definition}

We record some results on paraproducts; see Chapter 2 of \cite{BCD}, Chapter 2 of \cite{Chemin} or Chapter X of \cite{Hormander1997}. 

\begin{proposition}\label{prop:PMReg}
Consider real numbers $s_0,\sigma,\sigma'\in\xR$ and a non-negative integer $N\in\xN$. Let $\{X^s\}_{s\in\xR}$ and $\{Y^s\}_{s\in\xR}$ be two decreasing Banach spaces. Assume that functions $a,u$ belong to $\mathcal{T}^N(s_0;X^\bullet,H^{\sigma+\bullet})$ and $\mathcal{T}^N(s_0;Y^\bullet,H^{\sigma'+\bullet})$, respectively. If $\sigma_0+\sigma_0'+2s_0>n/2$, then the paraproduct remainder~\eqref{eq:RemPM} satisfies
    $$
    \R_{\PM}(a,u) \in \mathcal{T}^{N}(s_0;X^\bullet \times Y^\bullet,H^{\sigma+\sigma'+s_0+\bullet-n/2}).
    $$
    In particular, if we further have $\sigma=\sigma'$ and $\sigma+s_0>n/2$, then the product $au$ verifies
    $$
        au \in T^N(s_0;X^\bullet \times Y^\bullet,H^{\sigma_0+\bullet}).
    $$
\end{proposition}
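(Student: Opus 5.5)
We prove Proposition~\ref{prop:PMReg} in two steps: first a fixed-index bilinear estimate for $R_\PM$, then its transfer to the regular class $\Ta^N$ via bilinearity and the chain rule. (The hypothesis ``$\sigma_0+\sigma_0'+2s_0>n/2$'' is read as $\sigma+\sigma'+2s_0>n/2$.)

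\textbf{Step 1: bilinear estimate.} For $\alpha+\beta>0$ we show
$$
\|R_\PM(a,u)\|_{H^{\alpha+\beta-n/2}}\lesssim \|a\|_{H^\alpha}\|u\|_{H^\beta}.
$$
Each term $\Delta_ja\Delta_ku$ with $|j-k|<3$ has spectrum in a ball of radius $\simeq 2^{j}$. By Bernstein,
$$
\|\Delta_ja\Delta_ku\|_{L^2}\lesssim 2^{jn/2}\|\Delta_ja\|_{L^2}\|\Delta_ku\|_{L^2}.
$$
Equivalently, estimate the product in $L^1$ by Cauchy--Schwarz and use Bernstein $L^1\to L^2$ on each dyadic output piece $\Delta_q$. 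This gives
$$
\|\Delta_q R_\PM\|_{L^2}\lesssim \sum_{j\geq q-4}2^{qn/2}\,c_j2^{-j(\alpha+\beta)}\|a\|_{H^\alpha}\|u\|_{H^\beta},
\qquad (c_j)\in\ell^1 .
$$
Multiply by $2^{q(\alpha+\beta-n/2)}$. The resulting sum over $j\geq q$ is a discrete convolution with the kernel $2^{-(j-q)(\alpha+\beta)}$, which is summable exactly when $\alpha+\beta>0$. Young's inequality then gives the $\ell^2$ bound in $q$. This is the only place the positivity condition enters, and it is the main technical point.

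\textbf{Step 2: transfer to $\Ta^N$, and tameness.} Take $s\geq s_0$. Since $R_\PM$ is bilinear, the Leibniz rule gives
$$
\der^n R_\PM(a(x),u(y))=\sum R_\PM\big(\der^{k}a\cdot(\cdot),\,\der^{n-k}u\cdot(\cdot)\big).
$$
In each term put the high index on exactly one factor, choosing it according to which variable carries the $H^s$ norm.
\begin{itemize}
\item Apply Step 1 with $(\alpha,\beta)=(\sigma+s,\ \sigma'+s_0)$ or $(\sigma+s_0,\ \sigma'+s)$. This yields the target index $\sigma+\sigma'+s_0+s-n/2$.
\item The required $\alpha+\beta>0$ holds because $\sigma+\sigma'+s+s_0\geq\sigma+\sigma'+2s_0>n/2>0$.
\item Insert the tame bounds \eqref{eq:Reg-Tame} for $a$ and $u$, with the lower factor measured at $s_0$ and the higher factor at $s$.
\end{itemize}
This produces exactly the tame structure of Definition~\ref{def:Reg}, with a single $s$-index per term.

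Continuity of the $N$-th derivatives follows the same way. $R_\PM$ is a bounded bilinear map between the relevant Sobolev spaces, and it is composed with $C^N$ maps, so the chain rule applies. This is also the content of Lemma~\ref{lem:PropOfReg}.

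\textbf{The product.} Write
$$
au=T_au+T_ua+R_\PM(a,u).
$$
For the paraproducts, use $\|T_au\|_{H^{s'}}\lesssim\|a\|_{L^\infty}\|u\|_{H^{s'}}$. If instead $a$ has negative Hölder regularity, use $\|T_au\|_{H^{s'+\gamma}}\lesssim\|a\|_{C^{\gamma}}\|u\|_{H^{s'}}$ for $\gamma<0$. Together with the Sobolev embedding $H^{\sigma+s_0}\subset L^\infty$, valid since $\sigma+s_0>n/2$, this handles both paraproducts. The regularity statement for $T_au$ and $T_ua$ then follows from Proposition~\ref{prop:PDReg} with order-$0$ symbols.

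The remainder lands in $H^{2\sigma+s_0+s-n/2}\subset H^{\sigma+s}$, which is where $\sigma+s_0>n/2$ is used. Summing the three pieces gives $au\in\Ta^N$ with values in $H^{\sigma+\bullet}$.
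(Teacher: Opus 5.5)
Your proposal is correct and is the standard argument: Bony's dyadic estimate $\|R_\PM(a,u)\|_{H^{\alpha+\beta-n/2}}\lesssim\|a\|_{H^\alpha}\|u\|_{H^\beta}$, then bilinearity and the chain rule to get the tame $\Ta^N$ structure, then $au=T_au+T_ua+R_\PM(a,u)$ for the product. The paper does not reprove this; it cites Bahouri--Chemin--Danchin, Chemin and H\"ormander, so there is no separate proof to compare against. (Two minor points. Your two Bernstein bounds are not literally equivalent, since one gives $2^{jn/2}$ and the other $2^{qn/2}$, but under the hypothesis $\alpha+\beta\geq\sigma+\sigma'+2s_0>n/2$ either one closes the dyadic sum. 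Also, the aside about negative H\"older regularity of $a$ is unnecessary, because $\sigma+s_0>n/2$ already puts $a$ in $H^{\sigma+s_0}\subset L^\infty$.)
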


This proposition states that in the bilinear expression $B(a,u)=au$, the sum of the paraproducts $T_au+T_ua=T_{B'(a,u)}\cdot(a,u)$ captures the most irregularity. In the same vein, we 
have the following celebrated \emph{paralinearization theorem} due to Bony \cite{Bony1981} (a beautiful proof can be found in \cite{Meyer} or Chapter X of \cite{Hormander1997}, and a quantitative version can be found in \cite{AS2023}). 

\begin{proposition}\label{prop:PLReg}
    Consider integers $k,N\in\xN$ and real numbers $s_0, \sigma$ with $s_0+\sigma> k+n/2$. Let $F\in C^\infty(\xR^{k+1})$ be a fixed smooth function, $\{X^s\}_{s\in\xR}$ be a decreasing family of Banach spaces, and $u$ belong to $\Ta^N(s_0;X^\bullet,H^{\bullet+\sigma})$. Then the composition $F(u,\partial_xu,...\partial_x^ku)$ admits the following paralinearization
    $$
        F(u,\partial_xu,...\partial_x^ku) = F(0) + T_{\nabla F(\eta,\partial_xu,...\partial_x^ku)} \cdot (u,\partial_xu,...\partial_x^ku) + \R_\PL(F;u),
    $$
    where the remainder $\R_\PL(F;u)$, regarded as a mapping of $u$, satisfies
    $$
    \R_\PL(F;u) \in \mathcal{T}^N(s_0;X^{\bullet},H^{\bullet+s_0+2\sigma-2k-n/2}).
    $$
\end{proposition}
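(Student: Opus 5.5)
The plan is to reduce the statement to Bony's paralinearization for a single smooth function of several variables, and then to check the regular-mapping ($\Ta^N$) structure by differentiating the paralinearization identity. Write $U=(u,\partial_xu,\dots,\partial_x^ku)$. By assumption $u\in\Ta^N(s_0;X^\bullet,H^{\bullet+\sigma})$, so each component $\partial_x^ju$ belongs to $\Ta^N(s_0;X^\bullet,H^{\bullet+\sigma-j})$. The worst component, $\partial_x^ku$, lies in $H^{s+\sigma-k}$. The hypothesis $s_0+\sigma>k+n/2$ then gives $U\in L^\infty$, and in fact $U\in C^{\varrho}$ with $\varrho=s_0+\sigma-k-n/2>0$, uniformly for $u$ in the ball. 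Setting $\R_\PL(F;u):=F(U)-F(0)-T_{\nabla F(U)}\cdot U$, the identity itself is a definition. The content of the statement is the regularity of $\R_\PL$ and its tame, $C^N$ dependence on $u$.

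\textbf{Step 1: the zeroth-order estimate.} I will use Meyer's telescoping argument, following Chapter X of \cite{Hormander1997}. Write
$$F(U)-F(S_0U)=\sum_{j}\big(F(S_{j}U)-F(S_{j-1}U)\big)=\sum_j m_j\,\Delta_jU,$$
with $m_j=\int_0^1\nabla F(S_{j-1}U+\theta\Delta_jU)\,d\theta$. Subtracting $T_{\nabla F(U)}U=\sum_j S_{j-3}\nabla F(U)\,\Delta_jU$ leaves the sum
$$\sum_j\big(m_j-S_{j-3}\nabla F(U)\big)\Delta_jU.$$
Since $U\in C^\varrho$, we have $\|m_j-S_{j-3}\nabla F(U)\|_{L^\infty}\lesssim 2^{-j\varrho}$, and spectrally localized versions of this bound hold as well. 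Each term therefore gains $\varrho$ derivatives over $\Delta_jU\in H^{s+\sigma-k}$. The resulting remainder lies in $H^{s+\sigma-k+\varrho}=H^{s+s_0+2\sigma-2k-n/2}$, with a bound of the form $K(\|u\|_{X^{s_0}})\|u\|_{X^s}$. This is exactly the target index $\bullet+s_0+2\sigma-2k-n/2$, and the estimate is tame because the gain uses only the low norm.

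\textbf{Step 2: derivatives in $u$.} I expect this to be the main obstacle. Differentiating the identity in direction $\del u$ gives
$$\der_u\R_\PL\cdot\del u=\nabla F(U)\,\del U-T_{\nabla F(U)}\del U-T_{\nabla^2F(U)\del U}U.$$
Rewrite the first term as $T_{\nabla F(U)}\del U+T_{\del U}\nabla F(U)+R_\PM(\nabla F(U),\del U)$. Then paralinearize $\nabla F(U)$ via Step 1, so that $T_{\del U}\nabla F(U)$ becomes $T_{\del U}T_{\nabla^2F(U)}U$ up to a smoothing remainder. Compare this with $T_{\nabla^2F(U)\del U}U$ using the paraproduct composition of Proposition~\ref{prop:SymReg} with $r=\varrho$, and apply Proposition~\ref{prop:PMReg} to the $R_\PM$ terms. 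Every difference gains $\varrho$ derivatives, with tame bounds in which either $u$ or exactly one $\del u$ carries the high norm.

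Higher derivatives $\der^n_u$ follow the same way by induction on $n\le N$. They produce multilinear paraproduct and remainder expressions in $\nabla^{l}F(U)$, which are handled by the same lemmas together with Lemma~\ref{lem:PropOfReg}. Continuity of $\der^n_u\R_\PL$ in the topology of $\mathcal{L}(\otimes^nX^s;H^{s+s_0+2\sigma-2k-n/2})$ follows from the multilinear bounds applied to differences.

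Finally, composing with $u\in\Ta^N(X^\bullet,H^{\bullet+\sigma})$ through Lemma~\ref{lem:PropOfReg} yields membership of $\R_\PL(F;u)$ in $\Ta^N(s_0;X^\bullet,H^{\bullet+s_0+2\sigma-2k-n/2})$. The delicate bookkeeping is in keeping all the multilinear tame estimates in the prescribed form \eqref{eq:Reg-Tame}; this is where I would follow the quantitative version in \cite{AS2023}.
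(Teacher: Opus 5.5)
Your proposal is correct and follows the route the paper relies on. The paper gives no independent argument for this proposition; it defers to Bony's theorem via Meyer's telescoping proof (\cite{Meyer}, Chapter X of \cite{Hormander1997}) and to the quantitative version in \cite{AS2023} for the smooth, tame dependence, which is exactly your Step 1 plus differentiation of the identity using Propositions~\ref{prop:SymReg} and~\ref{prop:PMReg}. One point to keep in mind: when $\varrho=s_0+\sigma-k-n/2$ is an integer, $H^{s_0+\sigma-k}$ embeds only into the Zygmund space $C^\varrho_*$, so Step 1 and the estimate for $T_aT_b-T_{ab}$ should be run with Littlewood--Paley (Zygmund) norms rather than classical $C^\varrho$ to avoid an $\epsilon$-loss; the paper passes over this issue as well.
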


Proposition~\ref{prop:PLReg}, along with the symbolic calculus (Proposition~\ref{prop:SymReg}), allows us to handle nonlinear expressions $F(u)$ \emph{as if} they were linear. We first \emph{paralinearize} the expression $F(u)$, then analyze the operator $T_{F'(u)}$, which behaves algebraically like the differential $F'(u)$. Since any potential loss of regularity can be recovered by the remainder $\R_\PL(u)$, there is no concern about losing regularity. As an immediate consequence of Proposition~\ref{prop:PLReg}, we have the regularity of composition, which can also be justified directly.

\begin{proposition}\label{prop:CompReg}
    Under the same assumptions of Proposition~\ref{prop:PLReg}, we have
    $$
    F(u,\partial_xu,...\partial_x^ku) - F(0) \in \mathcal{T}^\infty(s_0;X^{\bullet},H^{\bullet+\sigma-k}).
    $$
    
    Furthermore, suppose $F = F(y;\xi)$ also depends on $\xi$, and there exists a real number $m\in\xR$ such that for all compact set $K\subset\xR^{k+1}$ and indices $k\in\xN$, $\alpha\in\xN^{k+1}$, there holds
    $$
        \sup_{y\in K}\left| \partial_y^\alpha \partial_\xi^k F(y;\xi) \right| \lesssim |\xi|^{m-k}, \quad \forall\ |\xi|> \frac{1}{2},
    $$
    then the symbol $F(u,\partial_xu,...\partial_x^ku;\xi)$ satisfies
    $$
        F(u,\partial_xu,...\partial_x^ku;\xi) \in \mathcal{T}^\infty(s_0;X^{\bullet},\M^m_{\bullet+\sigma-k-n/2}).
    $$
    Recall that the norm $\M^m_{\bullet+\sigma-k-n/2}$ is defined in~\eqref{defi:norms}.
\end{proposition}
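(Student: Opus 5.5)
The plan is to deduce the first assertion from the paralinearization formula of Proposition~\ref{prop:PLReg}, and then to obtain the derivatives in $u$ by induction. Write $U=(u,\partial_xu,\dots,\partial_x^ku)$, so that $U\in H^{s+\sigma-k}$ whenever $u\in X^s$. Proposition~\ref{prop:PLReg} gives
$$
F(U)-F(0)=T_{\nabla F(U)}\cdot U+\R_\PL(F;u).
$$
Since $s_0+\sigma>k+n/2$, the remainder lies in $\Ta^N(s_0;X^\bullet,H^{\bullet+s_0+2\sigma-2k-n/2})$. This space embeds into $\Ta^N(s_0;X^\bullet,H^{\bullet+\sigma-k})$ because $s_0+\sigma-k-n/2>0$.

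For the paraproduct term, note that $\nabla F(U)$ is a symbol of order $0$ independent of $\xi$. Its $\Gamma^0_0$ seminorm is $|\nabla F(U)|_{L^\infty}$. This is bounded by a non-decreasing function of $\|u\|_{X^{s_0}}$, using the embedding $H^{s_0+\sigma-k}\subset L^\infty$. Proposition~\ref{prop:PDReg} then places $T_{\nabla F(U)}\cdot U$ in $H^{\bullet+\sigma-k}$ with the zeroth-order tame bound.

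To reach all orders $N$ (hence $\Ta^\infty$), I would prove the derivatives directly rather than go through paralinearization. The $n$-th derivative is
$$
\der_u^n\big(F(U)\big)\cdot(\del u_1,\dots,\del u_n)=\sum (\partial^\beta F)(U)\,\partial_x^{j_1}\del u_1\cdots\partial_x^{j_n}\del u_n,
$$
with $j_l\le k$. Write $(\partial^\beta F)(U)=(\partial^\beta F)(0)+\big((\partial^\beta F)(U)-(\partial^\beta F)(0)\big)$; the second piece is controlled by the zeroth-order statement applied to $\partial^\beta F$. Each product is then estimated by the tame Moser inequality
$$
\|fg\|_{H^{t}}\lesssim |f|_{L^\infty}\|g\|_{H^t}+\|f\|_{H^t}|g|_{L^\infty},\qquad t=s+\sigma-k>n/2,
$$
or equivalently by Proposition~\ref{prop:PMReg}. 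Every factor in the low norm is measured in $L^\infty\supset H^{s_0+\sigma-k}$. This yields exactly the structure \eqref{eq:Reg-Tame}: exactly one factor carries the high index $s$, either $u$ itself or one $\del u_l$. Continuity of $u\mapsto\der^n_u F(U)$, and the fact that it is indeed the Fréchet derivative, follow by applying the same estimates to the Taylor formula with integral remainder. Thus $F(U)-F(0)\in\Ta^N$ for every $N$.

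For the symbol statement, fix $\alpha$ with $|\alpha|\le 2(n+2)+r$, where $r=s+\sigma-k-n/2$ (finitely many $\alpha$ for each $s$). Apply the above to $G_\xi(y):=(1+|\xi|)^{|\alpha|-m}\partial_\xi^\alpha F(y;\xi)$. The hypothesis says that, uniformly in $|\xi|\ge1/2$, the functions $G_\xi$ together with all their $y$-derivatives are bounded on compact sets. All constants above depend only on such bounds on a compact set containing the range of $U$, which is controlled by $\|u\|_{X^{s_0}}$. Hence $G_\xi(U)-G_\xi(0)$, and each of its $u$-derivatives, satisfy tame bounds in $H^{s+\sigma-k}$ uniformly in $\xi$. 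Composing with the embedding $H^{s+\sigma-k}\subset C^{s+\sigma-k-n/2}$ (after an arbitrarily small loss at integer indices), then taking the supremum over $\xi$ and $\alpha$, gives the claim in $\M^m_{\bullet+\sigma-k-n/2}$. The constant part $G_\xi(0)$ is trivially in the symbol class.

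The main obstacle I expect is bookkeeping rather than conceptual. One must keep the exact tame structure (one high norm, all others low) through the multilinear products. One must also ensure the constants are uniform in $\xi$, and avoid the integer-index Hölder/Zygmund issue in the embedding. Both are handled by working with $L^\infty$ and Littlewood–Paley characterizations, and by applying the paralinearization remainder bound only where it is needed.
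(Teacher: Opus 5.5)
Your proof is correct, but it is organized differently from the paper's, which treats the statement as an immediate corollary of Proposition~\ref{prop:PLReg}. In $F(U)-F(0)=T_{\nabla F(U)}\cdot U+\R_\PL(F;u)$, the remainder already lies in $\Ta^N(s_0;X^\bullet,H^{\bullet+s_0+2\sigma-2k-n/2})\subset\Ta^N(s_0;X^\bullet,H^{\bullet+\sigma-k})$. The paraproduct term lies in $\Ta^N(s_0;X^\bullet,H^{\bullet+\sigma-k})$ by Proposition~\ref{prop:PDReg}, because $\nabla F(U)\in\Ta^N(s_0;X^\bullet,\Gamma^0_0)$. For a $\xi$-independent symbol, $\M^0_0$ is just the sup norm, so this needs only the smoothness of a Nemytskii operator on bounded continuous functions together with $H^{s_0+\sigma-k}\subset L^\infty$; there is no circularity with the symbol half of the statement.

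So in the paper every derivative bound is inherited from the $\Ta^N$ conclusions of those two propositions, in two lines. You instead take only the order-zero bound from paralinearization. You then differentiate explicitly and estimate each term $(\partial^\beta F)(U)\,\partial_x^{j_1}\del u_1\cdots\partial_x^{j_n}\del u_n$ by Moser's product inequality. This is the ``direct justification'' the paper alludes to: longer, but self-contained. It exhibits the tame structure by hand, and it makes visible that all constants depend only on finitely many $y$-derivatives of $F$ on a compact set, which is exactly what the uniformity in $\xi$ of the symbol statement requires.

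For that uniformity, take the order-zero bound from the classical Moser composition estimate
\[
\|F(U)-F(0)\|_{H^t}\le C\big(t,\|F\|_{C^{M}(K)},|U|_{L^\infty}\big)\|U\|_{H^t},
\]
with $K$ a compact set containing the range of $U$, rather than from Proposition~\ref{prop:PLReg}. The latter, as stated in the paper, gives no control on how its constants depend on $F$.

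Two small remarks. First, you take derivatives with $u$ itself as the variable. For a general $u\in\Ta^N(s_0;X^\bullet,H^{\bullet+\sigma})$ you must compose using Lemma~\ref{lem:PropOfReg}, and this yields $\Ta^N$, not $\Ta^\infty$; the statement is loose on this point. Second, the loss you flag at integer H\"older indices is not an artifact of your method. The norm $\M^m_r$ uses the classical $C^r$, and $H^{r+n/2}\not\subset C^r$ for integer $r$. Already $F(y;\xi)=y\,(1+|\xi|^2)^{m/2}$ with $k=0$ shows that the symbol claim can hold as written only at non-integer indices, or with Zygmund spaces.
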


\section{Paralinearization of the Full System}\label{Sec3}

In this section, we present the paralinearization of the full system~\eqref{EQ}. The theory is similar for either $\xG=\xR$ or $\xT$, so we shall not make distinction for these two cases. The main difficulty lies in the paralinearization of the Dirichlet-Neumann operator, which will be treated in Subsection~\ref{subsect:PLDtN}. The paralinearization of other terms in~\eqref{EQ} is relatively straightforward, and will be treated in Subsection~\ref{subsect:PLOther}. We then apply a diagonalization procedure, similar to~\cite{HK2023} and~\cite{Yang2024}, to convert the system into a paradifferential equation of a single complex unknown. This will be done in Subsection~\ref{subsect:Diag0}.

There are two major differences between the result presented here and those in~\cite{HK2023} and~\cite{Yang2024}. The first is a refined analysis of the remainders. From the refined results on paradifferential calculus presented in the previous section, we shall justify the following properties of the remainders in the paralinearization formulas : (1) they are smooth in $(\eta,\psi)$; (2) they verify some tame estimates. These properties are crucial for the construction of hyperbolic invariant manifolds $M_\unst,M_\st$ and invariant set $M_\cen$ in Theorem~\ref{Main1} and~\ref{Main2}.

The other difference is the extraction of hyperbolic modes. As introduced in Section~\ref{Sec1}, the behavior of the system~\eqref{EQ} is totally different along hyperbolic and elliptic frequencies. After the paralinearization of nonlinear terms (see~\eqref{EQ_PL}), we shall split the equation into low and high frequencies, which will be treated separately. For low frequencies (containing hyperbolic regime), we will extract directly the linear parts, since the remaining parts are quadratic and supported on bounded frequencies -- therefore infinitely smoothing. For high frequencies (elliptic regime), we will follow similar strategies as in~\cite{HK2023} and~\cite{Yang2024}. Note that, since we aim at a quadratic remainder, the major contribution of the paralinearization of mean curvature $H[\eta]$ is $\xi^2-\rho^{-2}$ instead of $\xi^2$ (the case in~\cite{HK2023} and~\cite{Yang2024}), which requires only a technical difference in the procedure of diagonalization due to the truncation into high frequencies (see Subsection~\ref{subsect:Diag0}).

\subsection{Paralinearization of the Dirichlet-Neumann Operator}\label{subsect:PLDtN}

In this subsection, we will prove a refined version of the paralinearization formula of the Dirichlet-Neumann operator, namely Theorem~\ref{thm-PLDtN:Main}. We recall from Theorem 3.8 of \cite{HK2023} the paralinearization formula: 
\begin{equation}\label{eq-PLDtN:Basic}
   G[\eta]\psi = T_{\tilde{\lambda}} (\psi - T_B\eta) - T_V \partial_x\eta + \tilde{\R}_{\DN}(\eta,\psi),
\end{equation}
where the symbol $\tilde{\lambda}$ has principal part $|\xi|$, and the velocity components $B,V$ are defined by
\begin{equation}\label{DN_V_B}
B = B(\eta,\psi)=\frac{\partial_x\eta\partial_x\psi+G[\eta]\psi}{1+|\partial_x\eta|^2},
\quad
V = V(\eta,\psi)=\partial_x\psi-B\partial_x\eta,
\end{equation}
where $B$ is the velocity along radial direction while $V$ is the velocity along axial direction. The reminder $\tilde{\R}_\DN$ has higher regularity than other terms on the right-hand side. For our purposes, we need to deduce a refined version such that the reminder, together with its derivatives in $(\eta,\psi)$, is smooth in $(\eta,\psi)$ and verifies some tame quadratic estimates. 

Inspired by \cite{wang2017water}, we will repeat the arguments in \cite{HK2023} and improve them in the following ways: $\textbf{(1)}$ the remainder obtained in each step is smooth in $(\eta,\psi)$; $\textbf{(2)}$ these remainders verify some tame estimates, which can be expected from the paradifferential tools introduced in the previous subsection; \textbf{(3)} by adding low order corrections to the symbol $\tilde{\lambda}$, the remainder $\tilde{\R}_{\DN}$ can attain higher regularity. Time dependence is omitted in this subsection since all the arguments do not depend explicitly on time.

\begin{theorem}\label{thm-PLDtN:Main}
Consider a real number $s_0> 2$ and assume that $\eta$ satisfies $\eta+\rho\ge c$ for some constant $c>0$. Then the Dirichlet-Neumann operator $G[\eta]\psi$ admits the following paralinearization :
\begin{equation}\label{eq-PLDtN:Main}
G[\eta]\psi = T_{\lambda} w - \partial_x(T_V \eta) - T_{(\rho+\eta)^{-1}B}\eta + \R_\DN(\eta,\psi).
\end{equation}
Here $w = \psi - T_{B}\eta$ is the good unknown, $B,V$ are defined by \eqref{DN_V_B} above. The symbol $\lambda$ is of order $1$, taking the form 
\begin{equation}\label{eq-PLDtN:Symb}
\begin{aligned}
    &\lambda(x,\xi) = \lambda ^{(1)}(\xi) + \lambda^{(0)}(x,\xi) + \sum_{1\leq j <s_0-3/2} \lambda^{(-j)}(x,\xi), \\
    &\lambda^{(1)}(\xi) = \frac{I_1(\rho|\xi|)}{I_0(\rho|\xi|)}|\xi|, \quad \lambda^{(0)}(x,\xi) = \frac{\eta}{2\rho(\rho+\eta)} - \frac{\partial_x\eta}{2(\rho+\eta)} i \partial_\xi\lambda^{(1)}, \\
    &\lambda^{(-j)}(x,\xi) = f_j(\eta,\partial_x\eta,\cdots,\partial_x^{j+2}\eta)|\xi|^{-j} + ig_j(\eta,\partial_x\eta,\cdots,\partial_x^{j+2}\eta)|\xi|^{-j}\sgn(\xi), \quad 1\leq j < s_0-3/2,
\end{aligned}
\end{equation}
where $f_j$'s and $g_j$'s are real-valued smooth functions with $f_j(0)=g_j(0)=0$. Moreover, the remainder $\R_\DN(\eta,\psi)$, regarded as a mapping of $(\eta,\psi)$, satisfies
$$
    \R_\DN(\eta,\psi) \in \Ta^\infty(s_0;H^{\bullet+1} \times H^{\bullet+1/2},H^{\bullet+s_0-1}), \quad \R_\DN^{[\leq 1]}(\eta,\psi)=0.
$$
\end{theorem}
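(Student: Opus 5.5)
We follow the scheme of Lannes / Alazard--Burq--Zuily, as adapted to the cylinder in Theorem 3.8 of \cite{HK2023}, but track smoothness and tame dependence through the classes $\Ta^\infty$ of Definition~\ref{def:Reg}.

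\textbf{Step 1: flattening.} We straighten the domain by the change of variables $r=(\rho+\eta(x))z/\rho$, $0\le z\le\rho$. The harmonic extension $\Psi$ then becomes a function $\phi(x,z)$ solving a variable-coefficient elliptic equation $\partial_z^2\phi+\alpha\partial_x^2\phi+\beta\partial_x\partial_z\phi+\gamma\partial_z\phi+\frac{1}{z}(\cdots)=0$. Its coefficients $\alpha,\beta,\gamma$ are smooth functions of $(\eta,\partial_x\eta,z\partial_x^2\eta)$; the axial singular term $\frac1r\partial_r$ is kept explicit. We get elliptic regularity for $\phi$ in the strip $z\in[\rho/2,\rho]$ from the standard variational estimates, with tame $\Ta^\infty$ dependence on $(\eta,\psi)$. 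Near the axis $z\le\rho/2$ we only need interior smoothing, so a cutoff in $z$ makes the axis harmless. For smoothness in $(\eta,\psi)$ we use that $\phi$ depends linearly on $\psi$ and that the map $\eta\mapsto$ (solution operator) is smooth by the implicit function theorem. The $\Ta^\infty$ estimates then follow by differentiating the elliptic equation in $\eta$ and reusing the same elliptic bounds, as in \cite{wang2017water}.

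\textbf{Step 2: paralinearize the interior equation and introduce the good unknown.} We apply Proposition~\ref{prop:PLReg} and Proposition~\ref{prop:PMReg} to the nonlinear coefficients, and write the equation for $W=\phi-T_{\partial_z\phi/\partial_z\varrho}\varrho$, whose trace is $w=\psi-T_B\eta$. This gives $\partial_z^2W+T_\alpha\partial_x^2W+T_\beta\partial_x\partial_zW+T_{\tilde\gamma}\partial_zW=F$, where $F$ is smoother by about $s_0-1$ derivatives and quadratic. The mean-curvature-type term $\frac{1}{r}\partial_r$ produces the $(\rho+\eta)^{-1}$ contributions. In particular, $-T_{(\rho+\eta)^{-1}B}\eta$ and the $\eta/(2\rho(\rho+\eta))$ part of $\lambda^{(0)}$ come from expanding $\frac{1}{\rho+\eta}$ in the radial Laplacian.

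\textbf{Step 3: factorization.} We factor the paradifferential operator as $(\partial_z-T_a)(\partial_z-T_A)$ modulo operators of order $-(s_0-3/2)$. We solve for the symbols $a,A$ order by order with the symbolic calculus of Remark~\ref{rmk:MultiSymCal}. The principal symbol of $A$ is fixed at $z=\rho$ and must reproduce the exact Bessel multiplier $\lambda^{(1)}(\xi)=I_1(\rho|\xi|)|\xi|/I_0(\rho|\xi|)$ rather than $|\xi|$. To arrange this, we keep the exact flat-cylinder operator $G[0]$ as the principal part and paralinearize only the perturbation. The differences $\lambda^{(1)}-|\xi|$ are of order $-1$ and are smooth in $\xi$, so they fit into the symbol class. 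The lower-order recursion gives $\lambda^{(0)}$ and the $\lambda^{(-j)}$ with the stated dependence on $\partial_x^{\le j+2}\eta$. Reality of $G[\eta]$ forces the parity structure $f_j|\xi|^{-j}+ig_j|\xi|^{-j}\sgn\xi$. The $f_j,g_j$ vanish at $\eta=0$ because at $\eta=0$ the factorization is exact with $A=\lambda^{(1)}$. We then apply a parabolic estimate for $\partial_z-T_a$, which is backward well-posed since $\mathrm{Re}\,a<0$. This shows that $(\partial_z-T_A)W|_{z=\rho}$ is $H^{\bullet+s_0-1}$, and substituting into the expression for $G[\eta]\psi$ in terms of $\partial_z\phi,\partial_x\phi$ yields \eqref{eq-PLDtN:Main}. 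The term $-\partial_x(T_V\eta)$ arises from the tangential-derivative term and the paraproduct reorganization.

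\textbf{Step 4: quadratic vanishing.} We check $\R_\DN^{[\le1]}=0$ by linearizing at $0$. At $\eta=0$ the identity is exact, with $\R_\DN(0,\psi)=0$. The first variation in $\eta$ at $(0,\psi)$ is $0$, since $G$ is linear in $\psi$ and all paralinearization remainders are at least bilinear. Hence we need the linear part in $\eta$ of both sides to match, which reduces to the classical shape derivative formula $\der_\eta G[0]\psi\cdot\eta=-G[0](B_0\eta)-\partial_x(V_0\eta)-\dots$, whose linear-in-$\eta$ part at $\psi$-order zero vanishes.

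We expect the \textbf{main obstacle} to be Step 3: achieving the precise Bessel principal symbol with a remainder gaining $s_0-1$ derivatives uniformly and in the tame $\Ta^\infty$ sense, while handling the $1/z$ coefficient.
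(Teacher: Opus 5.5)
Your overall architecture matches the paper's appendix: flattening $r=(\rho+\eta)z/\rho$, the good unknown $W$, paralinearizing the interior equation, factorizing, parabolic smoothing on the outer slab, then taking the trace. The gap is exactly where you locate the main obstacle. You need the Bessel principal symbol \emph{and} exactness at $\eta=0$, since exactness is what gives $f_j(0)=g_j(0)=0$ and $\R_\DN^{[\le 1]}=0$, and the claims you offer there are incorrect.

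First, $\lambda^{(1)}-|\xi|$ is not of order $-1$. Since $I_1(z)/I_0(z)=1-\frac{1}{2z}+O(z^{-2})$, one has $\lambda^{(1)}(\xi)=|\xi|-\frac{1}{2\rho}+O(|\xi|^{-1})$. The constant $-\frac{1}{2\rho}$ is precisely the $\eta=0$ value of the subprincipal symbol $-\frac{1}{2(\rho+\eta)}$ that the factorization produces. If you add the recursion's order-$0$ output to $\lambda^{(1)}$, this constant is counted twice, and the identity fails at $\eta=0$ by a term $\frac{1}{2\rho}\psi$. That term is neither smoothing nor quadratic.

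Second, an order-by-order recursion in homogeneous symbols is never exact at $\eta=0$. It yields the asymptotic expansion $|\xi|-\frac{1}{2\rho}-\frac{1}{8\rho^2|\xi|}-\cdots$, not $\lambda^{(1)}$. An exact factorization at $\eta=0$ would need the non-homogeneous, $z$-dependent Riccati solution $|\xi|I_1(z|\xi|)/I_0(z|\xi|)$, which you never set up. So ``$f_j(0)=g_j(0)=0$ because the factorization is exact with $A=\lambda^{(1)}$'' does not follow. Likewise, ``expanding $1/(\rho+\eta)$'' does not by itself produce $\frac{\eta}{2\rho(\rho+\eta)}$.

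The missing idea is a short post-processing step, used in the paper, that keeps Bessel functions out of the factorization entirely.
First prove the formula with a homogeneous symbol $\tilde\lambda=|\xi|-\frac{1}{2(\rho+\eta)}-\frac{\partial_x\eta}{2(\rho+\eta)}i\sgn\xi+\cdots$ and a remainder $\tilde{\R}_\DN$ that is linear in $\psi$.
At $\eta=0$ one has $w=\psi$, and the $T_V\eta$, $T_B\eta$ terms vanish. So the formula reads $G[0]\psi=T_{\tilde\lambda_0}\psi+\tilde{\R}_\DN(0,\psi)$ with $\tilde\lambda_0=\tilde\lambda|_{\eta=0}$. Hence the Fourier multiplier $G[0]-T_{\tilde\lambda_0}$ has order $3/2-s_0$.
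Now set $\lambda:=\lambda^{(1)}+(\tilde\lambda-\tilde\lambda_0)$. This gives exactly $\lambda^{(0)}=\frac{\eta}{2\rho(\rho+\eta)}-\frac{\partial_x\eta}{2(\rho+\eta)}i\sgn\xi$ and lower-order coefficients vanishing at $\eta=0$.
Set also $\R_\DN:=(G[0]-T_{\tilde\lambda_0})T_B\eta+\tilde{\R}_\DN(\eta,\psi)-\tilde{\R}_\DN(0,\psi)$. Quadratic vanishing is then immediate, because $\R_\DN$ is linear in $\psi$ and vanishes at $\eta=0$.

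Your Step 4 claim that the first $\eta$-variation at $(0,\psi)$ vanishes is not true in general. The bilinear part of $\R_\DN$ consists of smoothing but nonzero paraproduct-type terms. The claim is also not needed, since that variation is bilinear and $\R_\DN^{[\le1]}=0$ only concerns terms of total degree $\le 1$.

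A minor slip: $(\partial_z-T_a)U=F$ with $\mathrm{Re}\,a<0$ is \emph{forward} parabolic, from $z=\rho/2$ toward $z=\rho$. That direction is what gives the gain at the boundary; ``backward'' would be ill-posed.
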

\begin{proof}
    The proof of the paralinearization formula \eqref{eq-PLDtN:Main} is classical. One can repeat the same arguments in \cite{HK2023} and improve them by using the properties of paradifferential tools introduced in Section~\ref{Sec:2}. The details can be found in Appendix~\ref{App:PLDtN}.
\end{proof}

As a consequence of this theorem, it is clear that the Dirichlet-Neumann operator $G[\eta]\psi$ is regular in $(\eta,\psi)$. The same property holds for $B$ and $V$, since they can be expressed as smooth functions of $G[\eta]\psi$, $\partial_x\psi$, and $\partial_x\eta$ (see \eqref{DN_V_B}).
\begin{proposition}\label{prop-PLDtN:BdDtN}
    Under the assumptions of Theorem~\ref{thm-PLDtN:Main}, the following mappings of $(\eta,\psi)$ verify
    $$
    G[\eta]\psi, B, V \in \Ta^\infty(s_0;H^{\bullet+1} \times H^{\bullet+1/2},H^{\bullet-1/2}).
    $$
\end{proposition}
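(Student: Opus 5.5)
The plan is to read everything off the paralinearization formula \eqref{eq-PLDtN:Main} of Theorem~\ref{thm-PLDtN:Main}. First we treat $G[\eta]\psi$; then $B$ and $V$ follow, since by \eqref{DN_V_B} they are smooth expressions in $G[\eta]\psi$, $\partial_x\eta$ and $\partial_x\psi$.

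\textbf{Step 1: the symbol $\lambda$.} By Proposition~\ref{prop:CompReg}, each coefficient $f_j,g_j$ of $\lambda^{(-j)}$ is regular in $\eta\in H^{\bullet+1}$. So are the coefficients $\eta/(2\rho(\rho+\eta))$ and $\partial_x\eta/(2(\rho+\eta))$ of $\lambda^{(0)}$. This uses $\rho+\eta\ge c$ and $s_0>2$, which makes $\partial_x^{j+2}\eta$ bounded for the $j$'s in the sum. Hence $\lambda\in\Ta^\infty(\cdot,\Gamma^1_0)$, with low norm controlled by $\|\eta\|_{H^{s_0+1}}$. The principal part $\lambda^{(1)}$ is a fixed Fourier multiplier of order $1$.

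\textbf{Step 2: $B$ and $V$ as functions of $G$.} Because $G[\eta]\psi$ appears inside $B$, and $B$ appears inside $w=\psi-T_B\eta$, I would avoid circularity. Begin with the crude bound that $G[\eta]\psi$ lies in $H^{s-1/2}$ for $(\eta,\psi)\in H^{s+1}\times H^{s+1/2}$, together with smooth dependence on $(\eta,\psi)$. This is classical (e.g.\ via the flattening construction of Appendix~\ref{App:PLDtN} and elliptic estimates). Then:
\begin{itemize}
\item Proposition~\ref{prop:CompReg} and the product rule of Proposition~\ref{prop:PMReg} (valid since $s_0-1/2>1/2$) give $B,V\in\Ta^\infty(\cdot,H^{\bullet-1/2})$.
\item Proposition~\ref{prop:PDReg} applied to $T_B\eta$, with $B$ regarded as a $\Gamma^0_0$ symbol via $H^{s_0-1/2}\subset L^\infty$, gives $w\in\Ta^\infty(\cdot,H^{\bullet+1/2})$.
\end{itemize}

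\textbf{Step 3: bootstrap the tame structure.} Apply Proposition~\ref{prop:PDReg} to each term of \eqref{eq-PLDtN:Main}:
\begin{itemize}
\item $T_\lambda w$, with $\lambda$ of order $1$ and $w\in H^{\bullet+1/2}$, lies in $H^{\bullet-1/2}$;
\item $\partial_x(T_V\eta)$ lies in $H^{\bullet}\subset H^{\bullet-1/2}$;
\item $T_{(\rho+\eta)^{-1}B}\eta$ lies in $H^{\bullet+1}$.
\end{itemize}
The remainder $\R_\DN$ lies in $H^{\bullet+s_0-1}\subset H^{\bullet-1/2}$ by Theorem~\ref{thm-PLDtN:Main}. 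Lemma~\ref{lem:PropOfReg}, applied to composition and diagonals, then gives $G[\eta]\psi\in\Ta^\infty(s_0;H^{\bullet+1}\times H^{\bullet+1/2},H^{\bullet-1/2})$. Feeding this back into \eqref{DN_V_B} reproduces the tame estimates for $B$ and $V$ in the sense of Definition~\ref{def:Reg}.

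\textbf{Main obstacle.} The delicate point is the tame form \eqref{eq:Reg-Tame} for derivatives in $(\eta,\psi)$, because of the implicit dependence through $B$ and $w$. If Step 2's a priori $C^\infty$ smoothness is taken from the appendix construction, the remaining work is routine bookkeeping with Propositions~\ref{prop:PDReg}, \ref{prop:PMReg}, \ref{prop:CompReg} and Lemma~\ref{lem:PropOfReg}.
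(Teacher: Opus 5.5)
Your proposal is correct and takes the paper's route: the paper also reads the regularity of $G[\eta]\psi$ off the paralinearization formula \eqref{eq-PLDtN:Main}, where each paradifferential term needs only low norms of $\lambda$, $B$, $V$ and $\R_\DN$ is smoother, and then gets $B$ and $V$ from \eqref{DN_V_B} as smooth expressions in $G[\eta]\psi$, $\partial_x\eta$, $\partial_x\psi$. Your Step~2 first takes smoothness and low-norm bounds for $G[\eta]\psi$ from the elliptic theory of Appendix~\ref{App:PLDtN} (Proposition~\ref{prop-PLDtN:EllipReg}), then bootstraps the tame estimates; this usefully gives a non-circular starting point that the paper's one-line argument leaves implicit, since the appendix's proof of Theorem~\ref{thm-PLDtN:Main} itself cites this proposition for the bound on $B$.
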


In the paralinearization formula \eqref{eq-PLDtN:Main} and the following computations, one can see the significance of the good unknown $w$. Therefore, we will turn to set $(\eta,w)$ as the main unknowns instead of $(\eta,\psi)$. Notice that $-T_B\eta$, regarded as a mapping of $(\eta,\psi)$, satisfies $T_B|_{\psi=0}=0$. This fact, together with the regularity of $B$ (Proposition~\ref{prop-PLDtN:BdDtN}) and paraproduct (Proposition~\ref{prop:PDReg}), allows us to apply inverse function theorem and conclude that the mapping $(\eta,\psi)\mapsto(\eta,w)$ is a local diffeomorphism near zero. More precisely, we have the following lemma.

\begin{lemma}\label{lem:ChgtUnk:psi-w}
For any real number\footnote{Later, we may choose the index $s_1$ different from $s_0$ in Theorem~\ref{thm-PLDtN:Main} above.} $s_1> 2$, there exists a constant $\varepsilon>0$ such that, the map
\begin{equation*}
\begin{array}{cccc}
\mathscr{I}_\GU : &H^{s_1+1} \times H^{s_1+1/2} & \mapsto & H^{s_1+1} \times H^{s_1+1/2} \\
&(\eta,\psi) & \mapsto & (\eta,w)= (\eta,\psi-T_{B}\eta)
	\end{array}
	\end{equation*}
	is invertible from a ball centered at zero with radius $\varepsilon$ to its image. Its inverse, denoted by $\mathscr{A}_\GU$, satisfies
    $$
        \mathscr{A}_\GU(\eta,w) \in \Ta^\infty(s_1;H^{\bullet+1} \times H^{\bullet+1/2}, H^{\bullet+1} \times H^{\bullet+1/2}).
    $$
In particular, the velocity components $B,V$ are regular in $(\eta,w)$.
\end{lemma}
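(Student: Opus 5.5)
The plan is to view $\mathscr{I}_\GU=\Id+\mathcal{K}$ with $\mathcal{K}(\eta,\psi):=(0,-T_{B(\eta,\psi)}\eta)$. I will solve $\mathscr{I}_\GU(\eta,\psi)=(\eta,w)$ by a fixed point in the second component only. Given $(\eta,w)$, we seek $\psi$ with
$$
\psi=\Phi(\psi;\eta,w):=w+T_{B(\eta,\psi)}\eta .
$$
The first step is to check that $\Phi$ maps $H^{s_1+1/2}$ to itself and is a contraction on a small ball. By Proposition~\ref{prop-PLDtN:BdDtN} (applied with $s_0$ replaced by $s_1$, which Theorem~\ref{thm-PLDtN:Main} allows since $s_1>2$), $B\in\Ta^\infty(s_1;H^{\bullet+1}\times H^{\bullet+1/2},H^{\bullet-1/2})$. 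In particular, $B$ lies in $L^\infty$ with norm $\lesssim K(\|(\eta,\psi)\|_{s_1})\|(\eta,\psi)\|_{s_1}$. Proposition~\ref{prop:PDReg} with a symbol of order $0$ then gives $\|T_B\eta\|_{H^{s_1+1}}\lesssim \|B\|_{L^\infty}\|\eta\|_{H^{s_1+1}}$, which is better than needed, since $H^{s_1+1}\subset H^{s_1+1/2}$.

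For the contraction, $\der_\psi(T_B\eta)\cdot\del\psi=T_{\der_\psi B\cdot\del\psi}\eta$. The tame estimate \eqref{eq:Reg-Tame} at level $s_1$ bounds this by $K\,\|\eta\|_{H^{s_1+1}}\|\del\psi\|_{H^{s_1+1/2}}$, which is small when $\|\eta\|$ is small. The key structural point is that $B|_{\psi=0}=0$, because $G[\eta]0=0$. Hence $B$ vanishes at least linearly, and the Lipschitz constant of $\Phi$ in $\psi$ is $O(\varepsilon)$. Banach's fixed point theorem, or equivalently the inverse function theorem since $\der\mathscr{I}_\GU(0,0)=\Id$, therefore yields a unique $\psi=\psi(\eta,w)$ in the ball. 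It depends $C^\infty$ on $(\eta,w)$ by the implicit function theorem, because $\Phi$ is $C^\infty$ by regularity of $B$ and bilinearity of $(B,\eta)\mapsto T_B\eta$.

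The second step is to upgrade to $\Ta^\infty$, namely smoothness at every level $s\ge s_1$ with tame bounds, while keeping the same $\varepsilon$. Here is the obstacle: the smallness must be measured only in the low norm. I would argue inductively by differentiating the identity $\psi=w+T_{B(\eta,\psi)}\eta$. For the zeroth order, the tame estimate gives
$$
\|T_B\eta\|_{H^{s+1/2}}\le K(\|(\eta,\psi)\|_{s_1})\big(\|B\|_{L^\infty}\|\eta\|_{H^{s+1}}+\|(\eta,\psi)\|_{s}\|\eta\|_{H^{s_1+1}}\big).
$$
The term $\|\psi\|_{H^{s+1/2}}$ enters only with the small coefficient $K\varepsilon$ and can be absorbed, which gives $\|\psi\|_{s}\lesssim K(\|(\eta,w)\|_{s_1})\|(\eta,w)\|_s$.

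Persistence of regularity, meaning that the $H^{s_1}$ fixed point actually lies in $H^s$ when the data do, follows from the same contraction run in the space $H^{s+1/2}$ equipped with the weighted norm $\|\cdot\|_{s_1}+\delta\|\cdot\|_s$; the fixed points coincide by uniqueness. For the derivatives, $\der^n\psi$ satisfies a linear equation of the form
$$
(\Id-T_{\der_\psi B\cdot}\eta)\,\der^n\psi=\text{(lower derivatives)}.
$$
The operator on the left is invertible on every $H^{s+1/2}$ with a bound depending only on low norms, again by smallness of $\eta$ in $H^{s_1+1}$. Its right-hand side is controlled by the tame estimates of $B$ and by the induction hypothesis, and Lemma~\ref{lem:PropOfReg} handles the compositions.

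Finally, $B\circ\mathscr{A}_\GU$ and $V\circ\mathscr{A}_\GU$ are regular by the composition property in Lemma~\ref{lem:PropOfReg}. The main difficulty is the absorption argument in the high norm: one must verify that every occurrence of the top norm of $\psi$ comes with a coefficient that is small in the low norm, which is exactly what the tame structure \eqref{eq:Reg-Tame} supplies.
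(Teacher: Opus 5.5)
Your argument is essentially correct. The level-$s_1$ step, the inverse function theorem from $\der\mathscr{I}_\GU(0,0)=\Id$, is the same as in the paper. The upgrade to $\Ta^\infty$, however, follows a heavier route than the paper's.

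The paper runs no second fixed point and no absorption. It reads everything off the identity $\psi=w+T_{B\circ\mathscr{A}_\GU(\eta,w)}\eta$ together with the paraproduct bound
\[
\|T_B\eta\|_{H^{s+1/2}}\lesssim\|B\|_{L^\infty}\|\eta\|_{H^{s+1}}\lesssim\|B\circ\mathscr{A}_\GU(\eta,w)\|_{H^{s_1-1/2}}\|\eta\|_{H^{s+1}},
\]
which holds for every $s$ because $s_1-1/2>1/2$. Two facts make this one line sufficient:
\begin{itemize}
\item $B\circ\mathscr{A}_\GU$ is already smooth into $H^{s_1-1/2}$, by the level-$s_1$ inverse;
\item $(B,\eta)\mapsto T_B\eta$ is bounded bilinear from $L^\infty\times H^{s+1}$ to $H^{s+1}$.
\end{itemize}
Together they give persistence of regularity, smoothness into $H^{s+1/2}$, and the tame bound simultaneously. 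Derivatives are handled the same way via explicit formulas for $\der^k_\eta\psi$ and $\der^k_w\psi$, in which all high regularity again lands on $\eta$ or $\del\eta$.

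So the top norm of $\psi$ never appears on the right-hand side, and there is nothing to absorb. The extra term $\|(\eta,\psi)\|_s\|\eta\|_{H^{s_1+1}}$ in your level-$s$ estimate comes from using a weaker bound than the one you yourself used at level $s_1$. Your weighted-norm contraction is needed only to legitimize that absorption. What your scheme buys is a general tame inverse-function template: it would survive a correction that depends on the top norm of $\psi$ with a coefficient small in the low norm. The paper's route buys a one-line proof by exploiting that the good unknown is built with a paraproduct. That choice is exactly what makes the correction insensitive to high norms of $\psi$.

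One claim needs correcting. The linearized operator is $\phi\mapsto\phi-T_{B(\eta,\phi)}\eta$, since $B$ is linear in $\psi$. It is invertible on $H^{s+1/2}$, but not with an operator bound depending only on low norms. Already $\phi\mapsto T_{B(\eta,\phi)}\eta$ has $\mathcal{L}(H^{s+1/2})$-norm of the size of a high Sobolev norm of $\eta$; test it on a fixed low-frequency $\phi$ and a high-frequency $\eta$. Smallness of $\eta$ in $H^{s_1+1}$ therefore does not make it small there.

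What is true, and is all the tame estimates need, is that this map sees $\phi$ only through $\|\phi\|_{H^{s_1+1/2}}$. Solving $\phi-T_{B(\eta,\phi)}\eta=g$ at level $s_1$ and reinserting gives
\[
\|\phi\|_{H^{s+1/2}}\le\|g\|_{H^{s+1/2}}+C\,K(\|\eta\|_{H^{s_1+1}})\,\|\eta\|_{H^{s+1}}\|g\|_{H^{s_1+1/2}},
\]
which is the paper's mechanism again. This matters because the class $\Ta^\infty$ requires a single ball, measured in the low norm, valid for all $s\ge s_1$. An argument relying on smallness in $H^{s+1}$ would shrink the domain with $s$.
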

\begin{proof}
    The existence of smooth inverse mapping $\mathscr{A}_\GU \in C^\infty(H^{s_1+1} \times H^{s_1+1/2}; H^{s_1+1} \times H^{s_1+1/2})$ is a consequence of inverse function theorem, since $\mathscr{I}_\GU$ is $C^\infty$ with $\der_{(\eta,\psi)}\mathscr{I}_\GU(0,0)=\Id$. It remains to check the regularity of $\mathscr{A}_\GU$. Indeed, if $(\eta,w)\in H^{s+1} \times H^{s+1/2}$ for some $s \ge s_1$, according to the regularity of paradifferential operators (Proposition~\ref{prop:PDReg}), we have
    \begin{equation*}
        \|\psi\|_{H^{s+1/2}} = \| w + T_{B\circ\mathscr{A}_\GU(\eta,w)}\eta \|_{H^{s+1/2}} \lesssim  \| w \|_{H^{s+1/2}} + \| B\circ\mathscr{A}_\GU(\eta,w) \|_{H^{s_1-1/2}} \|\eta\|_{H^{s+1}},
    \end{equation*}
    since we choose $s_1-1/2>1/2$. Then we can apply Proposition~\ref{prop-PLDtN:BdDtN} above to obtain the estimate of $B\circ\mathscr{A}_\GU(\eta,w)$ in $H^{s_1-1/2}$ norm, requiring only $H^{s_1+1} \times H^{s_1+1/2}$ norm of $(\eta,w)$. This proves the tame estimates of $\mathscr{A}_\GU$. As for its derivatives, we can apply the same argument to the following formulas: for integers $k\in\xN_+$,
    \begin{align*}
        \der_\eta^k \psi = \Op^\PM\big(\der_\eta^k (B\circ\mathscr{A}_\GU)\big) \eta 
        + k \Op^\PM\big(\der_\eta^{k-1} (B\circ\mathscr{A}_\GU)\big), 
        \quad \der_w^k \psi = \der_w^k w + \Op^\PM\big(\der_w^k (B\circ\mathscr{A}_\GU)\big) \eta,
    \end{align*}
    which proves that $\mathscr{A}_\GU$ belongs to the desired class $\Ta^\infty(s_1;H^{\bullet+1} \times H^{\bullet+1/2}, H^{\bullet+1} \times H^{\bullet+1/2})$.
\end{proof}

\subsection{Paralinearization of the Full System}\label{subsect:PLOther}
We now turn to the second equation in the full system (\ref{EQ}). Let us compute
\begin{equation}\label{D_tw}
\begin{aligned}
\partial_tw
&=\partial_t\psi-T_{\partial_tB}\eta-T_B\partial_t\eta\\
&\overset{(\ref{EQ})}{=}
-\frac{1}{2}|\partial_x\psi|^2
+\frac{\left(\partial_x\psi\partial_x\eta+G[\eta]\psi\right)^2}{2(1+|\partial_x\eta|^2)}
-H[\eta]+\frac{1}{\rho}
-T_BG[\eta]\psi-T_{\partial_tB}\eta.
\end{aligned}
\end{equation}
Paralinearization of the sum
$$
-\frac{1}{2}|\partial_x\psi|^2
+\frac{\left(\partial_x\psi\partial_x\eta+G[\eta]\psi\right)^2}{2(1+|\partial_x\eta|^2)}
-T_BG[\eta]\psi
$$ 
in (\ref{D_tw}) can be easily deduced from the properties of paraproduct(see Proposition~\ref{prop:PMReg} and Proposition~\ref{prop:PLReg}).

\begin{proposition}\label{PL_Quad}
Consider a real numbers $s_0>2$ and assume that $\eta$ satisfies $\eta+\rho\ge c$ for some constant $c>0$. There holds
$$
-\frac{1}{2}|\partial_x\psi|^2
+\frac{\left(\partial_x\psi\partial_x\eta+G[\eta]\psi\right)^2}{2(1+|\partial_x\eta|^2)}
-T_BG[\eta]\psi
=-T_V\partial_xw - T_{V\partial_xB}\eta +\R_{\PL}^1(\eta,\psi),
$$
where the remainder $\R_{\PL}^1(\eta,\psi)$, regarded as a mapping of $(\eta,\psi)$, satisfies
$$
    \R_{\PL}^1 \in T^\infty(s_0;H^{\bullet+1}\times H^{\bullet+1/2}, H^{\bullet+s_0-3/2}), \quad (\R_{\PL}^1)^{[\leq 1]} = 0.
$$
\end{proposition}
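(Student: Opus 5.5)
The plan is to rewrite the quadratic expression in terms of $B$ and $V$, then paralinearize each product with Proposition~\ref{prop:PMReg} and identify the leading paraproducts.

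\textbf{Rewriting in terms of $B$ and $V$.} By \eqref{DN_V_B}, $\partial_x\psi\partial_x\eta+G[\eta]\psi=(1+|\partial_x\eta|^2)B$. Hence
$$
\frac{(\partial_x\psi\partial_x\eta+G[\eta]\psi)^2}{2(1+|\partial_x\eta|^2)}=\frac12(1+|\partial_x\eta|^2)B^2 .
$$
Using $\partial_x\psi=V+B\partial_x\eta$ together with the identity $G[\eta]\psi=B-V\partial_x\eta$ (which follows from the definitions of $B$ and $V$), a direct computation gives
$$
-\tfrac12|\partial_x\psi|^2+\tfrac12(1+|\partial_x\eta|^2)B^2=\tfrac12 B^2-\tfrac12V^2-BV\partial_x\eta .
$$
So the left-hand side of the proposition equals
$$
\tfrac12B^2-\tfrac12V^2-BV\partial_x\eta-T_B(B-V\partial_x\eta).
$$

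\textbf{Paralinearizing each product.} Every product $fg$ is replaced by $T_fg+T_gf+R_\PM(f,g)$. Since $B,V\in\Ta^\infty(H^{\bullet-1/2})$ by Proposition~\ref{prop-PLDtN:BdDtN} and $\partial_x\eta\in H^{\bullet}$, Proposition~\ref{prop:PMReg} puts every $R_\PM$ in $\Ta^\infty(H^{\bullet+s_0-3/2})$ up to the constant $n/2=1/2$; this is also where $s_0>2$ is used. The expansions are:
\begin{itemize}
\item $\tfrac12B^2=T_BB+\text{rem}$, which cancels against $-T_BB$;
\item $-\tfrac12V^2=-T_VV+\text{rem}$;
\item $-BV\partial_x\eta=-T_{V\partial_x\eta}B-T_{B\partial_x\eta}V-T_{BV}\partial_x\eta+\text{rem}$;
\item $T_B(V\partial_x\eta)=T_BT_V\partial_x\eta+T_BT_{\partial_x\eta}V+T_BR_\PM(V,\partial_x\eta)$.
\end{itemize}
The composition remainders $T_BT_V-T_{BV}$ (Proposition~\ref{prop:SymReg} with $r=s_0-3/2$ on order-zero symbols) smooth by $s_0-3/2$ derivatives. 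The paraproducts in low-regularity factors, such as $T_{V\partial_x\eta}B$ and $T_{\partial_x\eta}V$, are not remainders. For these I would expand $V=\partial_x w+T_{\partial_xB}\eta+\partial_x(T_B\eta)-T_B\partial_x\eta$ and re-express $\partial_x\psi$ through $w$: $\partial_x\psi=\partial_xw+T_B\partial_x\eta+T_{\partial_xB}\eta$.

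After this, the surviving terms collect as
$$
-T_V\partial_xw-T_VT_{\partial_xB}\eta+(\text{terms }T_{\cdot}\partial_x\eta\text{ that cancel}),
$$
and $T_VT_{\partial_xB}\eta=T_{V\partial_xB}\eta+\text{rem}$.

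\textbf{Main obstacle.} The hard step is the cancellation of the paraproducts that act on $B$ and on $\partial_x\eta$ through low-frequency coefficients, for instance $T_{V\partial_x\eta}B$ against $T_{\partial_x\eta}T_VB$. This cancellation is exactly what the good-unknown structure provides, as in the classical Alazard--Burq--Zuily computation. I would verify it by substituting $G[\eta]\psi$ from Theorem~\ref{thm-PLDtN:Main} only where necessary, and otherwise working at the level of the identities for $B$ and $V$.

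\textbf{Regularity and quadratic vanishing.} Tame estimates and smoothness follow from Propositions~\ref{prop:PDReg}, \ref{prop:SymReg} and \ref{prop:PMReg} together with Lemma~\ref{lem:PropOfReg}. Quadratic vanishing, $(\R^1_\PL)^{[\le1]}=0$, holds because every term is at least bilinear in $(B,V,\eta)$, and $B,V$ vanish at zero.
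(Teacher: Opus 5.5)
\textbf{Verdict: there is a genuine gap.} Your reduction matches the paper's up to the last step, but the one cancellation that remains is not derived, and the formula you give for it is wrong.

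\textbf{What is correct.} Rewriting the left-hand side as $\tfrac12B^2-\tfrac12V^2-BV\partial_x\eta-T_BB+T_B(V\partial_x\eta)$ is right. Your trilinear expansion of $BV\partial_x\eta$ differs from the paper's bilinear split $B\cdot(V\partial_x\eta)$: the paper gets an exact cancellation of $T_B(V\partial_x\eta)$, while you get it modulo $T_BT_V-T_{BV}$ and $T_BT_{\partial_x\eta}-T_{B\partial_x\eta}$. Either way you correctly arrive at $-T_VV-T_{V\partial_x\eta}B$ modulo admissible remainders. Your arguments for the tame estimates and the quadratic vanishing are fine.

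\textbf{The gap.} The remaining step is the one you label the main obstacle, and you do not carry it out. The one concrete tool you offer, $V=\partial_xw+T_{\partial_xB}\eta+\partial_x(T_B\eta)-T_B\partial_x\eta$, is incorrect. By the exact Leibniz rule $\partial_x(T_B\eta)=T_{\partial_xB}\eta+T_B\partial_x\eta$, it collapses to $\partial_xw+2T_{\partial_xB}\eta$. The correct identity, from $V=\partial_x\psi-B\partial_x\eta$ and $\psi=w+T_B\eta$, is
\[
V=\partial_xw+T_{\partial_xB}\eta-T_{\partial_x\eta}B-R_\PM(B,\partial_x\eta).
\]
The term you dropped, $-T_{\partial_x\eta}B$, is exactly what produces the cancellation. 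Placed in the high-frequency slot of $-T_VV$, it gives $+T_VT_{\partial_x\eta}B$. Then $(T_VT_{\partial_x\eta}-T_{V\partial_x\eta})B$ gains $s_0-1$ derivatives on $B\in H^{s-1/2}$, because $V,\partial_x\eta\in C^{s_0-1}$, so it lies in $H^{s+s_0-3/2}$. With your formula, the term $-T_{V\partial_x\eta}B$ survives even though it is only in $H^{s-1/2}$. The coefficient of $T_{V\partial_xB}\eta$ would also come out as $2$, so the claimed identity would fail.

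\textbf{How to fix it.} No good-unknown argument of Alazard--Burq--Zuily type and no input from Theorem~\ref{thm-PLDtN:Main} is needed; the cancellation is pure paraproduct algebra of $V=\partial_x\psi-B\partial_x\eta$. This is what the paper does:
\begin{itemize}
\item Write $-T_VV=-T_V\partial_x\psi+T_V(B\partial_x\eta)$ and paralinearize $B\partial_x\eta$.
\item Absorb $(T_VT_{\partial_x\eta}-T_{V\partial_x\eta})B$ and $T_VR_\PM(B,\partial_x\eta)$ into the remainder.
\item Use $-T_V\partial_x\psi+T_VT_B\partial_x\eta=-T_V\partial_xw-T_VT_{\partial_xB}\eta$, which holds exactly.
\item Finally, $T_VT_{\partial_xB}-T_{V\partial_xB}$ gains $s_0-2$ derivatives on $\eta\in H^{s+1}$, because $\partial_xB\in C^{s_0-2}$.
\end{itemize}

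\textbf{A smaller correction.} The hypothesis $s_0>2$ enters through this last composition and through Proposition~\ref{prop-PLDtN:BdDtN}. It does not enter through Proposition~\ref{prop:PMReg}, which here only requires $s_0>3/4$.
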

\begin{proof}
Let us begin with some identities which can be obtained directively from the definition~\eqref{DN_V_B} of $B,V$:
\begin{equation*}
    -\frac{1}{2}|\partial_x\psi|^2 +\frac{\left(\partial_x\psi\partial_x\eta+G[\eta]\psi\right)^2}{2(1+|\partial_x\eta|^2)} = \frac{B^2-V^2}{2} - BV\partial_x\eta, \quad G[\eta]\psi = B - V\partial_x\eta,\quad \partial_x\psi = B\partial_x\eta + V.
\end{equation*}
With these identities, we can write
$$
- \frac{1}{2}|\partial_x\psi|^2 + \frac{\left(\partial_x\psi\partial_x\eta+G[\eta]\psi\right)^2}{2(1+|\partial_x\eta|^2)} - T_BG[\eta]\psi
= \frac{B^2-V^2}{2} - BV\partial_x\eta - T_BB + T_B(V\partial_x\eta).
$$
Through paraproduct decomposition (see Definition~\ref{R_PM}), the right-hand side can be written as
\begin{align*}
    R_\PM(B,B) & - T_VV - R_\PM(V,V) - T_{V\partial_x\eta}B - R_\PM(B,V\partial_x\eta) \\
    & = -T_V\partial_x\psi + T_V(B\partial_x\eta) - T_{V\partial_x\eta}B + R_\PM(B,B) - R_\PM(V,V) - R_\PM(B,V\partial_x\eta) \\
    & = -T_V\partial_x\psi + T_V \big( T_B \partial_x\eta + T_{\partial_x\eta}B + R_\PM(B,\partial_x\eta) \big) - T_{V\partial_x\eta}B \\
    &\quad + R_\PM(B,B) - R_\PM(V,V) - R_\PM(B,V\partial_x\eta) \\
    & = -T_V\partial_x\psi + T_VT_B\partial_x\eta + \R_\PL^1(\eta,\psi),
\end{align*}
where the remainder $\R_\PL^1(\eta,\psi)$ reads
$$
\R_\PL^1(\eta,\psi) = (T_VT_{\partial_x\eta} - T_{V\partial_x\eta})B + T_VR_\PM(B,\partial_x\eta) + R_\PM(B,B) - R_\PM(V,V) - R_\PM(B,V\partial_x\eta).
$$
Note that the main term $-T_V\partial_x\psi + T_VT_B\partial_x\eta$ can be written as $-T_V\partial_xw - T_{V\partial_xB}\eta$, due to the definition $w = \psi - T_B\eta$. To conclude the regularity of $\R_\PL^1(\eta,\psi)$, it suffices to apply the regularity of (para)product (see Proposition~\ref{prop:PMReg}), symbolic calculus (see Proposition~\ref{prop:SymReg}) and $B,V$ (see Proposition~\ref{prop-PLDtN:BdDtN}).
\end{proof}

It remains to paralinearize the mean curvature operator $H[\eta]$. Recall that 
\begin{equation}\label{eq:MeanCurv}
    H[\eta] = -\frac{\partial_x^2\eta}{(1+|\partial_x\eta|^2)^{3/2}}+\frac{1}{(\rho+\eta)\sqrt{1+|\partial_x\eta|^2}},
\end{equation}
which is no more than a smooth function of $(\eta,\partial_x\eta,\partial_x^2\eta)$. Hence, we can apply the paralinearization formula (see Proposition~\ref{prop:PLReg}) to obtain its paralinearization with regular remainder.
\begin{proposition}\label{PL_H}
Consider a real number $s_0>2$ and assume that $\eta$ satisfies $\eta+\rho\ge c$ for some constant $c>0$. Define a classical elliptic differential symbol $h = h^{(2)} + h^{(1)} + h^{(0)}$ with
\begin{equation}\label{eq:PL_H_Symb}
    h^{(2)} = \frac{|\xi|^2}{(1+|\partial_x\eta|^2)^{3/2}}, \quad h^{(1)} = \frac{3(\rho+\eta)\partial_x^2\eta-1-|\partial_x\eta|^2}{(\rho+\eta)(1+|\partial_x\eta|^2)^{5/2}}\partial_x\eta i\xi, \quad h^{(0)} = -\frac{1}{(\rho+\eta)^2\sqrt{1+|\partial_x\eta|^2}} + \frac{1}{\rho^2}.
\end{equation}
Then there holds
\begin{equation}\label{eq:PL_H}
    H[\eta] =\frac{1}{\rho} + T_{h}\eta - \frac{\eta}{\rho^2} - (\partial_x^2 + T_{|\xi|^2})\eta +\R_{\PL}^2(\eta),
\end{equation}
where the remainder $\R_{\PL}^2(\eta)$, regarded as a mapping of $\eta$, satisfies
$$
    \R_{\PL}^2 \in \Ta^\infty(s_0;H^{\bullet+1},H^{\bullet+s_0-3/2}), \quad (\R_{\PL}^2)^{[\leq 1]}=0.
$$
\end{proposition}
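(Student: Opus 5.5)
We apply Proposition~\ref{prop:PLReg} to $H[\eta]=F(\eta,\partial_x\eta,\partial_x^2\eta)$, where
$$
F(y_0,y_1,y_2)=-\frac{y_2}{(1+y_1^2)^{3/2}}+\frac{1}{(\rho+y_0)\sqrt{1+y_1^2}}.
$$
We take $k=2$, $n=1$, and $\eta\in H^{\bullet+1}$, i.e.\ $\sigma=1$. Since $s_0>2$, the hypothesis $s_0+\sigma>k+n/2$ holds. Proposition~\ref{prop:PLReg} then gives
$$
H[\eta]=F(0)+T_{\partial_{y_0}F}\eta+T_{\partial_{y_1}F}\partial_x\eta+T_{\partial_{y_2}F}\partial_x^2\eta+\R_\PL(F;\eta),
$$
with $F(0)=\rho^{-1}$. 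The remainder lies in $\Ta^\infty(s_0;H^{\bullet+1},H^{\bullet+s_0+2-4-1/2})=\Ta^\infty(s_0;H^{\bullet+1},H^{\bullet+s_0-5/2})$.

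This is one derivative short of the claimed $H^{\bullet+s_0-3/2}$. The plan is to recover it by exploiting that $F$ is affine in the top variable $y_2$. Write $F=-a(y_1)y_2+b(y_0,y_1)$ with $a=(1+y_1^2)^{-3/2}$. The term $b(\eta,\partial_x\eta)$ is a function of only first derivatives. Proposition~\ref{prop:PLReg} with $k=1$ paralinearizes it with a remainder in $H^{\bullet+s_0+2-2-1/2}=H^{\bullet+s_0-1/2}$, which is better than needed.

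For the product $a(\partial_x\eta)\partial_x^2\eta$, I would write
$$
a\,\partial_x^2\eta=T_a\partial_x^2\eta+T_{\partial_x^2\eta}a+R_\PM(a,\partial_x^2\eta).
$$
Here $a-1=a(\partial_x\eta)-1$ is paralinearized by Proposition~\ref{prop:PLReg} with $k=1$: $a=1+T_{a'(\partial_x\eta)}\partial_x\eta+$ a remainder in $H^{\bullet+s_0-1/2}$. Consequently $T_{\partial_x^2\eta}a=T_{\partial_x^2\eta}T_{a'}\partial_x\eta+\cdots$. I would replace this by $T_{\partial_x^2\eta\,a'}\partial_x\eta$ using Proposition~\ref{prop:SymReg}. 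With $\partial_x^2\eta\in C^{s_0-3/2-\epsilon}$, the error gains almost $s_0-3/2$ derivatives on $\partial_x\eta\in H^{\bullet}$, landing in roughly $H^{\bullet+s_0-3/2}$.

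The term $T_{\partial_x^2\eta}(\text{remainder})$ is harmless. For $R_\PM(a-1,\partial_x^2\eta)$, Proposition~\ref{prop:PMReg} with $a-1\in H^{\bullet}$ and $\partial_x^2\eta\in H^{\bullet-1}$ gives $H^{0+(-1)+s_0+\bullet-1/2}=H^{\bullet+s_0-3/2}$, exactly the claimed index. The low-frequency contribution $R_\PM(1,\partial_x^2\eta)$ is smoothing.

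Next we collect the principal paradifferential terms, using Example~\ref{Ex:Paradiff} to identify symbols: $T_{c(x)}\partial_x^j\eta=T_{c(x)(i\xi)^j}\eta$.
\begin{itemize}
\item The $\partial_x^2$ coefficient gives $-T_a\partial_x^2\eta=T_{a|\xi|^2}\eta=T_{h^{(2)}}\eta$.
\item The $\partial_x\eta$ coefficients combine $-\partial_x^2\eta\,a'(\partial_x\eta)=3\partial_x^2\eta\,\partial_x\eta(1+|\partial_x\eta|^2)^{-5/2}$ with $\partial_{y_1}b=-\partial_x\eta(\rho+\eta)^{-1}(1+|\partial_x\eta|^2)^{-3/2}$. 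Together these yield $h^{(1)}$ after putting them over a common denominator.
\item The $\eta$ coefficient is $\partial_{y_0}b=-(\rho+\eta)^{-2}(1+|\partial_x\eta|^2)^{-1/2}=h^{(0)}-\rho^{-2}$.
\end{itemize}
The constants $-\rho^{-2}$ and $|\xi|^2$ were inserted into $h$ to make $h$ vanish at $\eta=0$. They are compensated by the explicit terms in \eqref{eq:PL_H}. For example, $T_{-\rho^{-2}}\eta+\rho^{-2}\eta=\rho^{-2}S_2\eta$ and $T_{|\xi|^2}\eta+\partial_x^2\eta$ are low-frequency, hence infinitely smoothing and linear. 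Taking the signs as stated in \eqref{eq:PL_H}, these pieces exactly produce the terms $-\eta/\rho^2-(\partial_x^2+T_{|\xi|^2})\eta$, up to low-frequency remainders that are absorbed into the form of the formula.

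Finally, we check $(\R^2_\PL)^{[\le1]}=0$. All remainders listed are built from $R_\PM$ of two small quantities, from the $\R_\PL$ of Proposition~\ref{prop:PLReg} (which are quadratic), or from commutators of two paraproducts with small symbols. All of these vanish to second order at $\eta=0$. The exact linear part of $H$ is $\rho^{-1}-\partial_x^2\eta-\rho^{-2}\eta$, and it is reproduced identically by the main terms at $\eta=0$.

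The main obstacle is the derivative bookkeeping in the top-order product, i.e.\ obtaining the index $s_0-3/2$ rather than $s_0-5/2$. Regularity in the class $\Ta^\infty$ follows throughout from Propositions~\ref{prop:PDReg}--\ref{prop:CompReg} and Lemma~\ref{lem:PropOfReg}.
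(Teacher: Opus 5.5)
Your proposal is correct and is essentially the paper's proof. Both arguments split off the top-order product $(1+|\partial_x\eta|^2)^{-3/2}\partial_x^2\eta$ by paraproduct decomposition (Proposition~\ref{prop:PMReg} for the $R_\PM$ term), paralinearize the remaining functions of $(\eta,\partial_x\eta)$ with Proposition~\ref{prop:PLReg}, merge $T_{\partial_x^2\eta}T_{a'(\partial_x\eta)}$ via Proposition~\ref{prop:SymReg}, and compare linear parts at $\eta=0$.

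One slip in your bookkeeping: the remainders of Proposition~\ref{prop:PLReg} and $R_\PM(1,\partial_x^2\eta)=S_2\partial_x^2\eta$ are \emph{not} quadratic, and $h^{(2)}$ does not vanish at $\eta=0$. The first leaves the linear piece $-\rho^{-2}S_2\eta$, which is cancelled by $T_{-\rho^{-2}}\eta+\rho^{-2}\eta$. The second coincides with the explicit term $(\partial_x^2+T_{|\xi|^2})\eta=\partial_x^2S_2\eta$. So $(\R^2_\PL)^{[\le1]}=0$ rests solely on your final comparison of linear parts, which suffices, exactly as in the paper.
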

\begin{proof}
    Note that if one applies directly Proposition~\ref{prop:PLReg} to $H[\eta]$, the remainder only lies in $H^{\bullet+s_0-5/2}$. To improve the regularity of the remainder, we need to treat the term with $\partial_x^2\eta$ separately. First, we apply paraproduct decomposition to write $H[\eta]$ as
    $$
    \begin{aligned}
    -\Op^\PM\big((1+|\partial_x\eta|^2)^{-3/2}\big) \partial_x^2\eta & - \Op^\PM(\partial_x^2\eta) \big( (1+|\partial_x\eta|^2)^{-3/2} \big) \\
    & - R_\PM\big( (1+|\partial_x\eta|^2)^{-3/2}, \partial_x^2\eta \big) + \frac{1}{(\rho+\eta)\sqrt{1+|\partial_x\eta|^2}}.
    \end{aligned}
    $$
    From the regularity of (para)product (see Proposition~\ref{prop:PMReg}), the third term $R_\PM\big( (1+|\partial_x\eta|^2)^{-3/2}, \partial_x^2\eta \big)$ belongs to the desired class $\Ta^\infty(s_0;H^{\bullet+1},H^{\bullet+s_0-3/2})$. Next, we apply paralinearization formula (see Proposition~\ref{prop:PLReg}) to the second term and the last term to obtain that, up to a remainder in the desired class, $H[\eta]$ equals to
    \begin{align*}
        &-\Op^\PM\big((1+|\partial_x\eta|^2)^{-3/2}\big) \partial_x^2\eta + T_{\partial_x^2\eta} \Op^\PM\big(3(1+|\partial_x\eta|^2)^{-5/2}\partial_x\eta\big) \partial_x\eta \\
        &\hspace{12em}- \Op^\PM\big( (\rho+\eta)^{-1} (1+|\partial_x\eta|^2)^{-3/2} \big)\partial_x\eta + \frac{1}{\rho} - T_{h^{(0)-\rho^{-2}}}\eta.
    \end{align*}
    Through the regularity of the composition of paradifferential operators (see Proposition~\ref{prop:SymReg}), the second term, up to some regular remainders, can be replaced by
    $$
        \Op^\PM\big(3(1+|\partial_x\eta|^2)^{-5/2}\partial_x\eta\partial_x^2\eta\big) \partial_x\eta,
    $$
    which gives
    $$
        H[\eta] - \left( \frac{1}{\rho} + T_{h-\rho^{-2}}\eta \right) \in \Ta^\infty(s_0;H^{\bullet+1},H^{\bullet+s_0-3/2}).
    $$
    Recalling from Example~\ref{Ex:Paradiff} that the differences $T_{\rho^{-2}} - \rho^{-2}$ and $\partial_x^2 + T_{|\xi|^2}$ are smoothing operators, we can thus conclude the equality~\eqref{eq:PL_H} with desired regularity. It remains to check that $\R_\PL^2$ decays quadratically. 

    By taking $\eta=0$, the fact $H[0]=\rho^{-1}$ guarantees that $(\R_{\PL}^2)^{[0]}(\eta)=0$. To prove that the linear part $(\R_\PL^2)^{[1]}(\eta)$ also vanishes, we compare the derivative $\der_\eta H[0]$ obtained through the paralinearization formula~\eqref{eq:PL_H} and the definition~\eqref{eq:MeanCurv} of $H[\eta]$,
    \begin{equation*}
        -\partial_x^2\bm{\delta}\eta - \rho^{-2}\bm{\delta}\eta = \der_\eta H[0] \cdot \bm{\delta}\eta = T_{|\xi|^2}\bm{\delta}\eta -\rho^{-2}\bm{\delta}\eta - (\partial_x^2 + T_{|\xi|^2})\del\eta + \der_\eta \R_\PL^2(0) \cdot \bm{\delta}\eta,
    \end{equation*}
    which yields that $(\R_{\PL}^2)^{[1]}(\eta)=0$.
\end{proof}

To sum up, we have managed to reformulate the full system (\ref{EQ}) into a paralinearized one:
\begin{equation}\label{EQ_PL}
\left\{\begin{aligned}
\partial_t\eta
&=T_{\lambda} w - T_V\partial_x\eta - \Op^\PM(\partial_xV+(\rho+\eta)^{-1}B)\eta + \R_\DN(\eta,\psi), \\
\partial_tw
&=-(T_h - \rho^{-2})\eta - (\partial_x^2 + T_{|\xi|^2})\eta -T_V\partial_xw
-\Op^\PM(\partial_tB+V\partial_xB)\eta+\R_\PL(\eta,\psi),
\end{aligned}\right.
\end{equation}
with regular quadratic remainders
\begin{equation*}
    \begin{gathered}
        \R_\DN(\eta,\psi) \in \Ta^\infty_r(s_0;H^{\bullet+s_0-1}), \quad \R_\PL(\eta,\psi) := \R^1_\PL(\eta,\psi)+\R^2_\PL(\eta) \in \Ta^\infty_r(s_0;H^{\bullet+s_0-3/2}), \\
        \R_\DN^{[\leq 1]} = \R_\PL^{[\leq 1]} = 0.
    \end{gathered}
\end{equation*}
See Notation~\ref{note:Reg} for the definition of $\Ta^\infty_r$.

\begin{remark}
    According to Lemma~\ref{lem:ChgtUnk:psi-w}, for small solution, we can replace $\psi$ by the good unknown $w$ through the diffeomorphism defined in Lemma~\ref{lem:ChgtUnk:psi-w}. Then the system~\eqref{EQ_PL} above can be regarded as a system of $(\eta,w)$, with regular remainders
    $$
    \R_\DN\circ\mathscr{A}_\GU(\eta,w) \in \Ta^\infty_r(s_0;H^{\bullet+s_0-1}), \quad \R_\PL\circ\mathscr{A}_\GU(\eta,w) \in \Ta^\infty_r(s_0;H^{\bullet+s_0-3/2}).
    $$
    For simplicity, we will still be using $\psi$ in the following, but it should be considered as a regular mapping of $(\eta,w)$.
\end{remark}

\begin{remark}\label{Time_implicit}
    As a direct consequence of the paralinearization, one can show that
    $$
    \partial_t \eta \in \Ta_r^\infty(s_0;H^{\bullet-1/2}), \quad \partial_t\psi, \partial_t w \in \Ta_r^\infty(s_0;H^{\bullet-1}),
    $$
    where $\partial_t\eta$, $\partial_t\psi$ should be understood as the right-hand sides of the system~\eqref{EQ} and $\partial_tw$ as the right-hand side of the paralinearized equation~\eqref{EQ_PL}.
    
    In particular, since $B = B(\eta,\psi)$ is smooth in $(\eta,\psi)$ (see Proposition~\ref{prop-PLDtN:BdDtN}), $\partial_tB$ is well-defined as a mapping of $(\eta,\psi)$ (thus of $(\eta,w)$), through the chain rule
    $$
    \partial_t B = \der_\eta B(\eta,\psi)\cdot \partial_t\eta + B(\eta,\partial_t\psi),
    $$
    which belongs to the class $\Ta_r^\infty(s_0;H^{\bullet-2})$ for all $s_0>5$.
\end{remark}

Before diagonalizing~\eqref{EQ_PL}, as mentioned in the beginning of this section, we will split the system into high/low frequencies, corresponding to dispersive/hyperbolic regimes. Let us define 
\begin{equation}\label{Low_High}
\proj_{\low}=\text{Fourier projection to frequencies }|\xi|<\frac{2}{\rho},
\quad
\proj_{\high}=\Id-\proj_{\low},
\end{equation}
and set $\1_\low,\1_\high$ as the set indicator functions in frequency space. Applying them to \eqref{EQ_PL},
\begin{equation}\label{EQ_PL_Spl}
    \left\{\begin{aligned}
        \partial_t \proj_{\high}\eta &= T_{\lambda} \proj_{\high} w - T_V\partial_x \proj_{\high}\eta - \Op^\PM(\partial_xV+(\rho+\eta)^{-1}B) \proj_{\high}\eta + \R_\DN^\high(\eta,w), \\
        \partial_t \proj_{\high}w &= - T_{h - \rho^{-2}} \proj_{\high}\eta - T_V\partial_x \proj_{\high}w - \Op^\PM(\partial_tB+V\partial_xB) \proj_{\high}\eta + \R_\PL^\high(\eta,w), \\
        \partial_t \proj_{\low}\eta &= G[0] \proj_{\low} w + \R_\DN^\low(\eta,w), \\
        \partial_t \proj_{\low}w &= -H'[0] \proj_{\low}\eta + \R_\PL^\low(\eta,w),
    \end{aligned}\right.
\end{equation}
where the remainders are regular and quadratic:
\begin{gather*}
    \R_\DN^\high(\eta,w), \R_\DN^\low(\eta,w) \in \Ta^\infty_r(s_0;H^{\bullet+s_0-1}), \quad \R_\PL^\high(\eta,w), \R_\PL^\low(\eta,w) \in \Ta^\infty_r(s_0;H^{\bullet+s_0-3/2}), \\
    (\R_\DN^\high)^{[\leq 1]} = (\R_\DN^\low)^{[\leq 1]} = (\R_\PL^\high)^{[\leq 1]} = (\R_\PL^\low)^{[\leq 1]} = 0.
\end{gather*}
See Notation~\ref{note:Reg} for the definition of $\Ta_r^\infty$. 

Justification of the size and regularity of the low frequency equations is straightforward, since $G[0],H'[0]$ are both Fourier multipliers, while $\proj_{\low}$ is an infinitely smoothing operator and the remainders $\R^\low_{\DN/\PL}$ can absorb all the nonlinear terms except for their linear parts. As for high frequencies, we observe that the projection $\proj_{\high}$ commutes with Fourier multipliers (thus the operators in linearized equations). Meanwhile, from the definition~\eqref{eq.para} of paradifferential operators, the commutators between $\proj_{\high}=\Id-\proj_{\low}$ and paradifferential operators are infinitely smoothing, since the images of these operators consist only of functions with bounded frequencies. Therefore, we can apply the projection $\proj_{\high}$ directly on $(\eta,w)$ on the right-hand side, without changing the regularity and quadratic nature of the remainders.

\subsection{Diagonalization in High Frequencies}\label{subsect:Diag0}

To end this section, we will diagonalize the paralinearized system~\eqref{EQ_PL_Spl} for high frequencies. Namely, we will make a change of unknowns to transform the principal symbol matrix (see (\ref{LEQ(0,0)}))
$$
    \left(\begin{array}{cc}
        0 & \lambda \\
        -h + \rho^{-2} & 0
    \end{array}\right)
    \simeq L(D_x)=
    \left(\begin{array}{cc}
        0 & G[0] \\
        -H'[0] & 0
    \end{array}\right)
$$
into a skew-symmetric one at principal and subprincipal levels. This procedure is standard in the energy estimate of water waves (see for instance~\cite{ABZ2011,ABZ2014} and also~\cite{HK2023,Yang2024}). The main difference here with these works is that we need to consider an extra low order term $\rho^{-2}$ to maintain the quadratic structure of the remainders. 

\subsubsection{Diagonalizing Symbols}
Let us first introduce some operations that are used only within this subsection. Define $\sharp$ and $*$ as the composition and adjoint operation for principal and subprincipal terms (compare \eqref{eq:SymCal}):
\begin{notation}\label{note:2TermComp}
    For real numbers $m,m'\in\xR$ and symbols 
    $$
        a = a^{(m)} + a^{(m-1)} + ... \in \Gamma_{1}^{m}+\Gamma_{0}^{m-1} + \Gamma^{m-2}_0, \quad b = b^{(m')} + b^{(m'-1)} + ... \in \Gamma_{1}^{m'}+\Gamma_{0}^{m'-1}+\Gamma_{0}^{m'-2},
    $$
    write
    \begin{equation*}
        \begin{aligned}
            a\sharp b &:= a^{(m)}b^{(m')} + a^{(m-1)} b^{(m')} + a^{(m)} b^{(m'-1)} + \frac{1}{i}\partial_\xi a^{(m)} \partial_x b^{(m')}, \\
            a^* &:= \overline{a^{(m)}} + \overline{a^{(m-1)}} + \frac{1}{i}\partial_x\partial_\xi \overline{a^{(m)}}.
        \end{aligned}
    \end{equation*}
\end{notation}

Let $\chi:\xR\to[0,1]$ be a smooth bump function that equals 1 within the interval $[-1.5\rho^{-1},1.5\rho^{-1}]$ and vanishes outside of $[-2\rho^{-1},2\rho^{-1}]$. Imitating Section 5 of \cite{HK2023} (see also Section 4 of \cite{Yang2024}), we define a symbol
\begin{equation}\label{gamma}
\gamma:=\gamma^{(3/2)}+\gamma^{(1/2)}
\in \Gamma^{3/2}_1+\Gamma^{1/2}_0,
\end{equation}
where 
\begin{equation}\label{gamma_detail}
\begin{aligned}
\gamma^{(3/2)}
=\sqrt{ \left(\frac{|\xi|^{2}}{(1+|\partial_x\eta|^2)^{3/2}}-\frac{1}{\rho^2}\right)\frac{I_1(\rho|\xi|)}{I_0(\rho|\xi|)}|\xi|} \cdot\big(1-\chi(\xi)\big),
\quad
\gamma^{(1/2)}
= -\frac{i}{2}\partial_x\partial_\xi\gamma^{(3/2)}.
\end{aligned}
\end{equation}
We further define symbols $\mathfrak{p},\mathfrak{q}$ by\footnote{The order of $\mathfrak{p},\mathfrak{q}$ are $1/2$ higher than \cite{HK2023,Yang2024} due to technical reasons, while this difference has no impact in the diagonalization procedure.}
\begin{equation}\label{eq-diag:Symb}
\begin{aligned}
\mathfrak{q}
&= \sqrt{(\rho+\eta) |\xi|\frac{I_1(\rho|\xi|)}{I_0(\rho|\xi|)}}\big(1-\chi(\xi)\big)\in \Gamma^{1/2}_1,  \\
\mathfrak{p} 
&= \gamma \sharp \mathfrak{q} \sharp \lambda^\textrm{inv}, \quad 
\text{with }\lambda^\textrm{inv} = \frac{\big(1-\chi(\xi)\big)}{\lambda^{(1)}} - \frac{\lambda^{(0)}\big(1-\chi(\xi)\big)}{(\lambda^{(1)})^2}
\in \Gamma^{-1}_1+\Gamma^{-2}_0.
\end{aligned}
\end{equation}

It is easy to check that, when $|\xi|\geq 2\rho^{-1}$, the principal symbol of $\mathfrak{p}$ equals
$$
\mathfrak{p}^{(1)} = \frac{\gamma^{(3/2)}\mathfrak{q}^{(1/2)}}{\lambda^{(1)}} 
= \sqrt{(\rho+\eta) \left(\frac{|\xi|^{2}}{(1+|\partial_x\eta|^2)^{3/2}}-\frac{1}{\rho^2}\right)}.
$$
Straightforward computation shows the following: 

\begin{lemma}\label{lem-diag:Symb}
Fix an index $s_0>5/2$. Suppose $\eta$ is close to 0 in $H^{s_0}$ norm, so that $\eta+\rho\geq c$ for some constant $c>0$, while $|\eta|_{C^2}\ll1$. Then the symbols $\mathfrak{p},\mathfrak{q},\gamma$, which depend only on $\eta$ and its derivatives, have the regularity
    $$
        a^{(m-j)} \in \Ta^\infty(s_0;H^{\bullet+1},\M^{m-j}_{\bullet-j-1/2}), \quad (a,m)\in\{ (\mathfrak{p},1), (\mathfrak{q},1/2), (\gamma,3/2) \},\ j=0,1,
    $$
    Moreover, with $\lambda$ being as in (\ref{eq-PLDtN:Symb}) and the operations $\sharp$, $*$ as in Notation~\ref{note:2TermComp}, there holds
    \begin{gather*}
        \mathfrak{p}\sharp\lambda - \gamma\sharp\mathfrak{q} = 0, \quad \gamma\sharp\mathfrak{p} - \mathfrak{q}\sharp (h-\rho^{-2}) = 0, \quad \gamma = \gamma^*,
        \quad \text{when }|\xi|\geq\frac{2}{\rho}.
    \end{gather*}
\end{lemma}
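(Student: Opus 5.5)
The proof splits into two independent parts: the regularity claims and the three symbolic identities.

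\emph{Regularity.} Each of $\gamma^{(3/2)}$, $\mathfrak{q}$, $\mathfrak{p}^{(1)}$ and $\lambda^{\mathrm{inv}}$ has the form $F(\eta,\partial_x\eta;\xi)$. Here $F$ is smooth in $(\eta,\partial_x\eta)$ on the region $\eta+\rho\geq c$, $|\partial_x\eta|\ll1$, and satisfies symbol bounds of order $m$ in $\xi$. The symbol bounds follow because $\frac{I_1(\rho|\xi|)}{I_0(\rho|\xi|)}|\xi|=|\xi|+O(1)$ is an analytic symbol of order $1$, and because for $|\xi|\geq 1.5\rho^{-1}$ and $|\partial_x\eta|$ small the radicands are bounded below by $c|\xi|^3$, resp. $c|\xi|$. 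The factor $1-\chi(\xi)$ removes the bad region. Proposition \ref{prop:CompReg} with $k=1$ and $\sigma=1$ then gives $a^{(m)}\in\Ta^\infty(s_0;H^{\bullet+1},\M^m_{\bullet-1/2})$.

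The subprincipal terms $\gamma^{(1/2)}$ and $\mathfrak{p}^{(0)}$ involve one more $x$-derivative, hence depend on $\partial_x^2\eta$. Proposition \ref{prop:CompReg} with $k=2$ loses one more order of H\"older regularity, which yields $\M^{m-1}_{\bullet-3/2}$. Expressions containing $\lambda^{(0)}$ are handled the same way. For $\mathfrak{p}$ one expands $\gamma\sharp\mathfrak{q}\sharp\lambda^{\mathrm{inv}}$ and keeps only the principal and subprincipal levels, which are products and first derivatives of the above symbols. Products of regular symbols are regular by Lemma \ref{lem:PropOfReg} together with the algebra property of $C^r$.

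\emph{Identities, for $|\xi|\geq 2/\rho$.} In this range $\chi=0$, so all cutoffs equal one. I will use that $\sharp$ is associative modulo order $m+m'-2$ terms: by Notation \ref{note:2TermComp} it is just the two-term truncation of the pseudodifferential composition. I will also use that scalar functions of $x$ alone commute at principal order.

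The first identity, $\mathfrak{p}\sharp\lambda=\gamma\sharp\mathfrak{q}\sharp\lambda^{\mathrm{inv}}\sharp\lambda$, reduces to checking $\lambda^{\mathrm{inv}}\sharp\lambda=1$ at principal and subprincipal level. Since $\lambda^{(1)}$ is $x$-independent, the term $\partial_\xi\lambda^{\mathrm{inv},(-1)}\partial_x\lambda^{(1)}$ vanishes. The remaining terms give $1+\lambda^{(0)}/\lambda^{(1)}-\lambda^{(0)}/\lambda^{(1)}=1$.

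The identity $\gamma=\gamma^*$ is direct. $\gamma^{(3/2)}$ is real. The adjoint correction is $\frac1i\partial_x\partial_\xi\gamma^{(3/2)}$, and conjugating $\gamma^{(1/2)}$ gives $\frac{i}{2}\partial_x\partial_\xi\gamma^{(3/2)}$. These sum to $-\frac{i}{2}\partial_x\partial_\xi\gamma^{(3/2)}=\gamma^{(1/2)}$.

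\emph{Main obstacle.} The hard identity is $\gamma\sharp\mathfrak{p}=\mathfrak{q}\sharp(h-\rho^{-2})$. At principal order it reads
$$
\big(\gamma^{(3/2)}\big)^2\mathfrak{q}/\lambda^{(1)}=\mathfrak{q}\,\big(h^{(2)}-\rho^{-2}\big).
$$
This holds by the definition of $\gamma^{(3/2)}$. Note that here $h^{(0)}$ contributes only at order $1/2$ relative to the top order $5/2$, so it is lower order.

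At subprincipal order (order $3/2$) one must collect several contributions: the terms $\partial_\xi(\cdot)\partial_x(\cdot)$ from both nested compositions, $\gamma^{(1/2)}$, $\lambda^{(0)}$ through $\lambda^{\mathrm{inv}}$, and $h^{(1)}$. These must be matched exactly. I would follow the computation in Section 5 of \cite{HK2023}, where the choice $\gamma^{(1/2)}=-\frac{i}{2}\partial_x\partial_\xi\gamma^{(3/2)}$ and the specific $\eta$-dependence of $\mathfrak{q}$ (the factor $\rho+\eta$) are designed precisely to cancel the contributions of $\lambda^{(0)}$ and $h^{(1)}$. The structure of $\lambda^{(0)}$, namely $-\frac{\partial_x\eta}{2(\rho+\eta)}i\partial_\xi\lambda^{(1)}$, matches $\partial_\xi\mathfrak{q}\,\partial_x(\cdot)$-type terms via $\partial_x\log\mathfrak{q}=\frac{\partial_x\eta}{2(\rho+\eta)}$.

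The new feature compared with that paper is the $-\rho^{-2}$ shift and the Bessel ratio replacing $|\xi|$. Both are $x$-independent, so they do not affect the cancellation structure; they only change $\xi$-derivatives. I expect verifying this bookkeeping carefully to be the bulk of the work.
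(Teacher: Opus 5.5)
Your treatment of the regularity claims, of $\mathfrak p\sharp\lambda=\gamma\sharp\mathfrak q$, and of $\gamma=\gamma^*$ is correct. For the first identity, the two-term $\sharp$ is in fact exactly associative at principal and subprincipal level, and $\lambda^{\mathrm{inv}}\sharp\lambda=1$ because $\lambda^{(1)}$ is $x$-independent. The paper gives no proof beyond ``straightforward computation'', so your overall route is the natural one.

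The gap is the step you postponed, and it is not bookkeeping: with the definitions as stated, the subprincipal computation for $\gamma\sharp\mathfrak p=\mathfrak q\sharp(h-\rho^{-2})$ does not close. Put $G=\gamma^{(3/2)}$, $\ell=\lambda^{(1)}$, $H_2=h^{(2)}-\rho^{-2}$, so that $G^2=H_2\ell$. For $|\xi|\ge 2/\rho$,
\[
\mathfrak p^{(1)}=\frac{G\mathfrak q}{\ell},\qquad \mathfrak p^{(0)}=\frac{\gamma^{(1/2)}\mathfrak q-i\,\partial_\xi G\,\partial_x\mathfrak q}{\ell}-\frac{G\mathfrak q\,\lambda^{(0)}}{\ell^2}.
\]
Collecting all order-$3/2$ terms of $\gamma\sharp\mathfrak p-\mathfrak q\sharp(h-\rho^{-2})$ gives exactly
\[
-\frac{\eta}{2\rho(\rho+\eta)}\,\frac{(h^{(2)}-\rho^{-2})\,\mathfrak q}{\lambda^{(1)}}\neq0 .
\]
The imaginary contributions do cancel as you expected:
\begin{itemize}
\item the $i\partial_\xi\lambda^{(1)}$ part of $\lambda^{(0)}$ cancels $-iH_2\frac{\partial_\xi\ell}{\ell}\partial_x\mathfrak q$, using $\partial_x\mathfrak q=\frac{\partial_x\eta}{2(\rho+\eta)}\mathfrak q$;
\item $\mathfrak q h^{(1)}$ cancels $-\frac i2\mathfrak q\,\partial_x\partial_\xi h^{(2)}-i\,\partial_\xi h^{(2)}\,\partial_x\mathfrak q$;
\item the remaining terms pair off directly.
\end{itemize}

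What you missed is that $\lambda^{(0)}$ is not only $-\frac{\partial_x\eta}{2(\rho+\eta)}i\partial_\xi\lambda^{(1)}$. It also has the real part $\frac{\eta}{2\rho(\rho+\eta)}$, inherited from the curvature term $-\frac{1}{2(\rho+\eta)}$ in $\tilde\lambda$. This, not the $\rho^{-2}$ shift or the Bessel ratio, is the genuinely new feature compared with flat water waves. It enters $\gamma\sharp\mathfrak p$ through $\lambda^{\mathrm{inv}}$ and $\mathfrak p^{(0)}$.

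Every other order-$3/2$ term on either side is purely imaginary. Indeed $G,\mathfrak q,\ell,h^{(2)}$ are real, $\gamma^{(1/2)}$ and $h^{(1)}$ are imaginary, and $\frac1i\partial_\xi(\cdot)\partial_x(\cdot)$ of real symbols is imaginary. So nothing can cancel the real term, and with $\gamma^{(1/2)}=-\frac i2\partial_x\partial_\xi\gamma^{(3/2)}$ the second identity is false. The paper's bare assertion shares this defect. It matters downstream: $T_\gamma T_{\mathfrak p}-T_{\mathfrak q}T_{h-\rho^{-2}}$ would then have order $3/2$, not the order $\le 1$ used in Lemma~\ref{lem-diag:HiFre-Rem}.

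The missing idea is to give $\gamma^{(1/2)}$ a real part, as in the symmetrization of \cite{ABZ2011}:
\[
\gamma^{(1/2)}=\frac{\RE\lambda^{(0)}}{2\lambda^{(1)}}\,\gamma^{(3/2)}-\frac i2\,\partial_x\partial_\xi\gamma^{(3/2)}.
\]
The added real term enters $\gamma\sharp\mathfrak p$ twice, via $\gamma^{(1/2)}\mathfrak p^{(1)}$ and via $G\mathfrak p^{(0)}$. Each time it contributes $\frac{G\,\RE\lambda^{(0)}}{2\ell}\cdot\frac{G\mathfrak q}{\ell}$, for a total of $\frac{H_2\mathfrak q\,\RE\lambda^{(0)}}{\ell}$, which exactly removes the defect.

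Nothing else breaks:
\begin{itemize}
\item the first identity holds for any $\gamma$, by construction of $\mathfrak p$;
\item $\gamma=\gamma^*$ survives because the added term is real;
\item the new term lies in $\M^{1/2}_{\bullet-1/2}$ and vanishes at $\eta=0$, so the regularity statement is unaffected.
\end{itemize}
With this correction your outline becomes a complete proof.
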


Applying $\proj_{\high} T_{\mathfrak{p}}$ to the first equation in~\eqref{EQ_PL_Spl} and $\proj_{\high} T_{\mathfrak{q}}$ to the second one, we obtain the evolution of $T_\mathfrak{p}\eta$ and $T_\mathfrak{q} w$ in high frequencies:
\begin{equation}\label{eq-diag:0}
\left\{\begin{aligned}
\partial_t(\proj_{\high} T_\mathfrak{p}\proj_{\high}\eta) 
 - \proj_{\high} T_\gamma T_\mathfrak{q} \proj_{\high}w 
+ \proj_{\high} T_V\partial_x(T_\mathfrak{p}\proj_{\high}\eta) 
& = \mathcal{A}_1(\eta,w),\\
\partial_t(\proj_{\high} T_\mathfrak{q}\proj_{\high}w) 
+ \proj_{\high} T_\gamma T_\mathfrak{p}\proj_{\high}\eta + \proj_{\high} T_V\partial_x(T_\mathfrak{q}\proj_{\high}w) 
& = \mathcal{A}_2(\eta,w),
\end{aligned}\right.
\end{equation}
where
$$
\begin{aligned}
\mathcal{A}_1(\eta,w)
&= \proj_{\high} \Big(-T_\mathfrak{p}\Op^\PM(\partial_xV+(\rho+\eta)^{-1}B)\proj_{\high}\eta  
+ T_\mathfrak{p}\R^\high_\DN(\eta,w) \\
&\quad + T_{\partial_t\mathfrak{p}}\proj_{\high}\eta + [T_V\partial_x,T_\mathfrak{p}]\proj_{\high}\eta + ( T_\mathfrak{p} T_\lambda - T_\gamma T_\mathfrak{q} )\proj_{\high}w\Big),\\
\mathcal{A}_2(\eta,w)
&=\proj_{\high}\Big(T_\mathfrak{q}\Op^\PM(\partial_tB+V\partial_xB)\proj_{\high}\eta 
+ T_\mathfrak{q}\R_\PL^\high(\eta,w) \\
&\quad +T_{\partial_t\mathfrak{q}}\proj_{\high}w + [T_V\partial_x,T_\mathfrak{q}]\proj_{\high}w + ( T_\gamma T_\mathfrak{p} - T_\mathfrak{q} T_{h-\rho^{-2}} )\proj_{\high}\eta\Big).
\end{aligned}
$$
For the right-hand side of the diagonalized system (\ref{eq-diag:0}), we have the following lemma:
\begin{lemma}\label{lem-diag:HiFre-Rem}
Fix a real index $s_0>5$. Suppose $(\eta,w)\in C^0_tH^{s_0+1}_x\times C^0_tH^{s_0+1/2}_x$ solves the paralinearized system (\ref{EQ_PL}). Then the right-hand side of \eqref{eq-diag:0} can be re-organized as
\begin{equation}\label{eq-diag:HiFre-Rem}
\begin{aligned}
\mathcal{A}_1(\eta,w) 
&= \proj_{\high}\big(A_1^1\size[D_x]\eta + A_1^2 \size[D_x]^{1/2}w\big) + \R_\DG^1(\eta,w), \\
\mathcal{A}_2(\eta,w)
&= \proj_{\high}\big(A_2^1\size[D_x]\eta + A_2^2\size[D_x]^{1/2}w\big) + \R_\DG^2(\eta,w).
\end{aligned}
\end{equation}
Here in (\ref{lem-diag:HiFre-Rem}), the bounded linear operators $A_j^k$ depend smoothly on\footnote{We can conclude from Proposition \ref{prop:SymReg} that these operators $A_j^k$ are indeed all paradifferential operators; however, this fact is not needed in this paper.} $(\eta,w)$: for any index $s\in\xR$, there holds
\begin{equation}\label{eq-diag:HiFre-RemSymb} 
A_j^k \in C^\infty\big(H^{s_0+1}\times H^{s_0+1/2};\mathcal{L}(H^s)\big), \quad A_j^k\big|_{(\eta,w)=0} = 0.
\end{equation}
The remainders $\R_\DG^{j}$ are regular in $(\eta,w)$, satisfying
\begin{equation}\label{eq-diag:HiFre-RemReg}
\R_\DG^{j} \in \Ta_r^\infty(s_0;H^{\bullet+s_0-2}), \quad (\R_\DG^{j})^{[\leq 1]} = 0, \quad j=1,2.
\end{equation}
\end{lemma}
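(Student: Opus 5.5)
We go through the terms of $\mathcal{A}_1$ and $\mathcal{A}_2$ one at a time. Each is put into one of two classes. The first class is a \emph{linear} operator in $\proj_\high\eta$ or $\proj_\high w$ whose coefficients vanish at $(\eta,w)=0$ and are of order $\leq 1$ on $\eta$ (respectively $\leq 1/2$ on $w$); these give the $A_j^k$. The second class is a regular quadratic remainder in $\Ta_r^\infty(s_0;H^{\bullet+s_0-2})$; these give $\R^j_\DG$. In every case $A_j^k$ is defined as a paradifferential operator composed with $\size[D_x]^{-1}$ or $\size[D_x]^{-1/2}$. Proposition~\ref{prop:PDReg} then gives boundedness on every $H^s$, and smooth dependence on $(\eta,w)\in H^{s_0+1}\times H^{s_0+1/2}$ comes from the regularity of the symbols (Lemma~\ref{lem-diag:Symb}, Proposition~\ref{prop-PLDtN:BdDtN}, Remark~\ref{Time_implicit}). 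The operator norms only use $C^r_x$ norms of the symbols, which $H^{s_0}$ controls.

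\textbf{Order-one and order-$1/2$ terms.} The term $-T_\mathfrak{p}\Op^\PM(\partial_xV+(\rho+\eta)^{-1}B)\proj_\high\eta$ is of order $1$, with coefficient vanishing at $0$ because $B,V$ vanish there. We set it equal to $A\size[D_x]\eta$ with $A=-T_\mathfrak{p}\Op^\PM(\cdots)\size[D_x]^{-1}$. Likewise $T_\mathfrak{q}\Op^\PM(\partial_tB+V\partial_xB)\proj_\high\eta$ is of order $1/2$. Here $\partial_tB\in H^{s_0-2}\subset C^1$ for $s_0>5$, and it vanishes at $0$ by Remark~\ref{Time_implicit}. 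We absorb it into $A_2^1$. For $T_{\partial_t\mathfrak{p}}$ and $T_{\partial_t\mathfrak{q}}$, we use the chain rule: $\partial_t\mathfrak{p}=\der_\eta\mathfrak{p}\cdot\partial_t\eta$. Since $\partial_t\eta\in H^{s_0-1/2}$ and the symbol seminorms $\M^1_0$ need only $\partial_t\eta,\partial_x\partial_t\eta\in L^\infty$, these operators are of order $1$ and $1/2$ with vanishing coefficients at $0$. The commutators $[T_V\partial_x,T_\mathfrak{p}]$ and $[T_V\partial_x,T_\mathfrak{q}]$ are of order $1$ and $1/2$ by Proposition~\ref{prop:SymReg}, because the principal Poisson bracket cancels one derivative; their symbols are linear in $V$ and derivatives of $\eta$. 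The terms $T_\mathfrak{p}\R_\DN^\high$ and $T_\mathfrak{q}\R_\PL^\high$ are remainders in $H^{\bullet+s_0-2}$. This uses $\R_\DN\in H^{\bullet+s_0-1}$ and order $1$ for $\mathfrak{p}$, respectively $H^{\bullet+s_0-3/2}$ and order $1/2$ for $\mathfrak{q}$. Regularity is preserved by Proposition~\ref{prop:PDReg}, and quadraticity follows because $\R^{[\leq1]}=0$ and $\mathfrak{p}$ is nonzero only as a multiplier at $\eta=0$.

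\textbf{Symbolic-calculus defects.} The key terms are $(T_\mathfrak{p}T_\lambda-T_\gamma T_\mathfrak{q})\proj_\high w$ and $(T_\gamma T_\mathfrak{p}-T_\mathfrak{q}T_{h-\rho^{-2}})\proj_\high\eta$. By Lemma~\ref{lem-diag:Symb}, the two-term symbols cancel on $|\xi|\geq 2/\rho$, and $\proj_\high$ localises there modulo infinitely smoothing errors. So the defect is $T_\mathfrak{p}T_\lambda-T_{\mathfrak{p}\sharp\lambda}$ minus the analogous expression for $\gamma,\mathfrak{q}$, plus the lower-order pieces of $\lambda$ and the third-order terms dropped in $\sharp$. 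Proposition~\ref{prop:SymReg}, applied with $r=2$ (the symbols have $C^{r}$ regularity, $r\le s_0-3/2$), shows that these defects are of order $3/2-2+\text{(gain)}\leq 1/2$ on $w$ and $\leq 1$ on $\eta$. The delicate point is that these operators are \emph{not} quadratic as mappings of $(\eta,w)$: at $\eta=0$ all symbols are Fourier multipliers and the defects vanish exactly. Hence the defect operators vanish at $0$, and they can be put into $A_j^k$ as bounded operators composed with $\size[D_x]^{-1/2}$ or $\size[D_x]^{-1}$.

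\textbf{Main obstacle.} The hard part is the bookkeeping needed to make sure that every term of non-negative order goes into $A_j^k$ with its coefficient vanishing at zero, and that nothing of order higher than $1$ on $\eta$ (or $1/2$ on $w$) remains. The top-order cancellation in Lemma~\ref{lem-diag:Symb} is exactly what prevents such a term. A second point is to check that the smooth $C^\infty(H^{s_0+1}\times H^{s_0+1/2};\mathcal{L}(H^s))$ dependence survives the $\partial_t$ terms. These require $s_0>5$, since $\partial_tB\in H^{s_0-2}$ must control $C^{1+}$ seminorms. Everything else is a routine application of Propositions~\ref{prop:PDReg}--\ref{prop:SymReg} and Lemma~\ref{lem:PropOfReg}.
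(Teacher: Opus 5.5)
Your proposal is correct and follows essentially the same term-by-term route as the paper. The paraproduct, $T_{\partial_t\mathfrak{p}}$, $T_{\partial_t\mathfrak{q}}$ and commutator terms become the $A_j^k$ after composing with $\langle D_x\rangle^{-1}$ or $\langle D_x\rangle^{-1/2}$; the composition defects are controlled by Lemma~\ref{lem-diag:Symb} and Proposition~\ref{prop:SymReg}; and only $T_\mathfrak{p}\R^{\high}_{\DN}$ and $T_\mathfrak{q}\R^{\high}_{\PL}$ go into $\R^j_{\DG}$.

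The one small variation is how you show the defects vanish at $\eta=0$. You use that all symbols are then Fourier multipliers satisfying the identities of Lemma~\ref{lem-diag:Symb} exactly on $|\xi|\geq 2/\rho$, whereas the paper points to a factor in each order-$\leq 0$ term ($\lambda^{(0)}$, $\gamma^{(1/2)}$, or an $x$-derivative of a symbol) that vanishes linearly in $\eta$. Both arguments work. One correction to your phrasing: these defect terms are quadratic but not smoothing, which is why they belong in $A_j^k$; they are not ``non-quadratic'' as you say.
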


\begin{proof}
The result follows easily from the symbolic calculus of paradifferential operators in Proposition \ref{prop:SymReg}. As each terms in the expression of $\mathcal{A}_{1}(\eta,w),\mathcal{A}_{2}(\eta,w)$ are handled quite similarly, we shall only write down how to analyze the typical ones among them. 
    
For example, let us look at the operator
$$
\proj_{\high} T_\mathfrak{p}\Op^{\PM}(\partial_xV+(\rho+\eta)^{-1}B)\proj_{\high}
=\left(\proj_{\high} T_\mathfrak{p}\Op^{\PM}(\partial_xV+(\rho+\eta)^{-1}B)\size[D_x]^{-1}\proj_{\high}\right)\size[D_x]
$$ 
appearing in the expression of $\mathcal{A}_1(\eta,w)$. Proposition~\ref{prop-PLDtN:BdDtN} and Lemma \ref{lem:ChgtUnk:psi-w} show that $B\in H^{s_0+1/2}_x,V\in H^{s_0-1/2}_x$ and depend on $(\eta,w)\in H^{s_0+1}_x\times H^{s_0+1/2}_x$ smoothly, vanishing linearly as $(\eta,w)\to0$. By Proposition \ref{prop:PDReg}, the operator
$$
\proj_{\high} T_\mathfrak{p}\Op^{\PM}(\partial_xV+(\rho+\eta)^{-1}B)\size[D_x]^{-1}\proj_{\high}
$$
is of order zero (in $H^s$ for every $s\in\xR$) , and obviously the coefficients of these paradifferential operators depend on $(\eta,w)\in H^{s_0+1}_x\times H^{s_0+1/2}_x$ smoothly. Therefore it is a smooth map from $(\eta,w)\in H^{s_0+1}_x\times H^{s_0+1/2}_x$ to $\mathcal{L}(H^s)$ for every $s\in\xR$. A similar argument applies to the operators 
$$
\proj_{\high}[T_V\partial_x,T_{\mathfrak{p}}]\size[D_x]^{-1}\proj_{\high},\quad
\proj_{\high}[T_V\partial_x,T_{\mathfrak{q}}]\size[D_x]^{-1/2}\proj_{\high}.
$$

Let us turn to symbols involving time derivatives; the key here is to use the equation (\ref{EQ_PL}) itself to express the time derivatives $\partial_t\eta,\partial_tw$ in terms of $\eta,w$ (and their spatial derivatives). For example, in the term
$$
\proj_{\high} T_{\partial_t\mathfrak{p}}\proj_{\high} \eta
=\proj_{\high} T_{\partial_t\mathfrak{p}}\size[D_x]^{-1}\proj_{\high}\size[D_x]\eta,
$$
the symbol  
$$
\partial_t\mathfrak{p}
=\der_\eta \mathfrak{p} \cdot\partial_t\eta
\overset{\eqref{EQ_PL}}{=}
\der_\eta \mathfrak{p} \cdot G[\eta]\psi.
$$
Recalling the definition (\ref{eq-diag:Symb}), we find that the paradifferential operator $T_{\partial_t\mathfrak{p}}$ is of order 1, with coefficients being smooth functions of $\eta,G[\eta]\psi$ and their spatial derivatives of order $\leq3$. On the other hand, $T_{\partial_t\mathfrak{p}}$ does when $(\eta,w)=0$, since the factor $\eta_t=G[\eta]\psi$ is linear in $\psi$ (thus in $w$). Therefore, we conclude that $(\eta,w)\mapsto T_{\partial_t\mathfrak{p}}\size[D_x]^{-1}\proj_{\high}$ is smooth from $(\eta,w)\in H^{s_0+1}_x\times H^{s_0+1/2}_x$ to $\mathcal{L}(H^s)$ for every $s\in\xR$. A similar argument applies to the term $\proj_{\high} T_\mathfrak{q}\Op^\PM(\partial_tB+V\partial_xB)\proj_{\high}\eta $ and the term $\proj_{\high} T_{\partial_t\mathfrak{q}}\proj_{\high} w$.

For the term $\proj_{\high}(T_\mathfrak{p} T_\lambda - T_\gamma T_\mathfrak{q} )\proj_{\high}w$, we need the composition formula (see Proposition~\ref{prop:SymReg}) for the principal and subprincipal parts separately:
$$
\sum_{j,j'\in\{0,1\}} \big( \mathfrak{p}^{(1-j)} \#_{r_{j,j'}} \lambda^{(1-j')} - \gamma^{(3/2-j)} \#_{r_{j,j'}} \mathfrak{q}^{(1/2-j')} \big), \quad r_{j,j'}=s_0-j-j'.
$$
According to Lemma~\ref{lem-diag:Symb}, the principal and the subprincipal parts of this symbol (namely, the symbols of order $2$ and $1$) vanish. Meanwhile, each term of order $\leq 0$ has at least one of the following factors: (1) $\lambda^{(0)}$; (2) $\gamma^{(1/2)}$; (3) derivative in $x$ of $\lambda$ or $\mathfrak{q}$ (see the symbolic calculus formula in Proposition~\ref{prop:SymReg}). The first two factors has linear decay in $\eta$ due to their definition~\eqref{eq-PLDtN:Symb} and~\eqref{eq-diag:Symb}, while the last one will bring about a factor $\partial_x^j\eta$ for some $j\in\xN_+$. Therefore, the symbol above is of order zero and has at least linear decay in $(\eta,w)$ as $(\eta,w)\to 0$. A similar argument applies to the term $\proj_{\high}(T_\gamma T_\mathfrak{p} - T_\mathfrak{q}T_{h-\rho^{-2}} )\proj_{\high}\eta$, showing that these last two terms are indeed order zero operators acting on $\eta,w$. 

Finally, we simply absorb $\proj_{\high} T_\mathfrak{p}\R^\high_\DN(\eta,w)$, $\proj_{\high} T_\mathfrak{q}\R^\high_\PL(\eta,w)$ into the remainders $\R_\DG(\eta,w)$. They are quadratic in $\eta,w$ and of desired regularity in $x$ by (\ref{EQ_PL_Spl}). This completes the proof.
\end{proof}

In conclusion, the equations for high frequencies read
\begin{equation}\label{eq-diag:high}
\left\{\begin{aligned}
\partial_t(\proj_{\high} T_\mathfrak{p}\proj_{\high}\eta) 
& - \proj_{\high} T_\gamma T_\mathfrak{q} \proj_{\high}w + \proj_{\high} T_V\partial_x(T_\mathfrak{p}\proj_{\high}\eta) \\ 
&= \proj_{\high}\big(A_1^1\size[D_x]\eta + A_1^2 \size[D_x]^{1/2}w\big) + \R_\DG^1(\eta,w),\\
\partial_t(\proj_{\high} T_\mathfrak{q}\proj_{\high}w) 
& + \proj_{\high} T_\gamma T_\mathfrak{p}\proj_{\high}\eta + \proj_{\high} T_V\partial_x(T_\mathfrak{q}\proj_{\high}w) \\
&= \proj_{\high}\big(A_2^1\size[D_x]\eta + A_2^2\size[D_x]^{1/2}w\big) + \R_\DG^2(\eta,w),
\end{aligned}\right.
\end{equation}
where the linear operators $A_j^k$ and the remainders $\R_\DG^j$ verify~\eqref{eq-diag:HiFre-RemSymb} and~\eqref{eq-diag:HiFre-RemReg}, respectively.

\subsubsection{Complex formulation}
We now encode the unknown $(\eta,w)$ into a complex-valued function. Define
\begin{equation}\label{eq-diag:ComplexUnk}
\begin{aligned}
u &= \proj_{\low}(\eta+iw) 
+ \proj_{\high}T_\mathfrak{p}\proj_{\high}\eta
+i\proj_{\high}T_{\mathfrak{q}}\proj_{\high}w.
    \end{aligned}
\end{equation}
We claim that it is equivalent to the unknown $(\eta,w)$ in the following sense:
\begin{lemma}\label{DG_Transform}
For an arbitrary index\footnote{Later, we may choose $s_1$ different from the index $s_0$.} $s_1>5/2$, the mapping 
\begin{equation*}
\begin{array}{cccc}
\mathscr{I}_\DG : &H^{s_1+1}(\xG;\xR) \times H^{s_1+1/2}(\xG;\xR) & \mapsto & H^{s_1}(\xG;\xC) \\
& (\eta,w) & \mapsto & u
\end{array}
\end{equation*}
defined by (\ref{eq-diag:ComplexUnk}) realizes a smooth local diffeomorphism near zero. Its inverse $\mathscr{A}_\DG$ is regular in $u$:
$$
\mathscr{A}_\DG \in \Ta^\infty(s_1;H^{\bullet},H^{\bullet+1} \times H^{\bullet+1/2}).
$$

Moreover, for any index $s_0 \ge s_1$, the map $\mathscr{A}_\DG(u)$ can be paralinearized as follows:
\begin{equation}\label{eq-diag:ParalinTrans}
\begin{aligned}
\eta  = T_{\mathfrak{p}_\inv} \RE u + \R_\inv^1(u),
\quad
w  = T_{\mathfrak{q}_\inv} \IM u + \R_\inv^2(u).
\end{aligned}
\end{equation}
Here the symbols $\mathfrak{q}_\inv$ and $\mathfrak{p}_\inv$ take the form
\begin{equation*}
\begin{aligned}
\mathfrak{q}_\inv &= \sum_{\substack{k\in\xZ+1/2 \\ 1/2 \leq k < s_0}} \left( \mathfrak{q}_{\inv ,1}^{(-k)}(x)|\xi|^{-k} + \mathfrak{q}_{\inv ,2}^{(-k)}(x)i\sgn(\xi)|\xi|^{-k} \right), \\
\mathfrak{p}_\inv &= \sum_{\substack{k\in\xZ \\ 1 \leq k < s_0+1/2}} \left( \mathfrak{p}_{\inv ,1}^{(-k)}(x)|\xi|^{-k} + \mathfrak{p}_{\inv ,2}^{(-k)}(x)i\sgn(\xi)|\xi|^{-k} \right),
\end{aligned}
\end{equation*}
where the functions $\mathfrak{q}^{(-k)}_{\inv, j}$'s and $\mathfrak{p}^{(-k)}_{\inv, j}$'s are regular in $u$:
$$
\mathfrak{q}^{(-k)}_{\inv, j} \in \Ta^\infty(s_0;H^{\bullet},H^{\bullet-k+1/2}), \quad \mathfrak{p}^{(-k)}_{\inv, j} \in \Ta^\infty(s_0;H^{\bullet},H^{\bullet-k+1}).
$$
The remainder $\R_\inv^j(u)$'s are regular in $u$, vanishing linearly as $u\to0$:
$$
\R_\inv^1 \in \Ta^\infty(s_0;H^{\bullet},H^{\bullet+s_0}), \quad \R_\inv^2 \in \Ta^\infty(s_0;H^{\bullet},H^{\bullet+s_0+1/2}).
$$
\end{lemma}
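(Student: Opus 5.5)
The plan has two parts: the inverse function theorem for invertibility, then a bootstrap (parametrix) argument for tame regularity and for the paralinearization \eqref{eq-diag:ParalinTrans}. Throughout, I treat $\mathscr{I}_\DG$ as a real map into $H^{s_1}(\xG;\xC)\simeq H^{s_1}\times H^{s_1}$ via $u\mapsto(\RE u,\IM u)$.

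\textbf{Invertibility.} At $(\eta,w)=0$ the symbols $\mathfrak p,\mathfrak q$ reduce to Fourier multipliers: $\mathfrak p^{(1)}|_{\eta=0}=\sqrt{\rho(\xi^2-\rho^{-2})}$ and $\mathfrak q|_{\eta=0}=\sqrt{\rho\lambda^{(1)}(\xi)}$. On $|\xi|\ge 2\rho^{-1}$ these are elliptic of orders $1$ and $1/2$. Hence $\der\mathscr I_\DG(0)$ is
\[
(\eta,w)\mapsto \proj_\low\eta+\proj_\high\mathfrak p(D_x)\proj_\high\eta+i\bigl(\proj_\low w+\proj_\high\mathfrak q(D_x)\proj_\high w\bigr),
\]
up to the smoothing corrections coming from $T_a$ versus $a(D_x)$ and from the subprincipal terms, all of which vanish at $\eta=0$ except frequency cut-offs. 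The real and imaginary parts are Fourier multipliers bounded below on the respective frequency pieces, so this operator is an isomorphism $H^{s_1+1}\times H^{s_1+1/2}\to H^{s_1}\times H^{s_1}$. Smoothness of $\mathscr I_\DG$ follows from Lemma~\ref{lem-diag:Symb} together with Proposition~\ref{prop:PDReg}; for $s_1>5/2$ the symbols lie in $\Gamma^{m}_0$ with $\eta\in H^{s_1+1}$. The inverse function theorem then gives the local inverse $\mathscr A_\DG$ in the fixed topology $s=s_1$.

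\textbf{Tame regularity for $s\ge s_1$.} I would argue as in Lemma~\ref{lem:ChgtUnk:psi-w}. Write $\RE u=\proj_\low\eta+\proj_\high T_{\mathfrak p}\proj_\high\eta$, and let $\mathfrak p_0$ be the elliptic parametrix symbol $1/\mathfrak p^{(1)}$, corrected at subprincipal level via $\sharp$. Proposition~\ref{prop:SymReg} gives
\[
\proj_\high\eta=T_{\mathfrak p_0}\proj_\high\RE u+R(\eta)\proj_\high\eta,
\]
where $R$ has order $-1$ with norm controlled by $\|\eta\|_{H^{s_1+1}}$. The low-frequency part is trivial. Iterating this identity yields
\[
\|\eta\|_{H^{s+1}}\le K(\|u\|_{H^{s_1}})\bigl(\|u\|_{H^s}+\|\eta\|_{H^{s}}\bigr),
\]
and a finite induction on $s$ closes the tame bound; $w$ is handled identically. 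For derivatives, I differentiate $\mathscr I_\DG\circ\mathscr A_\DG=\Id$. This gives $\der^k\mathscr A_\DG$ as $\der\mathscr I_\DG(\mathscr A_\DG)^{-1}$ applied to expressions involving $T_{\der^j\mathfrak p}$, and the same parametrix argument gives the tame estimates \eqref{eq:Reg-Tame}. Lemma~\ref{lem:PropOfReg} handles the compositions.

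\textbf{Paralinearization \eqref{eq-diag:ParalinTrans}.} The main technical step is to construct the symbol $\mathfrak p_\inv$ with $T_{\mathfrak p_\inv}T_{\mathfrak p}\proj_\high-\proj_\high$ smoothing of order $-(s_0+1)$ when acting on $\eta$; $\mathfrak q_\inv$ is built the same way. I would solve the $\#_r$ equation order by order in Remark~\ref{rmk:MultiSymCal}. The leading term is $\mathfrak p_{\inv}^{(-1)}=1/\mathfrak p^{(1)}$. Expanding $\sqrt{\rho+\eta}\,\bigl(\xi^2(1+|\partial_x\eta|^2)^{-3/2}-\rho^{-2}\bigr)^{1/2}$ at large $|\xi|$, and $I_1/I_0$ asymptotically, produces the homogeneous terms $|\xi|^{-k}$ and $i\sgn\xi|\xi|^{-k}$. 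The $\sgn$ terms arise from $\partial_\xi$ of $|\xi|$-powers in the composition. The coefficients are smooth functions of $\eta$ and its derivatives up to order $k-1$ (resp. $k-1/2$), which gives the stated regularities via Proposition~\ref{prop:CompReg}, with $\eta$ regular in $u$ by the previous step.

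The low-frequency pieces $\proj_\low\RE u$ and the remainders from the cut-offs $\chi$ are infinitely smoothing and go into $\R^j_\inv$. The remainder is then $\R_\inv^1=(\Id-T_{\mathfrak p_\inv}T_{\mathfrak p})\eta+\text{smoothing}$, whose regularity follows from Proposition~\ref{prop:SymReg} with $r$ of size $s_0$. The main obstacle is this regularity bookkeeping. The symbols have limited $x$-regularity $C^{\bullet-k}$, so the expansion must be truncated exactly at $k<s_0+1/2$ so that each $\Gamma^{-k}_{r}$ term still has $r>0$. Linear vanishing of $\R_\inv^j$ needs no extra argument: at $\eta=0$ everything is a Fourier multiplier and the expansion is exact up to the smoothing cut-offs, so the remainder at $u=0$ is zero.
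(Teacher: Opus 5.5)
Your proposal is correct and follows essentially the same route as the paper. Invertibility comes from the inverse function theorem at the Fourier-multiplier linearization. An order-by-order elliptic parametrix for $\mathfrak{p}$ and $\mathfrak{q}$, rewritten in homogeneous form via the Bessel asymptotics, then drives both the tame bootstrap $u\in H^s\Rightarrow(\eta,w)\in H^{s+1}\times H^{s+1/2}$ and the paralinearization, with derivatives handled by differentiating the defining identity. One small slip: the coefficient of $|\xi|^{-k}$ in $\mathfrak{p}_\inv$ involves derivatives of $\eta$ up to order $k$, not $k-1$, since already $1/\mathfrak{p}^{(1)}$ contains $\partial_x\eta$; the correct count still gives exactly the stated $H^{\bullet-k+1}$ regularity.
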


\begin{proof}
By (\ref{eq-diag:Symb}), the symbols $\mathfrak{p},\mathfrak{q}$ involve up to 3 spatial derivatives of $\eta$. By Proposition \ref{prop:SymReg}, the mapping $\mathscr{I}_\DG$ is a smooth map. Furthermore, by (\ref{eq-diag:Symb}), the operators $T_{\mathfrak{p}},T_{\mathfrak{q}}$ shall become simple Fourier multipliers when $\eta=0,w=0$. Thus the linearization of (\ref{eq-diag:ComplexUnk}) at $(\eta,w)=(0,0)$ is the Fourier multiplier
$$
\begin{aligned}
\left(\proj_{\low}+\sqrt{|D_x|^2-\rho^{-2}}\proj_{\high}\right)\RE
+i\left(\proj_{\low}+\sqrt{\lambda^{(1)}(D_x)}\proj_{\high}\right)\IM,
\end{aligned}
$$
which is a linear isomorphism from $H^{s_1+1}(\xG;\xR) \times H^{s_1+1/2}(\xG;\xR)$ to ${H}^{s_1}(\xG;\xC)$. Note that the Fourier multiplier never vanishes in both the real and imaginary part, since the projection $\proj_{\high}$ keeps only frequencies $\geq 2\rho^{-2}$, avoiding the zeros of $\sqrt{\xi^2-\rho^{-2}}$ and $\lambda^{(1)}(\xi)$. Then the existence of inverse mapping $\mathscr{A}_\DG$ at index $s_1$ follows from inverse function theorem. Now, we will first prove the paralinearization~\eqref{eq-diag:ParalinTrans} and then deduce the regularity of $\mathscr{A}_\DG$ iteratively.

The constructions of the symbols $\mathfrak{q}_\inv,\mathfrak{p}_\inv$ indeed follow from a rather standard ``elliptic parametrix argument" -- see for example Theorem 18.1.9 of \cite{Hormander-III}; although the statement was made for pseudodifferential operators in the literature, the argument remains largely unchanged since paradifferential operators enjoy exactly the same symbolic calculus. The key here is to observe that $\mathfrak{p}\geq c|\xi|$ and $\mathfrak{q}\geq c|\xi|^{1/2}$ for $|\xi|\gg1$. We write down the details for the construction of $\mathfrak{q}_\inv$; the argument for $\mathfrak{p}_{\inv}$ follows in the same way.

Our goal is to construct a symbol 
$$
\mathfrak{q}_{\inv} = \sum_{\substack{k\in\xZ+1/2 \\ 1/2 \leq k < s_0-1/2}} \mathfrak{q}_{\inv}^{(-k)}(x,\xi)
$$ 
under the condition $\eta\in H^{s_0+1}$ with $s_0 \ge s_1$ such that 
$$
T_{\mathfrak{q}_{\inv}} T_{\mathfrak{q}} = \Id + \text{operators of order $\leq (-s_0+1/2)$}.
$$
Through symbolic calculus (see Proposition~\ref{prop:SymReg}) \footnote{This also guarantees that the operator norm of remainders depends only on $H^{s_0+1}$ norm of $\eta$.}, the composition of $T_{\mathfrak{q}_{\inv}}$ and $T_{\mathfrak{q}}$ equals
$$
\Op^\PM\sum_{\substack{k\in\xZ+1/2 \\ 1/2 \leq k < s_0}} \mathfrak{q}_{\inv}^{(-k)} \#_{s_0-k} \mathfrak{q} 
,
$$
up to some operators of order $(1-s_0)$. Our purpose is to choose $\mathfrak{q}_\inv$ so that the symbol above equals $1$ (with remainder of order $\leq -s_0+1/2$). Comparing symbols of different orders, the symbol $\mathfrak{q}_{\inv}$ is then uniquely determined by the following iteration procedure: at the highest order,
$$
\mathfrak{q}^{(-1/2)}_\inv = \frac{1}{\mathfrak{q}};
$$
for all integer $0< m < s_0-1/2$, the symbol $\mathfrak{q}_\inv^{(-m-1/2)}$ is explicitly given by 
\begin{equation*}
\begin{aligned}
\mathfrak{q}_\inv^{(-m-1/2)} 
=& - \frac{1}{\mathfrak{q}}\sum_{\substack{k\in\xZ+1/2 \\ 1/2 \leq k \leq m+1/2}} \frac{1}{(m-k+1/2)!} \partial_\xi^{m-k+1/2}\mathfrak{q}_{\inv}^{(-k)}  D_x^{m-k+1/2}\mathfrak{q}.
\end{aligned}
\end{equation*}
Note that $\mathfrak{q} \ge c|\xi|^{1/2}$ for some constant $c>0$. From this formula, it is clear that $\mathfrak{q}_\inv^{(-m-1/2)}$ depends smoothly on $\eta$ and its derivatives up to order $(m+1)$. Finally, one observes that, in the right-hand side of the formula above, the dependence on $\xi$ only comes from modified Bessel functions and its derivatives. Thus, through the asymptotic expansion of these functions as $|\xi| \to \infty$, we can reformulate $\mathfrak{q}_\inv^{(-m-1/2)}$ as some functional of $\eta$ and its derivatives up to order $(m+1)$, multiplies with $|\xi|^{-m-1/2}$ or $i\sgn(\xi)|\xi|^{-m-1/2}$. This is exactly the form of $\mathfrak{q}_\inv$ we are looking for.

With the $\mathfrak{q}_\inv$ constructed as above and $s_0=s_1$, we obtain the following relation between $(\eta,w)$ and $u$:
\begin{equation}\label{eq-diag:qqinvRel}
    \proj_\high w = T_{\mathfrak{q}_\inv} \proj_\high \IM u + \Big( \text{operators of order $\leq (-s_1+1/2)$} \Big) w.
\end{equation}
Regarding the last $w$ as the second component of $\mathscr{A}_\DG(u)$, we prove inductively that for all $s\ge s_1$, $u \in H^{s}$ implies $w\in H^{s+1/2}$ (and $\eta\in H^{s+1}$ in the same way). Moreover, through the regularity of paradifferential operators (see Proposition~\ref{prop:PDReg}), we can also prove the tame estimates of $\mathscr{A}_\DG$, since $T_{\mathfrak{q}_\inv}$ and the remainders have operator norm bounded by $H^{s_1+1}$ norm of $\eta$. As for the derivatives of $\mathscr{A}_\DG$, we can apply derivatives in $u$ to the relation~\eqref{eq-diag:qqinvRel} above and utilize again the iterative argument. This completes the proof of desired regularity $\mathscr{A}_\DG \in \Ta^\infty(s_1;H^{\bullet},H^{\bullet+1} \times H^{\bullet+1/2})$.
\end{proof}

\begin{remark}\label{Complex_u}
Clearly the complex unknown $u$ equals, up to a smooth quadratic remainder, 
$$
\left(\proj_{\low}+\sqrt{|D_x|^2-\rho^{-2}}\proj_{\high}\right)\eta
+i\left(\proj_{\low}+\sqrt{\lambda^{(1)}(D_x)}\proj_{\high}\right)\psi.
$$
Therefore, at the linear level, the low frequency equation in (\ref{EQ_PL_Spl}) is nothing but the right-hand side of (\ref{LEQ(0,0)}) projected to $|\xi|\leq2\rho^{-1}$. From now on, we shall not distinguish the complex unknown $u$ and the $\xR^2$-valued unknown $(\RE u,\IM u)$. We keep the complex unknown only to simplify notation.
\end{remark}

Combing the diagonalized equation \eqref{eq-diag:high}  and the low-frequency equation in \eqref{EQ_PL_Spl}, we arrive at the evolution equation for $u$:
\begin{equation}\label{eq-diag:fin}
\begin{aligned}
\partial_t u& - \Big(G[0]\proj_{\low}\IM u
-iH'[0]\proj_{\low}\RE u\Big) 
+i\proj_{\high} T_{\gamma}\proj_{\high}u
+\proj_{\high} T_V\partial_x \proj_{\high} u\\
&= \proj_{\high}(A_2^1 + iA_1^1)\size[D_x]\eta +  \proj_{\high}(A_2^2 + iA_1^2)\size[D_x]^{1/2}w + \R_\DG^3(\eta,w).
\end{aligned}
\end{equation}
Here the linear operators $A_j^k$ verify (\ref{eq-diag:HiFre-RemSymb}); the remainder $\R_\DG^3(\eta,w)$ collects all the remainders coming from the right-hand side of (\ref{EQ_PL_Spl}) and (\ref{eq-diag:high}):
\begin{equation}\label{R_DG3}
\begin{aligned}
\R_\DG^3(\eta,w)
&=\R_\DN^{\low}(\eta,w)-i\R_\PL^{\low}(\eta,w)
+\R_\DG^1(\eta,w)-i\R_\DG^2(\eta,w)\\
&\quad-i\proj_{\high}T_\gamma\proj_{\low}(\eta+iw)
-\proj_{\high} T_V\partial_x \proj_{\low}(\eta+iw).
\end{aligned}
\end{equation}
From \eqref{eq-diag:HiFre-RemReg} we know that the first four terms in (\ref{R_DG3}) are regular and quadratic in $(\eta,w)$, belonging to the class $\Ta^\infty(s_0;H^{\bullet+1}\times H^{\bullet+1/2},H^{\bullet+s_0-2})$. On the other hand, the last two terms in \eqref{eq-diag:HiFre-RemReg} are infinitely smoothing and quadratic in $(\eta,w)$; in fact, by the definition (\ref{gamma})(\ref{gamma_detail}), the operator $T_\gamma$ differs from the Fourier multiplier $\Lambda_\disp(D_x)(1-\chi(D_x))$ (whose composition with $\proj_{\low}$ vanishes) by an operator of size $O(|\eta|_{C^2}|D_x|^{3/2})$, while the operator $T_V$ is linear in $\psi$; it is also obvious the last two terms are infinitely smoothing. We thus conclude
$$
\R_\DG^3 \in \Ta^\infty(s_0;H^{\bullet+1}\times H^{\bullet+1/2},H^{\bullet+s_0-2}), \quad \big(\R_\DG^3\big)^{[\leq 1]}=0.
$$

It remains to express the right-hand side of~\eqref{eq-diag:fin} in terms of $u$. We apply the paralinearization formula~\eqref{eq-diag:ParalinTrans} to the first two terms:
\begin{equation}\label{Op_h}
\begin{aligned}
\proj_{\high}&(A_2^1 + iA_1^1)\size[D_x]\eta +  \proj_{\high}(A_2^2 + iA_1^2)\size[D_x]^{1/2}w\\
&= \proj_{\high}(A_2^1 + iA_1^1) \size[D_x] T_{\mathfrak{p}_\inv}\RE \proj_{\high}u
+\proj_{\high}(A_2^2 + iA_1^2)\size[D_x]^{1/2}T_{\mathfrak{q}_\inv}\IM \proj_{\high}u
+ \R_\DG^4(u),
\end{aligned}
\end{equation}
where the remainder $\R_\DG^4(u)$ reads
\begin{equation}\label{R_DG4}
\R_\DG^4(u)
=\proj_{\high}(A_2^1 + iA_1^1) \size[D_x] \R^1_\inv(u)
+\proj_{\high}(A_2^2 + iA_1^2)\size[D_x]^{1/2} \R^2_\inv(u).
\end{equation}
Here we substitute in $(\eta,w)=\mathscr{A}_\DG(u)$, so that all the operators $A_j^k$ all depend on $u$ now. By Lemma \ref{lem-diag:HiFre-Rem}, this remainder term is regular and quadratic in $u$, belonging to the class $ \Ta^\infty(s_0;H^{\bullet},H^{\bullet+s_0-1/2})$, and vanishes quadratically in $u$ as $\|u\|_{H^{s_0}} \to 0$, since the operators $A_j^k$ vanish when $u=0$. 

As for the remainder $\R_\DG^3(\eta,w)$, we simply substitute in $(\eta,w)=\mathscr{A}_\DG(u)$. Therefore, 
$$
\R_\DG(u)
:=\R_\DG^3\circ\mathscr{A}_\DG(u)+\R_\DG^4(u)
\in \Ta^\infty(s_0;H^{\bullet},H^{\bullet+s_0-2}).
$$

\subsection{Conclusion: Diagonalization and Extension} 
Summarizing \eqref{eq-diag:fin}-\eqref{R_DG4}, we re-write \eqref{eq-diag:fin} into a terser form: fixing $s_0 > 5$, 
\begin{equation}\label{EQ_DG}
\partial_t u - L(D_x)\proj_{\low}u
+ i\proj_{\high}T_\gamma\proj_{\high} u 
+ \proj_{\high}T_V\partial_x\proj_{\high} u
- \proj_{\high}\A\proj_{\high}u = \R_\DG(u).
\end{equation}
Here in (\ref{EQ_DG}), with $\proj_{\low},\proj_{\high}$ as in (\ref{Low_High}) and $\eta,w,u$ related by \eqref{eq-diag:ComplexUnk},

\begin{enumerate}[label=\textbf{(DG\arabic*)}]
\item\label{Diag1}
The \emph{real linear} operator $L(D_x)$ is the same as in (\ref{LEQ(0,0)}) (see Remark \ref{Complex_u}):
$$
\begin{aligned}
L(D_x)\proj_{\low}u
&=G[0]\proj_{\low}\IM u-iH'[0]\proj_{\low}\RE u \\
&=\frac{I_1(\rho|D_x|)}{I_0(\rho|D_x|)}|D_x|\proj_{\low}\IM u
+i\left(-|D_x|^2+\frac{1}{\rho^2}\right)\proj_{\low}\RE u.
\end{aligned}
$$

\item\label{Diag2}
The symbol $\gamma$ is given by (\ref{gamma})(\ref{gamma_detail}):
$$
\gamma = \gamma^{(3/2)} -\frac{i}{2}\partial_x\partial_\xi\gamma^{(3/2)}, \quad \gamma^{(3/2)}
=\sqrt{ \left(\frac{|\xi|^{2}}{(1+|\partial_x\eta|^2)^{3/2}}-\frac{1}{\rho^2}\right)\frac{I_1(\rho|\xi|)}{I_0(\rho|\xi|)}|\xi|} \cdot\big(1-\chi(\xi)\big).
$$
When $u\simeq0$ varies in $H^4$, the symbol $\gamma$ shall depend smoothly on $u$.

\item\label{Diag3}
The real-valued function $V$ is given in (\ref{DN_V_B}):
$$
V = V(\eta,\psi)=\partial_x\psi-B\partial_x\eta,
$$
where $\psi$ and $w$ are related by Lemma \ref{lem:ChgtUnk:psi-w}. When $u\simeq0$ varies in $H^{s_0}$, $V\in H^{s_0-1/2}$ shall depend smoothly on $u$.

\item\label{Diag4}
The \emph{real linear} operator 
$$
\A=(A_2^1 + iA_1^1) \size[D_x] T_{\mathfrak{p}_\inv}\RE
+(A_2^2 + iA_1^2)\size[D_x]^{1/2}T_{\mathfrak{q}_\inv}\IM. 
$$
For every $s\in\xR$, $u\mapsto\A$ is a $C^\infty$ map from $0\simeq u\in H^{s_0}$ to the algebra of \emph{real linear} operators $\mathcal{L}(H^s)$, and $\A\big|_{u=0}=0$.

\item\label{Diag5}
The remainder $\R_\DG(u)$ is regular and quadratic in $u$ when $\|u\|_{H^{s_0}}\simeq0$:
$$
\R_\DG \in \Ta^\infty(s_0;H^{\bullet},H^{\bullet+s_0-2}), \quad \R_\DG^{[\leq 1]}=0.
$$
\end{enumerate}

Since we aim to study long time behavior of the decaying solutions, it is convenient to introduce a cut-off in the space $H^{s_0}$ and extend (\ref{EQ_DG}) into the whole of $H^{s_0}$. We choose any smooth even function $\kappa:\xR\to[0,1]$, such that $\kappa(x)=1$ for $|x|\leq2$ and $\kappa(x)=0$ for $|x|\geq4$, and fix some $\delta_0>0$ suitably (but not arbitrarily) small. Then we define the extended symbol $\gamma_{\Ext}$ as
\begin{equation}\label{gamma_Ext}
\begin{aligned}
\gamma_{\Ext}
=\left(1-\frac{i}{2}\partial_x\partial_\xi\right)
\sqrt{ \left(\frac{|\xi|^{2}}{\left(1+\kappa\big(\delta_0^{-1}\|u\|_{H^{3}_x}\big)|\partial_x\eta|^2\right)^{3/2}}-\frac{1}{\rho^2}\right)\frac{I_1(\rho|\xi|)}{I_0(\rho|\xi|)}|\xi|} \cdot\big(1-\chi(\xi)\big),
\end{aligned}
\end{equation}
so that $\gamma_{\Ext}$ is now an elliptic symbol well-defined for all $u\in H^{s_0}$ (instead of $u$ near zero). We then define the extended operator
\begin{equation}\label{Gamma_Ext}
\Gamma_\Ext
=\kappa\big(\delta_0^{-1}\|u\|_{H^{s_0}_x}\big)\left(
T_{\gamma_\Ext}-iT_V\partial_x+i\A
\right).
\end{equation}
We finally define the extended system
\begin{equation}\label{EQ_Red_Ext}
\begin{aligned}
\partial_t u - L(D_x)\proj_{\low}u
+ i\proj_{\high} \Gamma_{\Ext} \proj_{\high}u= \R_\Ext(u),
\end{aligned}
\end{equation}
where in (\ref{EQ_Red_Ext}), the extended smoothing remainder
\begin{equation}\label{R_Ext}
\R_\Ext(u)=\kappa\big(\delta_0^{-1}\|u\|_{H^{s_0}_x}\big)\R_\DG(u).
\end{equation}
The advantage of (\ref{EQ_Red_Ext}) is twofold: as we shall see in Theorem \ref{GWP_EQ}, the system is globally well-posed in $H^s$ with $s\geq s_0$, while on the other hand it obviously coincides with the actual water jet system (\ref{EQ_DG}) in the set $\|u\|_{H^{s_0}_x}<2\delta_0$, an open ball in $H^{s_0}$. We summarize the properties that will be used later as follows; they all follow easily from \ref{Diag1}-\ref{Diag5}.

\begin{enumerate}[label=\textbf{(Ext\arabic*)}]

\item\label{Ext1} 
For every $s\in\xR$, the \emph{real linear} operator $\Gamma_\Ext$ defines a smooth mapping from $u\in H^{s_0}$ to the space of operators $\mathcal{L}(H^s,H^{s-3/2})$, vanishing when $\|u\|_{H^{s_0}}\geq 4\delta_0$. Furthermore, when $u\to0$ in $H^{s_0}$, there holds, for all $s\in\xR$,
$$
\left\| \Gamma_\Ext-\kappa\big(\delta_0^{-1}\|u\|_{H^{s_0}_x}\big)\Lambda_\disp(D_x)\big(1-\chi(D_x)\big) ;\mathcal{L}(H^s,H^{s-3/2})\right\|
=O\big(\|u\|_{H^{s_0}_x}\big).
$$

\item\label{Ext2} For $s\geq s_0$, the smoothing remainder $\R_\Ext(u)$ is a smooth map from $H^s$ to $H^{s+s_0-2}$,  vanishing when $\|u\|_{H^{s_0}}\geq 4\delta_0$, satisfying tame estimates: with $\kappa_1$ being some bump function that equals 1 on $\mathrm{supp}\kappa$, 
\begin{equation}\label{Size_RExt}
\begin{aligned}
\|\R_\Ext(u)\|_{H^{s+s_0-2}}
&\lesssim_{s}\kappa\big(\delta_0^{-1}\|u\|_{H^{s_0}_x}\big)\|u\|_{H^{s_0}}\|u\|_{H^s},\\
\big\|\der_u\R_\Ext(u)\cdot\bm{\delta}u\big\|_{H^{s+s_0-2}}
&\lesssim_{s}\kappa_1\big(\delta_0^{-1}\|u\|_{H^{s_0}_x}\big)\big(\|u\|_{H^{s_0}}\|\bm{\delta} u\|_{H^s}
+\|u\|_{H^{s}}\|\bm{\delta} u\|_{H^{s_0}}\big),\\
\big\|\der_u^k\R_\Ext(u)\cdot(\bm{\delta}u)^{\otimes k} \big\|_{H^{s+s_0-2}}
&\lesssim_{s}
\delta_0^{-(k-1)}
\kappa_1\big(\delta_0^{-1}\|u\|_{H^{s_0}_x}\big) 
\big( \|\bm{\delta} u\|_{H^{s_0}}^{k-1}\|\bm{\delta} u\|_{H^s} 
+ \|u\|_{H^{s}} \|\bm{\delta} u\|_{H^{s_0}}^k \big),
\quad k\geq2.
\end{aligned}
\end{equation}
\end{enumerate}

\section{Paradifferential Propagator and Twisted Duhamel Formula}\label{Sec:4}

In this section, we derive from the quasilinear paradifferential equation (\ref{EQ_Red_Ext}) a \emph{twisted Duhamel formula}, which is an integral equation that leads to a fixed point equation. This will be achieved through a detailed analysis of the \emph{propagator} associated to the paradifferential operator $\partial_t+i\proj_{\high}\Gamma_{\Ext}\proj_{\high}$. Due to the quasilinearity of the system, differentiation of the propagator with respect to coefficients of $\gamma_\Ext$ causes a loss of 1.5 derivatives. But this loss can be easily compensated by the regularity gain in the right-hand side of (\ref{EQ_Red_Ext}). From the twisted Duhamel formula, we will deduce the global well-posedness of the truncated system~\eqref{EQ_Red_Ext}. Meanwhile, energy estimates and dependence on initial data of the paradifferential propagator are crucial in the construction of hyperbolic manifolds. The theory is similar for either $\xG=\xR$ or $\xT$, so we shall not make distinction for these two cases. 

\subsection{Paradifferential Propagator}\label{subsect:Fundamental}
We are at the place to study the propagator, or fundamental solution of the paradifferential operator $\partial_t+i\proj_{\high}\Gamma_{\Ext}\proj_{\high}$, which determines the behavior of the solution in high frequencies. The proof will occupy the rest of this subsection.

\begin{theorem}\label{Fundamental}
Fix an index $s_0>5$. Fix some $u\in L^\infty_{t;\loc}H^{s_0}_x\cap W^{1,\infty}_{t;\loc}H^{s_0-3/2}_x$, and let the operator $\Gamma_\Ext$ be as in (\ref{Gamma_Ext}):
$$
\Gamma_\Ext
=\kappa\big(\delta_0^{-1}\|u\|_{H^{s_0}_x}\big)\left(
T_{\gamma_\Ext}-iT_V\partial_x+i\A
\right).
$$
There exists a linear operator $\bm{F}(u;t,t_0)$, called the ``propagator", or ``fundamental solution" of $\partial_t+i\proj_{\high} \Gamma_\Ext\proj_{\high}$, verifying the following properties:
\begin{enumerate}[label=(\textbf{F\arabic*})]
\item\label{F1} There holds the transition property $\bm{F}(u;t,t)\equiv\Id$, $\bm{F}(u;t,t_0)\bm{F}(u;t_0,t')=\bm{F}(u;t,t')$. Fixing $s\in\xR$ and $f\in L^1_{t;\loc}H^s_x$, the function
$$
v(t)=\bm{F}(u;t,t_0)v(t_0)+\int_{t_0}^t\bm{F}(u;t,\tau)f(\tau)\dtau
$$
is of class $C^0_{t;\loc}H^s_x\cap W^{1,\infty}_{t,\loc}H^{s-3/2}_x$ and is the unique solution in that space of the linear evolution equation
$$\partial_t v+i\proj_{\high}\Gamma_{\Ext}\proj_{\high} v=f\in L^1_{t;\loc}H^s_x,\quad v(t_0)\in H^s\text{ given}.
$$

\item\label{F2} For each $s\in\xR$, $\bm{F}(u;t,t_0)$ is a bounded real linear operator on $H^s_x$: given any $h\in H^s_x$,
$$
\|\bm{F}(u;t,t_0)h\|_{H^s_x}
\leq C_s\exp\left(C_s\int_{[t_0,t]}\mathtt{F}(u;
\tau)\dtau\right)\|h\|_{H^s_x},
$$
where with $\kappa_1$ being some bump function that equals 1 on $\mathrm{supp}\kappa$, 
$$
\mathtt{F}(u;t)=\kappa\big(\delta_0^{-1}\|u\|_{H^{s_0}_x}\big) \|u(t)\|_{H^{s_0}_x}+\kappa_1\big(\delta_0^{-1}\|u(t)\|_{H^{3}_x}\big)\|\partial_tu(t)\|_{H^{3}_x}.
$$
Here we write $\int_{[a,b]}f$ for the absolute value of the integral $\int_a^bf$, regardless of the order of $a,b$.

\item\label{F3} Each differentiation of $\bm{F}(u;t,t_0)$ with respect to\footnote{Although $L^\infty_{t;\loc}H^{s_0}_x\cap W^{1,\infty}_{t;\loc}H^{s_0-3/2}_x$ is not a normed space, the propagator $\bm{F}(u;t,t_0)$ clearly depends only on the restriction of $u$ between time $t_0$ and $t$, so differentiation in $u$ does make sense.} $u\in L^\infty_{t;\loc}H^{s_0}_x\cap W^{1,\infty}_{t;\loc}H^{s_0-3/2}_x$ loses 1.5 derivatives: fixing $k\geq1$, $s\in\xR$, given any $h\in H^s$, there holds
$$
\begin{aligned}
\|(\der_u&\bm{F}\cdot(\del u)^{\otimes k})(u;t,t_0)h\|_{H^{s-3k/2}_x}\\
&\leq C_{s,k}\delta_0^{-k}
\exp\left(C_{s,k}\int_{[t_0,t]}\mathtt{F}(u;\tau)\dtau\right)\cdot
\left(\int_{[t_0,t]}\del u(\tau)\|_{H^{s_0}_x}\dtau\right)^k
\|h\|_{H^s_x}.
\end{aligned}
$$
\end{enumerate}
\end{theorem}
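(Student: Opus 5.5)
The plan is to construct $\bm{F}(u;t,t_0)$ by solving the linear equation $\partial_t v+i\proj_{\high}\Gamma_\Ext\proj_{\high}v=f$ via energy estimates plus a regularization (Friedrichs mollifier / Galerkin in frequency) argument. The key structural input is that the principal part of $i\Gamma_\Ext$ is almost anti-self-adjoint on $L^2$. Concretely:
\begin{itemize}
\item $\gamma_\Ext=\gamma_\Ext^{(3/2)}-\frac{i}{2}\partial_x\partial_\xi\gamma_\Ext^{(3/2)}$ with $\gamma_\Ext^{(3/2)}$ real, so by the adjoint formula of Proposition~\ref{prop:SymReg} (with Notation~\ref{note:2TermComp}, as in Lemma~\ref{lem-diag:Symb}, $\gamma=\gamma^*$) we get that $T_{\gamma_\Ext}-T_{\gamma_\Ext}^*$ is of order $3/2-2<0$.
\item $T_V\partial_x+(T_V\partial_x)^*=-T_{\partial_xV}$ up to lower order, which is bounded.
\item $\A$ is bounded on every $H^s$ by \ref{Diag4}.
\end{itemize}
Hence $i\Gamma_\Ext+(i\Gamma_\Ext)^*$ is bounded on $L^2$, with norm $\lesssim\kappa\,\|u\|_{H^{s_0}}$. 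Note that the $\xi$-independent constant part of $\gamma$ cancels exactly because of the adjoint symmetry. To pass to $H^s$, I will conjugate by the Fourier multiplier $\size[D_x]^s$. The commutator $[\size[D_x]^s,T_{\gamma_\Ext}]$ has order $s+1/2$, and its symbol involves $\partial_x\gamma_\Ext$, hence $\partial_x^2\eta$, so its norm is controlled by $\|u\|_{H^{s_0}}$. Similarly $[\size[D_x]^s,T_V\partial_x]$ has order $s$. To absorb the order-$1/2$ excess I will instead use the symmetrizer $T_{(\gamma_\Ext^{(3/2)})^{2s/3}}$, which commutes with $T_{\gamma_\Ext}$ to principal order (symbols commute, and the subprincipal Poisson bracket vanishes for powers of the same symbol). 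This yields a modified energy $E_s(v)=\|T_{(\gamma^{(3/2)}_\Ext)^{2s/3}}\proj_{\high}v\|_{L^2}^2+\|\proj_{\low}v\|^2_{H^s}$ equivalent to $\|v\|_{H^s}^2$. Its time derivative involves $T_{\partial_t\gamma^{(3/2)}_\Ext}$, which costs $\|\partial_tu\|_{H^3}$. This explains the second term of $\mathtt{F}$ and the $\kappa_1(\delta_0^{-1}\|u\|_{H^3})$ cutoff coming from $\gamma_\Ext$'s dependence on $\|u\|_{H^3}$. Gronwall then gives the a priori bound \ref{F2}.

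For existence, I will replace $\Gamma_\Ext$ by $J_\epsilon\Gamma_\Ext J_\epsilon$, where $J_\epsilon$ is a frequency cutoff. This gives an ODE on $H^s$ that is Lipschitz in time thanks to $u\in W^{1,\infty}H^{s_0-3/2}$. The energy estimates above hold uniformly in $\epsilon$ (the cutoff is a self-adjoint Fourier multiplier commuting with the symmetrizer up to harmless terms). I then pass to the limit by compactness in $H^{s-3/2}$. Uniqueness in $C^0H^s\cap W^{1,\infty}H^{s-3/2}$ follows from the energy estimate at a level $s-3/2$ for the difference of two solutions; both $s$ arbitrary and negative $s$ are allowed since the calculus holds for every $s\in\xR$. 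The transition property and Duhamel representation in \ref{F1} then follow from uniqueness. Continuity in time (rather than mere boundedness) follows from the standard weak continuity plus norm continuity argument for the energy.

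For \ref{F3}, I will differentiate the equation in $u$. Writing $v=\bm{F}(u;t,t_0)h$, the derivative $\dot v=(\der_u\bm{F}\cdot\del u)h$ solves
$$\partial_t\dot v+i\proj_{\high}\Gamma_\Ext\proj_{\high}\dot v=-i\proj_{\high}(\der_u\Gamma_\Ext\cdot\del u)\proj_{\high}v,\quad \dot v(t_0)=0,$$
so by Duhamel $\dot v(t)=-i\int_{t_0}^t\bm{F}(u;t,\tau)\proj_{\high}(\der_u\Gamma_\Ext\cdot\del u(\tau))\proj_{\high}\bm{F}(u;\tau,t_0)h\,\dtau$. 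The operator $\der_u\Gamma_\Ext\cdot\del u$ is of order $3/2$ with norm $\lesssim\delta_0^{-1}\|\del u\|_{H^{s_0}}$; the factor $\delta_0^{-1}$ arises from differentiating $\kappa(\delta_0^{-1}\|u\|)$, while other terms carry an extra factor of $\|u\|\lesssim\delta_0$. Applying \ref{F2} at levels $s$ and $s-3/2$ gives the $k=1$ bound. Higher $k$ follows by induction: the $k$-th derivative solves the same equation with source a sum of products of lower derivatives and $\der_u^j\Gamma_\Ext$, each costing $3/2$ derivatives and a factor $\delta_0^{-1}\int\|\del u\|_{H^{s_0}}$. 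Smoothness of $\bm{F}$ in $u$ as a map (and not merely the existence of formal derivatives) is justified by estimating difference quotients in the weaker topology, which the derivative loss permits.

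The main obstacle is the symmetrized $H^s$ energy estimate at order $3/2$. One must check that all commutator and adjoint errors are of order $0$ with constants depending only on $\|u\|_{H^{s_0}}$ and $\|\partial_tu\|_{H^3}$, and in particular that the $x$-dependence of $\gamma^{(3/2)}_\Ext$ (through $|\partial_x\eta|^2$) produces no order-$1/2$ residue. My first attempt is the power-of-symbol symmetrizer. If that leaves a residue, I would instead use a paradifferential gauge $T_{\gamma^{(3/2)}_\Ext}^{2s/3}$ defined via parametrix, as in the proof of Lemma~\ref{DG_Transform}.
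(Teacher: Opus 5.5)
Your a priori estimate and your treatment of (F3) coincide with the paper's Steps 1--4 and 6. The paper uses the same almost self-adjointness of $\Gamma_\Ext$ and a symmetrizer that is a $2s/3$-power of the principal symbol: its $\Gamma_s=T_{(1+b_\Ext)^{2s/3}}\langle D_x\rangle^{s}$ is exactly the $2s/3$-power of $(1+b_\Ext)\langle\xi\rangle^{3/2}$, so the Poisson bracket vanishes as with your choice. It likewise pays $\|\partial_tu\|_{H^3}$ for $\partial_t\Gamma_s$, closes with Gr\"onwall, and writes $(\der_u\bm F\cdot\del u)h$ by Duhamel (your sign $-i$ is the right one).

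\textbf{The gap is in the existence step.} There you depart from the paper's duality argument. You regularize to $J_\epsilon\proj_\high\Gamma_\Ext\proj_\high J_\epsilon$ with a Fourier multiplier $J_\epsilon=j(\epsilon D_x)$, and claim the $H^s$ energy estimate is uniform in $\epsilon$ because $J_\epsilon$ commutes with the symmetrizer ``up to harmless terms''. For $s\neq0$ this is false. Write $\Lambda$ for the symmetrizer (symbol $\lambda$, order $s$) and $P=\proj_\high\Gamma_\Ext\proj_\high$. Then
\begin{equation*}
\Lambda J_\epsilon PJ_\epsilon-J_\epsilon PJ_\epsilon\Lambda
=J_\epsilon[\Lambda,P]J_\epsilon+J_\epsilon P[\Lambda,J_\epsilon]+[\Lambda,J_\epsilon]PJ_\epsilon .
\end{equation*}
The first term has order $s$. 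But $[\Lambda,J_\epsilon]$ has principal symbol $i\,\partial_x\lambda\,\partial_\xi j_\epsilon$, which is $O(\langle\xi\rangle^{s-1})$ uniformly and no better, so the other two terms have order $s+1/2$. They do not cancel in the time derivative of $\|\Lambda v\|_{L^2}^2$. Indeed, $-i\bigl(J_\epsilon P[\Lambda,J_\epsilon]+[\Lambda,J_\epsilon]PJ_\epsilon\bigr)$ is self-adjoint to leading order, with real, sign-indefinite principal symbol $2j_\epsilon\gamma^{(3/2)}_\Ext\,\partial_x\lambda\,\partial_\xi j_\epsilon$. Relative to $\|\Lambda v\|_{L^2}^2$, this is of size $|s|\,\epsilon|\xi|^{3/2}|\partial_xb_\Ext|\sim|s|\,\epsilon^{-1/2}|\partial_xb_\Ext|$ at frequencies $|\xi|\sim\epsilon^{-1}$. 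For a first-order operator the same term has order $0$, which is why Friedrichs mollifiers work there. Here only the $L^2$ estimate is uniform, so your compactness step has no uniform $H^s$ bound to rest on.

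\textbf{How to repair it.} You have two options.
\begin{itemize}
\item Regularize as $J_\epsilon^*PJ_\epsilon$, with $J_\epsilon$ a paradifferential operator whose symbol is a function of the principal symbol, e.g.\ $\exp(-\epsilon\gamma^{(3/2)}_\Ext)$. This is uniformly of order $0$ and Poisson-commutes with $\gamma^{(3/2)}_\Ext$ and with your symmetrizer; it is the usual device for capillary water waves.
\item Follow the paper and get existence by duality from the estimates you already have for every $s\in\xR$. Use the $H^{-s}$ estimate for the adjoint problem, which has the same structure since $\Gamma_\Ext-\Gamma_\Ext^*$ is bounded. Then obtain continuity and uniqueness by smooth approximation.
\end{itemize}

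\textbf{Two smaller points.} First, for $\xG=\xR$ there is no compact embedding $H^s\hookrightarrow H^{s-3/2}$. Pass to the limit weakly-$*$ instead, which suffices since the equation is linear. Second, your energy $\|T_{(\gamma^{(3/2)}_\Ext)^{2s/3}}\proj_\high v\|_{L^2}^2+\|\proj_\low v\|_{H^s}^2$ is not coercive as written. Paraproducts annihilate all frequencies below a fixed threshold, which can exceed $2\rho^{-1}$, and for $s<0$ the power is singular where $1-\chi$ vanishes. This is why the paper uses $\Gamma_s$ together with the term $K_s\|\chi(D_x/K_s)v\|_{L^2}^2$ of Lemma~\ref{Ellip_s}.
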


The proof follows from a rather standard energy estimate and duality argument. We divide it into several steps. Throughout this proof, we shall only specify the dependence of functions, symbols, and operators on the time variable $t$ and omit the dependence on $x$. For example, $\Gamma_\Ext(t)v(t)$ denotes the action of the operator $\Gamma_\Ext$, evaluated at time $t$, on the function $v(t)\in H^s_x$. When considered at a fixed time $t$, even the dependence on $t$ is omitted. It is always assumed that $u$ and $(\eta,w)$ are related by Lemma~\ref{DG_Transform} when $\|u\|_{H^{s_0}_x}\leq 4\delta_0$ (due to the truncation $\kappa\big(\delta_0^{-1}\|u\|_{H^{s_0}_x}\big)$, the operators are all trivial otherwise).

\noindent
\textbf{Step 1: Almost Self-adjointness.} In this first step, we shall fix the time $t$ and omit its dependence in equations. All the constants appearing in this step are independent of $t$. 

Consider the \emph{real-valued} $L^2_x$-inner product for complex-valued functions:
$$
\langle u_1,u_2\rangle_{L^2_x}
:=\langle \RE u_1,\RE u_2\rangle_{L^2_x}+\langle \IM u_1,\IM u_2\rangle_{L^2_x}.
$$
We claim that the operator $\Gamma_\Ext$ is almost self-adjoint in the following sense: setting $\Gamma_\Ext^*$ to be its $L^2_x$-adjoint\footnote{Note that $\Gamma_\Ext$ is \emph{real linear} instead of \emph{complex linear}; but this does not harm the argument at all, see Remark \ref{Complex_u}.}, there holds, for any $v\in H^s_x$,
\begin{equation}\label{Self_adjoint}
\big\|(\Gamma_\Ext-\Gamma_\Ext^*)v\big\|_{L^2_x}
\leq C\kappa\big(\delta_0^{-1}\|u\|_{H^{s_0}_x}\big)\|u\|_{H^{s_0}_x}\|v\|_{L^2_x},
\end{equation}
where $C$ is a constant independent of all the arguments. This follows easily from the definition (\ref{gamma_Ext}) of $\gamma_\Ext$ and the symbolic calculus formula in Proposition~\ref{prop:SymReg} for adjoints of paradifferential operators. In fact, we compute, using the definition of the ${\times;r}$ operation (noting $\|u\|_{H^{s_0}_x}$ does not depend on $x,\xi$), for $r\in(1,2)$,
$$
\begin{aligned}
(\gamma_\Ext+V\xi)^{\times;r}
&=\left(1-i\partial_\xi\partial_x\right)\kappa\big(\delta_0^{-1}\|u\|_{H^{s_0}_x}\big)
\overline{\left(\gamma^{(3/2)}_\Ext-\frac{i}{2}\partial_\xi\partial_x\gamma^{(3/2)}_\Ext+V\xi\right)}\\
&=\kappa\big(\delta_0^{-1}\|u\|_{H^{s_0}_x}\big)
\big(\gamma_\Ext+V\xi\big)
+\kappa\big(\delta_0^{-1}\|u\|_{H^{s_0}_x}\big)
\left(\frac{1}{2}\partial_x^2\partial_\xi^2\gamma^{(3/2)}_\Ext
-i\partial_xV\right).
\end{aligned}
$$
the last term vanishes linearly as $\|u\|_{H^{s_0}_x}\to0$ by \ref{Diag3} and \ref{Ext1}. On the other hand, the adjoint of the bounded real linear operator $\kappa\big(\delta_0^{-1}\|u\|_{H^{s_0}_x}\big)\A$ is still bounded. This proves (\ref{Self_adjoint}).

\noindent
\textbf{Step 2: Adapted Energy Functional.} Time dependence will still be omitted in this step. Introduce
\begin{equation}\label{b_Ext}
1+b_\Ext:=\left(1+\kappa\big(\delta_0^{-1}\|u\|_{H^{3}_x}\big)|\partial_x\eta|^2\right)^{-3/4}.
\end{equation}
Comparing with the definition of $\gamma_\Ext$ in (\ref{gamma_Ext}), recalling the asymptotic formula $\Lambda_\disp(\xi)=|\xi|^{3/2}+O(|\xi|^{1/2})$, it is clear that 
\begin{equation}\label{T_gamma_Ext_Asym}
T_{\gamma_\Ext}=T_{1+b_\Ext}\size[D_x]^{3/2}
+\text{paradifferential operator of order }\leq\frac{1}{2}.
\end{equation}
We claim the following auxiliary result:
\begin{lemma}\label{Ellip_s}
Fix a function $u\in H^{s_0}_x$ and $s\in\xR$. Fix a smooth bump function $\chi\in C^\infty_0(\xR)$ that equals to 1 near 0. Consider the paradifferential operator 
\begin{equation}\label{Gamma_s}
\Gamma_s:=\Op^\PM\left((1+b_\Ext)^{2s/3}\right)\size[D_x]^{2s/3};
\end{equation}
recall from (\ref{DefOpPM}) that $\Op^\PM(a)$ is just another way of writing $T_a$. There is a constant $K_s\geq1$ depending only on $s$, such that the following ellipticity inequality holds:
\begin{equation}\label{Elliptic_s}
\begin{aligned}
\frac{\|v\|_{H^s_x}^2}{K_s}
&\leq
\|\Gamma_sv\|_{L^2_x}^2
+K_s\cdot
\big\|\chi\big(D_x/K_s\big)v\big\|_{L^2_x}^2
\leq K_s\|v\|_{H^s_x}^2.
\end{aligned}
\end{equation}
\end{lemma}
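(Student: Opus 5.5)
The plan is to prove \eqref{Elliptic_s} by a standard elliptic parametrix argument for paradifferential operators, using that the symbol of $\Gamma_s$ is bounded above and below by constant multiples of $\size[\xi]^{2s/3}$. One remark first: to match the $H^s$ norm, the order of $\Gamma_s$ should be $s$. With the stated exponent $2s/3$, the result is read with $\size[D_x]$ replaced by $\size[D_x]^{3/2}$, or equivalently with the norm $H^{2s/3}$. The argument below does not depend on this normalization, so I write $m$ for the order of $\Gamma_s$ and $H^m$ for the matching norm.

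\textbf{Key uniform bounds.} By \eqref{b_Ext}, $1+b_\Ext=(1+\kappa|\partial_x\eta|^2)^{-3/4}$. The cutoff $\kappa(\delta_0^{-1}\|u\|_{H^3_x})$ forces $\|u\|_{H^3}\le 4\delta_0$ on its support, and then Lemma~\ref{DG_Transform} and Sobolev embedding give $|\partial_x\eta|_{C^1}\lesssim\delta_0$. Hence $1+b_\Ext$ lies in $[c,1]$ with $c>0$ independent of $u$. Moreover $|(1+b_\Ext)^{2s/3}|_{C^1}\le C_s$ uniformly in $u$, including when $\kappa=0$, where the symbol is identically $1$.

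\textbf{Upper bound.} Proposition~\ref{prop:PDReg}, in the form \eqref{eq:PDSymEsti}, gives $\|\Gamma_s v\|_{L^2}\lesssim \M^0_0\big((1+b_\Ext)^{2s/3}\big)\|v\|_{H^m}\le C_s\|v\|_{H^m}$. The low-frequency term satisfies $\|\chi(D_x/K_s)v\|_{L^2}\le\|v\|_{L^2}\le\|v\|_{H^m}$ when $m\ge0$. When $m<0$ it is bounded by $C(K_s)\|v\|_{H^m}$, because the frequencies are localized to $|\xi|\lesssim K_s$. Either way the right inequality follows.

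\textbf{Lower bound.} Set $a=(1+b_\Ext)^{-2s/3}$, which is bounded in $C^1$ uniformly. By Proposition~\ref{prop:SymReg} with $r=1$, composition gives $T_aT_{a^{-1}}=T_1+R$, where $R$ has order $-1$ and $\|R\|_{H^\sigma\to H^{\sigma+1}}\le C_s$. Using Example~\ref{Ex:Paradiff}, $T_1=\Id-S_2$. Then
\[
\size[D_x]^{-m}T_a\Gamma_s v = v - S_2v + R'v,
\]
where $R'$ has order $-1$ with uniform bounds. Since $T_a$ has order $0$, this yields
\[
\|v\|_{H^m}\le C\|\Gamma_s v\|_{L^2}+\|S_2v\|_{H^m}+C_s\|v\|_{H^{m-1}}.
\]

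\textbf{Absorbing the lower-order error (main obstacle).} The hard step is controlling $C_s\|v\|_{H^{m-1}}$ using only the low-frequency term. Split $v$ with $\chi(D_x/K)$. The high-frequency part satisfies $\|(1-\chi(D_x/K))v\|_{H^{m-1}}\lesssim K^{-1}\|v\|_{H^m}$, and choosing $K$ large in terms of $C_s$ absorbs it into the left side. The low-frequency part satisfies $\|\chi(D_x/K)v\|_{H^{m-1}}\le C(K)\|\chi(D_x/K)v\|_{L^2}$. The term $S_2v$ is also supported at frequencies $\lesssim 1$, so it is bounded by the same low-frequency $L^2$ norm once $K\ge 8$, because $\chi=1$ near $0$ and we may enlarge $K$. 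Squaring and taking $K_s$ large enough to dominate $C(K)$ and all the constants gives the left inequality of \eqref{Elliptic_s}. All constants depend only on $s$ and on the uniform $C^1$ bounds above, not on $u$.
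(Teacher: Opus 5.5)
Your proposal is correct and is essentially the paper's argument. It uses the same ingredients: uniform $C^1$ control of $b_{\Ext}$ coming from the cutoff, a left parametrix from the composition formula with an order $-1$ error, and absorption of the resulting $\|v\|_{H^{s-1}_x}$ term by a frequency split, which is what the paper's interpolation inequality does. Your $\langle D_x\rangle^{-s}T_{(1+b_{\Ext})^{-2s/3}}$ is the paper's $\Gamma_{-s}$ with the factors swapped, and your explicit $T_1=\Id-S_2$ bookkeeping is slightly more careful than the paper's $\Gamma_{-s}\Gamma_s=\Id+R_s$. Your reading of the order as $s$ also matches what the paper's proof actually uses ($\langle D_x\rangle^{s}$).

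The only detail to tidy is reconciling the absorption scale $K$ with the final $K_s$. For example, take $K_s$ so large that $\chi(\cdot/K_s)\equiv 1$ on $\mathrm{supp}\,\chi(\cdot/K)$; then $\|\chi(D_x/K)v\|_{L^2_x}\le \sup|\chi|\,\|\chi(D_x/K_s)v\|_{L^2_x}$. The paper glosses over the same point.
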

\begin{proof}
In the following, $K_s\geq1$ will refer to a constant that might vary from line to line. The proof of the right half of the inequality is easy since $\Gamma_s$ is a paradifferential operator of order $s$: recalling Lemma \ref{DG_Transform}, $\partial_x\eta$ has the same regularity as $u$, so $|b_\Ext|_{C^1_x}\leq C\kappa\big(\delta_0^{-1}\|u\|_{H^{3}_x}\big)\|u\|_{H^{3}_x}^2\leq C\delta_0^2$, which \emph{does not depend on $u$} due to the cut-off $\kappa\big(\delta_0^{-1}\|u\|_{H^{3}_x}\big)$. The proof of the left half follows from a standard elliptic parametrix argument: recalling the symbolic calculus formula in Proposition \ref{prop:SymReg}, we find 
$$
\begin{aligned}
\Gamma_{-s}\Gamma_s
&=\Op^\PM((1+b_\Ext)^{-2s/3})\size[D_x]^{-s}\Op^\PM((1+b_\Ext)^{2s/3})\size[D_x]^{s}\\
&=\Id+R_s,
\end{aligned}
$$
where the operator $R_s$ satisfies 
$$
\|R_sv\|_{H^{s}_x}\leq K_s|b_\Ext|_{C^1_x}\|v\|_{H^{s-1}_x}\leq K_s\|v\|_{H^{s-1}_x}.
$$
Therefore,
$$
\begin{aligned}
\|v\|_{H^s_x}
&\leq \|\Gamma_{-s}\Gamma_s v\|_{H^s_x}+\|R_sv\|_{H^s_x}\\
&\leq K_s\|\Gamma_sv\|_{L^2_x}
+K_s\|v\|_{H^{s-1}_x}\\
&\leq K_s\|\Gamma_sv\|_{L^2_x}+\frac{1}{2}\|v\|_{H^s_x}
+K_s\big\|\chi\big(D_x/K_s\big)v\big\|_{L^2_x}^2.
\end{aligned}
$$
Absorbing the term $\|v\|_{H^s_x}/2$ to the left, we obtain the desired inequality. Note that the second step is valid since $\Gamma_{-s}$ is a paradifferential operator of order $-s$ (see Proposition~\ref{prop:PDReg}), while the last step follows from interpolation: for $\varepsilon<1$, there holds 
$$
\|v\|_{H^{s-1}_x}\leq \varepsilon\|v\|_{H^s_x}+C_{s}\max\left(1,\varepsilon^{-(s-1)}\right)\big\|\chi(\varepsilon D_x)v\big\|_{L^2_x}.
$$
This finishes the proof.
\end{proof}

\noindent
\textbf{Step 3: A Priori Energy Estimate in $L^2_x$.} Starting from this step, time dependence becomes important. Given $u\in  L^\infty_{t;\loc}H^{s_0}_x\cap W^{1,\infty}_{t;\loc}H^{s_0-3/2}_x$, the symbol $\gamma_\Ext$ is then time-dependent (though not explicitly); we write $\Gamma_\Ext(t)$ to denote the paradifferential operator corresponding to $\gamma_\Ext$ evaluated at time $t$.

Suppose \emph{in a priori} that $v\in L^\infty_{t;\loc}L^2_x$ solves the linear evolution equation
\begin{equation}\label{Lin_Ev_L2}
\partial_t v+i\proj_{\high}\Gamma_{\Ext}\proj_{\high} v=f
\in L^1_{t;\loc}L^2_x,\quad v(t_0)\in L^2\text{ given}.
\end{equation}
We claim the following \emph{a priori} differential inequality:
\begin{equation}\label{Energy_L2_Diff}
\frac{\diff}{\dt}\|v(t)\|_{L^2_x}^2
\leq C\kappa\big(\delta_0^{-1}\|u(t)\|_{H^{s_0}_x}\big)\|u(t)\|_{H^{s_0}_x}\|v\|_{L^2_x}^2+\|f(t)\|_{L^2_x}\|v(t)\|_{L^2_x}.
\end{equation}
where $C>0$ is a universal constant. Indeed, differentiating $\|v(t)\|_{L^2_x}^2$ with respect to $t$, we find
$$
\begin{aligned}
\frac{\diff}{\dt}\|v(t)\|_{L^2_x}^2
&=2\langle i\proj_{\high}\Gamma_{\Ext}(t)\proj_{\high}v(t),v(t)\rangle_{L^2_x}
+\langle f(t),v(t)\rangle_{L^2_x}.
\end{aligned}
$$
We compute the first term as 
$$
\begin{aligned}
2\langle i\proj_{\high}\Gamma_{\Ext}(t)\proj_{\high}v(t),v(t)\rangle_{L^2_x}
&=\left\langle i\big(\proj_{\high}\big(\Gamma_{\Ext}(t)-\Gamma_\Ext(t)^*\big)\proj_{\high} \big)
v(t),v(t)\right\rangle_{L^2_x}
\end{aligned}
$$
By (\ref{Self_adjoint}), the magnitude of this is 
$$
\leq C\kappa\big(\delta_0^{-1}\|u(t)\|_{H^{s_0}_x}\big)\|u(t)\|_{H^{s_0}_x}\|v(t)\|_{L^2_x}^2.
$$
The inner product $\langle f(t),v(t)\rangle_{L^2_x}$ is quite easily controlled. We thus obtain (\ref{Energy_L2_Diff}).

\noindent
\textbf{Step 4: A Priori Energy Estimate in $H^s_x$.} Given $u\in L^\infty_{t;\loc}H^{s_0}_x\cap W^{1,\infty}_{t;\loc}H^{s_0-3/2}_x$, We write $\Gamma_s(t)$ to denote the evaluation of the paradifferential operator (\ref{Gamma_s}) at time $t$. Based on Lemma \ref{Ellip_s}, we define the following \emph{energy functional}:
\begin{equation}\label{Energy_s}
E_s[v(t)]:=\|\Gamma_s(t)v(t)\|_{L^2_x}^2
+K_s\cdot
\big\|\chi\big(D_x/K_s\big)v(t)\big\|_{L^2_x}^2.
\end{equation}

Fix some $s\in\xR$. Suppose \emph{in a priori} that $v\in L^\infty_{t;\loc}H^s_x$ solves the linear evolution equation
\begin{equation}\label{Lin_Ev_Hs}
\partial_t v+i\proj_{\high}\Gamma_{\Ext}\proj_{\high} v=f
\in L^1_{t;\loc}H^s_x,\quad v(t_0)\in H^s\text{ given}.
\end{equation}
We apply the operator $\Gamma_s(t)$ to (\ref{Lin_Ev_Hs}), and find the equation satisfied by $v_s(t):=\Gamma_s(t)v(t)$:
\begin{equation}\label{Lin_Ev_Gv}
\begin{aligned}
\partial_t v_s(t)+i\proj_{\high}\Gamma_{\Ext}(t)\proj_{\high} v_s(t)
&=B_s(t)v(t)+\Gamma_s(t)f(t),
\quad\text{with }v_s(t_0)=\Gamma_s(t_0)v(t_0)\in L^2,\\
\text{where }
B_s(t)&=\left(\partial_t\Gamma_s(t)+i\big[\Gamma_s(t),\proj_{\high} \Gamma_\Ext(t) \proj_{\high}\big]\right),
\end{aligned}
\end{equation}

We claim that the right-hand side of (\ref{Lin_Ev_Gv}) is in $L^1_{t;\loc}L^2_x$, even though it involves the commutator $\big[\Gamma_s(t),\proj_{\high} \Gamma_\Ext(t) \proj_{\high}\big]$ that is supposed to be of order $s+1/2$. We first compute 
$$
\partial_t\Gamma_s(t)
=\frac{2s}{3}
\Op^{\PM}\left(\partial_tb_\Ext(1+b_\Ext)^{2s/3}\right)\size[D_x]^{s},
$$
which is still a paradifferential operator of order $s$. Recalling from (\ref{b_Ext}) the definition of $b_\Ext$, 
$$
\partial_tb_\Ext
=-\frac{3\kappa\big(\delta_0^{-1}\|u(t)\|_{H^{3}_x}\big)(\partial_t\partial_x\eta\cdot\partial_x\eta)}{2\left(1+\kappa\big(\delta_0^{-1}\|u\|_{H^{3}_x}\big)|\partial_x\eta|^2\right)^{7/4}}
-\frac{3\kappa'\big(\delta_0^{-1}\|u(t)\|_{H^{3}_x}\big)\langle u(t),\partial_tu(t)\rangle_{H^{3}_x}|\partial_x\eta|^2}{4\delta_0\|u(t)\|_{H^{3}_x}\left(1+\kappa\big(\delta_0^{-1}\|u\|_{H^{3}_x}\big)|\partial_x\eta|^2\right)^{7/4}}.
$$
By Lemma \ref{DG_Transform}, $u\mapsto \eta$ is smooth from $H^3_x$ to $H^4_x$. So we obtain the estimate 
$$
|\partial_tb_\Ext|_{L^\infty_x}
\leq C\kappa_1\big(\delta_0^{-1}\|u(t)\|_{H^{3}_x}\big)
\|\partial_tu\|_{H^{3}_x},
$$
where $\kappa_1\geq0$ is any bump function that dominates $|\kappa'|$. Therefore, recalling the boundedness of paradifferential operators (Proposition \ref{prop:PDReg}), we have
\begin{equation}\label{RHS1}
\left\|\big(\partial_t\Gamma_s(t)\big) v(t)\right\|_{L^2_x}
\leq C_s\kappa_1\big(\delta_0^{-1}\|u(t)\|_{H^{3}_x}\big)\|\partial_tu(t)\|_{H^{3}_x}\|v(t)\|_{H^s_x}.
\end{equation}

We then make a key observation: the commutator $[\Gamma_s(t),\Gamma_\Ext(t)]$ is an operator \emph{of order $s-1/2$ instead of $s+1/2$}. Indeed, from (\ref{T_gamma_Ext_Asym}), the symbol $\gamma_\Ext(t)=(1+b_\Ext(t))\size[\xi]^{3/2}+O(|\xi|^{1/2})$. By the composition formula in Proposition \ref{prop:SymReg}, the commutator $[\Gamma_s(t),\Gamma_\Ext(t)]$ has principal symbol given by the Poisson bracket
$$
    \kappa\big(\delta_0^{-1}\|u(t)\|_{H^{s_0}_x}\big) \Big\{(1+b_\Ext(t))^{2s/3}\size[\xi]^s,(1+b_\Ext(t))\size[\xi]^{3/2}\Big\}.
$$
Elementary calculus immediately shows that this Poisson bracket vanishes identically. On the other hand, the commutator of $\Gamma_s(t)$ with any paradifferential operator of order $\leq1$ is again of order $\leq s$ -- including $T_{V(t)}\partial_x$ and the subprincipal part of $T_{\gamma_\Ext(t)}$. The commutator $[\Gamma_s(t),\A(t)]$ is still of order $\leq s$, since $\A(t)$ is bounded in all Sobolev spaces (see \ref{Diag4}). Therefore, we find 
\begin{equation}\label{Gamma_s_Comm_T_gamma}
\big\|[\Gamma_s(t), \Gamma_\Ext(t)] v(t)\big\|_{L^2_x}
\leq C_s\kappa\big(\delta_0^{-1}\|u(t)\|_{H^{s_0}_x}\big)\|u(t)\|_{H^{s_0}_x}\|v(t)\|_{H^s_x}.
\end{equation}
This estimate is valid since the differentiated symbols 
$$
\partial_\xi^{\alpha}\gamma_\Ext(t),\,
\partial_x^{\alpha}\gamma_\Ext(t),\,
\partial_\xi^{\alpha}\big((1+b(t))^{2s/3}\size[\xi]^s\big), \,\partial_x^{\alpha}\big((1+b(t))^{2s/3}\size[\xi]^s\big),
\quad |\alpha|\geq2,
$$
all have coefficients vanishing linearly as $\|u(t)\|_{H^{s_0}_x}\to0$ (recall the symbolic calculus formula in Proposition \ref{prop:SymReg}). The factor $\kappa\big(\delta_0^{-1}\|u(t)\|_{H^{s_0}_x}\big)$ enters due to the presence of it in the definition of $\Gamma_\Ext(t)$.

On the other hand, $[\Gamma_s(t),\proj_{\high}]=-\big[\Gamma_s(t)-\kappa\big(\delta_0^{-1}\|u\|_{H^{s_0}_x}\big)\Lambda_\disp(D_x)^{2s/3}(1-\chi(D_x))^{2s/3},\proj_{\low}\big]$ is infinitely smoothing, since $\proj_{\low}$ has finite rank and commutes with any Fourier multiplier. By \ref{Ext1}, it vanishes linearly as $\|u\|_{H^{s_0}_x}\to0$,  Therefore, 
\begin{equation}\label{RHS2}
\begin{aligned}
\big\|i[\Gamma_s(t)&,\proj_{\high}\Gamma_{\Ext}(t)\proj_{\high}]v(t)+\Gamma_s(t)f(t)\big\|_{H^s_x}\\
&\overset{(\ref{Elliptic_s})}{\leq} 
\big\|[\Gamma_s(t),\proj_{\high}]\Gamma_\Ext(t)\proj_{\high}v(t)\big\|_{H^s_x}
+\big\|\proj_{\high}\Gamma_\Ext(t)[\Gamma_s(t),\proj_{\high}]v(t)\big\|_{H^s_x}\\
&\quad+
\big\|[\Gamma_s(t), \Gamma_\Ext(t)]\proj_{\high} v(t)\big\|_{H^s_x}
+C_s\|f(t)\|_{H^s_x}\\
&\overset{(\ref{Gamma_s_Comm_T_gamma})}{\leq} C_s\kappa\big(\delta_0^{-1}\|u(t)\|_{H^{s_0}_x}\big)\|u(t)\|_{H^{s_0}_x}\|v(t)\|_{H^s_x}+C_s\|f(t)\|_{H^s_x}.
\end{aligned}
\end{equation}

Since $\chi(D_x/K_s)$ is infinitely smoothing, we find that
$$
\partial_t\chi(D_x/K_s)v(t)=-i\chi(D_x/K_s)\proj_{\high}\Gamma_{\Ext}(t)\proj_{\high}v(t)+\chi(D_x/K_s)f(t)
\in H^{+\infty}_x.
$$
Using again the almost self-adjointness (\ref{Self_adjoint}), arguing exactly as in Step 3, we obtain
\begin{equation}\label{RHS3}
\frac{\diff}{\dt}\big\|\chi(D_x/K_s)v(t)\big\|_{L^2_x}^2
\leq C_s\kappa\big(\delta_0^{-1}\|u\|_{H^{s_0}_x}\big)
\|u\|_{H^{s_0}_x}\|v(t)\|_{H^s_x}^2+\|f(t)\|_{H^{s}_x}\big\|\chi(D_x/K_s)v(t)\big\|_{L^2_x}.
\end{equation}
Putting (\ref{RHS1})(\ref{RHS2})(\ref{RHS3}) together, using (\ref{Elliptic_s}) and the differential inequality (\ref{Energy_L2_Diff}), we finally obtain
$$
\begin{aligned}
\frac{\diff}{\dt}E_s[v(t)]
&\leq C_s\mathtt{F}(u;t)E_s[v(t)]
+C_s\|f(t)\|_{H^s_x}\sqrt{E_s[v(t)]},\\
\text{where}\quad\mathtt{F}(u;t)&=\kappa\big(\delta_0^{-1}\|u\|_{H^{s_0}_x}\big)\|u(t)\|_{H^{s_0}_x}+\kappa_1\big(\delta_0^{-1}\|u(t)\|_{H^{3}_x}\big)\|\partial_tu(t)\|_{H^{3}_x}.
\end{aligned}
$$
A standard Gr\"{o}nwall inequality argument, together with (\ref{Elliptic_s}), yields
\begin{equation}\label{Energy(s)}
\begin{aligned}
\|v(t)\|_{H^s_x}
&\leq C_s\exp\left(C_s\int_{[t_0,t]}\mathtt{F}(u;\tau)\dtau\right)\left(\|v(t_0)\|_{H^s_x}
+\int_{[t_0,t]}\|f(\tau)\|_{H^s_x}\dtau\right).
\end{aligned}
\end{equation}

\noindent
\textbf{Step 5: Duality Argument.} Having obtained the adjoint formula (\ref{Self_adjoint}) and the energy estimate (\ref{Energy(s)}) for any $s\in\xR$, the well-posedness of the linear evolution equation 
$$
\partial_t v+i\proj_{\high}\Gamma_{\Ext}\proj_{\high} v=f
\in L^1_{t;\loc}H^s_x,\quad v(t_0)\in H^s\text{ given}
$$
in the space $L^\infty_{t;\loc}H^s_x\cap W^{1,\infty}_{t;\loc}H^{s-3/2}_x$ then follows from a standard duality argument, restricted to any finite time interval. See for example, Subsection 6.3 in Chapter VI of \cite{Hormander1997}, or Section 2C of \cite{AG}; we omit the details here. To justify $v\in C^0_{t;\loc}H^s_x$, consider any sequence $\{f^n\}\subset C_{0;t,x}^{\infty}$ that converges to $f$ in the topology of $L^1_{t;\loc}H^s_x$ and initial value sequence $\{v^n(t_0)\}\subset C^\infty_0$ that converges to $v(t_0)$ in $H^s$. Then $v^n$ is indeed $L^\infty_{t;\loc}H^{s'}_x$ for any $s'\geq s$, so by the equation itself we conclude $v\in W^{1,\infty}_{t;\loc}H^{s'-3/2}_x$; in particular $v\in C^0_{t;\loc}H^s_x$. The energy estimate (\ref{Energy(s)}) then implies, for any time interval $I\Subset\xR$ containing $t_0$,
$$
\begin{aligned}
\sup_{t\in I}\|v^n(t)-v(t)\|_{H^s_x}
\leq C_s(u,I)\left(\|v^n(t_0)-v(t_0)\|_{H^s}+\int_{I}\|f^n(\tau)-f(\tau)\|_{H^s_x}\dtau\right)
\to 0,
\end{aligned}
$$
forcing the limit $v$ to be continuous. In conclusion, the well-posedness immediately gives the existence of the propagator $\bm{F}(u;t,t_0)$, satisfying \ref{F1}\ref{F2}. 

\noindent
\textbf{Step 6: Differentiability.} The differentiability of $\bm{F}(u;t,t_0)$ with respect to $u$ follows easily by a direct computation. For simplicity, we write down only the estimate for first order differentiation. Indeed, given $u\in L^\infty_{t;\loc}H^{s_0}_x\cap W^{1,\infty}_{t;\loc}H^{s_0-3/2}_x$ and $h\in H^s_x$, the function $v(t):=\bm{F}(u;t,t_0)h\in L^\infty_{t;\loc}H^s_x$ is the unique solution of the homogeneous Cauchy problem
$$
\partial_tv+i\proj_{\high}\Gamma_{\Ext}\proj_{\high}v=0,\quad w(t_0)=h,
$$
and verifies the energy estimate
$$
\|v(t)\|_{H^s_x}\leq
C_s\exp\left(C_s\int_{[t_0,t]}\mathtt{F}(u;
\tau)\dtau\right)\|h\|_{H^s_x}
$$
The linearized equation with respect to $u$ reads
$$
\partial_t(\der_uv\cdot\del u)+i\proj_{\high}\Gamma_{\Ext}\proj_{\high}(\der_uv\cdot\del u)=i\proj_{\high} (\der_u\Gamma_{\Ext}\cdot\del u)\proj_{\high}v,
\quad (\der_uv\cdot\del u)(t_0)=0.
$$
The right-hand side is of class $L^\infty_{t;\loc}H^{s-3/2}_x$: for any $z\in L^\infty_{t;\loc}H^s_x$, by \ref{Ext1},
\begin{equation}\label{Tilde_Gamma}
\big\|\proj_{\high} (\der_u\Gamma_{\Ext}(t)\cdot\del u(t))\proj_{\high}z(t)\big\|_{H^{s-3/2}_x}
\leq C_s\delta_0^{-1}
\|\del u(t)\|_{H^{s_0}_x}\cdot\|z(t)\|_{H^s_x},
\end{equation}
Differentiation in $u$ of order $k\geq2$ shall yield a factor $\delta_0^{-k}$ by a similar computation. 

Using the inhomogeneous energy estimate (\ref{Energy(s)}) with solution $v$ and source term $f$ replaced by $\der_uv\cdot\del u$ and $i\proj_{\high} \der_u\Gamma_{\Ext}\proj_{\high}v$, respectively, we immediately obtain the desired estimate \ref{F3} for $k=1$ in the statement of Theorem \ref{Fundamental}. Indeed, with the explicit formula in Item \ref{F1}, we can even write down the explicit formula
\begin{equation}\label{DuF(u)}
\begin{aligned}
\big((\der_u\bm{F}\cdot\del u)(u;t,t_0)\big)h
&=i\int_{t_0}^t\bm{F}(u;t,\tau)\cdot\tilde{\Gamma}(\tau)
\cdot\bm{F}(u;\tau,t_0)h\dtau,\\
\tilde{\Gamma}(t)&:=\proj_{\high}(\der_u\Gamma_{\Ext}(t)\cdot\del u(t))\proj_{\high}.
\end{aligned}
\end{equation}
This finishes the proof for Theorem \ref{Fundamental}.

\subsection{Twisted Duhamel Formula and Global Existence for the Extended System}\label{Subsec_St_1}
Having obtained the extended equation (\ref{EQ_Red_Ext}) and Theorem \ref{Fundamental}, we are now able to deduce a \emph{twisted Duhamel formula} that brings (\ref{EQ_Red_Ext}) into an integral equation. This is the starting point of the construction of any invariant set. Suppose $s\geq s_0$ and $I$ is a closed time interval containing $t_0$. Suppose $u\in C^0_IH^s_x\cap W^{1,\infty}_IH^{s-3/2}_x$ solves (\ref{EQ_Red_Ext}). Projecting to the low frequency regime $0\leq|\xi|<2\rho^{-1}$, the usual Duhamel formula yields
$$
\begin{aligned}
\proj_{\low} u(t)
=e^{(t-t_0)L(D_x)}u(t_0)
+e^{tL(D_x)}\int_{t_0}^t e^{-\tau L(D_x)}\proj_{\low}\R_{\Ext}(u(\tau))\dtau;
\end{aligned}
$$
we emphasize here that the Fourier multiplier $L(D_x)$, given by \ref{Diag1}, is \emph{real linear} (equivalently by (\ref{LEQ(0,0)}); see Remark \ref{Complex_u}). On the other hand, the projection to high frequencies becomes
$$
\partial_t \proj_{\high}u+i\proj_{\high}\Gamma_{\Ext}\proj_{\high}u
=\proj_{\high} \R_\Ext(u),
$$
which is completely governed by the propagator $\bm{F}(u;t,t_0)$ constructed in Theorem \ref{Fundamental}. Therefore, we obtain the \emph{twisted Duhamel formula}:

\begin{proposition}\label{Twist_Duhamel}
Fix indices $s\geq s_0>5$, an initial time $t_0$ and a time interval $I$ containing $t_0$. The function $u\in C^0_IH^s_x\cap W^{1,\infty}_IH^{s-3/2}_x$ solves the extended equation (\ref{EQ_Red_Ext}) with a given value $u(t_0)\in H^s$ at $t=t_0$, if and only if it solves the following integral equation for $t\in I$:
\begin{equation}\label{Duhamel}
\begin{aligned}
u(t)&=e^{(t-t_0)L(D_x)}\proj_{\low}u(t_0)
+\bm{F}(u;t,t_0)\proj_{\high} u(t_0)\\
&\quad+e^{tL(D_x)}\int_{t_0}^t e^{-\tau L(D_x)}\proj_{\low}\R_{\Ext}(u(\tau))\dtau
+\int_{t_0}^t \bm{F}(u;t,\tau)\proj_{\high}\R_{\Ext}(u(\tau))\dtau.
\end{aligned}
\end{equation}
\end{proposition}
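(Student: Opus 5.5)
The plan is to prove the two implications separately, splitting $u$ into $\proj_{\low}u$ and $\proj_{\high}u$. These pieces evolve under two different linear mechanisms that interact only through the remainder $\R_\Ext$.

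\emph{($\Rightarrow$).} Suppose $u\in C^0_IH^s_x\cap W^{1,\infty}_IH^{s-3/2}_x$ solves (\ref{EQ_Red_Ext}).

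\textbf{Low frequencies.} Apply $\proj_{\low}$. Since $\proj_{\low}$ commutes with the real linear Fourier multiplier $L(D_x)$ and annihilates $\proj_{\high}\Gamma_\Ext\proj_{\high}$, we get
$$\partial_t\proj_{\low}u=L(D_x)\proj_{\low}u+\proj_{\low}\R_\Ext(u).$$
This is an ODE in the range of $\proj_{\low}$, on which $L(D_x)$ is bounded, so $e^{tL(D_x)}$ is a genuine group there. The classical variation of constants formula then gives the low-frequency part of (\ref{Duhamel}). We only need $t\mapsto\R_\Ext(u(t))$ to be continuous into $H^{s}$, which follows from \ref{Ext2} and $u\in C^0_IH^s_x$.

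\textbf{High frequencies.} Apply $\proj_{\high}$ and set $v=\proj_{\high}u$. Using $\proj_{\high}^2=\proj_{\high}$, $v$ solves
$$\partial_tv+i\proj_{\high}\Gamma_\Ext\proj_{\high}v=\proj_{\high}\R_\Ext(u)=:f.$$
We have $f\in C^0_IH^s_x\subset L^1_IH^s_x$ by \ref{Ext2}, with room to spare since $\R_\Ext$ maps into $H^{s+s_0-2}$. The coefficient $u$ belongs to $L^\infty_IH^{s_0}_x\cap W^{1,\infty}_IH^{s_0-3/2}_x$ because $s\ge s_0$, so Theorem \ref{Fundamental} applies on $I$ (extending $u$ constantly outside $I$ if a locally defined coefficient is required). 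The uniqueness part of \ref{F1} in the class $C^0H^s\cap W^{1,\infty}H^{s-3/2}$ then identifies $v$ with $\bm{F}(u;t,t_0)\proj_{\high}u(t_0)+\int_{t_0}^t\bm{F}(u;t,\tau)f(\tau)\dtau$. Adding the two pieces gives (\ref{Duhamel}).

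\emph{($\Leftarrow$).} Suppose $u$ in the same class satisfies (\ref{Duhamel}).

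\textbf{Range of each term.} We need $\proj_{\low}$ of the $\bm{F}$-terms to vanish and $\proj_{\high}$ of the exponential terms to vanish. The second fact is immediate, since $e^{tL(D_x)}$ is a Fourier multiplier. For the first, note that $\tilde v=\proj_{\low}\bm{F}(u;t,t_0)h$ satisfies $\partial_t\tilde v=0$ with $\tilde v(t_0)=\proj_{\low}h$. Hence $\bm{F}$ preserves the range of $\proj_{\high}$, and the $\bm{F}$-terms in (\ref{Duhamel}) have zero low-frequency part because they act on $\proj_{\high}$-data.

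\textbf{Recovering the equation.} With $u$ frozen as the coefficient, \ref{F1} shows that $\proj_{\high}u$ solves the high-frequency equation. Differentiating the low-frequency formula shows that $\proj_{\low}u$ solves the low-frequency equation. Summing recovers (\ref{EQ_Red_Ext}).

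The main subtlety I expect is the mild circularity that $u$ appears both as the unknown and as the coefficient of the propagator. This is handled by freezing the coefficient $u$ throughout, which is legitimate because Theorem \ref{Fundamental} is a statement about arbitrary coefficients in that class. A second point needing care is the regularity bookkeeping: $\bm{F}$ is applied at the index $s$, where it is bounded by \ref{F2}, while the coefficient is measured only at $s_0$.
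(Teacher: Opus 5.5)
Your proposal is correct and follows the paper's argument. You project onto $\proj_{\low}$ and $\proj_{\high}$, apply the classical variation-of-constants formula on low frequencies (where $L(D_x)$ is a bounded multiplier), and identify the high-frequency part through the existence and uniqueness in \ref{F1} of Theorem \ref{Fundamental}, with the coefficient $u$ frozen. The paper only writes out the forward implication; your observation that $\proj_{\low}\bm{F}(u;t,t_0)h=\proj_{\low}h$, so that the $\bm{F}$-terms have no low-frequency component, is exactly what makes the converse work.
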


As a consequence of the twisted Duhamel formula (\ref{Duhamel}), we find that the extended equation (\ref{EQ_Red_Ext}) is \emph{globally well-posed} in $H^s$ for any initial value $u(t_0)\in H^s$, with $s\geq s_0+3/2$. The solution $u(t)$ therefore exists globally in time and solves the converted water jet equation (\ref{EQ_DG}) within the ball $B^{s_0}(0,2\delta_0)$.

\begin{theorem}\label{GWP_EQ}
Fix indices $s_0>5$, $s\geq s_0+3/2$ and initial data $u(t_0)\in H^{s}$. The paradifferential equation \eqref{EQ_Red_Ext} has a unique global-in-time solution $u(t)$ in the space $C^0_{t;\loc}H^{s}_x\cap W^{1,\infty}_{t;\loc}H^{s-3/2}_x$. For any given $\delta>0$ and given time interval $I\Subset\xR$, the solution map $H^{s}_x \mapsto C^0_t\big(I;H^{s-\delta}_x\big), \,  u(t_0) \mapsto u$ is continuous \footnote{It is possible to prove the continuity in $H^{s}_x$ norm instead of $H^{s-\delta}_x$. But this fact is useless in the construction of hyperbolic invariant manifolds.}. 
\end{theorem}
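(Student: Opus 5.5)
The plan is to construct the solution as a fixed point of the twisted Duhamel formula (\ref{Duhamel}) given by Proposition~\ref{Twist_Duhamel}, working locally in time, and then to globalize using the cut-off structure of (\ref{EQ_Red_Ext}).

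\textbf{Step 1: local existence by iteration.} Fix a short interval $I=[t_0,t_0+T]$. Define the map $\Phi(u)(t)$ as the right-hand side of (\ref{Duhamel}), acting on the space
\[
X_T=\{u\in C^0_IH^{s}_x\cap W^{1,\infty}_IH^{s-3/2}_x:\ \|u\|_{L^\infty_IH^s_x}\le R,\ \|\partial_tu\|_{L^\infty_IH^{s-3/2}_x}\le R\}.
\]
Since $s-3/2\ge s_0\ge 3$, the quantity $\mathtt{F}(u;t)$ in \ref{F2} is bounded by $C(\delta_0)+\|\partial_tu\|_{H^3_x}\le C(\delta_0)+R$. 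Hence \ref{F2}, the fact that $e^{tL(D_x)}\proj_\low$ is bounded with growth $e^{C|t|}$, and the tame bound (\ref{Size_RExt}) on $\R_\Ext$ (which is a bounded map $H^s\to H^s$ uniformly, thanks to the factor $\kappa$) give $\|\Phi(u)\|_{L^\infty_IH^s}\le C(\|u(t_0)\|_{H^s}+TC(R))$. By \ref{F1}, $\Phi(u)$ solves a linear equation, so $\partial_t\Phi(u)$ is controlled in $H^{s-3/2}$ by $\|\Gamma_\Ext\|_{\mathcal{L}(H^s,H^{s-3/2})}$, which is uniformly bounded by \ref{Ext1}. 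With $R$ large and $T$ small, $\Phi$ therefore maps $X_T$ into itself.

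\textbf{Step 2: contraction in a weaker norm.} I would prove the contraction in $C^0_IH^{s-3/2}_x$, which is where the loss of derivatives is absorbed. For $u_1,u_2\in X_T$, write $\bm{F}(u_1)-\bm{F}(u_2)=\int_0^1(\der_u\bm{F}\cdot(u_1-u_2))(u_\theta)\,\diff\theta$. By \ref{F3} with $k=1$, applied to data in $H^{s}$, this difference maps into $H^{s-3/2}$ with norm $\lesssim T\sup_I\|u_1-u_2\|_{H^{s_0}}$. Since $s_0\le s-3/2$, this is $\lesssim T\|u_1-u_2\|_{H^{s-3/2}}$. The $\R_\Ext$ differences are Lipschitz by (\ref{Size_RExt}). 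Thus $\Phi$ is a contraction for $T$ small. The set $X_T$ is closed in the weaker topology (by weak-$*$ compactness), so a fixed point exists in $X_T$. Continuity into $H^s$ then follows from \ref{F1}. Uniqueness in the stated class follows from the same difference estimate.

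\textbf{Step 3: globalization.} This is the main obstacle, because $T$ depends on $R$, and a priori $\|\partial_tu\|_{H^3}$ could grow. The truncation is what saves us. Every nonlinear term carries the factor $\kappa(\delta_0^{-1}\|u\|_{H^{s_0}})$, so either $\|u\|_{H^{s_0}}\ge4\delta_0$ and the equation is linear, or $u$ is small. In both cases the bound
\[
\|\partial_tu\|_{H^{3}}\le C(\|u\|_{H^{s_0}}+\kappa\,\delta_0)
\]
holds with uniform constants. Consequently \ref{F2} yields $\mathtt{F}(u;t)\le C(1+\|u(t)\|_{H^{s}})$, and more simply $\mathtt{F}\lesssim 1+\kappa_1\|u\|_{H^{s_0}}\lesssim1$. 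Applying (\ref{Duhamel}) with Grönwall gives $\|u(t)\|_{H^s}\le Ce^{C|t-t_0|}\|u(t_0)\|_{H^s}$, which rules out blow-up of the $H^s$ norm on finite intervals. The local step can therefore be iterated to cover all of $\xR$.

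\textbf{Step 4: continuous dependence.} Given data $u^n(t_0)\to u(t_0)$ in $H^s$, the solutions are uniformly bounded in $L^\infty_IH^s$ by Step 3. The Step 2 difference estimate gives convergence in $C^0_IH^{s-3/2}$. Interpolating between this convergence and the uniform $H^s$ bound yields convergence in $C^0_IH^{s-\delta}$.
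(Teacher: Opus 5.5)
Your local theory (Steps 1, 2 and 4) follows the paper's route. You take a fixed point of the twisted Duhamel formula, and the $3/2$-derivative loss in \ref{F3} is paid for by an $H^s$ bound while the contraction is measured in $H^{s-3/2}$. The paper contracts on a ball of $W^{1,\infty}_IH^{s-3/2}_x$. It gets the $H^s$ bound on the image $\mathscr{G}(u,f)$ from $f\in H^s$ and the smoothing of $\R_\Ext$, instead of building it into the iteration set. The two versions are equivalent. Two small repairs: put $L^\infty_IH^s_x$ rather than $C^0_IH^s_x$ into $X_T$, so the set is closed under $C^0_IH^{s-3/2}_x$ limits; and in Step 4 run the difference estimate on subintervals of uniform length.

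The gap is in Step 3, which you rightly identify as the crux. The chain $\mathtt F\lesssim 1+\kappa_1\|u\|_{H^{s_0}}\lesssim 1$ is false.
\begin{itemize}
\item In \ref{F2} the cutoff $\kappa_1$ is evaluated at $\delta_0^{-1}\|u(t)\|_{H^3_x}$, not at the $H^{s_0}$ norm.
\item So $\kappa_1(\delta_0^{-1}\|u\|_{H^3_x})\|u\|_{H^{s_0}_x}$ is unbounded: take $u$ with small $H^3$ norm and large $H^{s_0}$ norm.
\item Your bound $\|\partial_tu\|_{H^3}\lesssim\|u\|_{H^{s_0}}+\delta_0$ is true but too weak here.
\item Your alternative $\mathtt F\lesssim 1+\|u\|_{H^s}$ only gives a superlinear Gr\"onwall inequality, which does not rule out blow-up.
\end{itemize}

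The missing idea is to use the exact form of the equation in the truncated regime. There are two cases.
\begin{itemize}
\item If $\kappa(\delta_0^{-1}\|u\|_{H^{s_0}})=0$, then $\partial_tu=L(D_x)\proj_\low u$. Its $H^3$ norm is $\lesssim\|\proj_\low u\|_{L^2}\le\|u\|_{H^3}$, which is $\lesssim\delta_0$ on $\mathrm{supp}\,\kappa_1$.
\item If $\kappa\neq0$, then $\|u\|_{H^{s_0}}<4\delta_0$. Each term of $\partial_tu$ is then $O(\delta_0)$ in $H^3$ by \ref{Ext1} and \ref{Ext2}, since $s_0-3/2>3$.
\end{itemize}
Hence $\mathtt F\lesssim\delta_0$ uniformly along solutions. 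With this, your Gr\"onwall bound and the continuation argument go through.

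The paper handles this point differently. It first proves an exponential a priori bound on $\|u(t)\|_{H^{s_0}}$. It splits time into $\{\|u\|_{H^{s_0}}\le 4\delta_0\}$ and its complement, where the flow is the linear group $e^{tL(D_x)}$ with growth at most $e^{\lambda|t|}$. It then uses $\|\partial_tu\|_{H^3}\lesssim\|u\|_{H^{s_0}}$, so $\mathtt F$ is locally bounded but grows like $e^{\lambda|t|}$. The repaired version of your argument is a little more direct and gives a time-uniform bound on $\mathtt F$.
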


The local well-posedness for the water jet system is already established in \cite{HK2023} and \cite{Yang2024}, based on which we can expect this global well-posedness result, since the solution is governed by linear equation when it exits the ball $\|u\|_{H^{s_0}_x}\leq 4\delta_0$. Here we apply a more delicate argument using Theorem \ref{Fundamental} to obtain the global-in-time result. The proof will occupy the rest of this subsection. 

\noindent
\textbf{Step 1: Set up.} 
Let $I\Subset\xR$ be a small interval containing $t_0$ to be determined later. Define a mapping $\mathscr{G}(u,f;t)$ for $u\in W^{1,\infty}_IH^{s-3/2}_x$ and $f\in H^{s}$ by
\begin{equation}\label{LWP}
\begin{aligned}
\mathscr{G}(u,f;t)
&:=e^{(t-t_0)L(D_x)}\proj_{\low}f
+\bm{F}(u;t,t_0)\proj_{\high} f\\
&\quad+e^{tL(D_x)}\int_{t_0}^t e^{-\tau L(D_x)}\proj_{\low}\R_{\Ext}(u(\tau))\dtau
+\int_{t_0}^t \bm{F}(u;t,\tau)\proj_{\high}\R_{\Ext}(u(\tau))\dtau.
\end{aligned}
\end{equation}
Note that by Theorem \ref{Fundamental}, the propagator $\bm{F}(u;t,t_0)$ is well-defined since $s\geq s_0+3/2$. We aim to show that $u\mapsto\mathscr{G}(u,f;t)$ is indeed $C^1$ mapping from $W^{1,\infty}_IH^{s-3/2}_x$ to itself, and defines a contraction when $I$ is sufficiently short (depending on $\|f\|_{H^s}$).

Using the explicit expression (\ref{Sol_Lin}), we obtain the easy linear bound
\begin{equation}\label{exp(tL)_easy}
\|e^{tL(D_x)}f\|_{H^s_x}
\leq C_s e^{\lambda|t|}\|f\|_{H^s}.
\end{equation}

\noindent
\textbf{Step 2: Self-map on $W^{1,\infty}_IH^{s-3/2}_x$.} 
We first show that $\mathscr{G}$ does map $W^{1,\infty}_IH^{s-3/2}_x$ to itself. Suppose $\big\|u;W^{1,\infty}_IH^{s-3/2}_x\big\|\leq R$. We have for all $s_1\in\xR$,
\begin{equation}\label{Factor}
\begin{aligned}
\sup_{t,\tau\in I}&\|\bm{F}(u;t,\tau);\mathcal{L}(H^{s_1}_x)\|\\
&\overset{\ref{F2}}{\leq} C_{s_1}\exp\left(C_{s_1}\int_{I}\kappa\big(\delta_0^{-1}\|u(\tau)\|_{H^{s_0}_x}\big)\|u(\tau)\|_{H^{s_0}_x}+\kappa_1\big(\delta_0^{-1}\|u(\tau)\|_{H^{3}}\big)\|\partial_\tau u(\tau)\|_{H^{3}_x}\dtau\right)\\
&\leq C_{s_1}\exp\big[C_{s_1,s}|I|(\delta_0+R)\big].
\end{aligned}
\end{equation}
Therefore, we obtain the (even better) $H^{s}_x$-estimate for $\mathscr{G}$:
\begin{equation}\label{G(u,f)_Hs}
\begin{aligned}
\big\|\mathscr{G}(u,f;t);L^\infty_IH^{s}_x\big\|
&\overset{(\ref{exp(tL)_easy})(\ref{Size_RExt})}{\leq} 
C_se^{\lambda|I|}\|f\|_{H^s}
+C_s\exp\big[C_{s}|I|(\delta_0+R)\big]\|f\|_{H^s}\\
&\quad+C_se^{\lambda|I|}|I|\delta_0R
+C_s\exp\big[C_{s}|I|(\delta_0+R)\big]|I|\delta_0R\\
&=:M_s(|I|,R,\|f\|_{H^s}).
\end{aligned}
\end{equation}
We then use \ref{F1} to write down the paradifferential equation satisfied by $\mathscr{G}(u,f;t)$:
\begin{equation}\label{D_tG(u,f)}
\partial_t\mathscr{G}(u,f;t)
=L(D_x)\proj_{\low}{\mathscr{G}}(u,f;t)
-i\proj_{\high}\Gamma_{\Ext}(t)\proj_{\high}\mathscr{G}(u,f;t)
+\R_\Ext(u(t))
\in L^\infty_IH^{s-3/2}_x.
\end{equation} 
Using \ref{Ext1}\ref{Ext2} and inserting (\ref{G(u,f)_Hs}), we find 
\begin{equation}\label{D_tG(u,f)_Hs}
\begin{aligned}
\big\|\partial_t\mathscr{G}(u,f;t);L^\infty_IH^{s-3/2}_x\big\|
&\leq C_sM_s(|I|,R,\|f\|_{H^s})+C_s\delta_0R.
\end{aligned}
\end{equation}
Therefore $\mathscr{G}(u,f;t)$ is indeed still in $W^{1,\infty}_IH^{s-3/2}_x$.

\noindent
\textbf{Step 3: Differentiation in $u$.} 
We then estimate the differential of $\mathscr{G}(u,f;t)$. We claim that with $\tilde{\Gamma}(t):=\proj_{\high} (\der_u\Gamma_{\Ext}(t)\cdot\del u(t))\proj_{\high}$, the following explicit expression is valid:
\begin{equation}\label{D_uG(u,f)}
\begin{aligned}
(\der_u\mathscr{G}&\cdot\del u)(u,f;t)\\
&=e^{tL(D_x)}\int_{t_0}^t e^{-\tau L(D_x)}\proj_{\low}\Big( \der_u\R_{\Ext}(u(\tau))\cdot\bm{\delta} u(\tau) \Big)\dtau
\dtau\\
&\quad+i\int_{t_0}^t \bm{F}(u;t,\tau)\tilde{\Gamma}(\tau)\mathscr{G}(u,f;\tau)\dtau
+\int_{t_0}^t \bm{F}(u;t,\tau)\proj_{\high}\Big( \der_u\R_{\Ext}(u(\tau))\cdot\bm{\delta} u(\tau) \Big)\dtau.
\end{aligned}
\end{equation}
This can be deduced using the explicit formula (\ref{DuF(u)}) through a somewhat tideous computation. However, the simplest method is to directly differentiate (\ref{D_tG(u,f)}) with respect to $u\in W^{1,\infty}_IH^{s-3/2}_x$. Since $s-3/2\geq s_0$, this is legitimate, leading to the equation satisfied by $z(t):=(\der_u\mathscr{G}\cdot\del u)(u,f;t)$:
$$
\partial_tz(t)-L(D_x)z(t)
+i\proj_{\high}\Gamma_{\Ext}(t)\proj_{\high}z(t)
=i\tilde{\Gamma}(t)\mathscr{G}(u,f;t)
+\der_u\R_\Ext(u(t))\cdot\del u(t),
\quad z(t_0)=0.
$$
We then use the explicit integral formula in Item \ref{F1} of Theorem \ref{Fundamental} to obtain (\ref{D_uG(u,f)}).

Therefore, we compute
$$
\begin{aligned}
\sup_{t\in I}\big\|(\der_u&\mathscr{G}\cdot\del u)(u,f;t);H^{s-3/2}_x\big\|\\
&\leq
C_s|I|e^{\lambda|I|}\|\der_u\R_\Ext(u(\tau))\cdot\del u(\tau);H^{s}_x\|\\
&\quad+C_s|I|\sup_{\tau\in I}\left(\big\|\bm{F}(u;t,\tau);\mathcal{L}(H^{s}_x)\big\|
\cdot\|\tilde{\Gamma}(\tau);\mathcal{L}(H^{s}_x,H^{s-3/2}_x)\|
\cdot\|\mathscr{G}(u,f;\tau);H^{s}_x\|\right)\\
&\quad+C_s|I|\sup_{\tau\in I}\left(\big\|\bm{F}(u;t,\tau);\mathcal{L}(H^{s}_x)\big\|\cdot\|\der_u\R_\Ext(u(\tau))\cdot\del u(\tau);H^{s}_x\|\right).
\end{aligned}
$$
If $\big\|u;W^{1,\infty}_IH^{s-3/2}_x\big\|\leq R$, the right-hand side is controlled by
\begin{equation}\label{D_uG(u,f)_bound}
 C_s|I|\cdot \left(e^{\lambda|I|}R+
\exp\big[C_{s}|I|(\delta_0+R)\big]\left(\delta_0^{-1}M(|I|,R,\|f\|_{H^s})+R\right)\right)
\|\del u;C^0_IH^{s-3/2}_x\|.
\end{equation}
Here $M_s(|I|,R,\|f\|_{H^s})$ is the right-hand-side of (\ref{G(u,f)_Hs}). The computation is straightforward: since $s\geq s_0+3/2$, 
$$
\tilde{\Gamma}(t)=\proj_{\high} (\der_u\Gamma_{\Ext}(t)\cdot\del u(t))\proj_{\high}:
H^{s}_x\to H^{s-3/2}_x
$$
is bounded by (\ref{Tilde_Gamma}). The bound for $\big\|\bm{F}(u;t,\tau);\mathcal{L}(H^{s}_x)\big\|$ is (\ref{Factor}). On the other hand, for $u\in H^{s-3/2}_x$, the mapping $\R_\Ext(u)$ takes value in $H^{s+s_0-5/2}_x\subset H^{s+3/2}_x$ and is smooth by (\ref{Size_RExt}). This shows that $\mathscr{G}(u,f;t)$ is indeed $C^1$ on $W^{1,\infty}_IH^{s-3/2}_x$. 

We then observe that in (\ref{D_uG(u,f)_bound}), the coefficient is bounded when $|I|,R,\|f\|_{H^s}$ are bounded. Recalling the expression of $M_s$ in (\ref{G(u,f)_Hs}), we can take $R\gg_s\|f\|_{H^s}$ and $|I|\ll_s R^{-1}$ to make $M_s(|I|,R,\|f\|_{H^s})\leq R$, and guarantee that (\ref{D_uG(u,f)_bound}) is further bounded by $\leq 0.5\|\del u;C^0_IH^{s-3/2}_x\|$. With such choice of $R,|I|$, the mapping $u\mapsto \mathscr{G}(u,f;t)$ is a $C^1$ contraction on the ball $\big\|u;W^{1,\infty}_IH^{s-3/2}_x\big\|\leq R$. By the Banach fixed point theorem, it has a unique fixed point depending in a Lipschitz manner on $f\in H^s$. 

To summarize, given any $f\in H^s$ with $s\geq s_0+3/2$, there is a small time interval $I$ on which the fixed point equation 
$$
u(t)=\mathscr{G}(u,f;t),\quad t\in I
$$
has a unique solution in $W^{1,\infty}_IH^{s-3/2}_x$, depending on $f\in H^s$ in a Lipschitz manner. By (\ref{G(u,f)_Hs}) itself (and \ref{F1}), the solution is automatically of class $C^0_IH^s_x$. Therefore, for any $\delta>0$, the solution map is continuous as a map $H^s\mapsto C^0_IH^{s-\delta}_x$.

\noindent
\textbf{Step 4: A Priori Bound and Global Well-posedness.} 
In Step 1-3, we proved that given any initial value $u(0)\in H^s$, with $s\geq s_0+3/2$, the integral equation (\ref{Duhamel}) and therefore, the extended equation (\ref{EQ_Red_Ext}), has a unique local-in-time solution. Let us set $I$ to be the maximal existence interval, and the solution $u\in C^0_{I;\loc}H^{s}_x\cap W^{1,\infty}_{I;\loc}H^{s-3/2}_x$. 

We now deduce an \emph{a priori estimate} for the solution: for an absolute constant $C>1$ independent from $I$, and $\lambda=\max_\xi \Lambda_\grow(\xi)$ as in \eqref{Max_Lambda}, 
\begin{equation}\label{APriori_s0}
\|u(t)\|_{H^{s_0}_x}
\leq C\delta_0e^{\lambda|t|},\quad t\in I.
\end{equation}
To justify this, we split
$$
I=\{t\in I:\|u(t)\|_{H^{s_0}_x}\leq 4\delta_0\}\cup \{t\in I:\|u(t)\|_{H^{s_0}_x}> 4\delta_0\}
=:L\cup J.
$$
By continuity, $U$ is open in $I$ and is thus a countable disjoint union of intervals; write $J=\cup_{k=1}^\infty J_k$. We notice that due to the truncation $\kappa\big(\delta_0^{-1}\|u\|_{H^{s_0}_x}\big)$, the equation degenerates to $\partial_tu=L(D_x)\proj_{\low}u$ on $J$, therefore on each $J_k$. Thus for $t\in J_k$,
$$
\partial_tu(t)=L(D_x)\proj_{\low}u(t),\quad \big\|u\big|_{\partial J_k}\big\|_{H^{s_0}_x}=4\delta_0.
$$
By (\ref{exp(tL)_easy}), this gives the exponential estimate $\sup_{t\in J_k}e^{-\lambda|t|}\|u(t)\|_{H^{s_0}_x}\leq C\delta_0$. For $t\in L$ the bound $\|u(t)\|_{H^{s_0}_x}\leq 4\delta_0$ obviously is true. We thus complete the proof of \eqref{APriori_s0}. 

Using the equation (\ref{EQ_Red_Ext}) itself, we find
$$
\begin{aligned}
\|\partial_tu(t)\|_{H^{3}_x}
&\leq C\big\|L(D_x)\proj_{\low}u(t)-i\proj_{\high}\Gamma_{\Ext}(t)\proj_{\high}u(t)+\R_\Ext(u(t))\big\|_{H^{s_0-3/2}_x}\\
&\overset{\ref{Ext1}\ref{Ext2}}{\leq} C\|u(t)\|_{H^{s_0}_x}
\overset{(\ref{APriori_s0})}{\leq}
Ce^{\lambda|t|}\delta_0.
\end{aligned}
$$
Therefore, the factor $\mathtt{F}(u;t)$ in the operator bound of $\bm{F}(u;t,t_0)$ (Item \ref{F2} of Theorem \ref{Fundamental}) is $\leq Ce^{\lambda|t|}\delta_0$. Recalling the integral equation (\ref{Duhamel}), this gives the bound
$$
\begin{aligned}
\|u(t)\|_{H^s_x}
&\overset{\ref{F2}}{\leq}
C_s\exp\left(C_se^{\lambda|t|}\delta_0\right)\|u(0)\|_{H^s}
+C_s\exp\left(C_se^{\lambda|t|}\delta_0\right)\int_{[0,t]}\|\R_\Ext(u(\tau))\|_{H^s_x}\dtau\\
&\overset{\ref{Ext2}}{\leq}
C_s\exp\left(C_se^{\lambda|t|}\delta_0\right)\|u(0)\|_{H^s}
+C_s\exp\left(C_se^{\lambda|t|}\delta_0\right)\int_{[0,t]}\|u(\tau)\|_{H^s_x}\dtau,
\end{aligned}
$$
for any $t\in I$. Grönwall's inequality then ensures that $\|u(t)\|_{H^s_x}\leq C_s\exp\left(C_se^{\lambda|t|}\delta_0\right)\|u(0)\|_{H^s}$ for $t\in I$. This forces $I=\xR$ by local well-posedness that we just proved. The proof for Theorem \ref{GWP_EQ} is now complete.

\section{Hyperbolic Invariant Submanifods}\label{Sec5}
In this section, we shall prove Theorem \ref{Main1} by a variant of the Lyapunov-Perron method, which is enabled by the twisted Duhamel formula (\ref{Duhamel}). The issue of losing regularity for quasilinear systems is manifested as the propagator $\bm{F}(u;t,\tau)$ -- linearizing with respect to $u$ loses derivatives, but this loss is compensated by the regularity gain due to the remainder $\R_\DG$. We shall only prove the statement for stable submanifold, since the unstable submanifold is obtained simply by reversing the time direction.

Throughout this section, we shall fix some
\begin{equation}\label{mu_fix}
\mu\in\big(0,\max_{\xi\in(0,\rho^{-1})} \Lambda_\grow(\xi)\big),
\end{equation}
By Remark \ref{RT_Case}, in the case of $\xG=\xT$, when $\mu\leq\min_{\xi\in\xZ:|\xi|<\rho^{-1}}\Lambda_\grow(\xi)$, the subspace $E_\st^\mu$ coincides with $E_\st$ itself. Apart from this, the theory is similar for either $\xG=\xR$ or $\xT$, so we shall not make distinction for these two cases throughout the proof.

\subsection{Decaying Solutions}\label{Subsec_St_2}
Given the global existence result, namely Proposition~\ref{GWP_EQ}, for the extended system (\ref{EQ_Red_Ext}), our construction of stable manifold then resembles the proof of the classical stable manifold theorem, as sketched in Theorem~\ref{Stab_Mfd_Finite}.

We first state two auxiliary results that will be used repeatedly. The first one concerns $e^{tL(D_x)}\proj_{\low}$, true for either $\xR$ or $\xT$ but the proof is non-trivial only for $\xR$. 

\begin{lemma}\label{exp(tL)}
Let $L(D_x)$ be the real linear Fourier multiplier in \ref{Diag1} (equivalently, the operator in \eqref{LEQ(0,0)}). Let $\Lambda_\grow(\xi),\Lambda_\disp(\xi)$ be as in (\ref{Disp_lh}), let $\lambda=\max_\xi \Lambda_\grow(\xi)$ as in \eqref{Max_Lambda}, and fix some $0<\mu<\lambda$. Let $0<\xi_\mu<\xi_\mu'<\rho^{-1}$ and $E_\st^\mu,E_\unst^\mu$ be as in (\ref{Stable_Space})-(\ref{Unstable_Space}).
\begin{itemize}
\item There are two bounded, commuting real linear projections 
$$
\proj_\st^\mu:\mathrm{Ran}\proj_{\low}\to E_\st^\mu,
\quad
\proj_\unst^\mu:\mathrm{Ran}\proj_{\low}\to E_\unst^\mu.
$$
They both commute with $L(D_x)$. There sum is the Fourier projection $\1_{\xi_\mu\leq|\xi|\leq\xi_\mu'}(D_x)$. With all spaces equipped with $L^2$ norm, the operatorial bounds depend only on $\mu$. As operator-valued functions, $\mu\mapsto\proj_\st^\mu,\proj_\unst^\mu$ are left continuous in the strong operator topology.

\item There holds
$$
\begin{aligned}
\|e^{tL(D_x)}\proj_\st^\mu f\|_{H^s}
&\leq C_{s,\mu} e^{-\mu t}\|f\|_{L^2},\quad t\geq0;\\
\big\|e^{tL(D_x)}(\proj_{\low}-\proj_\st^\mu) f\big\|_{H^s}
&\leq C_{s,\mu} e^{-\mu t}\|f\|_{L^2},\quad t\leq0.
\end{aligned}
$$
Similar inequalities hold for $e^{tL(D_x)}\proj_\unst^\mu$.
\end{itemize}
\end{lemma}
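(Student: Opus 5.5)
Everything reduces to the explicit solution formula (\ref{Sol_Lin}) restricted to frequencies $|\xi|<2\rho^{-1}$. Since $L(D_x)$ is a real $2\times2$ matrix-valued Fourier multiplier, the question is pointwise in $\xi$. For $0<|\xi|<\rho^{-1}$ the matrix $L(\xi)$ has the two real eigenvalues $\pm\Lambda_\grow(\xi)$, with eigenvectors given by the relations $\sqrt{-H'[0]}\,\eta\pm\sqrt{G[0]}\,\psi=0$. The associated spectral projections are
$$
P_\pm(\xi)=\frac12\begin{pmatrix}1 & \pm a(\xi)\\ \pm a(\xi)^{-1} & 1\end{pmatrix},
\quad a(\xi)=\sqrt{\frac{\lambda^{(1)}(\xi)}{\rho^{-2}-\xi^2}} .
$$
I define $\proj_\st^\mu:=P_-(D_x)\1_{\xi_\mu\leq|\xi|\leq\xi_\mu'}(D_x)$ and $\proj_\unst^\mu:=P_+(D_x)\1_{\xi_\mu\leq|\xi|\leq\xi_\mu'}(D_x)$. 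These commute with each other and with $L(D_x)$, their sum is $\1_{\xi_\mu\leq|\xi|\leq\xi_\mu'}(D_x)$, and they map into $E_\st^\mu,E_\unst^\mu$ by construction. (In the complex encoding of Remark \ref{Complex_u} the multipliers act on $(\RE u,\IM u)$, which is why the projections are only real linear.) On the compact set $\xi_\mu\leq|\xi|\leq\xi_\mu'$, which stays away from $0$ and $\rho^{-1}$, the quantities $a^{\pm1}$ are bounded by a constant depending only on $\mu$. Plancherel then gives the $L^2$ bounds. Because the frequency support is bounded, the same constant controls the $L^2\to H^s$ norm up to a factor $C_s$.

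For left continuity in $\mu$, note that $\xi_\mu$ increases and $\xi_\mu'$ decreases continuously as $\mu$ increases, so the interval $[\xi_\mu,\xi'_\mu]$ shrinks. As $\nu\uparrow\mu$ we have $[\xi_\nu,\xi_\nu']\downarrow[\xi_\mu,\xi_\mu']$, a decreasing family of closed sets whose intersection is $[\xi_\mu,\xi_\mu']$. The indicator functions therefore converge pointwise, and $P_\pm$ are bounded uniformly on a neighbourhood $[\xi_{\mu_1},\xi'_{\mu_1}]$. Dominated convergence in Plancherel then gives strong convergence, which proves left continuity.

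For the decay estimates, $e^{tL(\xi)}P_-(\xi)=e^{-t\Lambda_\grow(\xi)}P_-(\xi)$, and $\Lambda_\grow\geq\mu$ on the support. Hence for $t\geq0$ we get $|e^{-t\Lambda_\grow}|\leq e^{-\mu t}$, and the first estimate follows from the multiplier bounds above together with the bounded frequency support, which converts $L^2$ to $H^s$.

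The second estimate (for $t\leq0$) is the step requiring care, since $\proj_{\low}-\proj_\st^\mu$ contains everything else at low frequency. I split it into three pieces.
\begin{enumerate}
\item The unstable part $P_+\1_{[\xi_\mu,\xi_\mu']}$. Here $e^{t\Lambda_\grow}\leq e^{-\mu|t|}$ for $t\leq0$.
\item The hyperbolic frequencies $0<|\xi|<\xi_\mu$ and $\xi_\mu'<|\xi|<\rho^{-1}$. Here $\Lambda_\grow<\mu$, so both $\cosh(t\Lambda_\grow)$ and $\sinh(t\Lambda_\grow)/\Lambda_\grow$ are bounded by $C(1+|t|)e^{\mu' |t|}$ for some $\mu'<\mu$ only if $\Lambda_\grow$ is bounded away from $\mu$, which fails near $\xi_\mu,\xi_\mu'$.
\item The dispersive frequencies $\rho^{-1}\leq|\xi|<2\rho^{-1}$ and $\xi=0$. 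Here the entries are bounded by $C(1+|t|)$, the linear growth coming from the convention $\sin(tz)/z|_{z=0}=t$.
\end{enumerate}
So pieces 2 and 3 grow as $t\to-\infty$, subexponentially at rate below or at most $\mu$, rather than decaying. The estimate as literally stated with decay $e^{-\mu t}$ for $t\leq0$ therefore cannot hold for these components. I would interpret and prove it in the form actually needed in the Lyapunov--Perron argument: a bound $C_{s,\mu}e^{\mu|t|}\|f\|_{L^2}$, that is, growth at most $e^{\mu|t|}$, for $t\leq0$ on $\proj_{\low}-\proj_\st^\mu$.

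I expect this to be the main obstacle: uniform control near $|\xi|=\xi_\mu,\xi_\mu'$ and near $\rho^{-1}$, where $\Lambda_\grow\to\mu$ or $\Lambda_\grow\to0$. I would handle it with the pointwise inequality
$$
\sup_{z\in[0,\mu]}\big(|\cosh(tz)|+|\sinh(tz)/z|\big)\leq C_\mu e^{\mu|t|},
$$
which covers piece 2 (no polynomial factor arises because $z\leq\mu$ gives the sharp exponent up to the bounded factor $\sinh(tz)/z\leq|t|e^{z|t|}\leq C e^{\mu|t|}$ once the sup over $z\le\mu$ is taken carefully), and piece 3 similarly, with $|t|\leq C_\mu e^{\mu|t|}$. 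The boundedness of $a^{\pm1}$ and $\lambda^{(1)}$ on $|\xi|<2\rho^{-1}$ then completes the $L^2\to H^s$ bound. The unstable statements follow by reversing time.
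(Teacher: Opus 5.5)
Your proof is correct and takes the paper's route. You diagonalize the $2\times 2$ symbol $L(\xi)$ on $\xi_\mu\le|\xi|\le\xi_\mu'$, take $\proj_\st^\mu,\proj_\unst^\mu$ to be the resulting eigenprojection multipliers times the indicator, and read every bound off (\ref{Sol_Lin}), using the bounded frequency support to pass from $L^2$ to $H^s$.

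Two corrections.

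\begin{itemize}
\item No reinterpretation of the second estimate is needed, and it is not false as stated. For $t\le 0$ one has $e^{-\mu t}=e^{\mu|t|}$, so the literal statement already is the growth bound you prove. It is also the bound used later in (\ref{exp(tL)_mu}).
\item Your chain $|\sinh(tz)|/z\le|t|e^{z|t|}\le Ce^{\mu|t|}$ loses a factor $|t|$ at $z=\mu$. The clean justification of the piece-2 bound is that $x\mapsto\sinh(x)/x$ is increasing, so
\[
\sup_{0<z\le\mu}\frac{|\sinh(tz)|}{z}=\frac{\sinh(\mu|t|)}{\mu}\le\frac{e^{\mu|t|}}{2\mu}.
\]
\end{itemize}
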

\begin{proof}
The proof is an easy computation involving matrix exponentials, avoiding the issue of defining spectral projection for non-normal operators with continuous spectrum. 

The Fourier transform is an isomorphism from $\mathrm{Ran}\proj_{\low}$ to the Lebesgue space
$$
L^2(-2\rho^{-1},2\rho^{-1}),\quad \text{when }\xG=\xR;\quad
L^2(|\xi|<2\rho^{-1},\xi\in\xZ),\quad \text{when }\xG=\xT.
$$
The operator $L(D_x)$ is converted to the pointwise multiplier $L(\xi)$ as in (\ref{LEQ_Coeff}) (see Remark \ref{Complex_u}). Since $\Lambda_\grow(\xi)\geq\mu$ when $\xi_\mu\leq|\xi|\leq\xi_\mu'$, we can diagonalize 
$$
L(\xi)=S(\xi)\begin{pmatrix}
-\Lambda_\grow(\xi) & 0 \\ 0 & \Lambda_\grow(\xi) 
\end{pmatrix}S(\xi)^{-1},
\quad 
S(\xi)\text{ is continuous in }\xi.
$$
We may simply define $\proj_\st^\mu,\proj_\unst^\mu$ as the matrix Fourier multipliers on $L^2$ corresponding to 
$$
S(\xi)\begin{pmatrix}
1 & 0 \\ 0 & 0 
\end{pmatrix}S(\xi)^{-1}\cdot\1_{\xi_\mu\leq|\xi|\leq\xi_\mu'}(\xi),\quad
S(\xi)\begin{pmatrix}
0 & 0 \\ 0 & 1
\end{pmatrix}S(\xi)^{-1}\cdot\1_{\xi_\mu\leq|\xi|\leq\xi_\mu'}(\xi),
$$
respectively. These projections obviously commute with $L(D_x)$, and their sum is obviously the Fourier projection to $\xi_\mu\leq|\xi|\leq\xi_\mu'$. Direct computation shows that the operatorial bounds depend only on $\mu^{-1}$. By the explicit formula of $e^{tL(D_x)}$ in (\ref{Sol_Lin}), the estimate for $e^{tL(D_x)}\proj_{\low}$ and $e^{tL(D_x)}(\proj_{\low}-\proj_\st^\mu)$ follow immediately.

Furthermore, if $\mu\uparrow \mu_0$, then $\xi_{\mu}\uparrow\xi_{\mu_0},\xi_{\mu}'\downarrow\xi_{\mu_0}'$, implying $\1_{\xi_\mu\leq|\xi|\leq\xi_\mu'}(\xi)\downarrow \1_{\xi_{\mu_0}\leq|\xi|\leq\xi_{\mu_0}'}(\xi)$ pointwise. This proves the left continuity in strong operator topology.
\end{proof}

\begin{lemma}\label{Size_Fundamental}
Suppose $u\in L^\infty_{t;\loc}H^{s_0}_x\cap W^{1,\infty}_{t;\loc}H^{s_0-3/2}_x$ verifies the decay estimate 
$$
\|u(t)\|_{H^{s_0}_x}+\|\partial_tu(t)\|_{H^{s_0-3/2}_x}=O(e^{-\mu t}),
\quad t\to+\infty.
$$
Then the propagator $\bm{F}(u;t,t_0)$ satisfies
$$
\sup_{t_0:t_0 \ge t} \big\| \bm{F}(u;t,t_0);\mathcal{L}(H^{s}_x) \big\|
\leq C_{s}\exp\left(C_{s,\mu}\sup_{\tau:\tau\geq t}e^{\mu \tau}\left(\|u(\tau)\|_{H^{s_0}_x}+\|\partial_\tau u(\tau)\|_{H^{s_0-3/2}_x}\right)\right).
$$
A similar estimate holds for differentials of $\bm{F}(u;t,t_0)$. 
\end{lemma}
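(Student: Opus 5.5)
The plan is to plug the decay hypothesis directly into the energy bound \ref{F2} of Theorem \ref{Fundamental}. For $t_0\geq t$ that bound reads
$$
\|\bm{F}(u;t,t_0)\|_{\mathcal{L}(H^s_x)}\leq C_s\exp\Big(C_s\int_t^{t_0}\mathtt{F}(u;\tau)\dtau\Big),
$$
so everything reduces to bounding $\int_t^{\infty}\mathtt{F}(u;\tau)\dtau$ uniformly in $t_0$. Set
$$
A(t):=\sup_{\tau\geq t}e^{\mu\tau}\big(\|u(\tau)\|_{H^{s_0}_x}+\|\partial_\tau u(\tau)\|_{H^{s_0-3/2}_x}\big).
$$
This is finite by hypothesis and non-increasing in $t$.

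The first step is to bound $\mathtt{F}(u;\tau)$ pointwise. Since $0\leq\kappa,\kappa_1\leq1$, we have
$$
\mathtt{F}(u;\tau)\leq\|u(\tau)\|_{H^{s_0}_x}+\|\partial_\tau u(\tau)\|_{H^3_x}.
$$
Because $s_0>5$, we have $s_0-3/2>3$, so $\|\partial_\tau u\|_{H^3_x}\leq\|\partial_\tau u\|_{H^{s_0-3/2}_x}$. Hence $\mathtt{F}(u;\tau)\leq e^{-\mu\tau}A(t)$ for every $\tau\geq t$.

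The second step is to integrate over $[t,t_0]$:
$$
\int_t^{t_0}\mathtt{F}(u;\tau)\dtau\leq A(t)\int_t^\infty e^{-\mu\tau}\dtau=\mu^{-1}e^{-\mu t}A(t).
$$
For $t\geq0$ this is at most $\mu^{-1}A(t)$, which gives the claimed estimate with $C_{s,\mu}=C_s/\mu$. For $t<0$ there is a nuisance factor $e^{-\mu t}$. On $[t,0]$ one instead bounds $\int_t^0\mathtt{F}\leq |t|\sup_{\tau\in[t,0]}(\ldots)$, or absorbs the factor into the constant's dependence on the interval. Since the lemma is only used for $t\to+\infty$ (the stable manifold construction with $t\geq0$), I will state and prove it for $t\geq0$, where the bound is clean. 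Taking the supremum over $t_0\geq t$ finishes the main estimate.

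The third step handles the differentials. Iterating the formula (\ref{DuF(u)}), the $k$-th derivative is a time-ordered integral of products of $\bm{F}(u;\cdot,\cdot)$ interleaved with $\tilde\Gamma(\tau_j)$, each $\tilde\Gamma$ costing $3/2$ derivatives (by (\ref{Tilde_Gamma})) and a factor $\delta_0^{-1}\|\del u(\tau_j)\|_{H^{s_0}_x}$. Bounding every propagator factor by the estimate above, at the appropriate Sobolev index $s-3j/2$, gives
$$
\|(\der_u^k\bm{F}\cdot(\del u)^{\otimes k})(u;t,t_0)\|_{\mathcal{L}(H^s_x,H^{s-3k/2}_x)}\leq C_{s,k}\,\delta_0^{-k}\exp\big(C_{s,k,\mu}A(t)\big)\Big(\int_t^{t_0}\|\del u(\tau)\|_{H^{s_0}_x}\dtau\Big)^k.
$$
If $\del u$ is measured in the weighted norm $\sup_\tau e^{\mu\tau}\|\del u(\tau)\|$, the integral is in turn bounded by $\mu^{-1}e^{-\mu t}$ times that norm, uniformly in $t_0$.

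There is no real obstacle. The only points to get right are the embedding $H^{s_0-3/2}\subset H^3$, which is where $s_0>5$ is used, and keeping the bound uniform in $t_0$; the integrability of $e^{-\mu\tau}$ on $[t,\infty)$ provides the latter.
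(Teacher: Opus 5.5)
Your proposal is correct and is the paper's own argument: insert the decay into the energy bound \ref{F2} (using $H^{s_0-3/2}\subset H^3$), bound $\int_t^{t_0}\mathtt{F}(u;\tau)\dtau$ by $\mu^{-1}e^{-\mu t}$ times the weighted supremum uniformly in $t_0$, and treat differentials by inserting the same bound into \ref{F3}. Citing \ref{F3} directly is cleaner than re-iterating (\ref{DuF(u)}), which at order $k\geq2$ also produces higher $u$-derivatives of $\Gamma_\Ext$. Your restriction to $t\geq0$ is appropriate, since the paper's computation also carries the factor $e^{-\mu t}$ and the lemma is only invoked for $t_0\geq t\geq0$.
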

\begin{proof}
Recall from Theorem \ref{Fundamental} that the propagator $\bm{F}(u;t,t_0)$ satisfies
$$
\|\bm{F}(u;t,t_0);\mathcal{L}(H^s_x)\|
\leq C_s\exp\left(C_s\int_{[t_0,t]}\mathtt{F}(u;
\tau)\dtau\right),
$$
where for some smooth bump function $\kappa_1:\xR\to[0,1]$ that equals 1 on $\mathrm{supp}\kappa$,
$$
\mathtt{F}(u;\tau)
=\kappa\big(\delta_0^{-1}\|u\|_{H^{s_0}_x}\big)\|u(t)\|_{H^{s_0}_x}+\kappa_1\big(\delta_0^{-1}\|u(t)\|_{H^{3}_x}\big)\|\partial_tu(t)\|_{H^{3}_x}.
$$
If $u$ has the decay property, then we find
$$
\int_{[t_0,t]}\mathtt{F}(u;
\tau)\dtau
\leq C\sup_{\tau:\tau\geq t}e^{\mu \tau}\left(\|u(\tau)\|_{H^{s_0}_x}+\|\partial_\tau u(\tau)\|_{H^{s_0-3/2}_x}\right)
\cdot\int_t^{+\infty} e^{-\mu \tau}\dtau
<+\infty.
$$
Recalling Item \ref{F3} of Theorem \ref{Fundamental}, we estimate the differential of $\bm{F}(u;t,t_0)$ in a similar manner. This finishes the proof.
\end{proof}

We state a Lyapunov-Perron type lemma, comparable to the classical integral equation in \eqref{Classical}:
\begin{lemma}\label{Int_Eq_Stab}
Fix $s_0>5$. A solution $u\in C^0_{t;\loc}H^{s_0}_x\cap W^{1,\infty}_{t;\loc}H^{s_0-3/2}_x$ of (\ref{EQ_Red_Ext}) satisfying $\|u(t)\|_{H^{s_0}_x}=O(e^{-\mu t})$ as $t\to+\infty$ must solve the integral equation
\begin{equation}\label{Decay_Int}
\begin{aligned}
u(t)=
e^{tL(D_x)}\proj_\st^\mu u(0)
+\mathscr{A}^\mu(u;t),
\end{aligned}
\end{equation}
where we define 
$$
\begin{aligned}
\mathscr{A}^\mu(u;t)
&:=\mathscr{A}_\st^\mu(u;t)+\mathscr{A}_\unst^\mu(u;t)
+\mathscr{A}_{\high}(u;t),\\
\mathscr{A}_\st^\mu(u;t)
&:=e^{tL(D_x)}\int_{0}^t e^{-\tau L(D_x)}\proj_\st^\mu\R_{\Ext}(u(\tau))\dtau \\
\mathscr{A}_\unst^\mu(u;t)
&=-e^{tL(D_x)}\int_{t}^{+\infty} e^{-\tau L(D_x)}(\proj_{\low}-\proj_\st^\mu)\R_{\Ext}(u(\tau))\dtau\\
\mathscr{A}_{\high}(u;t)
&=-\int_t^{+\infty}\bm{F}(u;t,\tau)\proj_{\high}\R_{\Ext}(u(\tau))\dtau.
\end{aligned}
$$
The integrals $\mathscr{A}_\st^\mu(u;t),\mathscr{A}_\unst^\mu(u;t)
,\mathscr{A}_{\high}(u;t)$ all converge in the topology of $H^{s_0}_x$.
\end{lemma}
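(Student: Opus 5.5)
The plan is to begin from the twisted Duhamel formula of Proposition \ref{Twist_Duhamel}, used with base time $t_0=0$ for the stable low-frequency part and with base time $t_0=T\to+\infty$ for the remaining parts. First I will establish the decay of the time derivative. Since $\|u(t)\|_{H^{s_0}_x}=O(e^{-\mu t})$, the equation (\ref{EQ_Red_Ext}) combined with \ref{Ext1}--\ref{Ext2} gives $\|\partial_tu(t)\|_{H^{s_0-3/2}_x}\lesssim\|u(t)\|_{H^{s_0}_x}=O(e^{-\mu t})$. This is exactly what Lemma \ref{Size_Fundamental} requires, so it yields $\sup_{\tau\ge t}\|\bm{F}(u;t,\tau);\mathcal{L}(H^{s_0}_x)\|\le C<\infty$ uniformly in $t\ge0$. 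In the same way, (\ref{Size_RExt}) gives $\|\R_\Ext(u(\tau))\|_{H^{2s_0-2}_x}\lesssim\|u(\tau)\|_{H^{s_0}_x}^2=O(e^{-2\mu\tau})$, and in particular this quantity is $O(e^{-\mu\tau})$ in every lower norm.

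Next I will split the low-frequency part. Applying $\proj_{\low}$ to the equation gives the linear Duhamel identity between any two times. I write $\proj_{\low}=\proj_\st^\mu+(\proj_{\low}-\proj_\st^\mu)$ using Lemma \ref{exp(tL)}, noting that both pieces commute with $e^{tL(D_x)}$.

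For the stable piece I use base time $0$. This gives $\proj_\st^\mu u(t)=e^{tL}\proj_\st^\mu u(0)+\mathscr{A}_\st^\mu(u;t)$, and this integral is finite, so there is nothing to prove.

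For the complementary piece I use base time $T>t$ and write
$$(\proj_{\low}-\proj_\st^\mu)u(t)=e^{(t-T)L}(\proj_{\low}-\proj_\st^\mu)u(T)-e^{tL}\int_t^Te^{-\tau L}(\proj_{\low}-\proj_\st^\mu)\R_\Ext(u(\tau))\dtau.$$
By the backward bound of Lemma \ref{exp(tL)}, the first term is at most $C_\mu e^{\mu(T-t)}\|u(T)\|_{L^2}$. This only shows the term is bounded, which is not enough. The intended fix is to use the fact that the complementary spectrum in $\mathrm{Ran}(\proj_{\low}-\proj_\st^\mu)$ has decay rate strictly slower than $\mu$ on the relevant frequencies. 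However, the lemma as stated only provides growth at most $e^{\mu|t-T|}$ backward.

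I therefore plan to exploit a slightly stronger decay of $u$. Decay at rate $\mu$ places the data in the range where rates are $\ge\mu$. More precisely, for the frequencies with $\Lambda_\grow<\mu$, and for the elliptic frequencies, the backward propagator grows at most like $e^{\mu'|t-T|}$ with some $\mu'<\mu$ after restriction. I expect this to be the \textbf{main obstacle}: there is no spectral gap, so the point is delicate. What I will try first is a left-continuity argument in $\mu$, which is exactly why Lemma \ref{exp(tL)} records that $\mu\mapsto\proj_\st^\mu$ is left continuous. For $\mu'<\mu$ close to $\mu$, I apply the identity with $\proj_\st^{\mu'}$. The boundary term is then $\lesssim e^{\mu'(T-t)}e^{-\mu T}\to0$. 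I then let $\mu'\uparrow\mu$ in the strong operator topology.

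The integral term converges as $T\to\infty$ because the integrand is bounded by $e^{\mu(\tau-t)}e^{-2\mu\tau}$. This produces $\mathscr{A}_\unst^\mu$.

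For the high-frequency part I use \ref{F1} with base time $T$:
$$\proj_{\high}u(t)=\bm{F}(u;t,T)\proj_{\high}u(T)-\int_t^T\bm{F}(u;t,\tau)\proj_{\high}\R_\Ext(u(\tau))\dtau.$$
The uniform bound on $\bm{F}$ from the first step makes the boundary term $O(e^{-\mu T})\to0$ in $H^{s_0}_x$, and makes the integral converge absolutely with an $O(e^{-2\mu\tau})$ integrand. This gives $\mathscr{A}_{\high}$.

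Summing the three pieces yields (\ref{Decay_Int}), and the convergence of each integral in $H^{s_0}_x$ has already been established along the way.
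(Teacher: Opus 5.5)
Your plan is correct and is essentially the paper's argument. The paper also projects with $\Id-\proj_\st^{\mu-\delta}$ for $\delta>0$, lets $t_0\to+\infty$ (the boundary terms are $O(e^{-\delta t_0})$, using the uniform bound on $\bm{F}$ from Lemma \ref{Size_Fundamental}), and then sends $\delta\downarrow0$ by left continuity of $\mu\mapsto\proj_\st^\mu$. The one step you left implicit is passing $\mu'\uparrow\mu$ through the improper integral. The paper does this by writing $(\proj_{\low}-\proj_\st^{\mu'})=(\proj_{\low}-\proj_\st^{\mu'})(\proj_{\low}-\proj_\st^{\mu})$, so that the only $\mu'$-dependent operator sits outside an integral whose convergence does not depend on $\mu'$.
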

\begin{proof}
By the equation (\ref{EQ_Red_Ext}) itself, we conclude that $\partial_tu(t)$ also decays like $O(e^{-\mu t})$ but in the $H^{s_0-3/2}_x$ norm. Therefore, 
$$
\|u(t)\|_{H^{s_0}_x}+\|\partial_tu(t)\|_{H^{s_0-3/2}_x}=O(e^{-\mu t}),
\quad t\to+\infty.
$$
By Lemma \ref{Size_Fundamental}, we have $\big\|\bm{F}(u;t,t_0);\mathcal{L}(H^s_x)\big\|\leq C_{s,\mu}$ for $t_0\geq t\geq0$. We now substitute the decay assumption into the twisted Duhamel formula (\ref{Duhamel}) and project it using $\Id-\proj_\st^{\mu-\delta}$ for any $0<\delta<\mu/2$ (note that $\proj_\st^{\mu-\delta}$ vanishes on frequencies $>\rho^{-1}$): 
\begin{equation}\label{Duhamel_Temp}
\begin{aligned}
(\Id&-\proj_\st^{\mu-\delta}) u(t)\\
&=e^{(t-t_0)L(D_x)}(\proj_{\low}-\proj_\st^{\mu-\delta}) u(t_0)
+\bm{F}(u;t,t_0)\proj_{\high} u(t_0)\\
&\quad+e^{tL(D_x)}\int_{t_0}^t e^{-\tau L(D_x)}(\proj_{\low}-\proj_\st^{\mu-\delta}) \R_{\Ext}(u(\tau))\dtau
+\int_{t_0}^t \bm{F}(u;t,\tau)\proj_{\high}\R_{\Ext}(u(\tau))\dtau.
\end{aligned}
\end{equation}
Fixing $t$ and letting $t_0\to+\infty$, by Lemma \ref{exp(tL)}, we find that the first and second terms in (\ref{Duhamel_Temp}) have $H^{s_0}_x$ norm of size $O(e^{-\delta t_0})$, while the last integral converges in $H^{s_0}_x$ since the integrand have size $O(e^{-2\mu\tau})$ in $H^{s_0}_x$, thanks to the quadratic and smoothing property of $\R_\Ext$ (see the estimate~\eqref{Size_RExt}). As for the first integral in (\ref{Duhamel_Temp}), we re-write it as
$$
(\proj_{\low}-\proj_\st^{\mu-\delta})e^{tL(D_x)}\int_{t_0}^t e^{-\tau L(D_x)}(\proj_{\low}-\proj_\st^{\mu}) \R_{\Ext}(u(\tau))\dtau.
$$
The integrand has $H^{s_0}_x$ norm $O(e^{-\mu \tau})$ by Lemma \eqref{exp(tL)} and \eqref{Size_RExt}, so it converges as $t_0\to+\infty$. Therefore, 
$$
\begin{aligned}
(\Id&-\proj_\st^{\mu-\delta}) u(t)\\
&=-(\proj_{\low}-\proj_\st^{\mu-\delta})e^{tL(D_x)}\int_t^{+\infty} e^{-\tau L(D_x)}(\proj_{\low}-\proj_\st^{\mu}) \R_{\Ext}(u(\tau))\dtau \\
&\quad-\int_t^{+\infty} \bm{F}(u;t,\tau)\proj_{\high}\R_{\Ext}(u(\tau))\dtau.
\end{aligned}
$$
Letting $\delta\downarrow0$, using the left continuity of $\proj_\st^\mu$ in $\mu$ guaranteed by Lemma \ref{exp(tL)}, we finish the proof. 
\end{proof}

Note that in Lemma \ref{Int_Eq_Stab}, there is no assumption on the decay of $\partial_tu$, since it follows directly from the equation satisfied by $u$ itself, forcing $\partial_tu(t)$ to decay in $H^{s_0-3/2}_x$. As a consequence of Lemma~\ref{Int_Eq_Stab}, the integral equation~\eqref{Decay_Int} is what a solution $u(t)$ of (\ref{EQ_Red_Ext}) decaying like $O(e^{-\mu t})$ as $t\to+\infty$ must satisfy. Obviously, a solution $u\in C^0_{t;\loc}H^{s_0}_x\cap W^{1,\infty}_tH^{s_0-3/2}_x$ to (\ref{Decay_Int}) must also solve the paradifferential equation (\ref{EQ_Red_Ext}) by the twisted Duhamel formula (\ref{Duhamel}).

\subsection{The Solution Map}
The aim of this subsection is to prove the converse of Lemma \ref{Int_Eq_Stab}: corresponding to each $f\in E_\st^\mu$ close to zero, there is a \textit{unique} solution $u(t)$ of the integral equation
\begin{equation}\label{Stab_Int}
u(t)=e^{tL(D_x)}\proj_{\low}f+\mathscr{A}^\mu(u;t)
\end{equation}
that decays like $O(e^{-\mu t})$ as $t\to+\infty$. From the twisted Duhamel formula, we find that a function $u\in C^0_{t;\loc}\big([0,+\infty);H^{s_0}_x\big)\cap W^{1,\infty}_{t;\loc}\big([0,+\infty);H^{s_0-3/2}_x\big)$ solves (\ref{Stab_Int}) if and only if it solves the Cauchy problem (\ref{EQ_Red_Ext}) with initial value
$$
u(0)=f-\int_0^{+\infty} e^{-\tau L(D_x)}(\proj_{\low}-\proj_\st^\mu)\R_{\Ext}(u(\tau))\dtau
-\int_0^{+\infty} 
\bm{F}(u;0,\tau)\proj_{\high}\R_{\Ext}(u(\tau))\dtau.
$$
The initial data involves information of $u$ throughout its lifespan; therefore, it is the integral equation (\ref{Stab_Int}) that should be manipulated with.

We now turn to the integral equation (\ref{Stab_Int}). Let us consider the Banach space
\begin{equation}\label{E_gamma_mu}
 \begin{aligned}
&\mathfrak{E}_{\mu}:=\Big\{u \in C^0_{t}([0,+\infty);H^{s_0}_x)\cap W^{1,\infty}_{t}([0,+\infty);H^{s_0-3/2}_x): \\
&\hspace{8em}\E[u]_{\mu} := \sup_{t\ge 0} e^{\mu t}\big(\|u(t)\|_{H^{s_0}_x}+\|\partial_tu(t)\|_{H^{s_0-3/2}_x}\big)<+\infty \Big\}.
\end{aligned}
\end{equation} 
We also introduce the following notations:
\begin{notation}\label{note:OpenBall}
For $f\in H^s$ with $s\in\xR$, we denote the open ball centered at $f$ with radius $r>0$ by
$$
B_s(f,r) := \{ g\in H^s: \|f-g\|_{H^s}<r \}.
$$
Similarly, for all $u\in\mathfrak{E}_{\mu}$, we denote the open ball centered at $u$ with radius $r>0$ by
$$
\mathfrak{B}_{\mu}(u,r) =\left\{v\in\mathfrak{E}_{\mu}: \E[v-u]_{\mu}<r \right\}.
$$
\end{notation}
In the space $\mathfrak{E}_{\mu}$, we are able to solve the integral equation~\eqref{Stab_Int} near zero.

\begin{proposition}\label{Exp_Contraction}
Fix $s_0>5$. Fix an index $\mu$ as in (\ref{mu_fix}). There exist small real numbers $\varepsilon,\varepsilon'>0$ depending on $\mu$ and a unique $C^1$ solution map
\begin{equation}\label{eq-smfd:SolMap}
    \xS_\st^\mu : B_{s_{0}}(0,\varepsilon)\cap E_\st^\mu \to \mathfrak{B}_{\mu}(0,\varepsilon'),\quad f \mapsto u,
\end{equation}
such that, given any function $f\in E_\st^\mu\cap B_{s_{0}}(0,\varepsilon)$ and $u\in\mathfrak{B}_{\mu}(0,\varepsilon')$, $u$ solves the integral equation~\eqref{Stab_Int} if and only if $u = \xS_\st^\mu(f)$. The value $\xS_\st^\mu(f)$ indeed belongs to $C^0_t([0,+\infty);H^s_x)$ for any $s\geq s_0$, and indeed $\|\xS_\st^\mu(f)(t)\|_{H^s_x}=O(e^{-\mu t})$ for any $s\geq s_0$. Moreover, if $s_0-2>3k/2$ for some integer $k\in\xN$, then the solution map $\xS_\st^\mu$ is of class $C^k(B_{s_{0}}(0,\varepsilon)\cap E_\st^\mu;\mathfrak{E}_{\mu})$.
\end{proposition}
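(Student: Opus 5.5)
The plan is to solve \eqref{Stab_Int} as a fixed point problem $u=\Phi(u,f):=e^{tL(D_x)}\proj_{\low}f+\mathscr{A}^\mu(u;t)$ on the ball $\mathfrak{B}_\mu(0,\varepsilon')\subset\mathfrak{E}_\mu$, and then apply the implicit function theorem with parameter $f\in E_\st^\mu$. The first step is to check that $\Phi(\cdot,f)$ maps $\mathfrak{B}_\mu(0,\varepsilon')$ into itself. For the linear term, Lemma \ref{exp(tL)} gives $\|e^{tL}f\|_{H^{s_0}}\lesssim e^{-\mu t}\|f\|_{L^2}$; the time derivative $L(D_x)e^{tL}f$ satisfies the same bound, since $L$ is bounded on low frequencies. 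For $\mathscr{A}_\st^\mu$, note that $e^{(t-\tau)L}\proj_\st^\mu$ decays like $e^{-\mu(t-\tau)}$ and $\|\R_\Ext(u(\tau))\|\lesssim e^{-2\mu\tau}\E[u]_\mu^2$ by \eqref{Size_RExt}, so the integral is $O(e^{-\mu t}\E[u]^2_\mu)$. For $\mathscr{A}_\unst^\mu$, we use the backward bound of Lemma \ref{exp(tL)}: the factor $e^{\mu(\tau-t)}e^{-2\mu\tau}$ integrates over $[t,\infty)$ to $e^{-\mu t}\cdot e^{-\mu t}$. The absence of a spectral gap is harmless here, because the remainder decays at rate $2\mu$ rather than $\mu$. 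For $\mathscr{A}_\high$, Lemma \ref{Size_Fundamental} bounds $\bm F(u;t,\tau)$ uniformly for $\tau\ge t$ when $\E[u]_\mu\le\varepsilon'$, so this term is again $O(e^{-2\mu t}\E[u]_\mu^2)$. Its time derivative is computed from the equation in \ref{F1}: $\partial_t\mathscr{A}_\high=-i\proj_\high\Gamma_\Ext\proj_\high\mathscr{A}_\high+\proj_\high\R_\Ext(u)$, which loses only $3/2$ derivatives. Hence $\E[\Phi(u,f)]_\mu\le C\|f\|_{L^2}+C\E[u]_\mu^2$.

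The second step is differentiability in $u$, which I expect to be the main obstacle, specifically for the $\mathscr{A}_\high$ term. Differentiating gives two pieces, $-\int_t^\infty\bm F(u;t,\tau)\proj_\high\der_u\R_\Ext\cdot\del u\,\dtau$ and $-\int_t^\infty(\der_u\bm F\cdot\del u)(u;t,\tau)\proj_\high\R_\Ext(u(\tau))\,\dtau$. The second piece loses $3/2$ derivatives by \ref{F3}. This loss is absorbed because $\R_\Ext(u)\in H^{s_0+s_0-2}$, so one measures the input in $H^{s_0+3/2}$ and applies \ref{F3} with $s=s_0+3/2$, which is legitimate since $s_0-2>3/2$. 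For the time integrability, \ref{F3} gives a factor $(\int_t^\tau\|\del u\|_{H^{s_0}})\lesssim e^{-\mu t}\E[\del u]_\mu$, together with an exponential factor that is bounded by Lemma \ref{Size_Fundamental}; multiplied by $e^{-2\mu\tau}\E[u]_\mu^2$, this integrates to $O(e^{-\mu t}\E[u]_\mu^2\E[\del u]_\mu)$. The same bookkeeping handles the $\partial_t$ part of the norm. The $k$-th derivatives lose $3k/2$ derivatives, and these are covered by the $s_0-2$ gain exactly when $s_0-2>3k/2$, which explains the $C^k$ statement. All derivatives carry a factor $\E[u]_\mu$, so $\|\der_u\Phi\|\lesssim\varepsilon'$. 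This makes $\Phi(\cdot,f)$ a contraction for small $\varepsilon,\varepsilon'$, and yields $\der_u\Phi(0,0)=0$. The uniform contraction principle (equivalently, the implicit function theorem at $(0,0)$) then produces the unique $C^k$ map $\xS_\st^\mu$, together with the stated uniqueness in $\mathfrak{B}_\mu(0,\varepsilon')$.

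The last step is higher regularity. Take $s\ge s_0$ and redo the fixed point estimate in the weighted space built on $H^s$. The tame bounds \eqref{Size_RExt} are linear in $\|u\|_{H^s}$ with a coefficient $\|u\|_{H^{s_0}}\lesssim\varepsilon' e^{-\mu t}$, and \ref{F2} bounds the propagator on $H^s$ with constants depending only on $H^{s_0}$ quantities. The map is therefore affine-contractive in the $H^s$-weighted norm, whose smallness comes from $\varepsilon'$. By uniqueness in $\mathfrak{E}_\mu$, the $H^s$ fixed point coincides with $\xS_\st^\mu(f)$. This gives $\|\xS_\st^\mu(f)(t)\|_{H^s}=O(e^{-\mu t})$, and continuity in $H^s$ follows from \ref{F1}.
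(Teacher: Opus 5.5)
Your first two steps match the paper's proof. The paper also applies the implicit function theorem to $u\mapsto e^{tL(D_x)}\proj_{\low}f+\mathscr{A}^\mu(u;\cdot)$ on $\mathfrak{E}_\mu$. It treats $\der_u\mathscr{A}_{\high}$ as you do, through \eqref{d_uA_d}, which is \ref{F3}/\eqref{DuF(u)} written out. The $3/2$ loss from $\tilde{\Gamma}$ is paid for by the $s_0-2$ gain of $\R_\Ext$, and the condition $s_0-2>3k/2$ governs the $C^k$ class. One small correction: $\der_u\Gamma_\Ext$ carries a factor $\delta_0^{-1}$ from the cutoff, so the smallness you need is really $\delta_0^{-1}\varepsilon'\ll1$. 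This is harmless, since $\delta_0$ is fixed first.

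The gap is in your last step. If you rerun the fixed point in the $H^s$-weighted norm, the contraction constant is of the form $C_{s,\mu}\varepsilon'$. Here $C_{s,\mu}$ collects the tame constants of \eqref{Size_RExt} and the propagator bound \ref{F2} at level $s$, and these grow with $s$. But $\varepsilon,\varepsilon'$ were fixed once and for all at level $s_0$. So your argument only covers $s$ in a bounded range, or forces $\varepsilon$ to shrink with $s$. The statement, by contrast, asserts $C^0_tH^s_x$ regularity and $O(e^{-\mu t})$ decay for every $s\geq s_0$ with a single $\varepsilon$.

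The missing observation is that no contraction is needed at higher levels. The function $u=\xS_\st^\mu(f)$ is already known to satisfy $u=e^{tL(D_x)}\proj_{\low}f+\mathscr{A}^\mu(u;t)$, and the right-hand side is smoother than $u$:
\begin{itemize}
\item The linear term and $\mathscr{A}^\mu_\st,\mathscr{A}^\mu_\unst$ are spectrally localized, so Lemma \ref{exp(tL)} bounds them in every $H^\sigma$.
\item For the dispersive term, \ref{Ext2} and Lemma \ref{Size_Fundamental} give, for all $\sigma\geq s_0$,
\[
\big\|\bm{F}(u;t,\tau)\proj_{\high}\R_\Ext(u(\tau))\big\|_{H^{\sigma+s_0-2}_x}\leq C_{\sigma,\mu}\exp\big(C_{\sigma,\mu}\E[u]_{\mu}\big)\|u(\tau)\|_{H^{s_0}_x}\|u(\tau)\|_{H^{\sigma}_x},\quad \tau\geq t.
\]
\end{itemize}
Hence $\|u(\tau)\|_{H^\sigma_x}=O(e^{-\mu\tau})$ implies $\|u(t)\|_{H^{\sigma+s_0-2}_x}=O(e^{-\mu t})$. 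The constant is finite but need not be small. Iterating in steps of $s_0-2$ reaches every $s$, and continuity in time then follows from \ref{F1}. This bootstrap through the smoothing of $\R_\Ext$ is exactly how the paper concludes.
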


\begin{proof}
We emphasize that, since $E_\st^\mu$ consists only of $L^2$ functions with bounded frequency support (in case of $\xT$, it is finite dimensional), any Sobolev norm on it induces the same topology. 

To solve the integral equation~\eqref{Stab_Int}, we will apply the classical implicit function theorem. By Lemma \ref{exp(tL)}, there holds
$$
\E[e^{tL(D_x)}\proj_{\low}f]_{\mu} \lesssim_\mu \|f\|_{L^2}, \quad \forall f\in E_\st^\mu.
$$
So it suffices to check that the map $u \mapsto \mathscr{A}^\mu(u;\cdot)$ belongs to $C^1(\mathfrak{E}_{\mu};\mathfrak{E}_{\mu})$, and its first order differentiation at zero vanishes. 

An initial observation is that $\mathscr{A}^\mu(u;t)$ maps $\mathfrak{E}_{\mu}$ to itself. This follows from estimates for convergent integrals involving exponentially decaying integrands. The computation is similar to but even simpler than the estimate for $\der_u\mathscr{A}^\mu\cdot\del u$ as in (\ref{Ineq_duA_su})(\ref{Ineq_DtduA_su})(\ref{Ineq_d_uA_d(u)})(\ref{Ineq_Dtd_uA_d(u)}) below, so we shall save the computations later.

\noindent
\textbf{Part 1: Differentiability of Hyperbolic Part.} 
We start with the differentiability of $\mathscr{A}_{\st}^\mu(u;t),\mathscr{A}_{\unst}^\mu(u;t)$ with respect to $u\in\mathfrak{E}_{\mu}$. From \ref{Ext2}, we know that $\R_\Ext(u)$ is a smooth tame mapping from $H^{s}_x$ to $H^{s+s_0-2}_x$ when $s\geq s_0$, and vanishes quadratically as $\|u\|_{H^{s_0}_x}\to0$. Clearly, 
\begin{equation}\label{d_uA_Hyp}
\begin{aligned}
(\der_u\mathscr{A}_\st^\mu\cdot \bm{\delta} u)(u;t)
&=e^{tL(D_x)}\int_{0}^t e^{-\tau L(D_x)}\proj_\st^\mu \Big( \der_u\R_{\Ext}(u(\tau))\cdot\bm{\delta} u(\tau) \Big) \dtau \\
(\der_u\mathscr{A}_\unst^\mu\cdot \bm{\delta} u)(u;t)
&=-e^{tL(D_x)}\int_{t}^{+\infty} e^{-\tau L(D_x)}(\proj_{\low}-\proj_\st^\mu)\Big( \der_u\R_{\Ext}(u(\tau))\cdot\bm{\delta} u(\tau) \Big) \dtau.
\end{aligned}
\end{equation}
At any given time $t\geq0$, by Lemma \ref{exp(tL)}, we have, for any $s\in\xR$,
\begin{equation}\label{exp(tL)_mu}
\begin{aligned}
\big\|e^{(t-\tau)L(D_x)}\proj_\st^\mu;\mathcal{L}(H^s_x)\big\|
&\lesssim_{s,\mu} e^{\mu(\tau-t)},
\quad \tau\leq t;\\
\big\|e^{(t-\tau)L(D_x)}(\proj_{\low}-\proj_\st^\mu);\mathcal{L}(H^s_x)\big\|
&\lesssim_{s,\mu} e^{\mu(\tau-t)},
\quad \tau\geq t.
\end{aligned}
\end{equation}
We thus compute
\begin{equation}\label{Ineq_duA_su}
\begin{aligned}
\Big\| (\der_u\mathscr{A}_\st^\mu\cdot \bm{\delta} u)(u;t)\Big\|_{H^{2s_0-2}_x} 
&+ \Big\| (\der_u\mathscr{A}_\unst^\mu\cdot \bm{\delta} u)(u;t)\Big\|_{H^{2s_0-2}_x} \\
&\overset{(\ref{exp(tL)_mu}),\ref{Ext2}}{\leq} 
C_{s_0,\mu}  \int_0^{+\infty}e^{\mu(\tau-t)}
\|u(\tau)\|_{H^{s_0}_x}\|\bm{\delta} u(\tau)\|_{H^{s_0}_x}\dtau\\
&\leq C_{s_0,\mu} e^{-\mu t}\int_0^{+\infty} \E[u]_{\mu}\E[\bm{\delta} u]_{\mu}e^{-\mu\tau} \dtau 
\leq C_{s_0,\mu} e^{-\mu t}\E[u]_{\mu}\E[\bm{\delta} u]_{\mu}.
\end{aligned}
\end{equation}

The time derivatives of these differentials are estimated similarly; for example, a direct computation yields
$$
\partial_t\big((\der_u\mathscr{A}_\st^\mu\cdot \bm{\delta} u)(u;t)\big)
=L(D_x)(\der_u\mathscr{A}_\st^\mu\cdot \bm{\delta} u)(u;t)
+\proj_\st^\mu \Big( \der_u\R_{\Ext}(u(t))\cdot\bm{\delta} u(t) \Big),
$$
and we can simply substitute in the previous inequality to obtain
\begin{equation}\label{Ineq_DtduA_su}
\big\|\partial_t\big((\der_u\mathscr{A}_\st\cdot \bm{\delta} u)(u;t)\big)\big\|_{H^{2s_0-2}_x} 
\leq  C_{s_0,\mu} e^{-\mu t}\E[u]_{\mu}\E[\bm{\delta} u]_{\mu}.
\end{equation}
To summarize, for $u\in \mathfrak{E}_{\mu}$, there holds
\begin{equation}\label{Est_d_uA_Hyp}
\E[(\der_u\mathscr{A}_\st^\mu\cdot \bm{\delta} u)(u;\cdot)]_{\mu}
+\E[(\der_u\mathscr{A}_\unst^\mu\cdot \bm{\delta} u)(u;\cdot)]_{\mu}
\leq C_{s_0,\mu}\big( \E[u]_{\mu} \big) \E[u]_{\mu}\E[\bm{\delta} u]_{\mu}.
\end{equation}
Clearly, $\der\mathscr{A}_{\st}$ and $\der\mathscr{A}_{\unst}$ vanish at $u=0$ since $\R_\Ext(u)$ vanishes quadratically near 0. Their high order derivatives can be deduced in a similar way from the third inequality of~\eqref{R_Ext}, with details omitted. Note that $\mathscr{A}_{\st}^\mu(u;\cdot),\mathscr{A}_{\unst}^\mu(u;\cdot)$ are actually $C^\infty$ in $u$ without loss of derivative, which is not the case for $\mathscr{A}_{\high}(u;\cdot)$ to be studied below. 

\noindent
\textbf{Part 2: Differentiability of Dispersive Part.} 
The major technicality is caused by the dispersive term $\mathscr{A}_{\high}(u;t)$. However, similar to the derivation of \eqref{D_tG(u,f)}, the function $\mathscr{A}_{\high}(u;t)$ solves the linear equation
$$
\begin{aligned}
\partial_t\mathscr{A}_{\high}(u;t)
&=-i\proj_{\high}\Gamma_\Ext(t)\proj_{\high}\mathscr{A}(u;t)
+\proj_{\high}\R_\Ext(u(\tau))\\
\mathscr{A}_{\high}(u;0)&=-\int_0^{+\infty}\bm{F}(u;0,\tau)\proj_{\high}\R_\Ext(u(\tau))\dtau.
\end{aligned}
$$
We can directly linearize this equation with respect to $u$; with 
$$
\tilde{\Gamma}(t)=\proj_{\high} (\der_u\Gamma_{\Ext}(t)\cdot\del u(t))\proj_{\high},
$$ 
the differential $z(t)=(\der_u\mathscr{A}_{\high}\cdot \bm{\delta} u)(u;t)$ then solves the linearized equation 
\begin{equation}\label{D_uA_d_Eq}
\begin{aligned}
\partial_tz(t)
&=-i\proj_{\high}\Gamma_{\Ext}(t)\proj_{\high}z(t)
-i\tilde{\Gamma}(t)\mathscr{A}_{\high}(u;t)
+\der_u\R_\Ext(u(t))\cdot\del u(t),\\
z(0)&=-\int_0^{+\infty}\left[(\der_u\bm{F}\cdot\del u)(u;0,\tau)\proj_{\high}\R_\Ext(u(\tau))
+\bm{F}(u;0,\tau)\proj_{\high}\der_u\R_\Ext(u(\tau))\cdot\del u(\tau)\right]\dtau.
\end{aligned}
\end{equation}
Therefore, using the explicit formula in Item \ref{F1} of Theorem \ref{Fundamental}, we compute
\begin{equation}\label{d_uA_d}
\begin{aligned}
(\der_u\mathscr{A}_{\high}\cdot \bm{\delta} u)(u;t)
&=-i\int_t^{+\infty}\left[\int_\tau^t \bm{F}(u;t,\tau_1)\tilde{\Gamma}(\tau_1)\bm{F}(u;\tau_1,\tau)\dtau_1\right]
\proj_{\high} \R_{\Ext}(u(\tau)) \dtau\\
&\quad-\int_t^{+\infty}\bm{F}(u;t,\tau)
\proj_{\high}\Big( \der_u\R_{\Ext}(u(\tau))\cdot\bm{\delta} u(\tau) \Big) \dtau.
\end{aligned}
\end{equation}
The time derivative of $\der_u\mathscr{A}_{\high}\cdot \bm{\delta} u$ is then computed directly using (\ref{D_uA_d_Eq}). 

We then make use of the norm estimate for the propagator $\bm{F}(u;t,\tau)$ in Lemma \ref{Size_Fundamental}: for any $s\in\xR$ and any time $t\geq0$,
$$
\sup_{t_0:t_0 \ge t} \big\| \bm{F}(u;t,t_0);\mathcal{L}(H^{s}_x) \big\|
\leq C_{s}\exp\left(C_{s,a}\E[u]_{\mu}\right).
$$
On the other hand, using \ref{Ext1}, recalling the notation $\tilde{\Gamma}_{\Ext}(t)=\proj_{\high} (\der_u\Gamma_{\Ext}(t)\cdot\del u(t))\proj_{\high},$, we have, for any $s\in\xR$,
$$
\big\|\tilde{\Gamma}_{\Ext}(t);\mathcal{L}(H^{s}_x,H^{s-3/2}_x)\big\|
\leq C_{s_0,s}\delta_0^{-1}\|\del u(t)\|_{H^{s_0}_x}
\leq C_{s_0,s}\delta_0^{-1}\E[\del u]_{\mu}e^{-\mu t}.
$$
With these in mind, we obtain the following estimate:
\begin{equation}\label{Ineq_d_uA_d(u)}
\begin{aligned}
\big\|(\der_u\mathscr{A}_{\high}&\cdot\del u)(u;t)\big\|_{H^{2s_0-7/2}_x} \\
&\overset{(\ref{Size_RExt})}{\leq} C_{s_0,\mu}\delta_0^{-1}\exp\left(C_{s_0,\mu}\E[u]_{\mu}\right)
\int_{t}^{+\infty}\left(\int_t^\tau\E[\bm{\delta} u]_{\mu}e^{-\mu\tau_1}\dtau_1\right)
\E[u]_{\mu}e^{-2\mu\tau}\dtau\\
&\quad
+C_{s_0,\mu}\exp\left(C_{s_0,\mu}\E[u]_{\mu}\right)
\int_{t}^{+\infty}\E[u]_{\mu}\E[\del u]_{\mu}e^{-2\mu\tau}\dtau\\
&\leq
C_{s_0,\mu}\delta_0^{-1}\exp\left(C_{s_0,\mu}\E[u]_{\mu}\right) \E[u]_{\mu}\E[\del u]_{\mu}e^{-2\mu t}.
\end{aligned}
\end{equation}
As commented in the beginning of this section, a key in the proof of (\ref{Ineq_d_uA_d(u)}) is that the differential $\der_u\mathscr{A}_{\high}\cdot\del u$ is still of $H^{s_0}_x$ regularity in $x$: even though the expression (\ref{d_uA_d}) involves 3/2 more derivative in $x$, this loss of regularity can be compensated by the $s_0-2$ gain of regularity due to $\R_\Ext(u)$.

Substituting (\ref{Ineq_d_uA_d(u)}) into (\ref{D_uA_d_Eq}), we obtain the estimate for the time derivative:
\begin{equation}\label{Ineq_Dtd_uA_d(u)}
\begin{aligned}
\Big\|\partial_t\big((\der_u\mathscr{A}_{\high}\cdot\del u)(u;t)\big)\Big\|_{H^{2s_0-5}_x}
\leq C_{s_0,\mu}\delta_0^{-1}\exp\left(C_{s_0,\mu}\E[u]_{\mu}\right) \E[u]_{\mu}\E[\bm{\delta} u]_{\mu}e^{-2\mu t}.
\end{aligned}
\end{equation}
Summarizing~\eqref{Ineq_d_uA_d(u)},~\eqref{Ineq_Dtd_uA_d(u)}, and using the condition $s_0>5$, we have
\begin{equation}\label{Est_d_uA_d(u)}
\E[(\der_u\mathscr{A}_{\high}\cdot\del u)(u;t)]_{\mu}
\leq C_{s_0,\mu}\delta_0^{-1}\exp\big(C_{s_0,\mu}\E[u]_{\mu} \big) \E[u]_{\mu} \E[\bm{\delta} u]_{\mu}.
\end{equation}
Clearly $(\der_u\mathscr{A}_{\high}\cdot\del u)(u;t)$ vanishes when $u=0$. For higher order derivatives, since the derivative of the propagator $\bm{F}(u;t,\tau)$ in $u$ leads to $3/2$-loss of derivative, we need the condition $s_0-2>3k/2$ to ensure that its $k$-order derivative converges in $\mathfrak{E}_{\mu}$.

\noindent
\textbf{Conclusion.} 
The nonlinear map $\mathscr{A}^\mu(u;\cdot)$ is $C^k$ in $u\in\mathfrak{E}_{\mu}$ with $\der_u\mathscr{A}^\mu(u;\cdot)$ vanishing as $u=0$. By the classical implicit function theorem, the integral equation~\eqref{Stab_Int} admits a unique solution, represented by $u = \xS_\st^\mu(f)$ for some $C^k$ map $\xS_\st^\mu$, defined for $f\in B_{s_{0}}(0,\varepsilon)\cap E_\st^\mu$, with $\varepsilon$ suitably small (depending on $\mu$). The inequalities (\ref{Ineq_duA_su})(\ref{Ineq_d_uA_d(u)}) implies better spatial regularity of the solution $\xS_\st^\mu(f)\in C^0_{t\geq0}H^{2s_0-7/2}_x$, forcing it to decay like $O(e^{-\mu t})$ in $H^{2s_0-7/2}_x$. This is due to the regularity gain of $\R_\Ext$. We can thus argue inductively to show that $\xS_\st^\mu(f)\in C^0_{t\geq0}H^{s}_x$ and $\|u(t)\|_{H^s_x}=O(e^{-\mu t})$ for every $s\geq s_0$. Moreover, the solution map $\xS_\st^\mu$ is unique if the solution is assumed to be small (namely, belonging to some small ball $\mathfrak{B}_\mu(0,\varepsilon')$). 
\end{proof}

\subsection{The Stable and Unstable Manifold}\label{Subsec_St_3}
We have all the ingredients needed to prove Theorem~\ref{Main1}. In this subsection, we will construct the local stable manifold, while the unstable manifold is constructed simply by reversing time. Note that by Remark \ref{RT_Case}, in case of periodic boundary condition, we can simply take $\mu$ as in (\ref{mu_fix}) to ensure $E_\st^\mu=E_\st$. Therefore, we shall prove Theorem~\ref{Main1} by a unified argument.

\noindent
\textbf{Construction and tangent space.} 
With the solution map $\xS_\st^\mu$ constructed in Proposition \ref{eq-smfd:SolMap}, the local stable manifold $M_\st^\mu$ is then defined as the set
    \begin{equation}\label{eq:StabMfd}
        M_\st^\mu:=\left\{\xS_\st(f)\big|_{t=0}:f\in E_\st^\mu,\|f\|_{H^{s_0}}< \varepsilon\right\}.
    \end{equation}
Note that $E_\st^\mu$ is spectrally bounded, so any Sobolev norm on it induces the same topology. Equivalently, $M_\st^\mu$ can be regarded as the graph of the mapping
$$
\begin{aligned}
E_\st^\mu\cap B_{s_0}(0,\varepsilon)
&\mapsto 
&& \mathrm{Ran}\big((\proj_{\low}-\proj_\st^\mu)+\proj_{\high}\big)\cap H^{s_0}, \\
f&\mapsto && \big((\proj_{\low}-\proj_\st^\mu)+\proj_{\high}\big)\mathscr{A}^\mu(\xS_\st(f);0).
\end{aligned}
$$
In the proof of Proposition~\ref{Exp_Contraction}, we have also shown $\der_u \mathscr{A}^\mu(0)=0$, yielding that
$\der_f\big(\mathscr{A}^\mu(\xS_\st(f);0)\big)$ vanishes at $f=0$. In other words, the tangent space of $M_\st^\mu$ at the point 0 is just the subspace $E_\st^\mu$. 
\vspace{1em}

\noindent
\textbf{Invariance.} To show that $M_\st^\mu$ is an \emph{invariant set} for (\ref{EQ_Red_Ext}), it suffices to show that for any time $t_1\in\xR$ and any solution $u=\xS_\st^\mu(f)$ with $f\in E_\st^\mu\cap B_{s_0}(0,\varepsilon)$, the value $u(t_1)$ is still on $M_\st^\mu$ (unless $\|\proj_\st u(t_1)\|_{H^{s_0}_x} \ge \varepsilon$). Indeed, the solution $u_1(t)$ of (\ref{EQ_Red_Ext}) with $u_1(0)=u(t_1)$ is explicitly determined by $u_1(t)=u(t_1+t)$, therefore still satisfy the decay condition $\|u(t)\|_{H^{s_0}_x}=O(e^{-\mu t})$. By Lemma \ref{Int_Eq_Stab}, $u_1(t)$ must still solve the integral equation (\ref{Decay_Int}), just with $u(0)$ replaced by $u(t_1)$. By Proposition \ref{Exp_Contraction}, this forces $u_1=\xS_\st^\mu(\proj_\st^\mu u(t_1))$, so $u(t_1)$ must still be on the stable manifold $M_\st^\mu$.

\vspace{1em}

\noindent
\textbf{Regularity and topology.} It remains to prove that $M_\st^\mu$ is a $C^\infty$ submanifold with respect to $H^s$ topology for any $s\geq s_0$. As a consequence, $M_\st^\mu$ is contained in the space of smooth functions $H^\infty := \cap_{s\geq s_0}H^s$ and the exponential decay occurs with respect to all the $H^s$ norms. 

To begin with, we fix some large $s_0 \gg 1$. Then Proposition~\ref{Exp_Contraction} guarantees that $f\mapsto \mathscr{A}^\mu(\xS_\st^\mu(f);0)$ is at least a $C^1$ mapping, and therefore $M_\st^\mu$ is a $C^1$ submanifold of $H^{s_0}$. To improve the level of regularity and Sobolev index, let us replace $s_0$ by a larger index $\tilde{s}_0 := s_0 + 5k/2$ where 
$k\in\xN_+$ is an arbitrary integer. The truncation parameter $\delta_0$ in~\eqref{EQ_Red_Ext} and the radii $\varepsilon,\varepsilon'$ in~\eqref{eq-smfd:SolMap} have to be replaced by smaller ones. Repeating the arguments above, we can find a new solution map $\tilde{\xS}_\st^\mu$, which coincides with the previous one above at least for small $f\in E_\st^\mu$ due to the uniqueness part in Proposition~\ref{Exp_Contraction}. In what follows, we will extend the regularity of $\tilde{\xS}_\st^\mu$ to $\xS_\st^\mu$ through the flow map of equation~\eqref{EQ_Red_Ext}. The key observation is that, even through the equation is quasilinear, the loss of derivatives has no impact on the topology of $M_\st^\mu$ since it is a graph above $E_\st^\mu$, a space of functions with bounded frequency support.

Consider any open subset $I$ of $\{ f\in E_\st^\mu: \|f\|_{H^{s_0}}<\varepsilon \}$ and its image $N\subset M_\st^\mu$, which is a $C^1$ submanifold of $M_\st^\mu$ with respect to $H^{s_0}$ topology. Then we apply the flow map of equation~\eqref{EQ_Red_Ext} and obtain another submanifold $\tilde{N}$ whose projection to $E_\st^\mu$, denoted by $\tilde{I}$, lies in the domain of the new solution map $\tilde{\xS}_\st^\mu$. From Proposition~\ref{Exp_Contraction}, $\tilde{N}$ itself is a $C^{k}$ submanifold of $H^{\tilde{s}_0}$. The discussion is summed up in the commutative diagram below.

\begin{figure}[h]
\begin{center}

\begin{tikzpicture}[x=0.75pt,y=0.75pt,yscale=-1,xscale=1]

\draw    (39.6,210.8) -- (327.6,210.8) ;
\draw [shift={(329.6,210.8)}, rotate = 180] [color={rgb, 255:red, 0; green, 0; blue, 0 }  ][line width=0.75]    (10.93,-3.29) .. controls (6.95,-1.4) and (3.31,-0.3) .. (0,0) .. controls (3.31,0.3) and (6.95,1.4) .. (10.93,3.29)   ;
\draw    (59.6,230.8) -- (59.6,72.8) ;
\draw [shift={(59.6,70.8)}, rotate = 90] [color={rgb, 255:red, 0; green, 0; blue, 0 }  ][line width=0.75]    (10.93,-3.29) .. controls (6.95,-1.4) and (3.31,-0.3) .. (0,0) .. controls (3.31,0.3) and (6.95,1.4) .. (10.93,3.29)   ;
\draw [color={rgb, 255:red, 74; green, 144; blue, 226 }  ,draw opacity=1 ][line width=1.5]    (32.27,206.63) .. controls (33.27,206.91) and (38.38,210.91) .. (60.6,210.47)(59.6,210.8) .. controls (80.82,210.69) and (269.27,184.47) .. (312.6,122.47) ;
\draw  [dash pattern={on 0.84pt off 2.51pt}]  (289.6,144.97) -- (289.6,210.8) ;
\draw  [dash pattern={on 0.84pt off 2.51pt}]  (259.6,162.3) -- (259.6,210.8) ;
\draw  [dash pattern={on 0.84pt off 2.51pt}]  (169.6,193.63) -- (169.6,210.8) ;
\draw  [dash pattern={on 0.84pt off 2.51pt}]  (129.6,200.63) -- (129.6,210.8) ;
\draw    (440,91.1) -- (558,91.1) ;
\draw [shift={(560,101.1)}, rotate = 180] [color={rgb, 255:red, 0; green, 0; blue, 0 }  ][line width=0.75]    (10.93,-3.29) .. controls (6.95,-1.4) and (3.31,-0.3) .. (0,0) .. controls (3.31,0.3) and (6.95,1.4) .. (10.93,3.29)   ;
\draw  [dash pattern={on 4.5pt off 4.5pt}]  (440,223.6) -- (558,223.6) ;
\draw [shift={(560,223.6)}, rotate = 180] [color={rgb, 255:red, 0; green, 0; blue, 0 }  ][line width=0.75]    (10.93,-3.29) .. controls (6.95,-1.4) and (3.31,-0.3) .. (0,0) .. controls (3.31,0.3) and (6.95,1.4) .. (10.93,3.29)   ;
\draw    (415,114.6) -- (415,190.6) ;
\draw [shift={(415,192.6)}, rotate = 270] [color={rgb, 255:red, 0; green, 0; blue, 0 }  ][line width=0.75]    (10.93,-3.29) .. controls (6.95,-1.4) and (3.31,-0.3) .. (0,0) .. controls (3.31,0.3) and (6.95,1.4) .. (10.93,3.29)   ;
\draw    (425,192.6) -- (425,116.6) ;
\draw [shift={(425,114.6)}, rotate = 90] [color={rgb, 255:red, 0; green, 0; blue, 0 }  ][line width=0.75]    (10.93,-3.29) .. controls (6.95,-1.4) and (3.31,-0.3) .. (0,0) .. controls (3.31,0.3) and (6.95,1.4) .. (10.93,3.29)   ;
\draw    (572.33,114.6) -- (572.33,190.6) ;
\draw [shift={(572.33,192.6)}, rotate = 270] [color={rgb, 255:red, 0; green, 0; blue, 0 }  ][line width=0.75]    (10.93,-3.29) .. controls (6.95,-1.4) and (3.31,-0.3) .. (0,0) .. controls (3.31,0.3) and (6.95,1.4) .. (10.93,3.29)   ;
\draw    (582.33,192.93) -- (582.33,116.93) ;
\draw [shift={(582.33,114.93)}, rotate = 90] [color={rgb, 255:red, 0; green, 0; blue, 0 }  ][line width=0.75]    (10.93,-3.29) .. controls (6.95,-1.4) and (3.31,-0.3) .. (0,0) .. controls (3.31,0.3) and (6.95,1.4) .. (10.93,3.29)   ;
\draw    (560,101.1) -- (442,101.1) ;
\draw [shift={(440,91.1)}, rotate = 360] [color={rgb, 255:red, 0; green, 0; blue, 0 }  ][line width=0.75]    (10.93,-3.29) .. controls (6.95,-1.4) and (3.31,-0.3) .. (0,0) .. controls (3.31,0.3) and (6.95,1.4) .. (10.93,3.29)   ;
\draw  [dash pattern={on 4.5pt off 4.5pt}]  (560,213.6) -- (442,213.6) ;
\draw [shift={(440,213.6)}, rotate = 360] [color={rgb, 255:red, 0; green, 0; blue, 0 }  ][line width=0.75]    (10.93,-3.29) .. controls (6.95,-1.4) and (3.31,-0.3) .. (0,0) .. controls (3.31,0.3) and (6.95,1.4) .. (10.93,3.29)   ;

\draw (330,219.53) node [anchor=north west][inner sep=0.75pt]    {$E_{\st}^\mu$};
\draw (70,70) node [anchor=north west][inner sep=0.75pt]    {$(E_\st^\mu)^\perp$};
\draw (264.27,130) node [anchor=north west][inner sep=0.75pt]    {$N$};
\draw (270.6,220) node [anchor=north west][inner sep=0.75pt]    {$I$};
\draw (143.27,170) node [anchor=north west][inner sep=0.75pt]    {$\tilde{N}$};
\draw (142.27,217) node [anchor=north west][inner sep=0.75pt]    {$\tilde{I}$};
\draw (316.6,102.53) node [anchor=north west][inner sep=0.75pt]  [color={rgb, 255:red, 74; green, 144; blue, 226 }  ,opacity=1 ]  {$M_{\st}^\mu$};
\draw (412,87) node [anchor=north west][inner sep=0.75pt]    {$\tilde{N}$};
\draw (572,90) node [anchor=north west][inner sep=0.75pt]    {$N$};
\draw (412,207) node [anchor=north west][inner sep=0.75pt]    {$\tilde{I}$};
\draw (572,210) node [anchor=north west][inner sep=0.75pt]    {$I$};
\draw (383.47,144) node [anchor=north west][inner sep=0.75pt]    {$\proj _{\st}^\mu$};
\draw (438.93,142) node [anchor=north west][inner sep=0.75pt]    {$\tilde{\xS}_\st^\mu$};
\draw (540,144) node [anchor=north west][inner sep=0.75pt]    {$\proj _{\st}^\mu$};
\draw (594.27,144) node [anchor=north west][inner sep=0.75pt]    {$\xS_\st^\mu$};
\draw (465.67,68) node [anchor=north west][inner sep=0.75pt]   [align=left] {Flow map};

\end{tikzpicture}
\end{center}
\end{figure}

Note that the flow map of equation~\eqref{EQ_Red_Ext}, even being bijective, cannot serve as a $C^{k}$ diffeomorphism in the topology $H^{\tilde{s}_0}$ due to quasilinearity. Indeed, as a $C^{k}$ mapping, it loses $3k/2$ derivatives. However, we still read from the commutative diagram that the bijections between $I$ and $\tilde{I}$ are $C^{k}$. Indeed, these bijections can be decomposed as following:
\begin{gather*}
    I \xrightarrow[C^1]{\xS_\st^\mu} N \subset H^{s_0} \xrightarrow[C^1]{\text{flow map}} \tilde{N} \subset H^{s_0-3/2} \xrightarrow[C^\infty]{\proj_\st^\mu} \tilde{I}; \\
    \tilde{I} \xrightarrow[C^{k}]{\tilde{\xS}_\st} \tilde{N} \subset H^{\tilde{s}_0} \xrightarrow[C^{k}]{\text{flow map}} N \subset H^{s_0+k} \xrightarrow[C^\infty]{\proj_\st^\mu} I.
\end{gather*}
Note that these ensure the mapping $I \to \tilde{I}$ to be $C^k$ (not only $C^1$). Then we decompose the solution map $\xS_\st:I \to N$ by the following diagram:
\begin{equation*}
    I \xrightarrow[C^k]{\text{as above}} \tilde{I} \xrightarrow[C^k]{\tilde{\xS}_\st^\mu}\tilde{N} \subset H^{\tilde{s}_0} \xrightarrow[C^k]{\text{flow map}} N \subset H^{\tilde{s}_0-3k/2} = H^{s_0+k}.
\end{equation*}
In conclusion, for arbitrary open set $N\subset M_\st$ and $k\in\xN_+$, $N$ is a $C^k$ submanifold of $H^{s_0+k}$, which completes the proof of Theorem~\ref{Main1}.

\section{Mild Growth and Center Invariant Set}\label{Sec6}
This final section is devoted to the proof of Theorem \ref{Main2}. We focus on the periodic boundary condition case, namely the analysis for (\ref{EQ_Red_Ext}) (and its equivalent form (\ref{Twist_Duhamel})) takes place on $\xT$. We shall construct the center invariant set $M_\cen$ in (\ref{M_Center}), therefore completing the proof of Theorem~\ref{Main2}. Unfortunately, due to the quasilinear nature of the system, we are not able to show that the center invariant set is a submanifold. However, the set is, in a proper sense, \emph{tangent to $E_\disp$} at the origin (see Corollary~\ref{cor-center:Tgt} for the rigorous statement). Therefore, solutions on the set $M_\cen$ can be considered as corresponding to short wave perturbations, exhibiting stability within a sufficiently long time, as observed experimentally (see Subsection \ref{Facts}). The trivial lifespan estimate in Proposition \ref{prop-center:TrivLSp} is far from optimal: a normal form argument should indicate much better results, as discussed in Subsection \ref{Perspect}.

\subsection{Solutions of Mild Growth}
In this section, we shall fix 
$$
\mu=\min_{\xi\in\xZ,0<|\xi|<\rho^{-1}}\Lambda_\grow(\xi).
$$
As in Remark \ref{RT_Case}, we can drop the superscript $\mu$ in the following. We shall write $\proj_\st,\proj_\unst,\proj_\disp$ for the spectral projections to $E_\st,E_\unst,E_\disp$ corresponding to $L(D_x)$ respectively; they are all bounded operators, with $\proj_\st,\proj_\unst$ having finite rank, while $\proj_\disp$ is simply a Fourier projection. We therefore have
$$
L^2(\xT;\xC)=E_\st\oplus E_\unst\oplus E_\disp.
$$
For the propagator $\bm{F}(u;t,t_0)$, we shall make a little abuse of notation: we require it to also encode the evolution on the frequencies $\xi=0$ and $|\xi|\geq\rho^{-1}$, i.e. it shall instead be the fundamental solution corresponding to
$$
\partial_t-L(D_x)\big(\proj_{\xi=0}+\proj_{\rho^{-1}\leq|\xi|\leq2\rho^{-1}}\big)
+i\proj_{\high}\Gamma_\Ext\proj_{\high}.
$$
Note that by (\ref{Sol_Lin}), the solution operator $e^{tL(D_x)}\big(\proj_{\xi=0}+\proj_{|\xi|=\rho^{-1}}\big)$ is indeed linear in $t$. 

We start by relaxing the condition on the asymptotic behavior of the solution $u(t)$ to (\ref{EQ_Red_Ext}) when $t\to\pm\infty$. Under the new convention in this section, we re-write (\ref{EQ_Red_Ext}) as
\begin{equation}\label{EQ_Red_Ext_New}
\partial_t u - L(D_x)\big(\proj_\st+\proj_\unst\big)u
-L(D_x)\big(\proj_{\xi=0}+\proj_{\rho^{-1}\leq|\xi|\leq2\rho^{-1}}\big)u
+ i\proj_{\high}\Gamma_{\Ext}\proj_{\high}u= \R_\Ext(u).
\end{equation}
As shown in Theorem \ref{GWP_EQ}, this system is globally well-posed in $H^s$ for $s\geq s_0+3/2$, since it involves a truncation and degenerates to a simple system when $\|u\|_{H^{s_0}}\geq4\delta_0$. However, instead of decaying, we assume only the mild growth condition $\|u(t)\|_{H^{s_0}_x}=o(e^{\mu|t|})$. We prove the following Lyapunov-Perron type lemma:

\begin{lemma}\label{Lem_Center}
Fix some $s_1\geq s_0+3/2$. Consider a function $u\in C^0_{t;\loc}H^{s_1}_x\cap W^{1,\infty}_{t;\loc}H^{s_1-3/2}_x$. If $\delta_0>0$ is chosen sufficiently small\footnote{Within this section, it is of no harm allowing $\delta_0$ to vary, since no differentiation in $u$ is involved; the bounds of $\bm{F}$ itself does not involve $\delta_0^{-1}$ as seen from Theorem \ref{Fundamental}.} (independent from $u$), then the following assertions are equivalent -- note that only $H^{s_0}_x$ norms are involved instead of $H^{s_1}_x$:
\begin{enumerate}
\item $u$ solves the equation (\ref{EQ_Red_Ext}) (equivalently \eqref{EQ_Red_Ext_New}) with parameter $\delta_0>0$ with mild growth in time:
\begin{equation}\label{eq-center:MildGrow}
\| u(t) \|_{H^{s_0}_x} = o(e^{\mu|t|}), \quad \text{as }t\to\pm\infty.
\end{equation}

\item $u$ solves the integral equation
\begin{equation}\label{Int_Eq_Center}
u(t)
=\bm{F}(u;t,0)\proj_\disp u(0)
+\mathscr{B}_\disp(u;t)+\mathscr{B}_\st(u;t)+\mathscr{B}_\unst(u;t),
\end{equation}
where
$$
\begin{aligned}
\mathscr{B}_\disp(u;t)
&:=\int_0^t \bm{F}(u;t,\tau) \proj_\disp\R_\Ext(u(\tau))\dtau\\
\mathscr{B}_\st(u;t)
&:=e^{tL(D_x)}\int_{-\infty}^t e^{-\tau L(D_x)}\proj_\st\R_\Ext(u(\tau))\dtau\\
\mathscr{B}_\unst(u;t)
&:=-e^{t L(D_x)}\int_t^{+\infty} e^{-\tau L(D_x)}
\proj_\unst\R_\Ext(u(\tau))\dtau.
\end{aligned}
$$

\item $u$ solves the equation \eqref{EQ_Red_Ext} (equivalently \eqref{EQ_Red_Ext_New}) with parameter $\delta_0>0$ and the initial value of $u$ satisfies
\begin{equation}\label{Eq_Initial_Center}
(\proj_{\st}+\proj_{\unst})u(0)
=\int_{-\infty}^0 e^{-\tau L(D_x)}\proj_\st\R_\Ext(u(\tau))\dtau
-\int_0^{+\infty} e^{-\tau L(D_x)}
\proj_\unst\R_\Ext(u(\tau))\dtau.
\end{equation}
\end{enumerate}
\end{lemma}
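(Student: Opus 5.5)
We will prove $(1)\Rightarrow(3)\Rightarrow(2)\Rightarrow(1)$. The argument imitates the classical Lyapunov--Perron argument in Theorem~\ref{Stab_Mfd_Finite}. All projected pieces are handled with the twisted Duhamel formula of Proposition~\ref{Twist_Duhamel}, written in the new convention where $\bm{F}$ also carries the frequencies $\xi=0$ and $\rho^{-1}\leq|\xi|\leq 2\rho^{-1}$. The key preliminary input is a bound on $\bm{F}$ under mild growth that does not depend on $u$. By \ref{F2}, the exponent is $\int\mathtt{F}(u;\tau)\dtau$, and both terms of $\mathtt{F}$ carry a cut-off, $\kappa$ or $\kappa_1$. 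Using \ref{Ext1}--\ref{Ext2} and the equation to control $\|\partial_tu\|_{H^3}$ by $\|u\|_{H^{s_0}}$, we get $\mathtt{F}(u;t)\lesssim\delta_0$. Hence $\|\bm{F}(u;t,\tau)\|_{\mathcal{L}(H^s)}\leq C_se^{C_s\delta_0|t-\tau|}$. On the newly included frequencies $\xi=0,|\xi|=\rho^{-1}$, the evolution grows only linearly by (\ref{Sol_Lin}); the remaining ones are oscillatory. We then choose $\delta_0$ small enough that $C_s\delta_0<\mu/2$, and this choice is independent of $u$. This smallness is what the footnote allows.

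\textbf{$(1)\Rightarrow(3)$.} Project the Duhamel formula for (\ref{EQ_Red_Ext_New}) onto $E_\st$, from time $t_0$ to $0$:
$$\proj_\st u(0)=e^{-t_0L(D_x)}\proj_\st u(t_0)+\int_{t_0}^0e^{-\tau L(D_x)}\proj_\st\R_\Ext(u(\tau))\dtau.$$
Send $t_0\to-\infty$. Since $E_\st$ is finite dimensional with rates $\geq\mu$, $\|e^{-t_0L}\proj_\st\|\lesssim e^{-\mu|t_0|}$, and by (\ref{eq-center:MildGrow}) the boundary term tends to zero. For the integral, \ref{Ext2} gives $\|\R_\Ext(u)\|_{H^{s_0}}\lesssim\delta_0\|u\|_{H^{s_0}}=o(e^{\mu|\tau|})$, so the integrand is $o(1)\cdot e^{-\mu|\tau|}e^{\mu|\tau|}$. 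This is only $o(1)$, not obviously integrable, and is the main obstacle. My first attempt is to bootstrap. The mild-growth hypothesis together with the $\bm{F}$ bound shows, from the full Duhamel formula, that $u$ actually grows at most like $e^{\mu'|t|}$ for any $\mu'>C\delta_0$ whenever the hyperbolic components stay controlled. If this fails, I will replace $\mu$ in the hyperbolic bounds by the exact rates: each finite-dimensional eigenmode of $\proj_\st$ decays at rate $\geq\mu$, and $\R_\Ext$ is quadratic. Concretely, $\|\R_\Ext(u)\|\lesssim\kappa\,\|u\|_{H^{s_0}}^2\leq 4\delta_0\|u\|_{H^{s_0}}$ holds only where $\|u\|_{H^{s_0}}\leq4\delta_0$ and vanishes elsewhere, so $\R_\Ext(u(\tau))$ is bounded uniformly in $\tau$ by $16\delta_0^2$. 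With this uniform bound the integrand is $O(e^{-\mu|\tau|})$ and converges. The unstable part is symmetric, with $t_0\to+\infty$, and yields (\ref{Eq_Initial_Center}).

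\textbf{$(3)\Rightarrow(2)$.} Write the Duhamel formula on each piece. On $E_\st$, substitute (\ref{Eq_Initial_Center}) into $e^{tL}\proj_\st u(0)+e^{tL}\int_0^t e^{-\tau L}\proj_\st\R_\Ext\dtau$; this combines into $\mathscr{B}_\st$, and likewise for $E_\unst$. On $E_\disp$ the evolution is governed by $\bm{F}(u;t,0)$ with source $\proj_\disp\R_\Ext$, which gives the first two terms of (\ref{Int_Eq_Center}).

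\textbf{$(2)\Rightarrow(1)$.} First, differentiating (\ref{Int_Eq_Center}) with \ref{F1} recovers (\ref{EQ_Red_Ext_New}). For growth, the uniform bound $\|\R_\Ext(u)\|_{H^{s_0}}\lesssim\delta_0^2$ makes $\mathscr{B}_\st,\mathscr{B}_\unst$ bounded in time. The propagator bound makes $\bm{F}(u;t,0)\proj_\disp u(0)$ and $\mathscr{B}_\disp$ grow at most like $e^{C\delta_0|t|}$. Since $C\delta_0<\mu$, this is $o(e^{\mu|t|})$, and we use only $H^{s_0}$ norms throughout.
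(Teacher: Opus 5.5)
Your proposal is correct and follows essentially the same route as the paper. The hyperbolic components are recovered by sending $t_0\to\mp\infty$ in the projected Duhamel formula. The cut-off bound $\|\R_\Ext(u)\|_{H^{s_0}}\lesssim\delta_0^2$, which is the paper's \eqref{Hyp_Quad}, makes $\mathscr{B}_\st,\mathscr{B}_\unst$ converge. The bound $\mathtt{F}(u;t)\lesssim\delta_0$ with $C\delta_0<\mu$ gives the mild growth.

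Your ordering $(1)\Rightarrow(3)\Rightarrow(2)\Rightarrow(1)$ differs only cosmetically from the paper's $(1)\Rightarrow(2)\Leftrightarrow(3)$, $(3)\Rightarrow(1)$. The tentative bootstrap and exact-rate detour in your $(1)\Rightarrow(3)$ step is unnecessary, since the uniform cut-off bound on $\R_\Ext$ settles convergence by itself.
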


\begin{remark}
As sketched in Theorem \ref{Stab_Mfd_Finite}, different truncations lead to possibly different center manifolds. Therefore, even in the finite dimensional case, we usually only refer to \emph{a local center manifold} since it is not necessarily unique. 
\end{remark}

\begin{proof}
We first show that (1) implies (2). If $u$ solves the equation \eqref{EQ_Red_Ext_New}, then it can be expressed by the twisted Duhamel formula~\eqref{Duhamel}. Then, we apply on both sides the projection $\bm{\Pi}_\st$ (resp. $\bm{\Pi}_\unst$) to stable (resp. unstable) directions and take the limit $t_0 \to -\infty$ (resp. $t_0 \to +\infty$). By Lemma \ref{exp(tL)} and the assumption $\|u(t)\|_{H^{s_0}_x}=o(e^{\mu|t|})$, the term $e^{t_0L(D_x)}\proj_\unst u(t_0)\to0$ in $H^{s_0}_x$ when $t_0 \to +\infty$, while $e^{t_0L(D_x)}\proj_\st u(t_0)\to0$ in $H^{s_0}_x$ when $t_0 \to -\infty$. Therefore,
\begin{equation*}
\begin{aligned}
\bm{\Pi}_\st u(t) &= e^{tL(D_x)}\int_{-\infty}^t e^{-\tau L(D_x)}\bm{\Pi}_\st\R_{\Ext}(u(\tau))\dtau, \\
\bm{\Pi}_\unst u(t) &= -e^{tL(D_x)}\int_t^{+\infty} e^{-\tau L(D_x)}\bm{\Pi}_\unst\R_{\Ext}(u(\tau))\dtau,
\end{aligned}
\end{equation*}
where the integrals converge in $H^{s_0}_x$ for all time $t$. Then we evaluate these formulas at $t=0$ and substitute them into the twisted Duhamel formula~\eqref{Duhamel} to conclude the integral equation~\eqref{Int_Eq_Center}. 

Taking $t=0$ in (2), we immediately see that (2) implies (3). Using the twisted Duhamel formula (\ref{Twist_Duhamel}), we immediately see that (3) implies (2). 

We now show that $(3)$ yields $(1)$. Suppose $u\in C^0_{t;\loc}H^{s_0}_x\cap W^{1,\infty}_{t;\loc}H^{s_0-3/2}_x$ solves the integral equation (\ref{Int_Eq_Center}). We first note that regardless of how $u(t)$ behaves for large $t$, the two infinite integrals $\mathscr{B}_\st,\mathscr{B}_\unst$ always satisfy
\begin{equation}\label{Hyp_Quad}
\begin{aligned}
\|\mathscr{B}_\st(u;t)+\mathscr{B}_\unst(u;t)\|_{H^{s_0}_x}
&\overset{\text{Lem.}\ref{exp(tL)}}{\leq}
\int_{-\infty}^{+\infty} e^{-|t-\tau|\mu}\big\|(\proj_\st+\proj_\unst)\R(u(\tau))\big\|_{H^{s_0}_x}\dtau\\
&\overset{\ref{Ext2}}{\leq}
C_{s_0}\int_{-\infty}^{+\infty} e^{-|t-\tau|\mu}\delta_0^2\dtau
\leq C_{s_0}\delta_0^2.
\end{aligned}
\end{equation}
It is therefore legitimate to differentiate them in $t$. Furthermore, by Theorem \ref{Fundamental} (and the discussion for \eqref{EQ_Red_Ext_New} above), since $u\in C^0_{t\loc}H^{s_0}_x$ and $\partial_tu\in L^\infty_{t;\loc}H^{s_0-3/2}_x$, it is legitimate to differentiate $\bm{F}(u;t,\tau)$ in $t$ according to \ref{F1}. Therefore, $u$ must solve the paradifferential equation (\ref{EQ_Red_Ext_New}).

It remains to show that $u(t)$ has mild growth in time if $\delta_0>0$ is sufficiently small (independent from $u$). Indeed, from the equation $\partial_t u - L(D_x)\proj_{\low}u +i\proj_{\high}\Gamma_{\Ext}\proj_{\high}u= \R_\Ext(u)$ itself, we see that 
$$
\big\|(\proj_\st+\proj_\unst)\partial_tu(t)\big\|_{H^{3}_x}
=\|L(D_x)(\mathscr{G}_\st(u;t)+\mathscr{G}_\unst(u;t))\|_{H^{3}_x}
\leq C_{s_0}\delta_0^2,
$$
while 
$$
\begin{aligned}
\|\proj_\disp\partial_tu(t)\|_{H^{3}}
&\leq \|-i\proj_{\high}\Gamma_{\Ext}(t)\proj_{\high}u(t)+\R_\Ext(u(t))\|_{H^{s_0-3/2}}\\
&\overset{\ref{Ext1}\ref{Ext2}}{\leq}
C_{s_0}\kappa\big(\delta_0^{-1}\|u(t)\|_{H^{s_0}_x}\big)\|u(t)\|_{H^{s_0}_x}
\leq C_{s_0}\delta_0^2.
\end{aligned}
$$
Therefore, the factor $\mathtt{F}(u;t)$ in \ref{F2} satisfies $\mathtt{F}(u;t)\leq C_{s_0}\delta_0$. If we choose $\delta_0$ so small that $C_{s_0}\delta_0<\mu$, then according to \ref{F2} (and also the linear growth on frequencies $\xi=0$ or $\rho^{-1}\leq|\xi|\leq2\rho^{-1}$), there holds
$$
\|\bm{F}(u;t,t_0);\mathcal{L}(H^{s_0}_x)\|
\leq C_{s_0}\size[t-t_0]
+C_{s_0}\exp\left(C_{s_0}\int_{[t_0,t]}\mathtt{F}(u;
\tau)\dtau\right)
\leq C_{s_0,\mu}e^{(\mu-\delta)|t-t_0|}.
$$
Substituting (\ref{Hyp_Quad}) and this estimate of $\bm{F}$ into the integral equation (\ref{Int_Eq_Center}), we estimate
\begin{equation}\label{u_Cen_growth}
\begin{aligned}
\|u(t)\|_{H^{s_0}_x}
&\leq C_{s_0}\exp\big(C_{s_0}\delta_0|t|\big)\|u(0)\|_{H^{s_0}_x}
+C_{s_0}\int_{[0,t]}\exp\big(C_{s_0}\delta_0|t-\tau|\big)
\|\R(u(\tau))\|_{H^{s_0}_x}\dtau+\delta_0^2\\
&\overset{\ref{Ext2}}{\leq}
C_{s_0}\exp\big(C_{s_0}\delta_0|t|\big)(\|u(0)\|_{H^{s_0}_x}+\delta_0^2)
\leq C_{s_0,\mu}e^{(\mu-\delta)|t|}(\|u(0)\|_{H^{s_0}_x}+\delta_0^2).
\end{aligned}
\end{equation}
This finishes the proof.
\end{proof}

\subsection{The Center Invariant Set}
To construct the center invariant set, we are led by (3) of Lemma \ref{Lem_Center} to the fixed-point type equation
\begin{equation}\label{Eq_Initial_Center'}
\begin{aligned}
g&=\mathcal{I}(g,f)\\
&:=\int_{-\infty}^0 e^{\tau L(D_x)}\proj_\st\R_\Ext\big(\xS(\tau;g+f)\big)\dtau
-\int_0^{+\infty} e^{-\tau L(D_x)}
\proj_\unst\R_\Ext\big(\xS(\tau;g+f)\big)\dtau.
\end{aligned}
\end{equation}
Here $f\in E_\disp\cap H^{s_1}$ is any given element, and 
$$
\xS(t;\cdot):H^{s_1}\mapsto C^0_{t;\loc}H^{s_1}_x\cap W^{1,\infty}_{t;\loc}H^{s_1-3/2}_x
$$
is the solution map of the evolution equation (\ref{EQ_Red_Ext}) (equivalently \eqref{EQ_Red_Ext_New}), while $g\in E_\st\oplus E_\unst$ is the unknown to be solved. By global well-posedness shown in Theorem \ref{GWP_EQ}, we know that $\xS(t;\cdot)$ is a continuous map in the weaker norm topology
$$
H^{s_1}\mapsto C^0_{t;\loc}H^{s_1-\delta}_x.
$$
Since $\R_\Ext: H^s_x\to H^{s+s_0-2}_x, s\geq s_0$ is smooth by \ref{Ext2}, we find that given any $f\in E_\disp$, the mapping $\mathcal{I}(g,f)$ in (\ref{Eq_Initial_Center'}) is a continuous map defined for $g\in E_\st\oplus E_\unst$, satisfying
\begin{equation}\label{I(g,f)}
\|\mathcal{I}(g,f)\|_{L^2}
\overset{\ref{Ext2}}{\leq}
C_{s_1}\int_{\xR}e^{-\tau L(D_x)}\delta_0^2\dtau
\leq C_{s_1}\delta_0^2.
\end{equation}
If we take $\delta_0\ll1$, we can make sure that
$$
g\mapsto \mathcal{I}(g,f)
$$
maps the closed ball $\{g\in E_\st\oplus E_\unst:\|g\|_{L^2}\leq \delta_0\}$ to itself continuously. 

Recall that in the periodic boundary case, the hyperbolic subspace $E_\st\oplus E_\unst$ is finite-dimensional. By the Brouwer fixed point theorem, the equation (\ref{Eq_Initial_Center'}) then has a non-void close set of solutions. The estimate (\ref{I(g,f)}) shows that this solution set must be contained in the even smaller ball
$$
\{g\in E_\st\oplus E_\unst:\|g\|_{L^2}\leq C_{s_1}\delta_0^2\}.
$$
Using Lemma~\ref{Lem_Center}, we conclude from these arguments the following:
\begin{proposition}\label{Brouwer_Set}
Fix $s_1\geq s_0+3/2$ and $\delta_0$ small enough, depending only on $s_1$. Given any $f\in E_\disp\cap H^{s_1}$, there is a non-void close subset 
$$
\mathfrak{F}(f)\subset\{g\in E_\st\oplus E_\unst:\|g\|_{L^2}\leq C_{s_1}\delta_0^2\},
$$
such that if $g\in \mathfrak{F}(f)$, then $u(t)=\xS(t;g+f)\in C^0_{t;\loc}H^{s_1}_x$ is a solution of the integral equation (\ref{Int_Eq_Center}) with $\proj_\disp u(0)=f$.
\end{proposition}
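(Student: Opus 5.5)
The plan is to obtain $\mathfrak{F}(f)$ as the fixed-point set of the map $g\mapsto\mathcal{I}(g,f)$ from (\ref{Eq_Initial_Center'}), restricted to the closed ball $\bar B:=\{g\in E_\st\oplus E_\unst:\|g\|_{L^2}\leq\delta_0\}$. Membership in $\mathfrak{F}(f)$ will then be converted into the integral equation via Lemma \ref{Lem_Center}. Since $\xG=\xT$, Remark \ref{RT_Case} gives that $E_\st\oplus E_\unst$ is finite dimensional with frequencies $|\xi|<\rho^{-1}$. All norms on it are therefore equivalent, and $\bar B$ is a compact convex set. For $g\in\bar B$ and $f\in E_\disp\cap H^{s_1}$ we have $g+f\in H^{s_1}$, so Theorem \ref{GWP_EQ} supplies a global solution $\xS(\cdot;g+f)\in C^0_{t;\loc}H^{s_1}_x\cap W^{1,\infty}_{t;\loc}H^{s_1-3/2}_x$ of (\ref{EQ_Red_Ext}).

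\textbf{Step 1: $\mathcal{I}(\cdot,f)$ is well defined, maps $\bar B$ into a smaller ball, and is continuous.} Because of the cutoff in (\ref{R_Ext}), \ref{Ext2} gives $\|\R_\Ext(v)\|_{H^{s_0}}\lesssim\kappa(\delta_0^{-1}\|v\|_{H^{s_0}})\|v\|_{H^{s_0}}^2\leq C\delta_0^2$ uniformly in $v$. By Lemma \ref{exp(tL)}, $e^{\tau L}\proj_\st$ for $\tau\le0$ and $e^{-\tau L}\proj_\unst$ for $\tau\ge0$ both decay like $e^{-\mu|\tau|}$. Hence both integrals converge absolutely, and $\|\mathcal{I}(g,f)\|_{L^2}\leq C_{s_1}\delta_0^2$, which is (\ref{I(g,f)}). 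Taking $\delta_0\leq 1/C_{s_1}$ makes $\mathcal{I}(\cdot,f)$ map $\bar B$ into itself.

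The delicate point of this step is continuity in $g$. Theorem \ref{GWP_EQ} only gives continuity of the solution map into $C^0(I;H^{s_1-\delta})$ on bounded intervals $I$, which is enough since $s_1-\delta\ge s_0$. On each compact interval $[-T,T]$, the map $g\mapsto\R_\Ext(\xS(\tau;g+f))$ is therefore continuous in $H^{s_0}$, uniformly in $\tau$, using the smoothness of $\R_\Ext$ on $H^{s_0}$ from \ref{Ext2}. The tails $|\tau|>T$ contribute at most $C\delta_0^2e^{-\mu T}$ uniformly in $g$. A standard $\varepsilon/3$ argument then gives continuity of $g\mapsto\mathcal{I}(g,f)$ in $L^2$. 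I expect this to be the main obstacle: one has only continuity of the flow, not differentiability or Lipschitz bounds, on an infinite time horizon, and it is exactly the uniform exponential tail bound that saves it.

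\textbf{Step 2: fixed points and the integral equation.} Brouwer's theorem gives that the fixed-point set $\mathfrak{F}(f):=\{g\in\bar B:\mathcal{I}(g,f)=g\}$ is nonempty. It is closed because $\mathcal{I}(\cdot,f)$ is continuous, and by Step 1 it lies in the ball of radius $C_{s_1}\delta_0^2$. For $g\in\mathfrak{F}(f)$, set $u(t)=\xS(t;g+f)$. Then $u$ solves (\ref{EQ_Red_Ext}) with $(\proj_\st+\proj_\unst)u(0)=g$ and $\proj_\disp u(0)=f$, since the spectral projections are complementary. The fixed-point identity is precisely (\ref{Eq_Initial_Center}). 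This is assertion (3) of Lemma \ref{Lem_Center}, after shrinking $\delta_0$ as that lemma requires; the lemma then yields the integral equation (\ref{Int_Eq_Center}), which concludes the proof.
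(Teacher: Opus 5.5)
Your proposal is correct and follows the paper's own argument. You apply Brouwer's theorem to $g\mapsto\mathcal{I}(g,f)$ on the closed $\delta_0$-ball of the finite-dimensional space $E_\st\oplus E_\unst$, use the bound (\ref{I(g,f)}) to place the fixed-point set in the $C_{s_1}\delta_0^2$-ball, and conclude with the implication (3)$\Rightarrow$(2) of Lemma \ref{Lem_Center}; your $\varepsilon/3$ argument (continuity of the flow on compact time intervals plus the uniform $C\delta_0^2e^{-\mu T}$ tail bound) spells out the continuity of $\mathcal{I}(\cdot,f)$, which the paper only asserts.

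One small slip, inherited from a typo in (\ref{Eq_Initial_Center'}): the stable kernel must be $e^{-\tau L(D_x)}\proj_\st$ for $\tau\le 0$, as in (\ref{Eq_Initial_Center}), since $e^{\tau L(D_x)}\proj_\st$ grows as $\tau\to-\infty$.
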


With the \emph{close set-valued mapping} $\mathfrak{F}$ given by Proposition~\ref{Brouwer_Set}, let us then consider the set 
\begin{equation}\label{M_Center}
M_{\cen}:=\big\{(f,g):f\in E_\disp\cap H^{s_1},\,g\in\mathfrak{F}(f)\big\}.
\end{equation}
We claim that $M_{\cen}$ is an invariant set for the evolution equation (\ref{EQ_Red_Ext_New}). Indeed, we have just shown that the solution $u(t)$ with $u(0)\in M_{\cen}$ must satisfy the integral equation (\ref{Int_Eq_Center}). For any time $t_1$, the solution $u_1(t)$ of (\ref{EQ_Red_Ext_New}) with initial value $u_1(0)=u(t_1)$ is still of class $C^0_{t;\loc}H^{s_1}_x$ by the global well-posedness result. Obviously $u_1(t)$ still satisfies the mild growth requirement $\|u_1(t)\|_{H^{s_0}_x}=o(e^{\mu|t|})$ since it is simply a time shift of $u(t)$. By Lemma~\ref{Lem_Center}, $u_1(0)=u(t_1)$ must still satisfy (\ref{Eq_Initial_Center}), hence it must still be in the set $M_{\cen}$. 

The set $M_{\cen}$ given by (\ref{M_Center}) will then be our \emph{center invariant set}. Restricted back to the ball $\|u\|_{H^{s_0}}\leq 2\delta_0$, we see that $M_{\cen}$ is an invariant set for the water jet system (\ref{EQ_DG}) with respect to this ball. Unlike the construction of $M_\st$ and $M_\unst$, we are \emph{unable} to prove that $M_{\cen}$ is a submanifold (namely a graph), since there is \emph{no} guarantee that the integral equation (\ref{Int_Eq_Center}) has exactly one solution for a given $f=\proj_\disp u(0)$. The major reason is the quasilinear nature of the system -- if $u$ does not have any decay in time, then the differential with respect to $u$ of the right-hand side of (\ref{Int_Eq_Center}) shall result in extra growth in $t$ due to terms of the form $\der_u\bm{F}\cdot\del u$, by the explicit formula (\ref{DuF(u)}). This does not seem to be a technical issue; we identify it as an essential feature for problems involving propagators whose ``phase function" does not admit good estimate. A similar difficulty for semilinear Schr\"{o}dinger (NLS) equation has already been recognized by Krieger and Schlag, and they were forced to resort on the non-constructive Schauder-Tychnoff fixed point theorem to construct ``non-generic blow up set" for the NLS\footnote{The authors would like to thank Zexing Li for bringing this to their attention.}. See \emph{Stage D: Locating a fixed point} in \cite{KS2009}.

\subsection{Lifespan Estimate}
We prove a trivial lifespan estimate based on energy inequalities. A consequence of this is that $M_\cen$ is ``tangent to $E_\disp\cap H^{s_1}$ at 0'', completing the proof of Theorem~\ref{Main2}.

\begin{proposition}\label{prop-center:TrivLSp}
Fix $s_1\geq s_0+3/2$. Let $u \in C^0_{t;\loc}H^{s_1}_x$ be a solution to the integral equation~\eqref{Int_Eq_Center}. If $\delta_0>0$ is chosen small enough, then there exist constants $c_1',A>0$ depending only on $s_0$, such that, when the initial data $u(0)$ satisfies
    $$
        \| u(0) \|_{H^{s_0}} = \varepsilon \leq\delta_0 ,
    $$
    there holds
    \begin{equation*}\label{eq-center:TrivLSp}
        \sup_{|t| \leq c_1'\varepsilon^{-1}} \| u(t) \|_{H^{s_0}_x} \leq A\varepsilon.
    \end{equation*}
\end{proposition}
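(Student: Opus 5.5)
We plan a bootstrap (continuity) argument built on the integral equation \eqref{Int_Eq_Center}, splitting $u$ into its dispersive part $\proj_\disp u$ and its hyperbolic part $(\proj_\st+\proj_\unst)u=\mathscr{B}_\st+\mathscr{B}_\unst$. Fix $A$ to be chosen, and let $T^*$ be the supremum of times $T$ with $\sup_{|t|\le T}\|u(t)\|_{H^{s_0}_x}\le A\varepsilon$. Since $u\in C^0_{t;\loc}H^{s_1}_x$ and $\|u(0)\|_{H^{s_0}}=\varepsilon$, we have $T^*>0$ once $A>1$. We aim to show that on $[-T,T]$ with $T\le c_1'\varepsilon^{-1}$ the bound improves to $\frac{A}{2}\varepsilon$. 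Throughout we take $A\varepsilon\le A\delta_0<2\delta_0$, so that $\kappa\equiv1$ on the bootstrap interval and the truncated system coincides with \eqref{EQ_DG}.

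The dispersive part is handled by \ref{F2}. Under the bootstrap hypothesis, the equation itself together with \ref{Ext1} and \ref{Ext2} gives $\|\partial_tu(t)\|_{H^{3}_x}\lesssim\|u(t)\|_{H^{s_0}_x}\le CA\varepsilon$, as in the proof of Lemma~\ref{Lem_Center}. Hence $\mathtt{F}(u;t)\le C_{s_0}A\varepsilon$, and on $[-T,T]$ we get $\|\bm{F}(u;t,\tau)\|_{\mathcal{L}(H^{s_0})}\le C_{s_0}(1+|t-\tau|)e^{C_{s_0}A\varepsilon|t-\tau|}$. The polynomial factor comes from the frequencies $\xi=0$ and $|\xi|=\rho^{-1}$, where the linear evolution grows linearly in time. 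For $|t|\le c_1'\varepsilon^{-1}$ with $c_1'A\ll1$, the exponential is $\le 2$. Then
\[
\|\proj_\disp u(t)\|_{H^{s_0}}\le C(1+|t|)\varepsilon+C\int_{[0,t]}(1+|t-\tau|)\|\R_\Ext(u(\tau))\|_{H^{s_0}}\dtau ,
\]
and by \eqref{Size_RExt} the integrand is $\lesssim A^2\varepsilon^2$. This gives $C\varepsilon(1+|t|)+CA^2\varepsilon^2|t|(1+|t|)$.

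\textbf{Main obstacle.} The linear growth at $\xi=0$ and $|\xi|=\rho^{-1}$ destroys a uniform $O(\varepsilon)$ bound on a time scale $\varepsilon^{-1}$: it gives only $\varepsilon|t|\sim 1$. Our first attempt is to exploit the structure of these finitely many modes. At $\xi=0$ the matrix $L(0)$ has $\eta$ driving nothing: $G[0]$ vanishes at $0$ and $H'[0]$ gives $\rho^{-2}$. Only $\hat\eta(0)$ feeds $\hat\psi(0)$, and $\hat\psi(0)$ is a harmless gauge constant. Likewise, at $|\xi|=\rho^{-1}$ only $\psi$ feeds $\eta$. One would then measure the $H^{s_0}$ norm in a weighted energy for which these Jordan blocks are bounded, or note that the quantities $\hat\eta(0)$ (volume) and the relevant neutral modes are conserved or quadratically controlled. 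If this fails, the fallback is to accept the estimate $A\varepsilon$ only after absorbing the Jordan growth into the constant, choosing $c_1'$ small. With $A$ large and $c_1'A^2\ll1$ the nonlinear term is $\le\varepsilon$, so the dispersive part stays $\le \frac{A}{4}\varepsilon$ provided the neutral-mode contribution is controlled as above.

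For the hyperbolic part we use the bound \eqref{Hyp_Quad}, but in its refined form that uses the actual size of $u$ rather than $\delta_0$. This needs the growth of $u$ outside the bootstrap window. There we invoke the a priori mild-growth estimate \eqref{u_Cen_growth}: $\|u(\tau)\|_{H^{s_0}}\le Ce^{(\mu-\delta)|\tau|}\varepsilon$, obtained by redoing \eqref{u_Cen_growth} with $\delta_0^2$ replaced by the self-consistent quadratic bound. Then
\[
\|\mathscr{B}_\st+\mathscr{B}_\unst\|_{H^{s_0}}\lesssim\int e^{-\mu|t-\tau|}\|u(\tau)\|^2_{H^{s_0}}\dtau\lesssim A^2\varepsilon^2+\varepsilon^2\!\int_{|\tau|>T}e^{-\mu|t-\tau|+2(\mu-\delta)|\tau|}\dtau .
\]
The tail is the delicate point, since $2(\mu-\delta)>\mu$. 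We would handle it by using $\kappa$: $\R_\Ext$ vanishes once $\|u\|_{H^{s_0}}\ge4\delta_0$, so $\|\R_\Ext(u(\tau))\|\le C\delta_0\|u(\tau)\|$ is at worst linear with a small coefficient, and the tail integral converges. This yields a hyperbolic part $\lesssim\varepsilon\cdot\delta_0+A^2\varepsilon^2\le\frac{A}{4}\varepsilon$. It also gives the conical estimate \eqref{Conical} afterwards. Closing the bootstrap then yields $T^*\ge c_1'\varepsilon^{-1}$.
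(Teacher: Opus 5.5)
\textbf{Hyperbolic part: the idea your proposal lacks.} The terms $\mathscr{B}_\st(u;t)+\mathscr{B}_\unst(u;t)$ integrate $\R_\Ext(u(\tau))$ against $e^{-\mu|t-\tau|}$ over all $\tau\in\mathbb{R}$. So for $t$ near the edge $\pm T$ of your bootstrap window, they see $u$ just outside the window, where you have no $O(\varepsilon)$ information. Your tail argument confuses convergence with smallness. Grant $\|u(\tau)\|_{H^{s_0}}\le C\varepsilon e^{(\mu-\delta)|\tau|}$ and $\|\R_\Ext(u)\|_{H^{s_0}}\le C\delta_0\|u\|_{H^{s_0}}$. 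Then the piece $\tau>T$ equals $C\delta_0\varepsilon e^{\mu t-\delta T}/\delta$. At $t=T\sim c_1'\varepsilon^{-1}$ this is $C\delta_0\varepsilon e^{(\mu-\delta)T}/\delta$, exponentially large in $1/\varepsilon$. The cut-off bound $\|\R_\Ext\|\lesssim\delta_0^2$ only gives $\delta_0^2$, as in \eqref{Hyp_Quad}. So no bootstrap on a single time window closes.

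The paper gets around this by induction over dyadic sizes $2^{-k}\bar\varepsilon$ with $\bar\varepsilon\in[\delta_0/2,\delta_0]$. The coarsest scale (times $\lesssim c_1'/\delta_0$) follows from the crude bound \eqref{u_Cen_growth}. In the inductive step, the state at the exit time $T_{k+1}(u)$ is again initial data of a solution of \eqref{Int_Eq_Center}, by time-translation invariance via Lemma~\ref{Lem_Center}. The hypothesis at a coarser scale then keeps $u$ of size $\lesssim 2^{-k}A\bar\varepsilon$ on a further interval of length $T_k\gtrsim 2^{k}\bar\varepsilon^{-1}$. Splitting the hyperbolic integral at $|\tau|=T_{k+1}(u)+T_k$ leaves $C2^{-2k}A^2\bar\varepsilon^2+Ce^{-\mu T_k}\delta_0^2$. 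The last term is negligible precisely because $T_k$ is so long. This multi-scale use of invariance of the solution set is what you are missing.

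\textbf{Neutral modes: your diagnosis is right, your remedies are not.} A weighted energy cannot help: for a nonzero nilpotent block, $e^{tN}=\Id+tN$ grows linearly in every norm. Near-conservation of $\hat\eta(0)$ does not help either, since it is exactly what drives $\widehat{\psi}(0)$ linearly. Shrinking $c_1'$ fails too: it turns $C\varepsilon(1+|t|)$ into $C\varepsilon+Cc_1'$, which is not $O(\varepsilon)$.

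In fact the obstruction is real for the statement as written. The pair $\eta\equiv c$, $\psi(t)=t\,c/(\rho(\rho+c))$ solves \eqref{EQ} exactly and corresponds to the spatially constant $u(t)=c+i\psi(t)$. It solves \eqref{Int_Eq_Center}: it solves the extended system, stays constant in $x$, and grows only polynomially. Moreover $\mathscr{B}_\st=\mathscr{B}_\unst=0$, since $\R_\Ext(u(\tau))$ lives at $\xi=0$. Yet $\|u(t)\|_{H^{s_0}}\gtrsim\varepsilon|t|$ while it stays in the ball. Hence $\sup_{|t|\le c_1'\varepsilon^{-1}}\|u(t)\|_{H^{s_0}}\gtrsim\min(c_1',\delta_0)$, which is not $\le A\varepsilon$ as $\varepsilon\to0$.

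The paper's proof misses this because it bounds $\bm{F}(u;t,\tau)$ by the exponential factor of \ref{F2} alone. In this section $\bm{F}$ also carries the linear evolution at $\xi=0$, and the proof of Lemma~\ref{Lem_Center} itself records the extra factor $\langle t-t_0\rangle$. A correct version must first remove the neutral directions. For instance, measure $u$ modulo the gauge constant $\widehat{\IM u}(0)$, also inside the cut-off $\kappa$, and treat $|\xi|=\rho^{-1}$ separately when $\rho^{-1}\in\mathbb{Z}$. After that, your dispersive estimate combined with the paper's dyadic induction for the hyperbolic part is the right structure.
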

\begin{proof}
Throughout the proof, let us fix $\bar{\varepsilon} \in [\delta_0/2,\delta_0]$. Suppose for the moment $\|u(0)\|_{H^{s_0}}\leq\bar{\varepsilon}$. Using the intermediate inequality in (\ref{u_Cen_growth}), we find that 
$$
\|u(t)\|_{H^{s_0}_x}\leq
C_{s_0}\exp\big(C_{s_0}\delta_0|t|\big)(\|u(0)\|_{H^{s_0}_x}+\delta_0^2)
$$
In particular, when $|t| \leq c_1'\bar{\varepsilon}^{-1} \leq 2c_1'\delta_0^{-1}$, the right-hand side can be bounded by $A\bar{\varepsilon}$ if $\delta_0$ is chosen small enough and $A$ is chosen large enough.

Now, let us pass to the general case. We define
$$
\begin{aligned}
T_n(u) &:= \inf\Big\{ T > 0:u\text{ is a solution}, \sup_{|t|\leq T}\|u(t)\|_{H^{s_0}_x} \leq 2^{-n}A\bar{\varepsilon} \Big\}, \\
\quad T_n &:= \inf\Big\{T_n(u):u\text{ is a solution, } \|u(0)\|_{H^{s_0}}\leq 2^{-n}\bar{\varepsilon}\Big\}.
\end{aligned}
$$
The goal is to justify that for all $n\in\xN$ and any initial value $\|u(0)\|_{H^{s_0}}\leq 2^{-n}\bar{\varepsilon}$, there holds
\begin{equation}\label{eq-center:TrivLSp-iter}
T_n(u) \ge c_1' 2^{n}\bar{\varepsilon}^{-1}.
\end{equation}
Here $c_1',A>0$ are constants depending only on $s_0$. The argument in the previous paragraph already proves the initial case $n=0$. We are going to prove it for general $n$ by induction. Assume that \eqref{eq-center:TrivLSp-iter} holds for some $n=k$ with $k\in\xN$. 

Consider any solution $u$ with $\|u(0)\|_{H^{s_0}}\leq 2^{-k-1}\bar{\varepsilon}$. If $T_{k+1}(u) = + \infty$, then~\eqref{eq-center:TrivLSp-iter} is obvious. Otherwise, at least one of the following holds:
$$
\lim_{t\to T_{k+1}(u)} \|u(t)\|_{H^{s_0}_x} = 2^{-k}\bar{\varepsilon}, \quad \lim_{t\to -T_{k+1}(u)} \|u(t)\|_{H^{s_0}_x} = 2^{-k}\bar{\varepsilon}.
$$
Without loss of generality, we assume that the first one holds. Then we estimate the integral equation~\eqref{Int_Eq_Center} as $t \to T_{k+1}(u)$. According to the definition of $T_{k+1}(u)$, we have
$$
\sup_{t\in [0,T_{k+1}(u)]} \| u(t) \|_{H^{s_0}_x} \leq 2^{-k-1}A\bar{\varepsilon}.
$$
Based on this, we obtain from the equation itself
$$
\|\partial_tu(t)\|_{H^{s_0-3/2}_x}
\leq C_{s_0}2^{-k-1}A\bar{\varepsilon},
$$
so by \ref{F2}, 
$$
\|\bm{F}(u;t,\tau);\mathcal{L}(H^s_x)\|
\leq C_s\exp\left(C_{s}2^{-k-1}A\bar{\varepsilon}|t-\tau|\right),\quad
t,\tau\in[0,T_{k+1}(u)].
$$
Similar to (\ref{u_Cen_growth}), we thus find that for all $t\in [0,T_{k+1}(u)]$,
\begin{equation}\label{Induction(k+1)}
\begin{aligned}
\| u(t) \|_{H^{s_0}_x} 
&\leq C_{s_0}\exp\left(C_{s_0}2^{-k-1}A\bar{\varepsilon}t\right)\|u(0)\|_{H^{s_0}_x}\\
&\quad+C_{s_0}\int_{[0,t]}\exp\left(C_{s_0}2^{-k-1}A\bar{\varepsilon}|t-\tau|\right)\|\R(u(\tau))\|_{H^{s_0}_x}\dtau\\
&\quad+\int_{-\infty}^{+\infty} e^{-|t-\tau|\mu} \|\R_\Ext(u(\tau))\|_{H^{s_0}_x} \dtau.
\end{aligned}
\end{equation}
Using \ref{Ext2}, the sum of the first two terms in the right-hand side of (\ref{Induction(k+1)}) has upper bound
$$
C_{s_0}\exp(C_{s_0} 2^{-k-1}A\bar{\varepsilon} t)\big( 2^{-k-1} \bar{\varepsilon} + 2^{-2k-2}A^2\bar{\varepsilon}^2 t\big).
$$
For the last integral in the right-hand side of (\ref{Induction(k+1)}), we split it as follows:
$$
\begin{aligned}
\int_{-\infty}^{+\infty} e^{-|t-\tau|\mu} \|\R_\Ext(u(\tau))\|_{H^{s_0}_x} \dtau 
&\leq \int_{|\tau| \ge T_{k+1}(u)+T_{k}} + \int_{|\tau| \leq T_{k+1}(u)+T_{k}} \\
&\leq e^{-T_k\mu} C_{s_0}\delta_0^2 + C_{s_0} 2^{-2k}A^2\bar{\varepsilon}^2.
\end{aligned}
$$
Note that the second inequality follows from
$$
\sup_{t\in [0,T_{k+1}(u)+T_k]} \| u(t) \|_{H^{s_0}_x} \leq 2^{-k}A\bar{\varepsilon},
$$
due to the definition of $T_k$. 
    
To sum up, we have shown that, for all time $t\in [0,T_{k+1}(u)]$,
\begin{equation}\label{Induction(k+1)_1}
\begin{aligned}
\frac{\| u(t) \|_{H^{s_0}_x}}{2^{-k-1}A\bar{\varepsilon}} 
&\leq C_{s_0}\exp(C_{s_0} 2^{-k-1}A\bar{\varepsilon} t) \big( A^{-1} + 2^{-k-1}A\bar{\varepsilon} t \big) \\
&\quad + e^{-T_k\mu} C_{s_0}\delta_0^2(2^{-k-1}A\bar{\varepsilon})^{-1} + C_{s_0} 2^{-k+1}A\bar{\varepsilon} \\
&\overset{\eqref{eq-center:TrivLSp-iter}}{\leq} 
C_{s_0}\exp(C_{s_0} 2^{-k-1}A\bar{\varepsilon} t) \big( A^{-1} 
+ 2^{-k-1}A\bar{\varepsilon} t \big) \\
&\quad + \exp(-C_{s_0}c_1'\mu2^k\bar{\varepsilon}^{-1})
C_{s_0}A^{-1}2^{k+2}\delta_0
+ 2C_{s_0} A\delta_0.
\end{aligned}
\end{equation}
The right-hand side satisfies
$$
\begin{aligned}
\exp(-C_{s_0}c_1'\mu2^k\bar{\varepsilon}^{-1})
C_{s_0}A^{-1}2^{k+2}\delta_0 
\leq C_{s_0}' \frac{2^{k+2}\delta_0}{c_1'A\mu2^k\bar{\varepsilon}^{-1}} 
\leq \frac{4C_{s_0}'}{c_1'A\mu} \delta_0^2.
\end{aligned}
$$
Taking the limit $t\to T_{k+1}(u)$ in \eqref{Induction(k+1)_1}, we conclude
$$
1 \leq C_{s_0}\exp(C_{s_0} 2^{-k-1}A\bar{\varepsilon}T_{k+1}(u)) \big( A^{-1} 
+ 2^{-k-1}A\bar{\varepsilon}T_{k+1}(u) \big) 
+ \frac{4C_{s_0}'}{c_1'A\mu} \delta_0^2
+ 2C_{s_0} A\delta_0.
$$
We choose $A\gg 1$, then $\delta_0\ll1$ and $c_1'A \ll 1$, so that 
$$
\frac{4C_{s_0}'}{c_1'\mu} A^{-1}\delta_0^2
+ 2C_{s_0} A\delta_0<\frac{1}{4}.
$$
This should imply the desired inequality \eqref{eq-center:TrivLSp-iter} for $n=k+1$. 
\end{proof}

Thanks to Proposition~\ref{prop-center:TrivLSp}, we can prove that the center invariant set $M_\cen$ constructed in~\eqref{M_Center} is ``tangent'' to the dispersive subspace $E_\disp$ in the linear analysis~\ref{Lin3}. To be precise, for a small enough solution on the center invariant set, its projection along hyperbolic directions is much smaller than that along $E_\cen$. Restricted to the small ball $\|u\|_{H^{s_0}}\leq 2\delta_0$, recalling that the relation between $(\eta,\psi)$ and $u$ is diffeomorphic within that neighborhood (Lemma \ref{lem:ChgtUnk:psi-w} and \ref{DG_Transform}), while the system satisfied by $u$ is equivalent to the water jet system within that neighborhood, this finishes the proof of Theorem \ref{Main2}:

\begin{corollary}\label{cor-center:Tgt}
Fix $s_1\geq s_0+3/2$ and $\delta_0>0$ small enough as above. Let $z\in M_\cen$, and suppose $\| z \|_{H^{s_0}} = \varepsilon \leq \delta_0$. Then the stable and unstable components verify $\| \proj_\st z\|_{L^2} + \| \proj_\unst z\|_{L^2} \lesssim_{s_0} \varepsilon^2$. 
\end{corollary}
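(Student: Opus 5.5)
The plan is to read off the hyperbolic components of $z$ from the integral equation and bound them by the quadratic remainder, using the lifespan estimate to keep the solution of size $\varepsilon$ over a long time window.

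Let $u(t)=\xS(t;z)$ be the solution of \eqref{EQ_Red_Ext_New} with $u(0)=z$. Since $z\in M_\cen$, Proposition~\ref{Brouwer_Set} and Lemma~\ref{Lem_Center} show that $u$ solves \eqref{Int_Eq_Center}. By Lemma~\ref{Lem_Center}(3), the hyperbolic part of $z$ is therefore given exactly by \eqref{Eq_Initial_Center}:
$$
(\proj_\st+\proj_\unst)z=\int_{-\infty}^0 e^{-\tau L(D_x)}\proj_\st\R_\Ext(u(\tau))\dtau-\int_0^{+\infty}e^{-\tau L(D_x)}\proj_\unst\R_\Ext(u(\tau))\dtau .
$$
By Lemma~\ref{exp(tL)} (periodic case, with $\mu$ fixed as in this section), the kernels satisfy $\|e^{-\tau L(D_x)}\proj_\st\|\lesssim e^{-\mu|\tau|}$ for $\tau\le 0$, and the analogous bound holds for $\proj_\unst$ when $\tau\ge0$. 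Hence
$$
\|(\proj_\st+\proj_\unst)z\|_{L^2}\lesssim\int_{\xR}e^{-\mu|\tau|}\|\R_\Ext(u(\tau))\|_{H^{s_0}}\dtau .
$$

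Split this integral at $|\tau|=T:=c_1'\varepsilon^{-1}$. On $|\tau|\le T$, Proposition~\ref{prop-center:TrivLSp} gives $\|u(\tau)\|_{H^{s_0}}\le A\varepsilon$. The quadratic tame bound \eqref{Size_RExt} (with $s=s_0$) then yields $\|\R_\Ext(u(\tau))\|_{H^{s_0}}\lesssim A^2\varepsilon^2$, so this piece contributes $\lesssim\varepsilon^2/\mu$. On $|\tau|>T$, we use the uniform bound $\|\R_\Ext(u)\|_{H^{s_0}}\lesssim\delta_0^2$, valid because of the cutoff $\kappa$ (as in \eqref{Hyp_Quad}). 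This tail contributes $\lesssim\delta_0^2e^{-\mu c_1'/\varepsilon}$.

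The main point is that this tail is also $O(\varepsilon^2)$. This holds because $e^{-\mu c_1'/\varepsilon}\le C_k\varepsilon^k$ for every $k$; taking $k=2$ and $\delta_0\le1$ gives $\lesssim\varepsilon^2$ with constants depending only on $s_0$ and $\mu$, which in turn depends only on $\rho$. Adding the two pieces gives $\|\proj_\st z\|_{L^2}+\|\proj_\unst z\|_{L^2}\lesssim_{s_0}\varepsilon^2$.

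One check remains: Proposition~\ref{prop-center:TrivLSp} applies to solutions of \eqref{Int_Eq_Center} in $C^0_{t;\loc}H^{s_1}_x$, and $\xS(t;z)$ has this regularity by Theorem~\ref{GWP_EQ}, since $s_1\ge s_0+3/2$. This yields the cone condition \eqref{Conical} in Theorem~\ref{Main2}. The surjectivity statement there follows from Proposition~\ref{Brouwer_Set}, since $\mathfrak F(f)\neq\emptyset$ for every $f$.
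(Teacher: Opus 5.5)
Your proposal is correct and follows the paper's proof essentially verbatim. It represents $(\proj_\st+\proj_\unst)z$ through \eqref{Eq_Initial_Center}, splits the integrals at $|\tau|=c_1'\varepsilon^{-1}$, bounds the inner piece by $\varepsilon^2$ using Proposition~\ref{prop-center:TrivLSp} and \eqref{Size_RExt}, and bounds the tail by $e^{-c_1'\mu\varepsilon^{-1}}\delta_0^2\lesssim(c_1'\mu)^{-2}\delta_0^2\varepsilon^2$. Your closing remarks about \eqref{Conical} and surjectivity go beyond the corollary and need a little more care, since \eqref{Conical} is stated in terms of $\proj_\disp$ in the $(\eta,\psi)$ variables. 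They do not affect the proof of the stated estimate.
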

\begin{proof}
We denote by $u(t)$ the unique solution to~\eqref{EQ_Red_Ext} with initial data $u(0)=z$. From Proposition~\ref{prop-center:TrivLSp} above, the $H^{s_0}_x$ norm of $u(t)$ is smaller than $A\varepsilon$ within time $|t| \leq c_1'\varepsilon^{-1}$. Recall that its stable and unstable components at $t=0$ are given by the formula~\eqref{Eq_Initial_Center}. We split the integral as
\begin{equation*}
\begin{aligned}
(\proj_{\st}+\proj_{\unst})u(0) 
&= \int_{-c_1'\varepsilon^{-1}}^0 e^{-\tau L(D_x)}\proj_\st\R_\Ext(u(\tau))\dtau -\int_0^{c_1'\varepsilon^{-1}} e^{-\tau L(D_x)} \proj_\unst\R_\Ext(u(\tau))\dtau \\
&\quad+ \int_{-\infty}^{-c_1'\varepsilon^{-1}} e^{-\tau L(D_x)}\proj_\st\R_\Ext(u(\tau))\dtau -\int_{c_1'\varepsilon^{-1}}^{+\infty} e^{-\tau L(D_x)} \proj_\unst\R_\Ext(u(\tau))\dtau
\end{aligned}
\end{equation*}
From estimates~\eqref{Size_RExt} of $\R_\Ext$, the first two integrals are bounded by $\varepsilon^2$, while the remaining two integrals are bounded by
$$
\exp(-c_1'\varepsilon^{-1}\mu) C_{s_0}\delta_0^2 \lesssim_{s_0} (c_1'\mu)^{-2} \delta_0^2 \varepsilon^2,
$$
which completes the proof.
\end{proof}

\appendix

\section{Paralinearization of Dirichlet-Neumann operator}\label{App:PLDtN}

The aim of this appendix is to prove Theorem~\ref{thm-PLDtN:Main}. As mentioned in Section~\ref{subsect:PLDtN}, this is actually a refined version of the known paralinearization formula \eqref{eq-PLDtN:Basic}, while we will check in each step that the remainder is regular in $(\eta,\psi)$ (recall the definition~\ref{def:Reg} of regularity) with quadratic decay as $(\eta,\psi)\to0$. Before starting the proof, we recall the strategy to show \eqref{eq-PLDtN:Basic}, which serves as the guideline for the proof of Theorem~\ref{thm-PLDtN:Main}.
\begin{enumerate}[label=\textbf{Step \arabic* }]
\item\label{step:Paralin1} \textbf{(Regularity of scalar potential)} From the definition of Dirichlet-Neumann operator, we first need to investigate the regularity of the scalar potential $\Psi$, which is a harmonic function in the fluid region $\Omega_t$ with $\Psi(x,\rho+\eta(x))=\psi(x)$. Based on cylindrical coordinate $(r,\theta,x)$ (where $r,\theta,x$ refer to radial, angular, and axial variable, respectively), we apply the change of variable $(r,x)=(y\eta(x),x)$ and the axisymmetric scalar potential can be regarded as a function on $\{(x,y): x\in\xT, 0<y<1\}$:
$$
\Phi(x,y) = \Psi(y\eta(x),x).
$$
Furthermore, in this coordinate, the scalar potential $\Phi$ is the unique solution to the following boundary problem:
\begin{equation}\label{eq-PLDtN:Ellip}
    \left\{\begin{aligned}
        & \mathscr{L}[\eta]\Phi := \big( \alpha\partial_y^2 + \beta\partial_x\partial_y + \partial_x^2 - \gamma \partial_y \big) \Phi = 0, &&0<y<1 \\
        & \Phi|_{y=1}=\psi, && \\
        & \partial_y\Phi|_{y=0} = 0, &&
    \end{aligned}\right.
\end{equation}
where the coefficients are smooth functions of $\eta$ and its derivatives $\partial_x\eta, \partial_x^2\eta$,
\begin{equation}\label{eq-PLDtN:EllipCoeff}
    \alpha = \frac{1+y^2|\partial_x\eta|^2}{(\rho+\eta)^2}, \quad \beta = -\frac{2y\partial_x\eta}{\rho+\eta}, \quad \gamma = \frac{1}{y(\rho+\eta)^2} - y(\rho+\eta)\partial_x\left(\frac{\partial_x\eta}{(\rho+\eta)^2}\right).
\end{equation}
This elliptic problem is solved in Section 2.2.3 of \cite{HK2023} through a similar argument to that in \cite{alvarez2008water}. The authors managed to show that, given $(\eta,\psi)\in H^{s+1}\times H^{s+1/2}$, the scalar potential $\Phi$ lies in $L^2_y(0.5,1;H^{s+1}) \cap C^0_y(0.5,1;H^{s+1/2})$ and its radial derivatives $\partial_y^k\Phi$ belongs to $L^2_y(0.5,1;H^{s-k+1})$ with $k\in\{0,1,2\}$. The differentiability in $(\eta,\psi)$ and tame estimates can be seen in the in Section 2.2.3 of \cite{HK2023}, or from Lemma~\ref{lem:SolReg} below.

\item\label{step:Paralin2} \textbf{(Paralinearization)} With the regularity of $\Phi$, we are able to justify the paralinearization of \eqref{eq-PLDtN:Ellip}. More precisely, we will introduce \textit{Alinhac's good unknown}
$$
W := \Phi - T_{(\rho+\eta)^{-1}\partial_y\Phi}(y\eta),
$$
and check that the action of $\mathscr{P}[\eta]$ ( the paralinear version of $\mathscr{L}[\eta]$ ) on $W$,
$$
\mathscr{P}[\eta]W := \big( T_\alpha\partial_y^2 + T_{\beta i\xi}\partial_y - T_{\xi^2} - T_\gamma \partial_y \big) W
$$
has regularity $L^2_y(0.5,1;H^{2s-N})$ for some real number $N$ independent of $s$. Note that since we only apply paradifferential calculus in this step, all the estimates can be shown to be tame. Moreover, the coefficients $\alpha,\beta,\gamma$ and the scalar potential $\Phi$ are all smooth in $\eta$, ensuring that the remainders are also $C^\infty$ in $\eta$.

\item\label{step:Paralin3} \textbf{(Factorization)} Thanks to the elliptic nature of $\mathscr{P}[\eta]$, we can expect the following factorization 
$$
\mathscr{P}[\eta] = T_{\alpha}(\partial_y + T_a)(\partial_y - T_A) + \text{smooth remainders},
$$
where the symbols $a,A$ are elliptic and of order $1$. In fact, we will assume that $a,A$ are composed by homogeneous symbols of order $0,-1,-2$, etc, based on which the symbolic calculus allows us to uniquely determine the desired symbols $a,A$. Note that, the symbolic calculus does not change the smooth dependence on $\eta$, making expect that the remainders obtained in this step is still $C^\infty$ in $\eta$.

\item\label{step:Paralin4} \textbf{(Boundary regularity)} From the previous step, the difference between radial and axial derivatives of $W$, namely
$$
(\partial_y - T_A) W
$$
solves a parabolic equation. Thus the parabolic regularity yields that the trace of this difference at the free boundary has regularity $H^{2s-N}$ for some fixed real number $N$. Note that during the resolution of this parabolic equation, the smooth dependence in $\eta$ can be maintained (see Lemma~\ref{lem:SolReg} below).

\item\label{step:Paralin5} \textbf{(Conclusion)} In terms of $\Phi$ (the scalar potential $\Psi$ in cylindrical coordinate), the Dirichlet-Neumann operator can be written as
\begin{equation}\label{eq:Formu-DtN}
	G[\eta]\psi = \left.\left( \frac{1+|\partial_x\eta|^2}{\rho+\eta} \partial_y\Phi - \partial_x\eta\partial_x\Phi \right)\right|_{y=1}.
\end{equation}
According to the previous step, we can replace the radial derivative $\partial_y\Phi$ as axial ones via good unknown $W$ and the desired formula \eqref{eq-PLDtN:Basic} follows with extra information - the remainder is linear in $\psi$ (thus in $w:=W|_{y=1}$), smooth in $\eta$, and satisfies some tame estimates.
\end{enumerate}

\begin{notation}
    Let $p\in[1,\infty]$ and $s\in\xR$. For functions defined on $\{(x,y):x\in\xT,0<y<1\}$, we use the following simplified notation of mixed-type spaces:
    $$
    L^p_yH^s_x = L^p_y(0.5,1;H^s).
    $$
\end{notation}


\subsection{Preliminaries}

In this part, we collect some technical lemmas as well as some known results about the regularity of the scalar potential.

\begin{lemma}\label{lem:SolReg}
    Consider real numbers $s_0, s\in\xR$, and decreasing spaces $\{X^s\}_{s\in\xR}$, $\{Y^s\}_{s\in\xR}$, $\{Z^s\}_{s\in\xR}$. We assume that:
    \begin{enumerate}
        \item $f=f(\eta,\psi)$ belongs to $\mathcal{T}_r^\infty(s_0;X^\bullet)$ and is linear in $\psi$; $g=g(\eta)$ belongs to $\mathcal{T}^\infty(s_0;H^{\bullet+1};Y^\bullet)$.

        \item $b=b(g,w)$ is a bounded bilinear form from $Y^{s_0} \times Z^{s_0}$ to $X^{s_0}$, satisfying the regularity condition
        $$
            \Ta^\infty(s_0;Y^\bullet \times Z^\bullet, X^\bullet).
        $$

        \item For all $s \ge s_0$, $\mathscr{R}[\eta]$ is a linear form in the class $\mathcal{L}(X^s;Z^s)$ depending implicitly on $\eta\in H^{s+1}$. For all $h\in X^{s}$, there holds
        $$
        \| \mathscr{R}[\eta]h \|_{Z^s} \leq K\big(\|\eta\|_{H^{s_0+1}}\big) \Big( \|h\|_{X^s} + \|\eta\|_{H^{s+1}} \|h\|_{X^{s_0}} \Big).
        $$
    \end{enumerate}
    Under these assumptions, if the following identity holds for all $(\eta,\bm{\delta}\eta,\psi)\in H^{s_0+1}\times H^{s_0+1} \times H^{s_0+1/2}$,
    \begin{equation}\label{eq:SolReg-Cond}
        \begin{aligned}
            \mathscr{R}[\eta+\bm{\delta}\eta]&f(\eta+\bm{\delta}\eta,\psi) - \mathscr{R}[\eta]f(\eta,\psi)  \\
            &= \mathscr{R}[\eta]\Big( f(\eta+\bm{\delta}\eta,\psi) - f(\eta,\psi) - b\big( g(\eta+\bm{\delta}\eta)-g(\eta), \mathscr{R}[\eta]f(\eta+\bm{\delta}\eta,\psi) \big) \Big),
        \end{aligned}
    \end{equation}
    then the mapping $v = v(\eta,\psi)$ of $(\eta,\psi)$ defined by
    $$
        (\eta,\psi) := \mathscr{R}[\eta]f(\eta,\psi)
    $$
    belongs to the class $\mathcal{T}_r^\infty(s_0;Z^\bullet)$. The derivatives of $v$ in $\eta$ can be calculated via the following iteration formula,
    \begin{equation}\label{eq:SolReg-DerFormu}
    	\der^n_\eta v(\eta,\psi) = \mathscr{R}[\eta]\left( \der^n_\eta f(\eta,\psi) - \sum_{k=1}^{n} \binom{n}{k} b( \der^k_\eta g(\eta), \der^{n-k}_\eta v(\eta,\psi) ) \right).
    \end{equation}
    Recall that the class of regular mappings are defined in Definition~\ref{def:Reg} and the class $\Ta_r^\infty$ is defined in Notation~\ref{note:Reg}.
\end{lemma}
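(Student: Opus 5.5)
We regard $v=\mathscr{R}[\eta]f(\eta,\psi)$ as the solution of a linear problem with $\eta$-dependent solution operator. The identity \eqref{eq:SolReg-Cond} is a resolvent-type identity encoding how that operator depends on $\eta$. Since $v$ is linear in $\psi$, only the $\eta$-derivatives need work. The plan has three steps.

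\textbf{Step 1: tame estimate at order zero.} First we establish the tame bound for $v$ itself. Assumption (3), applied with $h=f(\eta,\psi)$, gives
$$
\|v\|_{Z^s}\leq K\big(\|\eta\|_{H^{s_0+1}}\big)\big(\|f\|_{X^s}+\|\eta\|_{H^{s+1}}\|f\|_{X^{s_0}}\big).
$$
We then insert the tame estimate for $f\in\Ta^\infty_r(s_0;X^\bullet)$. Because $f$ is linear in $\psi$, the product structure of \eqref{eq:Reg-Tame} is preserved.

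\textbf{Step 2: first derivative and continuity.} Replace $\bm{\delta}\eta$ by $\epsilon\bm{\delta}\eta$ in \eqref{eq:SolReg-Cond}, divide by $\epsilon$, and let $\epsilon\to0$. The right-hand side involves $\mathscr{R}[\eta]$ applied to difference quotients of $f$ and of $g$. These converge in $X^{s}$ and $Y^s$ respectively, by the $C^\infty$ regularity of $f$ and $g$.

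There is one subtlety: the term $\mathscr{R}[\eta]f(\eta+\epsilon\bm{\delta}\eta,\psi)$ inside $b$ must converge to $v(\eta,\psi)$. We plan to get this by first taking $\bm{\delta}\eta$ small in \eqref{eq:SolReg-Cond} itself, which yields local Lipschitz continuity of $v$ in $\eta$ in the $Z^s$ norm. That Lipschitz bound uses assumption (3), bilinear boundedness of $b$, and the Step 1 bound for $v(\eta+\bm{\delta}\eta,\psi)$.

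The limit then gives
$$
\der_\eta v\cdot\bm{\delta}\eta=\mathscr{R}[\eta]\big(\der_\eta f\cdot\bm{\delta}\eta-b(\der_\eta g\cdot\bm{\delta}\eta,v)\big),
$$
which is \eqref{eq:SolReg-DerFormu} with $n=1$. Continuity of this expression in $\eta$ follows from the same identity, so $v\in C^1$.

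\textbf{Step 3: induction on $n$.} Assume \eqref{eq:SolReg-DerFormu} and the tame bounds hold up to order $n-1$. Differentiate the order-$(n-1)$ formula. The differentiation hitting $\mathscr{R}[\eta]$ is handled again by \eqref{eq:SolReg-Cond}, applied to the argument $h_{n-1}:=\der^{n-1}_\eta f-\sum_k\binom{n-1}{k}b(\der^k_\eta g,\der^{n-1-k}_\eta v)$. Strictly, \eqref{eq:SolReg-Cond} is stated only for arguments of the form $f(\eta,\psi)$. It should nevertheless extend to $\eta$-dependent arguments $h(\eta)$, since the identity expresses $\mathscr{R}[\eta']^{-1}-\mathscr{R}[\eta]^{-1}$ being given by $b(g(\eta')-g(\eta),\cdot)$; we would check this by polarization/linearity, or else prove the derivative formula directly by differentiating the resolvent-type relation.

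Collecting terms with Leibniz's rule and Pascal's identity produces \eqref{eq:SolReg-DerFormu} at order $n$.

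The tame estimate at order $n$ then follows by plugging the regularity of $f$, $g$ and $b$, together with the inductive tame bounds for $\der^{j}_\eta v$ with $j<n$, into assumption (3). In each product exactly one factor carries the high norm $s$, matching the summation over $(\sigma_l)$ in \eqref{eq:Reg-Tame}. The term $\|\eta\|_{H^{s+1}}\|h\|_{X^{s_0}}$ in (3) produces precisely the $\|u\|_{X^s}\prod\|\bm{\delta}u_l\|_{X^{s_0}}$ contribution.

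\textbf{Main obstacle.} The delicate step is justifying differentiability rather than merely deriving the formula formally. This requires the Lipschitz/continuity step for $v$ and the extension of \eqref{eq:SolReg-Cond} to general $\eta$-dependent arguments, all with constants depending only on $\|\eta\|_{H^{s_0+1}}$.
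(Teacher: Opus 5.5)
Your Steps 1 and 2, and your tame-estimate bookkeeping, agree with the paper. The induction in Step 3, however, relies on an identity the hypotheses do not supply.

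When you differentiate the order-$(n-1)$ formula $\der^{n-1}_\eta v=\mathscr{R}[\eta]h_{n-1}(\eta)$, both the operator and its argument move with $\eta$. You therefore need a variation formula for $\eta'\mapsto\mathscr{R}[\eta']h$ with $h$ essentially arbitrary. After cancelling $\mathscr{R}[\eta]f(\eta,\psi)$ from both sides, \eqref{eq:SolReg-Cond} only asserts $\mathscr{R}[\eta']h-\mathscr{R}[\eta]h=-\mathscr{R}[\eta]\,b\big(g(\eta')-g(\eta),\mathscr{R}[\eta']h\big)$ for $h$ in the linear subspace $\{f(\eta',\psi)\}_\psi$. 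Linearity or polarization in $\psi$ cannot take you outside that subspace. Your $h_{n-1}$, built from $\der^{n-1}_\eta f$ and the terms $b(\der^k_\eta g,\der^{n-1-k}_\eta v)$, is in general not in it. Your justification, that the identity encodes $\mathscr{R}[\eta']^{-1}-\mathscr{R}[\eta]^{-1}$, presupposes that $\mathscr{R}[\eta]$ inverts an operator depending on $\eta$ through $b(g(\eta),\cdot)$. That holds for the parabolic solution operator in the application, but it is not an assumption of the lemma. As written, your route proves the lemma only under a stronger hypothesis.

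The fix, which is what the paper does, is never to differentiate $\mathscr{R}[\cdot]$. Freeze the base point $\eta$, so that $\mathscr{R}[\eta]$ is a single bounded operator, and differentiate \eqref{eq:SolReg-Cond} $m$ times in the shift $\bm{\delta}\eta$. Write the identity as $v(\eta+\bm{\delta}\eta)=\mathscr{R}[\eta]\big(f(\eta+\bm{\delta}\eta)-b(g(\eta+\bm{\delta}\eta)-g(\eta),v(\eta+\bm{\delta}\eta))\big)$. The inductive hypothesis ($v$ is $m$ times differentiable) then expresses $\der^m_\eta v(\eta+\bm{\delta}\eta)$ through the frozen $\mathscr{R}[\eta]$. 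Subtracting the value at $\bm{\delta}\eta=0$ gives continuity of $\der^m_\eta v$. Replacing $\bm{\delta}\eta$ by $\epsilon\bm{\delta}\eta$ and letting $\epsilon\to0$ gives the next derivative. Leibniz's rule at $\bm{\delta}\eta=0$ yields \eqref{eq:SolReg-DerFormu} directly; the $k=0$ term drops because $g(\eta+\bm{\delta}\eta)-g(\eta)$ vanishes there. A quicker alternative: for small $\bm{\delta}\eta$ the map $z\mapsto\mathscr{R}[\eta]b(g(\eta+\bm{\delta}\eta)-g(\eta),z)$ has norm $<1$ on $Z^s$, so a Neumann series shows at once that $\bm{\delta}\eta\mapsto v(\eta+\bm{\delta}\eta)$ is $C^\infty$.

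A side remark on Step 2: the inner argument of $b$ in \eqref{eq:SolReg-Cond} should be read as $v(\eta+\bm{\delta}\eta,\psi)=\mathscr{R}[\eta+\bm{\delta}\eta]f(\eta+\bm{\delta}\eta,\psi)$, which is what the parabolic application produces. Under that reading your Lipschitz step is genuinely needed. With the literal $\mathscr{R}[\eta]f(\eta+\bm{\delta}\eta,\psi)$, the convergence you worry about is immediate.
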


This lemma aims to explain that the solution to some evolution equation (resp. elliptic equations) depends smoothly on initial data (resp. boundary value) and the source term. Meanwhile, the regularity (of data and source term) can be preserved by the solution. In application, $\mathscr{R}$ will be chosen as the resolution operator, while $g$ and $f$ serve as the coefficients and source term, respectively. The bilinear form $b$ is usually the action of (para)differential operators on the solution. This lemma will be used to prove the elliptic regularity in~\ref{step:Paralin1} and the paradifferential parabolic equation in~\ref{step:Paralin4}.

\begin{proof}
    To begin with, we notice that $v$ is linear (thus smooth) in $\psi$. The tame estimates can be deduced directly from the formula \eqref{eq:SolReg-DerFormu} as well as the tames estimates for $f,g,b$, and $\mathscr{R}[\eta]$. Therefore, we only focus on the proof of smoothness in $\eta$, which can be deduced from the difference formula \eqref{eq:SolReg-DerFormu} through an iteration argument. More precisely, we will prove that, for any $m\in\xN$, if $v$ is $m$ times differentiable in any directions with \eqref{eq:SolReg-DerFormu} holding for $n=0,1,...m$, then $v$ is $(m+1)$ times differentiable in any directions with \eqref{eq:SolReg-DerFormu} for $n=m+1$. 
	
	If $v$ is $m$-times differentiable (in all directions) with \eqref{eq:SolReg-DerFormu}, we can apply $m$ times directional derivative in $\bm{\delta}\eta$ to the difference formula \eqref{eq:SolReg-Cond} and obtain that
	\begin{align*}
		\der^m_\eta v(\eta+\bm{\delta}\eta,\psi) =& \mathscr{R}[\eta]\left( \der_\eta^mf(\eta+\bm{\delta}\eta,\psi) - \sum_{k=1}^m b\big( \der^k_\eta g(\eta+\bm{\delta}\eta), \mathscr{R}[\eta]\der^{m-k}_{\eta}f(\eta+\bm{\delta}\eta,\psi) \big) \right) \\
		&- \mathscr{R}[\eta] b\big( g(\eta+\bm{\delta}\eta)-g(\eta), \mathscr{R}[\eta]\der^{m}_{\eta}f(\eta+\bm{\delta}\eta,\psi) \big),
	\end{align*}
	whose difference with $\der^m_\eta v(\eta,\psi)$ thus reads
	\begin{align*}
		&\der^m_\eta v(\eta+\bm{\delta}\eta,\psi) - \der^m_\eta v(\eta,\psi) \\
		=& \mathscr{R}[\eta]\left( \der_\eta^mf(\eta+\bm{\delta}\eta,\psi) - \der_\eta^mf(\eta,\psi) - \sum_{k=0}^m b\big( \der^k_\eta g(\eta+\bm{\delta}\eta) - g(\eta), \mathscr{R}[\eta]\der^{m-k}_{\eta}f(\eta+\bm{\delta}\eta,\psi) \big) \right) \\
		& - \mathscr{R}[\eta]\left( \sum_{k=1}^m b\Big( \der^k_\eta g(\eta+\bm{\delta}\eta), \mathscr{R}[\eta]\big(\der^{m-k}_{\eta}f(\eta+\bm{\delta}\eta,\psi) - \der^{m-k}_{\eta}f(\eta,\psi)\big) \Big) \right).
	\end{align*}
	For one thing this formula implies that the $m$-order directional derivatives of $v$ is continuous in $\eta$, thanks to the regularity of $f,g$ and the boundedness of $b,\mathscr{R}[\eta]$. For another thing, we can replace $\bm{\delta}\eta$ by $\epsilon\bm{\delta}\eta$ and take the limit $\epsilon\to 0$. Again through the regularity in $(\eta,\psi)$ of $f,g$ with the boundedness of $b,\mathscr{R}[\eta]$, we can conclude that, for all $s \ge s_0$, $\der_\eta^m v$ admits directional derivative in $\eta$ along all directions $\bm{\delta}\eta\in H^{s+1}$ with 
	\begin{align*}
		\der^{m+1}_\eta v =& \mathscr{R}[\eta]\left( \der_\eta^{m+1}f(\eta,\psi) - \sum_{k=0}^m b\big( \der^{k+1}_\eta g(\eta), \mathscr{R}[\eta]\der^{m-k}_{\eta}f(\eta,\psi) \big) \right) \\
		&\hspace{8em} - \mathscr{R}[\eta]\left( \sum_{k=1}^m b\big( \der^k_\eta g(\eta), \mathscr{R}[\eta]\der^{m+1-k}_{\eta}f(\eta+\bm{\delta}\eta,\psi) \big) \right).
	\end{align*}
	By applying the formula \eqref{eq:SolReg-DerFormu} for $n=0,1,...m$, we can conclude \eqref{eq:SolReg-DerFormu} for $n=m+1$.
\end{proof}

Now, let us recall the elliptic regularity proved in Section 2.2 of \cite{HK2023}.

\begin{proposition}[Corollary 2.12 and Lemma 2.15 of \cite{HK2023}]\label{prop-PLDtN:EllipRegOri}
    Consider a real number $s>2$. Let $(\eta,\psi)\in H^{s+1} \times H^{s+1/2}$ satisfy $\eta+\rho\ge c$ for some constant $c>0$. Then the elliptic problem \eqref{eq-PLDtN:Ellip} admits a unique solution $\Phi$. Moreover, there exists a positive increasing function $K\colon\xR_+\to\xR_+$ depending on $s,c$ such that, for all $0\leq \sigma \leq s$, there holds
    \begin{equation*}
        \int_0^1 \left( y\|\Phi\|_{H^{\sigma+1}_x}^2 + y\|\partial_y\Phi\|_{H^{\sigma}_x}^2 + y^3\|\partial_{y}^2\Phi\|_{H^{\sigma-1}_x}^2 \right) dy \leq K\big( \|\eta\|_{H^{s+1}} \big) \|\psi\|_{H^{\sigma+1/2}}^2.
    \end{equation*}
\end{proposition}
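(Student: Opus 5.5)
The plan is the classical weighted variational argument for the mixed boundary problem \eqref{eq-PLDtN:Ellip}, followed by tangential (in $x$) regularity and then recovery of $\partial_y^2\Phi$ from the equation. The weights $y$ and $y^3$ in the estimate reflect the degeneracy at the axis $y=0$, where the operator $\mathscr{L}[\eta]$ carries the singular term $\frac{1}{y(\rho+\eta)^2}\partial_y$. It is therefore natural to undo the reduction to $y$ and work on the solid cylinder, where the problem is uniformly elliptic.

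\textbf{Step 1: existence, uniqueness and the case $\sigma=0$.} Lift the boundary datum. Set $\underline{\psi}(x,y)=\chi(y)\,e^{-(1-y)\langle D_x\rangle}\psi$ with $\chi$ a cutoff equal to $1$ near $y=1$ and vanishing near $y=0$. Then $\underline\psi$ obeys the weighted bounds with $\|\psi\|_{H^{\sigma+1/2}}$ for all $\sigma$. Next write $\Phi=\underline\psi+\phi$ and note that $y\,\mathscr{L}[\eta]$ is in divergence form with respect to the measure $y\,dy\,dx$. This is simply the axisymmetric Laplacian $\bar\nabla\cdot(\cdots\bar\nabla)$ pulled back by $(r,x)=((\rho+\eta)y,x)$. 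The bilinear form
$$\int_0^1\!\!\int (\alpha\partial_y\phi\,\partial_y\varphi+\beta\,\partial_y\phi\,\partial_x\varphi+\partial_x\phi\,\partial_x\varphi)\,y\,dx\,dy$$
(with coefficients adjusted by the Jacobian) is coercive on $\{\phi: \sqrt y\nabla\phi\in L^2,\ \phi|_{y=1}=0\}$ because $\alpha\ge c$ and $|\beta|$ is controlled by $|\partial_x\eta|$. A weighted Poincaré inequality from $y=1$ then gives Lax--Milgram existence and uniqueness. The Neumann condition at $y=0$ is natural, since the weight $y$ kills the boundary term there. This yields the $\sigma=0$ bound on $\int y(\|\Phi\|_{H^1_x}^2+\|\partial_y\Phi\|_{L^2}^2)$ with constant depending on $|\eta|_{W^{1,\infty}}$ and hence on $\|\eta\|_{H^{s+1}}$.

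\textbf{Step 2: tangential regularity.} Apply $\langle D_x\rangle^{\sigma}$, or more precisely difference quotients or Littlewood--Paley pieces $\Delta_j$, to the equation and commute. The commutators $[\langle D_x\rangle^\sigma,\alpha]\partial_y^2$, $[\cdot,\beta]\partial_x\partial_y$ and $[\cdot,\gamma]\partial_y$ are estimated by Kato--Ponce type commutator or paraproduct bounds (Proposition~\ref{prop:PMReg}), using $\eta\in H^{s+1}$ with $s>2$ so that $\alpha,\beta\in W^{1,\infty}$ and $\gamma$'s regular part lies in $L^\infty$. Induct on $\sigma$ in steps of $1/2$ up to $\sigma=s$: the energy identity from Step 1 applied to $\langle D_x\rangle^\sigma\phi$ controls $\int y(\|\Phi\|^2_{H^{\sigma+1}}+\|\partial_y\Phi\|^2_{H^\sigma})$ by the previous level plus the commutator terms. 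The commutator terms are tame and use only $\|\eta\|_{H^{s+1}}$ because $\sigma\le s$.

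\textbf{Step 3: normal derivatives.} Solve the equation for $\alpha\partial_y^2\Phi=-\beta\partial_x\partial_y\Phi-\partial_x^2\Phi+\gamma\partial_y\Phi$ and multiply by $y^{3/2}$. The only dangerous term is $\frac{1}{y}\partial_y\Phi$. I would handle it with a Hardy-type inequality: since $\partial_y\Phi|_{y=0}=0$, we have $\int y\,|y^{-1}\partial_y\Phi|^2\lesssim$ terms already controlled. An alternative is to accept the weight $y^3$, which turns the singular term into $\int y\|\partial_y\Phi\|^2_{H^{\sigma-1}}$, and this is exactly why the $y^3$ weight appears. This gives the $\partial_y^2\Phi$ bound in $H^{\sigma-1}$.

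\textbf{Main obstacle.} I expect the difficulty to be the degeneracy at $y=0$: making the coercivity and the commutator estimates uniform near the axis, where the Neumann condition is an artifact of polar coordinates. I would handle it first by observing that the bound away from $y\le 1/2$ is interior uniform ellipticity, while near the axis the map $\eta$ only rescales $r$, so everything reduces to the flat Laplacian on a cylinder.
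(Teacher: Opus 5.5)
Your plan is correct and is essentially the argument the paper relies on. The paper gives no proof beyond citing \cite{HK2023}, which adapts \cite{alvarez2008water}: a weighted variational solution for the divergence form $y(\rho+\eta)^2\mathscr{L}[\eta]\Phi=\nabla_{x,y}\cdot(yQ\nabla_{x,y}\Phi)$, where $Q$ is positive definite because $\det Q=(\rho+\eta)^2$; then tangential regularity by commutator estimates up to $\sigma=s$; then recovery of $\partial_y^2\Phi$ from the equation, with the weight $y^3$ absorbing $y^{-1}\partial_y\Phi$.

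Two cautions. First, in Step~2 commute $\langle D_x\rangle^\sigma$ with $Q$ inside that divergence form, not with $\alpha,\beta,\gamma$ separately. The commutator with the singular part of $\gamma$, namely $y^{-1}[\langle D_x\rangle^\sigma,(\rho+\eta)^{-2}]\partial_y\Phi$, is not bounded by the $y$-weighted norms on its own; it cancels against the $\alpha\partial_y^2$ commutator only after an integration by parts in $y$. Second, the Hardy shortcut in Step~3 fails as stated, because $\int_0^1 y^{-1}|f|^2\,dy\lesssim\int_0^1 y|f'|^2\,dy$ is false for $f(0)=0$ at this borderline weight. It is therefore your $y^3$ alternative that actually proves the statement.
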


This result is no more than the resolution of the elliptic problem~\eqref{eq-PLDtN:Ellip}. According to Lemma~\ref{lem:SolReg}, this procedure preserves the regularity in the sense of Definition~\ref{def:Reg}. Alternatively, such result can also be obtained through iteration and the paralinearization in this appendix. We refer to Section 3.3.1 of~\cite{ABZ2014} for this type of argument. To sum up, we have

\begin{proposition}\label{prop-PLDtN:EllipReg}
    Consider real numbers $s_0>2$ and assume that $(\eta,\psi)\in H^{s_0+1} \times H^{s_0+1/2}$ satisfies $\eta+\rho\ge c$ for some constant $c>0$. Then the unique solution $\Phi$ to the elliptic problem \eqref{eq-PLDtN:Ellip}, together with its derivatives $\nabla_{x,y}\Phi, \nabla_{x,y}^2\Phi$, is regular in $(\eta,\psi)$. More precisely,
    $$
        \nabla_{x,y}^k\Phi \in \Ta^\infty_r\big(s_0;L^2_yH^{\bullet-k+1}_x\big), \quad k=0,1,2.
    $$
    In particular, we also have
    $$
        \nabla_{x,y}^k\Phi \in \mathcal{T}^\infty_r\big(s_0;L^\infty_yH^{\bullet-k+1/2}_x\big), \quad k=0,1.
    $$
    Recall that the class $\mathcal{T}^\infty_r$ is defined in Definition~\ref{def:Reg} (see also Notation~\ref{note:Reg}).
\end{proposition}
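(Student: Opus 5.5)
The plan is to run the abstract regularity principle of Lemma~\ref{lem:SolReg} with $\mathscr{R}[\eta]$ the resolution operator of the elliptic problem \eqref{eq-PLDtN:Ellip}, and to take the $H^{s+1}$-dependent bounds of Proposition~\ref{prop-PLDtN:EllipRegOri} as the input tame estimate. Since the boundary datum is inhomogeneous, I first lift it: pick an explicit extension $\underline{\psi}(x,y)=\chi_1(y)\,e^{-(1-y)\size[D_x]}\psi$ with $\chi_1$ a cutoff equal to $1$ near $y=1$ and vanishing near $y=0$. This is linear in $\psi$, independent of $\eta$, and satisfies $\nabla^k_{x,y}\underline{\psi}\in L^2_yH^{s-k+1}_x$ with norm $\lesssim\|\psi\|_{H^{s+1/2}}$. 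Then $\Phi=\underline{\psi}+\mathscr{R}[\eta]f(\eta,\psi)$, where $f=-\mathscr{L}[\eta]\underline{\psi}$ and $\mathscr{R}[\eta]$ solves $\mathscr{L}[\eta]\Phi_0=h$ with homogeneous conditions $\Phi_0|_{y=1}=0$, $\partial_y\Phi_0|_{y=0}=0$. The source $f$ is linear in $\psi$. Its coefficients $\alpha,\beta,\gamma$ from \eqref{eq-PLDtN:EllipCoeff} are smooth functions of $(\eta,\partial_x\eta,\partial_x^2\eta)$, so Proposition~\ref{prop:CompReg} and Proposition~\ref{prop:PMReg} put $f$ in $\Ta^\infty_r(s_0;L^2_yH^{\bullet-1}_x)$. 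The singular term $\gamma\sim y^{-1}$ only matters near the axis $y=0$, where $\underline{\psi}$ vanishes, so it causes no problem.

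Next I set up the three hypotheses of Lemma~\ref{lem:SolReg}. I take $X^s=L^2_yH^{s-1}_x$ (with the weights of Proposition~\ref{prop-PLDtN:EllipRegOri} where needed) and $Z^s$ the space normed by $\sum_{k\le2}\|\nabla^k_{x,y}\cdot\|_{L^2_yH^{s-k+1}_x}$. I take $g(\eta)=(\alpha,\beta,\gamma)$, which lies in $\Ta^\infty(s_0;H^{\bullet+1},Y^\bullet)$ with $Y^s$ a product of $L^\infty_yH^{s-1}_x$-type spaces, and the bilinear form $b(g,v)=\alpha\partial_y^2v+\beta\partial_x\partial_yv-\gamma\partial_yv$. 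The difference identity \eqref{eq:SolReg-Cond} is then exactly the resolvent identity $\mathscr{R}[\eta+\del\eta]-\mathscr{R}[\eta]=-\mathscr{R}[\eta](\mathscr{L}[\eta+\del\eta]-\mathscr{L}[\eta])\mathscr{R}[\eta+\del\eta]$. To obtain it, apply $\mathscr{L}[\eta]$ to $\mathscr{R}[\eta+\del\eta]f(\eta+\del\eta,\psi)$ and use uniqueness in Proposition~\ref{prop-PLDtN:EllipRegOri}. The tame bound required of $\mathscr{R}[\eta]$, namely $\|\mathscr{R}[\eta]h\|_{Z^s}\le K(\|\eta\|_{H^{s_0+1}})(\|h\|_{X^s}+\|\eta\|_{H^{s+1}}\|h\|_{X^{s_0}})$, comes from differentiating the equation in $x$ (commuting $\size[D_x]^{s-s_0}$ with $\mathscr{L}[\eta]$). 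The commutator terms are estimated tamely by product estimates, and the lower-order estimate of Proposition~\ref{prop-PLDtN:EllipRegOri} at level $s_0$ closes the induction. The $\partial_y^2$ part is recovered from the equation itself, since $\alpha\ge c>0$.

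With these in place, Lemma~\ref{lem:SolReg} gives $\mathscr{R}[\eta]f\in\Ta^\infty_r(s_0;Z^\bullet)$, hence $\nabla^k\Phi\in\Ta^\infty_r(s_0;L^2_yH^{\bullet-k+1}_x)$ for $k=0,1,2$. The $L^\infty_y$ statement then follows by the standard interpolation $\|v\|_{L^\infty_yH^{\sigma+1/2}}^2\lesssim\|v\|_{L^2_yH^{\sigma+1}}\|\partial_yv\|_{L^2_yH^{\sigma}}$ on $(0.5,1)$, applied to $v=\Phi$ and $v=\nabla\Phi$. This interpolation is bilinear and bounded, so it preserves the $\Ta^\infty$ class (and the derivatives in $(\eta,\psi)$ are treated the same way).

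I expect the main obstacle to be the tame high-norm estimate for $\mathscr{R}[\eta]$ under the degenerate weights near $y=0$. Proposition~\ref{prop-PLDtN:EllipRegOri} only gives weighted bounds with $K(\|\eta\|_{H^{s+1}})$, not the tame form. To handle this, I would first restrict to $y\in(0.5,1)$ using a cutoff. The cutoff commutator is supported away from both the axis and the boundary, so interior elliptic regularity makes it harmless. I would then perform the $x$-commutator argument only in the regular region, which is exactly where the stated spaces $L^2_y=L^2(0.5,1)$ live.
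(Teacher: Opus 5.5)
Your route is the paper's. Its proof consists only of the remark that Proposition~\ref{prop-PLDtN:EllipReg} follows from resolving \eqref{eq-PLDtN:Ellip} and applying Lemma~\ref{lem:SolReg}, with Proposition~\ref{prop-PLDtN:EllipRegOri} as input. Your lift, your use of the resolvent identity as \eqref{eq:SolReg-Cond}, and your observation that Proposition~\ref{prop-PLDtN:EllipRegOri} is not tame and needs an $x$-commutator upgrade are a faithful, more detailed version of that remark.

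One step of your set-up does not work as written: the axis $y=0$.
\begin{itemize}
\item You say the $y^{-1}(\rho+\eta)^{-2}$ part of $\gamma$ is harmless because $\underline{\psi}$ vanishes near $y=0$. That only covers the source $f$.
\item In Lemma~\ref{lem:SolReg} the same coefficient re-enters through the bilinear term. First, $g(\eta)=(\alpha,\beta,\gamma)$ does not lie in any $L^\infty_y$-type space on $(0,1)$.
\item Second, $b\big(g(\eta+\bm{\delta}\eta)-g(\eta),v\big)$ contains $y^{-1}\big((\rho+\eta+\bm{\delta}\eta)^{-2}-(\rho+\eta)^{-2}\big)\partial_yv$. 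Here $v=\mathscr{R}[\eta+\bm{\delta}\eta]f$ is the full solution, which does not vanish near the axis.
\item The weighted norms of Proposition~\ref{prop-PLDtN:EllipRegOri} (weight $y$ on $\partial_yv$, $y^3$ on $\partial_y^2v$) do not control $y^{-1}\partial_yv$ in $L^2(y\,\mathrm{d}y)$, the natural space of sources. The same singular factor also appears in the commutator $[\gamma,\langle D_x\rangle^{s-s_0}]$.
\item Your remedy of cutting off to $y\in(0.5,1)$ cannot be used inside Lemma~\ref{lem:SolReg}. Since $b(\cdot,v)$ is fed back into $\mathscr{R}[\eta]$ as a source on all of $(0,1)$, the spaces $X^s$ and $Z^s$ must be global in $y$.
\end{itemize}

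The repair is easy: multiply the equation by $(\rho+\eta)^2$. Since $(\rho+\eta)^2\alpha=1+y^2|\partial_x\eta|^2$, the operator $(\rho+\eta)^2\mathscr{L}[\eta]$ splits into two parts:
\begin{itemize}
\item the coefficient-free operator $\partial_y^2+y^{-1}\partial_y$, which is the radial part of the three-dimensional Laplacian;
\item the remainder $c_1y^2\partial_y^2+c_2y\partial_x\partial_y+c_3\partial_x^2+c_4y\partial_y$, with each $c_j$ a smooth function of $(\eta,\partial_x\eta,\partial_x^2\eta)$.
\end{itemize}
Take $g=(c_1,\dots,c_4)$ and let $b$ be the corresponding bilinear form. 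Then $b$ maps the norms of Proposition~\ref{prop-PLDtN:EllipRegOri} at level $\sigma$ boundedly (and tamely) into $L^2(y\,\mathrm{d}y;H^{\sigma-1}_x)$. Moreover, $\langle D_x\rangle^{s-s_0}$ commutes with $\partial_y^2+y^{-1}\partial_y$ and with the flat boundary conditions, so your commutator argument runs on all of $(0,1)$ with no cut-off.

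Two smaller points:
\begin{itemize}
\item Proposition~\ref{prop-PLDtN:EllipRegOri} is stated for zero source. The base estimate for $\mathscr{R}[\eta]$ with a source must therefore come from the variational construction behind it; the paper relies on this implicitly as well.
\item Your trace inequality is false on a bounded interval: take $v$ independent of $y$. It needs an extra term on the right, such as $\|v\|_{L^2_yH^{\sigma+1/2}_x}^2$.
\end{itemize}
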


\subsection{An Equivalent Form}

Before starting the paralinearization, we first clarify that the quadratic decay $\R_\DN^{[\leq 1]} =0$ in Theorem~\ref{thm-PLDtN:Main} can be automatically obtained by the classical paralinearization~\eqref{eq-PLDtN:Basic} (see also~\eqref{eq-PLDtN-Pf:Goal} below). More precisely, we can reduce ourselves to the following result.

\begin{proposition}\label{prop-PLDtN-Pf:Main}
    Under the conditions of Theorem~\ref{thm-PLDtN:Main}, we have
    \begin{equation}\label{eq-PLDtN-Pf:Goal}
        G[\eta]\psi = T_{\tilde{\lambda}}w - \partial_x(T_V \eta) - T_{(\rho+\eta)^{-1}B} \eta + \tilde{\R}_\DN(\eta,\psi), 
    \end{equation}
    where $w = \psi - T_B\eta$ is the good unknown and the symbol $\tilde{\lambda}$ reads
    \begin{equation}\label{eq-PLDtN-Pf:PrSym}
        \begin{aligned}
            \tilde{\lambda} =& |\xi| - \frac{1}{2(\rho+\eta)} - \frac{\partial_x\eta }{2(\rho+\eta)}i\sgn(\xi) \\
            &\hspace{6em}+ \sum_{1\leq j <s_0-3/2}  \left( \tilde{f}_j(\eta,\partial_x\eta,\cdots,\partial_x^{j+2}\eta)\xi|\xi|^{-j-1} + \tilde{g}_j(\eta,\partial_x\eta,\cdots,\partial_x^{j+2}\eta)|\xi|^{-j} \right),
        \end{aligned}
    \end{equation}
    with $\tilde{f}_j,\tilde{g}_j\in C^\infty$. The remainder $\tilde{\R}_\DN(\eta,\psi)$ is linear in $\psi$ and belongs to $\mathcal{T}^\infty_r(s_0;H^{\bullet+s_0-1})$ (see Definition~\ref{def:Reg} and Notation~\ref{note:Reg}). 
\end{proposition}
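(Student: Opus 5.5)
We follow Steps 1--5 of the strategy recalled at the beginning of this appendix, checking at every stage that remainders are linear in $\psi$, smooth in $\eta$, and tame; each regularity claim is routed through Lemma~\ref{lem:SolReg} or the paradifferential Propositions~\ref{prop:PDReg}--\ref{prop:PLReg}. The starting point is Proposition~\ref{prop-PLDtN:EllipReg}, which gives $\nabla^k_{x,y}\Phi\in\Ta^\infty_r(s_0;L^2_yH^{\bullet-k+1}_x)$, $k\le2$, linearly in $\psi$. Since the coefficients $\alpha,\beta,\gamma$ in \eqref{eq-PLDtN:EllipCoeff} are smooth functions of $(\eta,\partial_x\eta,\partial_x^2\eta)$, they are regular in $\eta$ by Proposition~\ref{prop:CompReg}. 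The term $\gamma\partial_y$ is singular at $y=0$, so we work only on $y\in(1/2,1)$, as the notation $L^p_yH^s_x$ suggests. Interior smoothness near $y=0$ is harmless, and it only enters through a cutoff in $y$, which produces smoothing remainders.

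For the paralinearization we introduce the good unknown $W=\Phi-T_{(\rho+\eta)^{-1}\partial_y\Phi}(y\eta)$. We paralinearize each product in $\mathscr{L}[\eta]\Phi=0$ using Proposition~\ref{prop:PMReg} and Proposition~\ref{prop:PLReg}. The terms $T_{\partial_y^2\Phi}\alpha$ and similar ones, in which $\Phi$ acts as a paramultiplier on the coefficients, recombine exactly with the action of $\mathscr{P}[\eta]$ on $T_{(\rho+\eta)^{-1}\partial_y\Phi}(y\eta)$; this is Alinhac's cancellation. The outcome is $\mathscr{P}[\eta]W=F$ with $F\in\Ta^\infty_r(s_0;L^2_yH^{\bullet+s_0-N}_x)$ linear in $\psi$. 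The gain comes from the remainders $R_\PM$ and the composition errors, with operator norms controlled by the low norm of $\eta$ and hence tame.

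Next we factor $\mathscr{P}[\eta]=T_\alpha(\partial_y+T_a)(\partial_y-T_A)+R_0$. The symbols $a,A$ are written as sums of homogeneous terms of orders $1,0,-1,\dots$ down to order about $1-s_0$. They are determined inductively by matching orders in the composition formula of Remark~\ref{rmk:MultiSymCal}. At leading order this is $aA=\alpha^{-1}(\xi^2-\text{lower order})$ together with $a-A=\alpha^{-1}\beta i\xi$, solved by taking the roots with positive real part. At each lower order the unknown symbol is obtained by dividing by $a^{(1)}+A^{(1)}$, which is elliptic since its real part is $\gtrsim|\xi|$. Each symbol is a smooth function of finitely many derivatives of $\eta$ times $|\xi|^{-j}$ or $\sgn(\xi)|\xi|^{-j}$, which is why the sums in \eqref{eq-PLDtN-Pf:PrSym} appear. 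Regularity in $\eta$ follows from Proposition~\ref{prop:SymReg}. The terms of $a,A$ beyond the principal order produce the explicit subprincipal part $-\tfrac{1}{2(\rho+\eta)}-\tfrac{\partial_x\eta}{2(\rho+\eta)}i\sgn\xi$ after evaluation at $y=1$; this comes from the $\gamma\partial_y$ term. That coefficient must be computed by hand.

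The main obstacle is Step~4, the boundary regularity. Set $U=(\partial_y-T_A)W$ multiplied by a cutoff in $y$ vanishing near $1/2$. Then $U$ solves the backward-parabolic paradifferential equation $\partial_yU+T_aU=$ (source in $L^2_yH^{\bullet+s_0-N}$). Parabolic estimates for $\partial_y+T_a$, with $\RE a\ge c|\xi|$, give $U|_{y=1}\in H^{\bullet+s_0-N'}$. To obtain $C^\infty$ and tame dependence on $\eta$ we apply Lemma~\ref{lem:SolReg}. In that application $\mathscr{R}[\eta]$ is the resolvent of this parabolic problem, $g(\eta)=a$, and $b$ is the paradifferential action $(a,U)\mapsto T_aU$. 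The difference identity \eqref{eq:SolReg-Cond} holds because subtracting the equations at $\eta+\delta\eta$ and at $\eta$ yields the source $-T_{a(\eta+\delta\eta)-a(\eta)}U$. Writing $W|_{y=1}=w$ and $\partial_yW|_{y=1}=T_A w+$ smooth, we insert this into \eqref{eq:Formu-DtN}. Paraproduct expansions of $(1+|\partial_x\eta|^2)(\rho+\eta)^{-1}\partial_y\Phi-\partial_x\eta\partial_x\Phi$ then give $T_{\tilde\lambda}w-\partial_x(T_V\eta)-T_{(\rho+\eta)^{-1}B}\eta$, up to a remainder in $\Ta^\infty_r(s_0;H^{\bullet+s_0-1})$. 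The exponent $s_0-1$ is reached by truncating the symbol expansions at order $j<s_0-3/2$, which fixes how many homogeneous terms we keep.
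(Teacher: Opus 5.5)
Your proposal follows the paper's proof essentially step for step: Alinhac's good unknown and cancellation, the factorization $\mathscr{P}[\eta]\simeq T_\alpha(\partial_y+T_a)(\partial_y-T_A)$ with lower-order symbols obtained by dividing by the elliptic symbol $a^{(1)}+A^{(1)}$, the parabolic step made smooth and tame through Lemma~\ref{lem:SolReg} with the same difference identity, and a final paraproduct expansion of \eqref{eq:Formu-DtN} at $y=1$ (where, as in the paper, you must use $\mathscr{L}[\eta]\Phi=0$ to eliminate $\partial_y^2\Phi$ and produce $-\partial_x(T_V\eta)-T_{(\rho+\eta)^{-1}B}\eta$). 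One correction to Step~4: after the cutoff in $y$ the source also contains $\chi'(y)(\partial_y-T_A)W$, which lies only in $L^2_yH^{\bullet}_x$, so a single energy estimate gives merely $H^{\bullet+1/2}$ at $y=1$; to reach $H^{\bullet+s_0-1}$ you need parabolic smoothing away from $\mathrm{supp}\,\chi'$, via nested cutoffs or the paper's Lemma~\ref{lem-PLDtN-Pf:SolPara} with $\sigma_0<\sigma$ on a shrunken interval, and note that $\partial_y+T_a$ with $\RE a\geq c|\xi|$ is forward, not backward, parabolic in $y$.
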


\begin{proof}[Proof of Theorem~\ref{thm-PLDtN:Main}]
    By denoting $\tilde{\lambda}_0:= \tilde{\lambda}|_{\eta=0}$, we can rewrite formula \eqref{eq-PLDtN-Pf:Goal} as
    $$
    G[\eta]\psi = ( G[0] + T_{\tilde{\lambda}-\tilde{\lambda}_0} )w - \partial_x(T_V\eta) - T_{(\rho+\eta)^{-1}B}\eta + (G[0]-T_{\tilde{\lambda}_0})T_B\eta + \tilde{\R}_\DN(\eta,\psi) - \tilde{\R}_\DN(0,\psi).
    $$
    A simple calculus together with the fact
    $$
    G[0] = \frac{I_1(\rho|D_x|)}{I_0(\rho|D_x|)}|D_x|
    $$
    guarantees that $T_{\lambda} = G[0] + T_{\tilde{\lambda}-\tilde{\lambda}_0}$, where $\lambda$ is given by \eqref{eq-PLDtN:Symb} with $f = \tilde{f}-\tilde{f}(0)$ and $g = \tilde{g}-\tilde{g}(0)$. Note that in the subprincipal term
    $$
        - \frac{1}{2(\rho+\eta)} - \frac{\partial_x\eta }{2(\rho+\eta)}i\sgn(\xi),
    $$
    it is harmless to replace $\sgn(\xi)$ by $\partial_\xi\lambda^{(1)}$ since their difference has any polynomial decay in $\xi$ and thus contributes a smooth enough remainder, vanishing as $\eta=0$. Clearly, the remainder $\R_\DN(\eta,w)$ in~\eqref{eq-PLDtN:Main} is given by
    $$
        \R_\DN(\eta,\psi) = (G[0]-T_{\tilde{\lambda}_0})T_B\eta + \tilde{\R}_\DN(\eta,\psi) - \tilde{\R}_\DN(0,\psi),
    $$
    which clearly satisfies the quadratic decay $\R_\DN^{[\leq 1]}=0$ since it is linear in $\psi$.
    
    Now, we check $\R_\DN(\eta,\psi)$ is regular with $\R_\DN\in\Ta^\infty_r(s_0;H^{\bullet+s_0-1})$. The regularity of $\tilde{\R}_\DN(\eta,\psi) - \tilde{\R}_\DN(0,\psi)$ follows from Proposition~\ref{prop-PLDtN-Pf:Main}. As for the remaining part, we observe that, by taking $\eta=0$, the formula \eqref{eq-PLDtN-Pf:Goal} implies
    $$
        (G[0] - T_{\tilde{\lambda}_0})\psi = \tilde{\R}_\DN(0,\psi) \in H^{s+s_0-1}, \quad \forall \psi\in H^s.
    $$
    Thus, $G[0] - T_{\tilde{\lambda}_0}$ is no more than a Fourier multiplier bounded from $H^{s+1/2}$ to $H^{s+s_0-1}$. The boundedness of paraproduct (see Proposition~\ref{prop:PDReg}) and the estimate for $B$ (see Proposition~\ref{prop-PLDtN:BdDtN}) imply the desired regularity:
    \begin{equation*}
        \| (G[0]-T_{\tilde{\lambda}_0})T_B\eta \|_{H^{s+s_0-1}} \lesssim \| T_B\eta \|_{H^{s+1/2}} \lesssim |B|_{L^\infty} \|\eta\|_{H^{s+1/2}} \lesssim \|B\|_{H^{s_0-1}} \|\eta\|_{H^{s+1}} <+\infty.
    \end{equation*}
    The smoothness and tame estimates for its derivatives follow from those of $\tilde{\R}_\DN$ (see Proposition~\ref{prop-PLDtN-Pf:Main}) and $B$ (see Proposition~\ref{prop-PLDtN:BdDtN}), since these properties can be preserved by paraproduct (see Proposition~\ref{prop:PDReg}).
\end{proof}

In what follows, we will follow the guideline \ref{step:Paralin2}- \ref{step:Paralin5} to complete the proof of Proposition~\ref{prop-PLDtN-Pf:Main}.

\subsection{Paralinearization of the Elliptic Operator}

Recall that the scalar potential $\Phi$ verifies the following elliptic equation \eqref{eq-PLDtN:Ellip}
$$
\mathscr{L}[\eta]\Phi := \big( \alpha\partial_y^2 + \beta\partial_x\partial_y + \partial_x^2 - \gamma \partial_y \big) \Phi = 0
$$
with coefficients 
$$
\alpha = \frac{1+y^2|\partial_x\eta|^2}{(\rho+\eta)^2}, \quad \beta = -\frac{2y\partial_x\eta}{\rho+\eta}, \quad \gamma = -\frac{1}{y(\rho+\eta)^2} + y(\rho+\eta)\partial_x\left(\frac{\partial_x\eta}{(\rho+\eta)^2}\right)
$$
and that the paralinearization of $\mathscr{L}[\eta]$ reads
$$
\mathscr{P}[\eta] = \big( T_\alpha\partial_y^2 + T_{\beta i\xi}\partial_y - T_{\xi^2} - T_\gamma \partial_y \big).
$$
The purpose of this step is to show that the difference between $\mathscr{L}[\eta]$ and $\mathscr{P}[\eta]$ is negligible. More precisely, we have
\begin{proposition}\label{prop-PLDtN-Pf:S2}
    Under the conditions of Theorem~\ref{thm-PLDtN:Main}, we have
    \begin{equation}\label{eq-PLDtN-Pf:S1}
        \mathscr{P}[\eta] W = r_1(\eta,\psi),
    \end{equation}
    where the good unknown $W$ is defined by
    \begin{equation}\label{eq-PLDtN-Pf:GoodUnk}
        W:=  \Phi - T_{(\rho+\eta)^{-1}\partial_y\Phi} (y\eta)
    \end{equation}
    and the remainder $r_1(\eta,\psi)$ belongs to the class $\mathcal{T}^\infty_r(s_0;L^2_yH^{\bullet+s_0-3/2}_x)$ (see Definition~\ref{def:Reg} and Notation~\ref{note:Reg}).
\end{proposition}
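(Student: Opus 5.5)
The plan is to paralinearize each term of $\mathscr{L}[\eta]\Phi=0$ with Propositions~\ref{prop:PMReg} and~\ref{prop:PLReg}, applied for fixed $y$ and then integrated in $L^2_y$ over $(0.5,1)$. Regularity in $(\eta,\psi)$ comes from Proposition~\ref{prop-PLDtN:EllipReg}: $\nabla^k_{x,y}\Phi\in\Ta^\infty_r(s_0;L^2_yH^{\bullet-k+1}_x)$. The coefficients $\alpha,\beta,\gamma$ are smooth functions of $(\eta,\partial_x\eta,\partial_x^2\eta)$ and $y$, and on $y\in(0.5,1)$ the factor $1/y$ in $\gamma$ is harmless. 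By Proposition~\ref{prop:CompReg} they are regular with values in $L^\infty_yH^{\bullet-1}_x$, up to constants. The restriction $y\geq0.5$ is exactly what the notation $L^2_yH^s_x$ encodes, so no degeneracy at the axis enters.

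\textbf{Step 1 (paraproducts).} For each product, e.g.\ $\alpha\partial_y^2\Phi$, write
\[
\alpha\partial_y^2\Phi=T_\alpha\partial_y^2\Phi+T_{\partial_y^2\Phi}\alpha+R_\PM(\alpha,\partial_y^2\Phi),
\]
and similarly for $\beta\partial_x\partial_y\Phi$ and $\gamma\partial_y\Phi$. By Proposition~\ref{prop:PMReg}, $\alpha-\alpha_0\in H^{\bullet}$ and $\partial_y^2\Phi\in H^{\bullet-1}$, so the $R_\PM$ terms lie in $L^2_yH^{\bullet+s_0-3/2}_x$ or better. Next paralinearize $\alpha,\beta,\gamma$ as functions of $(\eta,\partial_x\eta,\partial_x^2\eta)$ by Proposition~\ref{prop:PLReg}. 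The terms $T_{\partial_y^2\Phi}\alpha$, $T_{\partial_x\partial_y\Phi}\beta$ and $T_{\partial_y\Phi}\gamma$ then become paradifferential operators acting on $\eta$ of order at most $2$ with low-regularity coefficients. Symbolic calculus (Proposition~\ref{prop:SymReg}) collects them into $T_{\cdots}\eta$ plus acceptable remainders.

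\textbf{Step 2 (good unknown).} Substitute $\Phi=W+T_{(\rho+\eta)^{-1}\partial_y\Phi}(y\eta)$ into $\mathscr{P}[\eta]\Phi$ and expand $\mathscr{P}[\eta]T_{\mathfrak b}(y\eta)$, where $\mathfrak b=(\rho+\eta)^{-1}\partial_y\Phi$. The $y$-derivatives fall either on $\mathfrak b$ or on $y\eta$.
\begin{itemize}
\item Terms where $\partial_y^2$ or $\partial_x^2$ hit $\eta$ combine, via the composition formula, into $T_{\mathfrak b}$ applied to the linearization of the coefficients.
\item Terms where derivatives hit $\mathfrak b$ are expressed through the equation itself: solve for $\partial_y^2\Phi$ from $\mathscr{L}[\eta]\Phi=0$, so that $\partial_y\mathfrak b$ involves only $\Phi$ and first and second mixed derivatives.
\end{itemize}
The aim is to check that everything cancels against the Step 1 terms $T_{\partial_y^2\Phi}\alpha+\dots$ at the level of operators of order at least $s_0-3/2$ below top. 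This is Alinhac's classical computation, as in Section~3 of \cite{HK2023} and \cite{ABZ2014}, and the only change is the extra curvature contributions in $\gamma$. What remains are commutators $[T_\alpha,T_{\mathfrak b}]$, compositions $T_aT_b-T_{ab}$ and $R_\PM$ terms, each in $L^2_yH^{\bullet+s_0-3/2}_x$ by Propositions~\ref{prop:PDReg} and~\ref{prop:SymReg}.

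\textbf{Step 3 (regularity class).} Every operation used preserves the class $\Ta^\infty$ by Propositions~\ref{prop:PDReg}--\ref{prop:CompReg} and Lemma~\ref{lem:PropOfReg}: paraproducts, remainders, composition with smooth functions, and the diagonal of multilinear regular maps. Applied to the inputs $\nabla^k\Phi\in\Ta^\infty_r$, this yields $r_1\in\Ta^\infty_r(s_0;L^2_yH^{\bullet+s_0-3/2}_x)$, with $y$ treated as a parameter uniformly bounded on $(0.5,1)$.

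The main obstacle is the bookkeeping in Step 2. The terms where $\partial_y$ falls on $\mathfrak b$, which involve $\partial_y^2\Phi$ and $\partial_y^3\Phi$, are only controlled in $L^2_yH^{\bullet-1}$ or worse, so they must be rewritten using the equation before estimating. The index count must then be checked to reach exactly $s_0-3/2$ rather than a weaker gain. I would follow \cite{HK2023} line by line and verify each estimate in tame form.
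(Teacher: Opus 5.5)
Your overall route is the paper's. You paraproduct-decompose $\mathscr{L}[\eta]\Phi$, paralinearize the coefficients, expand $\mathscr{P}[\eta]T_{\mathfrak b}(y\eta)$ with $\mathfrak b=(\rho+\eta)^{-1}\partial_y\Phi$, and show that the resulting paraproducts acting on $\eta,\partial_x\eta,\partial_x^2\eta$ cancel. However, one step as written falls one derivative short.

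The coefficient $\gamma$ depends on $\partial_x^2\eta$. Proposition~\ref{prop:PLReg} with $k=2$, $\sigma=1$ only places its paralinearization remainder in $H^{\bullet+s_0-5/2}$. Since $T_{\partial_y\Phi}$ is of order $0$, the term $T_{\partial_y\Phi}\gamma$ then leaves an error in $L^2_yH^{\bullet+s_0-5/2}_x$, not $L^2_yH^{\bullet+s_0-3/2}_x$. This is the same loss noted in the proof of Proposition~\ref{PL_H}. The fix is to use that $\gamma$ is affine in $\partial_x^2\eta$, say $\gamma=F_1(y,\eta,\partial_x\eta)+F_2(y,\eta)\,\partial_x^2\eta$:
\begin{itemize}
\item apply Proposition~\ref{prop:PLReg} only to $F_1$ and $F_2$, so that $k\le 1$;
\item split $F_2\,\partial_x^2\eta=T_{F_2}\partial_x^2\eta+T_{\partial_x^2\eta}F_2+R_\PM(F_2,\partial_x^2\eta)$.
\end{itemize}
The last term lies in $H^{\bullet+s_0-1/2}$, because $F_2(y,\eta)-F_2(y,0)\in H^{\bullet+1}$ while $\partial_x^2\eta\in H^{\bullet-1}$. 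This gives the remainder $\tilde r_\gamma\in L^\infty_yH^{\bullet+s_0-1/2}_x$ of Lemma~\ref{lem-PLDtN-Pf:Coeff}.

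Your Step 2 should also be tightened. Since $\partial_y^2(y\eta)=0$, no term has $\partial_y^2$ falling on $\eta$. There is also no need to eliminate $\partial_y^2\Phi$ through the equation. As a paraproduct coefficient in $L^2_yH^{\bullet-1}_x$ it is harmless, and it is exactly what fixes the gain $s_0-3/2$, for instance in the composition $T_{\partial_y^2\Phi}T_c\,\partial_x\eta$ coming from the paralinearization of $\alpha$. Replacing it by $\partial_x^2\Phi$ and $\partial_x\partial_y\Phi$ gains nothing.

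The equation enters only through the identity that closes the argument:
\begin{itemize}
\item after composing, the coefficients of $\partial_x\eta$ and $\partial_x^2\eta$ cancel identically;
\item the coefficient of $\eta$ is $\mathscr{L}[\eta](y\mathfrak b)$ plus the derivatives of $\alpha,\beta,\gamma$ in their $\eta$-slot, contracted with $\partial_y^2\Phi,\partial_x\partial_y\Phi,\partial_y\Phi$, and it equals a multiple of $\partial_y(\mathscr{L}[\eta]\Phi)=0$.
\end{itemize}
This is the first variation of the pulled-back Laplacian under $r=y(\rho+\eta)$, i.e.\ the paper's $\iota_j+\iota_j'$ identities. It is also where $\partial_y^3\Phi$, which enters through $T_\alpha\partial_y^2T_{\mathfrak b}(y\eta)$, drops out. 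With these two corrections your plan coincides with the paper's proof.
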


As a preparation, we check the regularity of the coefficients $\alpha,\beta,\gamma$ and the good unknown $W$, which is due to Proposition~\ref{prop:CompReg} and Proposition~\ref{prop-PLDtN:EllipReg}, respectively.
\begin{lemma}\label{lem-PLDtN-Pf:Coeff}
    Consider a real numbers $s_0>2$. If $\rho+\eta \ge c$ for some constant $c>0$, then the coefficients $\alpha,\beta,\gamma$, together with their derivatives in $y\in[0.5,1]$ are regular in $\eta$. More precisely, we have, for all integers $k\in\xN$,
    $$
        \partial_y^k\alpha,\partial_y^k(\alpha^{-1}),\partial_y^k\beta\in \Ta^\infty(s_0;H^{\bullet+1},L^\infty_yH^{\bullet}_x), \quad \partial_y^k\gamma\in \Ta^\infty(s_0;H^{\bullet+1},L^\infty_yH^{\bullet-1}_x).
    $$
    Furthermore, we have the following paralinearization formulas
    \begin{align*}
        \alpha =& \rho^{-2} - \Op^\PM\left( 2(1+y^2|\partial_x\eta|^2)(\rho+\eta)^{-3} \right)\eta + \Op^\PM\left( 2y^2\partial_x\eta(\rho+\eta)^{-2} \right)\partial_x\eta + \tilde{r}_{\alpha}, \\
        \beta =& \Op^\PM\left( 2y\partial_x\eta(\rho+\eta)^{-2} \right)\eta - \Op^\PM\left( 2y\eta(\rho+\eta)^{-1} \right)\partial_x\eta + \tilde{r}_{\beta}, \\
        \gamma =& - y^{-1}\rho^{-2} + \Op^\PM\left( 2y^{-1}(\rho+\eta)^{-3} + 4y|\partial_x\eta|^2(\rho+\eta)^{-3} - y\partial_x^2\eta(\rho+\eta)^{-2} \right) \eta \\
        &- \Op^\PM\left( 4y\partial_x\eta(\rho+\eta)^{-2} \right) \partial_x\eta + \Op^\PM\left( y(\rho+\eta)^{-1} \right)\partial_x^2\eta + \tilde{r}_{\gamma},
    \end{align*}
    with
    $$
        \tilde{r}_\alpha, \tilde{r}_\beta \in \Ta^\infty(s_0;H^{\bullet+1},L^\infty_yH^{\bullet+s_0-1/2}_x), \quad \tilde{r}_\gamma \in \Ta^\infty(s_0;H^{\bullet+1},L^\infty_yH^{\bullet+s_0-1/2}_x).
    $$
\end{lemma}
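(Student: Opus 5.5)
The plan is to treat the coefficients as smooth functions of $(\eta,\partial_x\eta,\partial_x^2\eta)$ with $y\in[0.5,1]$ entering only as a bounded smooth parameter, and then apply Proposition~\ref{prop:CompReg} and Proposition~\ref{prop:PLReg} uniformly in $y$. For the regularity statement, write $\alpha=F_\alpha(y;\eta,\partial_x\eta)$ with $F_\alpha(y;a,b)=(1+y^2b^2)(\rho+a)^{-2}$, which is smooth on the region $\rho+a\ge c$. Since $y\in[0.5,1]$ ranges over a compact set and every $\partial_y^k F_\alpha$ is again smooth in $(y,a,b)$, Proposition~\ref{prop:CompReg} with $k=1$ and $\sigma=1$ gives $\partial_y^k\alpha-\partial_y^k\alpha|_{\eta=0}\in\Ta^\infty(s_0;H^{\bullet+1},H^{\bullet})$ uniformly in $y$, hence in $L^\infty_y$. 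The same argument applies to $\alpha^{-1}$, since $\alpha\ge(1+y^2|\partial_x\eta|^2)/(\rho+\eta)^2>0$, and to $\beta$. For $\gamma$, which involves $\partial_x^2\eta$, the same argument with $k=2$ loses one more derivative and gives $H^{\bullet-1}$. The constant parts $\rho^{-2}$ and $-y^{-1}\rho^{-2}$ (or the constants $\alpha|_{\eta=0}$ in general) are harmless, because the paper's convention allows constants in these classes, or we simply subtract them. The only issue is whether $s_0>2$ is enough for the hypothesis $s_0+\sigma>k+n/2$; it is, since $n=1$.

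For the paralinearization formulas, apply Proposition~\ref{prop:PLReg} to $F(y;\eta,\partial_x\eta)$ for $\alpha$ and $\beta$, and to $F(y;\eta,\partial_x\eta,\partial_x^2\eta)$ for $\gamma$. The main terms are $T_{\partial_aF}\eta+T_{\partial_bF}\partial_x\eta(+T_{\partial_cF}\partial_x^2\eta)$, and computing the partial derivatives produces exactly the displayed symbols. For example, $\partial_a\big[(1+y^2b^2)(\rho+a)^{-2}\big]=-2(1+y^2b^2)(\rho+a)^{-3}$ and $\partial_b=2y^2b(\rho+a)^{-2}$. For $\gamma$, first expand $y(\rho+\eta)\partial_x\big(\partial_x\eta(\rho+\eta)^{-2}\big)=y\partial_x^2\eta(\rho+\eta)^{-1}-2y|\partial_x\eta|^2(\rho+\eta)^{-2}$ and then differentiate. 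The main obstacle is the remainder regularity. Proposition~\ref{prop:PLReg} directly yields $H^{\bullet+s_0+2\sigma-2k-1/2}$, which equals $H^{\bullet+s_0-1/2}$ when $k=1$ and $\sigma=1$, exactly as needed for $\tilde r_\alpha$ and $\tilde r_\beta$.

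For $\gamma$, taking $k=2$ naively loses two derivatives. To recover them, I would imitate the proof of Proposition~\ref{PL_H} and isolate the term linear in $\partial_x^2\eta$, namely $y(\rho+\eta)^{-1}\partial_x^2\eta$. Splitting it by Definition~\ref{R_PM} as $T_{y(\rho+\eta)^{-1}}\partial_x^2\eta+T_{\partial_x^2\eta}\big(y(\rho+\eta)^{-1}\big)+R_\PM$, the paraproduct remainder lies in the improved class by Proposition~\ref{prop:PMReg}. The middle term is handled by paralinearizing $y(\rho+\eta)^{-1}$, a function of $\eta$ alone, and composing via Proposition~\ref{prop:SymReg}, which produces the $-y\partial_x^2\eta(\rho+\eta)^{-2}$ contribution in the $\eta$ coefficient. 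The remaining part depends only on $(\eta,\partial_x\eta)$, so it falls under the $k=1$ case. If the stated index for $\tilde r_\gamma$ turns out to be one derivative too optimistic, I would settle for $H^{\bullet+s_0-3/2}$, which is what the later application in Proposition~\ref{prop-PLDtN-Pf:S2} needs.

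Finally, smoothness in $\eta$ and the tame estimates come directly from the $\Ta^\infty$ conclusions of those propositions, applied uniformly in $y$.
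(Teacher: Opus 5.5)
Your proposal is correct and follows the paper's route. The paper justifies this lemma only by citing Proposition~\ref{prop:CompReg} (and implicitly Proposition~\ref{prop:PLReg}), with $y\in[0.5,1]$ treated as a parameter. Your extra step for $\gamma$, isolating $y(\rho+\eta)^{-1}\partial_x^2\eta$ as in Proposition~\ref{PL_H}, actually gives $\tilde r_\gamma\in L^\infty_yH^{\bullet+s_0-1/2}_x$, so your fallback to $H^{\bullet+s_0-3/2}$ is unnecessary. The reason is that $R_\PM$ lands in $H^{\bullet+s_0-1/2}$ by Proposition~\ref{prop:PMReg} with $(\sigma,\sigma')=(1,-1)$. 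Likewise, $T_{\partial_x^2\eta}T_{y(\rho+\eta)^{-2}}-T_{y\partial_x^2\eta(\rho+\eta)^{-2}}$ has order $-(s_0-3/2)$ and acts on $\eta\in H^{\bullet+1}$, which also lands in $H^{\bullet+s_0-1/2}$.

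One correction: differentiating $-2yb(\rho+a)^{-1}$ gives $-\Op^\PM\big(2y(\rho+\eta)^{-1}\big)\partial_x\eta$ in $\beta$. So the displayed symbol $2y\eta(\rho+\eta)^{-1}$ is a typo in the statement, and your claim that the computation reproduces exactly the displayed symbols is not right for $\beta$. As written, the linear term $-2y\rho^{-1}\partial_x\eta\in H^{\bullet}$ would have to sit in $\tilde r_\beta$, which is impossible.
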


\begin{lemma}\label{lem-PLDtN-Pf:GoodUnk}
    Under the condition of Proposition~\ref{prop-PLDtN:EllipReg}, the good unknown $W$ satisfies
    $$
        \nabla_{x,y}^kW \in \Ta^\infty_r(s_0;L^2_yH^{\bullet-k+1}_x),\quad k=0,1,2.
    $$
    Consequently, for $k=0,1$, $\nabla_{x,y}^kW$ also belongs to $\Ta^\infty_r(s_0;L^\infty_yH^{\bullet-k+1/2}_x)$.
\end{lemma}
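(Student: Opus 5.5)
The plan is to write $W=\Phi-T_{(\rho+\eta)^{-1}\partial_y\Phi}(y\eta)$ and treat the two pieces separately. For each we then check that $\nabla_{x,y}^k$ of it belongs to $\Ta^\infty_r(s_0;L^2_yH^{\bullet-k+1}_x)$ for $k=0,1,2$.

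\textbf{The potential $\Phi$.} This is exactly the content of Proposition~\ref{prop-PLDtN:EllipReg}, so nothing more is needed here.

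\textbf{The paraproduct part.} Set $a:=(\rho+\eta)^{-1}\partial_y\Phi$. Since $(\rho+\eta)^{-1}-\rho^{-1}$ is a smooth function of $\eta$, Proposition~\ref{prop:CompReg} gives that it lies in $\Ta^\infty(s_0;H^{\bullet+1},H^{\bullet+1})$. Combining this with Proposition~\ref{prop-PLDtN:EllipReg} for $\partial_y\Phi\in\Ta^\infty_r(s_0;L^2_yH^{\bullet}_x)$, and using the product rule of Proposition~\ref{prop:PMReg} pointwise in $y$, we get $a\in\Ta^\infty_r(s_0;L^2_yH^{\bullet}_x)$. Moreover $a$ is linear in $\psi$.

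The key point is that a paraproduct $T_a$ only needs $a$ in $L^\infty$, which $a$ satisfies with norm controlled by $\|a\|_{H^{s_0}_x}$ for $s_0>2$. The factor $y\eta$ carries the full regularity $H^{s+1}$. So Proposition~\ref{prop:PDReg}, with order $m=0$ and applied for a.e.\ $y$ and then integrated in $L^2_y$, gives
$$
T_a(y\eta)\in\Ta^\infty_r(s_0;L^2_yH^{\bullet+1}_x).
$$
Derivatives in $x$ fall on $\eta$ or on $a$ through $T_{\partial_xa}\eta$, and the latter is still bounded by $\|a\|_{C^0}$-type norms. Hence $\partial_x^k T_a(y\eta)$ has the required regularity.

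\textbf{The $y$-derivatives.} We write
$$
\partial_yT_a(y\eta)=T_{\partial_ya}(y\eta)+T_a\eta ,\qquad
\partial_y^2 T_a(y\eta)=T_{\partial_y^2a}(y\eta)+2T_{\partial_ya}\eta .
$$
The term $\partial_y^2a$ involves $\partial_y^3\Phi$, which is not covered by Proposition~\ref{prop-PLDtN:EllipReg}; this is the main obstacle. To handle it, we use the equation \eqref{eq-PLDtN:Ellip} to express
$$
\partial_y^2\Phi=\alpha^{-1}\big(\gamma\partial_y\Phi-\beta\partial_x\partial_y\Phi-\partial_x^2\Phi\big),
$$
and differentiate once more in $y$. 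This writes $\partial_y^3\Phi$ through terms in $L^2_yH^{\bullet-2}_x$; here Lemma~\ref{lem-PLDtN-Pf:Coeff} supplies the coefficients and Proposition~\ref{prop:PMReg} the products.

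With this, we only need $T_{\partial_y^2a}$ to be a paradifferential operator of order at most $1$ with a norm depending on low norms. Take a weighted symbol seminorm in $\Gamma^{1}_0$: since $\partial_y^2 a\in H^{s_0-2}_x\subset C^{-1/2+}$, the Littlewood--Paley bound $\|S_{j-3}b\,\Delta_j u\|_{L^2}\lesssim 2^{j}\|b\|_{H^{s_0-2}}\|\Delta_ju\|_{L^2}$ holds for $s_0>2$. It follows that $T_{\partial_y^2a}(y\eta)\in H^{s}$, which is exactly the target $H^{\bullet-1}$. The same reasoning covers the mixed derivative $\partial_x\partial_y$.

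\textbf{Conclusion.} The $L^\infty_y$ statements for $k=0,1$ follow from the $L^2_y$ bounds on $\nabla^k W$ and $\nabla^{k+1}W$ by the standard trace/interpolation inequality
$$
\|f\|_{L^\infty_yH^{\sigma+1/2}}^2\lesssim\|f\|_{L^2_yH^{\sigma+1}}\|\partial_yf\|_{L^2_yH^{\sigma}}+\|f\|_{L^2_yH^{\sigma+1/2}}^2 .
$$
Smoothness and tame estimates of all derivatives in $(\eta,\psi)$ are inherited throughout, since every operation used is a composition of regular maps (Lemma~\ref{lem:PropOfReg}).
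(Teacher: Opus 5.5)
Your proposal is correct and follows the same route as the paper. The paper simply derives the lemma from Proposition~\ref{prop-PLDtN:EllipReg} for $\Phi$, implicitly combined with paraproduct bounds on the correction $T_{(\rho+\eta)^{-1}\partial_y\Phi}(y\eta)$. Your observation that $\partial_y^2W$ involves $\partial_y^3\Phi$, which that proposition does not cover, and your use of the equation \eqref{eq-PLDtN:Ellip} to place $\partial_y^3\Phi$ in $L^2_yH^{\bullet-2}_x$, supply a step the paper leaves implicit.
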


For the sake of simplicity, during the proof of Proposition~\ref{prop-PLDtN-Pf:S2}, we use the notation:
$$
    w \sim v \Leftrightarrow w-v \in \Ta^\infty_r(s_0;L^2_yH^{\bullet+s_0-3/2}_x).
$$
Namely, our goal is to show that $\mathscr{P}[\eta] W \sim 0$.

We begin with the following identity, which is simply an application of paraproduct decomposition (see Definition~\ref{sec.2.3}):
\begin{align*}
    \mathscr{L}[\eta]\Phi =& \mathscr{P}[\eta]\Phi + T_{\partial_y^2\Phi} \alpha + T_{\partial_y\partial_x\Phi} \beta - T_{\partial_y\Phi}\gamma + r_{1,1}(\eta,\psi), \\
    r_{1,1}(\eta,\psi) = & R_\PM(\alpha,\partial_y^2\Phi) + R_\PM(\beta,\partial_y\partial_x\Phi) - R_\PM(\gamma,\partial_y\Phi) + (\partial_x^2 + T_{\xi^2}) \Phi.
\end{align*}
By combining Proposition~\ref{prop:PMReg} and~\ref{lem-PLDtN-Pf:Coeff}, together with Proposition~\ref{prop-PLDtN:EllipReg}, it is clear that the remainder $r_{1,1}$ belongs to $\Ta^\infty_r(s_0;L^2_yH^{s+s_0-3/2}_x)$. Notice that, by applying the paralinearization of coefficients $\alpha,\beta,\gamma$ (see Lemma~\ref{lem-PLDtN-Pf:Coeff}) with Proposition~\ref{prop:PDReg}, we have
\begin{align*}
    \mathscr{L}[\eta]\Phi \sim& \mathscr{P}[\eta]\Phi + T_{\partial_y^2\Phi} \alpha + T_{\partial_y\partial_x\Phi} \beta - T_{\partial_y\Phi}\gamma \\
    \sim& \mathscr{P}[\eta]\Phi - T_{\partial_y^2\Phi} \Op^\PM\left( 2(1+y^2|\partial_x\eta|^2)(\rho+\eta)^{-3} \right)\eta + T_{\partial_y^2\Phi}\Op^\PM\left( 2y^2\partial_x\eta(\rho+\eta)^{-2} \right)\partial_x\eta \\
    &+ T_{\partial_y\partial_x\Phi}\Op^\PM\left( 2y\partial_x\eta(\rho+\eta)^{-2} \right)\eta - T_{\partial_y\partial_x\Phi}\Op^\PM\left( 2y\eta(\rho+\eta)^{-1} \right)\partial_x\eta \\
    &- T_{\partial_y\Phi}\Op^\PM\left( 2y^{-1}(\rho+\eta)^{-3} + 4y|\partial_x\eta|^2(\rho+\eta)^{-3} - y\partial_x^2\eta(\rho+\eta)^{-2} \right) \eta \\
    &+ T_{\partial_y\Phi}\Op^\PM\left( 4y\partial_x\eta(\rho+\eta)^{-2} \right) \partial_x\eta - T_{\partial_y\Phi}\Op^\PM\left( y(\rho+\eta)^{-1} \right)\partial_x^2\eta.
\end{align*}
From the composition of paradifferential operators (see Proposition~\ref{prop:SymReg}), we can deduce that
\begin{equation}\label{eq-PLDtN-Pf:S2-1}
    \mathscr{L}[\eta]\Phi \sim \mathscr{P}[\eta]\Phi + T_{\iota_0}\eta + T_{\iota_1}\partial_x\eta + \T_{\iota_2}\partial_x^2\eta,
\end{equation}
where $\iota_j$'s are functions of $(\eta,\partial_x\eta,\partial_x^2\eta)$ defined by
\begin{align*}
    \iota_0 =& - 2(1+y^2|\partial_x\eta|^2)(\rho+\eta)^{-3} \partial_y^2\Phi + 2y\partial_x\eta(\rho+\eta)^{-2} \partial_y\partial_x\Phi   \\
    &\hspace{4em}- \left( 2y^{-1}(\rho+\eta)^{-3} + 4y|\partial_x\eta|^2(\rho+\eta)^{-3} - y\partial_x^2\eta(\rho+\eta)^{-2} \right) \partial_y\Phi, \\
    \iota_1 =& 2y^2\partial_x\eta(\rho+\eta)^{-2}\partial_y^2\Phi - 2y\eta(\rho+\eta)^{-1} \partial_y\partial_x\Phi + 4y\partial_x\eta(\rho+\eta)^{-2} \partial_y\Phi, \\
    \iota_2 =& -y(\rho+\eta)^{-1}\partial_y\Phi.
\end{align*}

The cancellation of these remainders is due to the correction $T_{(\rho+\eta)^{-1}\partial_y\Phi}(y\eta)$ in the good unknown. In fact, a direct calculus indicates that
\begin{align*}
    \mathscr{P}[\eta]\big( T_{(\rho+\eta)^{-1}\partial_y\Phi}(y\eta) \big) =& T_\alpha \Big[ \Op^\PM\Big( \partial_y^2 \big( (\rho+\eta)^{-1}\partial_y\Phi \big) \Big) \eta \Big] \\
    &+ T_\beta \Big[ \Op^\PM\Big( \partial_y\partial_x\big(y(\rho+\eta)^{-1}\partial_y\Phi\big) \Big) \eta + \Op^\PM\Big( \partial_y\big(y(\rho+\eta)^{-1}\partial_y\Phi\big) \Big) \partial_x\eta \Big] \\
    &+ \Op^\PM\Big( \partial_x^2\big( y(\rho+\eta)^{-1}\partial_y\Phi \big) \Big) \eta + \Op^\PM\Big( 2\partial_x\big( y(\rho+\eta)^{-1}\partial_y\Phi \big) \Big) \partial_x\eta \\
    &+ \Op^\PM\Big(  y(\rho+\eta)^{-1}\partial_y\Phi \Big) \partial_x^2\eta - T_\gamma \Big[ \Op^\PM\Big( \partial_y\big( (\rho+\eta)^{-1}\partial_y\Phi \big) \Big)\eta \Big].
\end{align*}
We apply again the composition of paradifferential operators (see Proposition~\ref{prop:SymReg}) and conclude that
\begin{equation}\label{eq-PLDtN-Pf:S2-2}
    \mathscr{P}[\eta]\big( T_{(\rho+\eta)^{-1}\partial_y\Phi}(y\eta) \big) \sim T_{\iota_0'}\eta + T_{\iota_1'}\partial_x\eta + \T_{\iota_2'}\partial_x^2\eta,
\end{equation}
where 
\begin{align*}
    \iota_0' =& \alpha \partial_y^2 \big( (\rho+\eta)^{-1}\partial_y\Phi \big) + \beta \partial_y\partial_x\big(y(\rho+\eta)^{-1}\partial_y\Phi\big) + \partial_x^2\big( y(\rho+\eta)^{-1}\partial_y\Phi \big) - \gamma \partial_y\big( (\rho+\eta)^{-1}\partial_y\Phi \big), \\
    \iota_1' =& \beta \partial_y\big(y(\rho+\eta)^{-1}\partial_y\Phi\big) + 2\partial_x\big( y(\rho+\eta)^{-1}\partial_y\Phi \big), \\
    \iota_2' =& y(\rho+\eta)^{-1}\partial_y\Phi.
\end{align*}
Then one show by direct calculus that
$$
\iota_0+\iota_0' = (\rho+\eta)^{-1}\partial_y\big( \mathscr{L}[\eta]\Phi \big), \quad \iota_1+\iota_1' = \iota_2+\iota_2' =0.
$$
As a result, Proposition~\ref{prop-PLDtN-Pf:S2} follows from~\eqref{eq-PLDtN-Pf:S2-1} and~\eqref{eq-PLDtN-Pf:S2-2}.

\subsection{Factorization}

The aim of this step is to decompose the paralinear operator $\mathscr{P}[\eta]$ as 
$$
T_{\alpha}(\partial_y + T_a)(\partial_y - T_A),
$$
modulo some smooth remainders, where $T_A,T_a$ are elliptic paradifferential operators. More precisely, we have
\begin{proposition}\label{prop-PLDtN-Pf:S3}
    Given $s_0>3/2$, there exist homogeneous symbols
    \begin{equation}\label{eq-PLDtN-Pf:S3-symb}
        \begin{aligned}
            a =& \sum_{-1 \leq j <s_0-3/2} a^{(-j)}, \quad A = \sum_{-1 \leq j <s_0-3/2} A^{(-j)}, \\
            a^{(-j)} =& p_a^{(-j)}(y,\eta,...\partial_x^{j+2}\eta) \xi|\xi|^{-j-1} + q_a^{(-j)}(y,\eta,...\partial_x^{j+2}\eta) \xi|\xi|^{-j-1}, \\
            A^{(-j)} =& p_A^{(-j)}(y,\eta,...\partial_x^{j+2}\eta) \xi|\xi|^{-j-1} + q_A^{(-j)}(y,\eta,...\partial_x^{j+2}\eta) \xi|\xi|^{-j-1},
        \end{aligned}
    \end{equation}
    where $p_a^{(-j)},q_a^{(-j)},p_A^{(-j)},p_A^{(-j)}$ belong to $C^\infty(\xR_y\times \xR^{j+3};\xC)$ ($-1 \leq j < s_0-3/2$), such that
    \begin{equation}\label{eq-PLDtN-Pf:S3-Decomp}
        \mathscr{P}[\eta] = T_{\alpha}(\partial_y + T_a)(\partial_y - T_A) + R_0\partial_y + R_1.
    \end{equation}
    Here $R_0$ and $R_1$ are operators depending smoothly on $y$ and $\eta$, such that
    \begin{equation}\label{eq-PLDtN-Pf:S3-Esti}
        R_0 \partial_y W, R_1 W \in \Ta^\infty_r(s_0;H^{\bullet+s_0-1-j}), \quad j=0,1.
    \end{equation}
    See Definition~\ref{def:Reg} and Notation~\ref{note:Reg} for the class $\Ta^\infty_r$.

    In particular, under the assumptions of Theorem~\ref{thm-PLDtN:Main}, we have
    \begin{equation}\label{eq-PLDtN-Pf:S3}
        (\partial_y + T_a)(\partial_y - T_A) W = r_2(\eta,\psi)
    \end{equation}
    where the remainder $r_2(\eta,\psi)$ belongs to the class $\Ta^\infty(s_0;H^{\bullet+1},L^2_yH^{\bullet+s_0-3/2}_x)$. Recall that the good unknown $W$ is defined by \eqref{eq-PLDtN-Pf:GoodUnk}.
\end{proposition}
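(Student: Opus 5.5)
We aim to construct $a$ and $A$ by the standard symbolic factorization of a second-order operator in $\partial_y$, as in the factorization step of the paralinearization of the Dirichlet--Neumann operator in \cite{ABZ2014,HK2023}. Formally expanding with the symbolic calculus of Proposition~\ref{prop:SymReg} (in the form of Remark~\ref{rmk:MultiSymCal}), we write
$$
T_\alpha(\partial_y+T_a)(\partial_y-T_A)=T_\alpha\partial_y^2+T_\alpha T_{a-A}\partial_y-T_\alpha T_a T_A-T_\alpha T_{\partial_yA}.
$$
Comparing with $\mathscr{P}[\eta]=T_\alpha\partial_y^2+T_{\beta i\xi}\partial_y-T_{\xi^2}-T_\gamma\partial_y$ leads to two symbolic equations, to be solved modulo symbols of sufficiently negative order:
$$
\alpha\#(a-A)=i\beta\xi-\gamma,\qquad \alpha\#\big(a\#A+\partial_yA\big)=\xi^2 .
$$
We first solve them at principal order $1$. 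This gives $a^{(1)}=\tfrac{1}{2\alpha}\big(-i\beta\xi+\sqrt{4\alpha\xi^2-\beta^2\xi^2}\big)$ and $A^{(1)}=\tfrac{1}{2\alpha}\big(i\beta\xi+\sqrt{4\alpha\xi^2-\beta^2\xi^2}\big)$. These have the form $p(y,\eta,\partial_x\eta)\,|\xi|+q(y,\eta,\partial_x\eta)\,i\xi$, and $\RE A^{(1)}\ge c|\xi|$ since $4\alpha-\beta^2=4(\rho+\eta)^{-2}>0$ by Lemma~\ref{lem-PLDtN-Pf:Coeff}. The term $\gamma$, of order $0$, first enters at the next level.

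We then determine $a^{(-j)}$ and $A^{(-j)}$ inductively for $j=0,1,\dots$ below $s_0-3/2$. At order $-j$, the two equations reduce to a linear system
$$
a^{(-j)}-A^{(-j)}=\text{known},\qquad a^{(1)}A^{(-j)}+A^{(1)}a^{(-j)}=\text{known},
$$
where the known terms collect $\partial_\xi^\alpha\partial_x^\alpha$ products of previously determined pieces, $\partial_y A^{(1-j)}$, and the lower-order parts of the expansion of $1/\alpha$. This system is uniquely solvable because $a^{(1)}+A^{(1)}=\alpha^{-1}\sqrt{4\alpha-\beta^2}\,|\xi|$ never vanishes for $\xi\neq0$. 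Each new symbol involves one more $x$-derivative of $\eta$ than the previous one, which yields the structure \eqref{eq-PLDtN-Pf:S3-symb}: coefficients are $C^\infty$ functions of $(y,\eta,\dots,\partial_x^{j+2}\eta)$ times $|\xi|^{-j}$ or $\xi|\xi|^{-j-1}$, homogeneity being preserved by each step. Because $y$ enters only smoothly through $\alpha,\beta,\gamma$ on $[0.5,1]$, the $\partial_yA$ terms cause no difficulty. Proposition~\ref{prop:CompReg} then shows that every coefficient is regular in $\eta$ with values in the symbol class $\Gamma^{-j}_{\bullet-j-3/2}$.

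The main obstacle is quantifying the remainders $R_0$ and $R_1$ while tracking tame estimates. We would truncate each composition $\#_r$ at $r=s_0-3/2-j$, matching the H\"older regularity $C^{s_0-j-3/2}$ available for the coefficients of $a^{(-j)}$ and $A^{(-j)}$. The composition and adjoint parts of Proposition~\ref{prop:SymReg} then give operators of order $1-(s_0-3/2)$ acting on $\partial_yW$, and of order $2-(s_0-3/2)$ acting on $W$. Combined with Lemma~\ref{lem-PLDtN-Pf:GoodUnk} (namely $\partial_yW\in L^2_yH^{\bullet}$ and $W\in L^2_yH^{\bullet+1}$), this yields \eqref{eq-PLDtN-Pf:S3-Esti}, with the regular dependence on $(\eta,\psi)$ inherited through Proposition~\ref{prop:SymReg}. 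The delicate point is the bookkeeping so that the index loss is uniform in $j$, so that the final regularity is $H^{\bullet+s_0-1-j}$ rather than something worse.

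Finally, to obtain \eqref{eq-PLDtN-Pf:S3}, we combine the factorization with Proposition~\ref{prop-PLDtN-Pf:S2}, which gives $T_\alpha(\partial_y+T_a)(\partial_y-T_A)W=r_1-R_0\partial_yW-R_1W$. We then invert $T_\alpha$ by $T_{\alpha^{-1}}$: since $T_{\alpha^{-1}}T_\alpha-\Id$ is smoothing of order $-(s_0-1/2)$, the leftover term $(T_{\alpha^{-1}}T_\alpha-\Id)(\partial_y+T_a)(\partial_y-T_A)W$ lands in $L^2_yH^{\bullet+s_0-3/2}$. This defines $r_2$, which is regular because each constituent is.
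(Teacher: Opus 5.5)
Your strategy is the paper's. You use the same $R_0=T_{\beta i\xi-\gamma}-T_\alpha T_{a-A}$ and $R_1=T_{\xi^2}-T_\alpha T_aT_A-T_\alpha T_{\partial_yA}$, and the same symbolic equations. Since $\alpha$ does not depend on $\xi$, $\alpha\#b=\alpha b$ exactly, so no expansion of $1/\alpha$ ever enters the known terms. You also use the same triangular systems with determinant proportional to $a^{(1)}+A^{(1)}\neq0$, and the same inversion of $T_\alpha$ by $T_{\alpha^{-1}}$.

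Two computations need fixing. First, your principal symbols are swapped. As written they give $a^{(1)}-A^{(1)}=-i\beta\xi/\alpha$, which contradicts your own first equation. The correct choice is $a^{(1)}=\frac{1}{2\alpha}\big(i\beta\xi+\sqrt{4\alpha-\beta^2}\,|\xi|\big)$ and $A^{(1)}=\frac{1}{2\alpha}\big(-i\beta\xi+\sqrt{4\alpha-\beta^2}\,|\xi|\big)$, i.e.\ \eqref{eq-PLDtN-Pf:S3-aA1}. This is harmless for ellipticity, since the real parts coincide.

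Second, and more seriously, your remainder count loses one derivative. Operators of order $1-(s_0-3/2)$ on $\partial_yW\in L^2_yH^{\bullet}_x$, and of order $2-(s_0-3/2)$ on $W\in L^2_yH^{\bullet+1}_x$, only land in $L^2_yH^{\bullet+s_0-5/2}_x$. So the $r_2$ you define would miss the claimed space $L^2_yH^{\bullet+s_0-3/2}_x$.

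The fix is to apply your own per-piece rule (H\"older index $s_0-3/2-j$ for the piece of order $-j$) to $j=-1$ as well. This is legitimate because $\alpha,\beta,a^{(1)},A^{(1)}$ involve only $\partial_x\eta\in H^{s_0}\subset C^{s_0-1/2}$. Order minus regularity is then uniformly $3/2-s_0$, and Remark~\ref{rmk:MultiSymCal} applies with $r=s_0-1/2$ rather than $s_0-3/2$. Consequently:
\begin{itemize}
\item $R_0$ has order $3/2-s_0$. This holds both for the part coming from $\beta i\xi/\alpha\in\Gamma^1_{s_0-1/2}$ and for the part coming from $\gamma/\alpha\in\Gamma^0_{s_0-3/2}$.
\item $R_1$ has order $5/2-s_0$.
\end{itemize}
These orders give exactly $L^2_yH^{\bullet+s_0-3/2}_x$, and the rest of your argument goes through as in the paper.
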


\begin{proof}
Let us begin with the following computation:
\begin{align*}
    \mathscr{P}[\eta] - T_{\alpha}(\partial_y + T_a)(\partial_y - T_A) =& \mathscr{P}[\eta] - \big( T_{\alpha}\partial_y^2 + T_\alpha T_{a-A}\partial_y - T_\alpha T_a T_A - T_\alpha T_{\partial_yA} \big) \\
    =& \big( T_{\beta i\xi -\gamma} - T_{\alpha}T_{a-A} \big)\partial_y - \big( T_{\xi^2} - T_{\alpha}T_{a}T_{A} - T_{\alpha} T_{\partial_yA} \big),
\end{align*}
which indicates that the operators $R_0,R_1$ should be chosen as
$$
R_0 = T_{\beta i\xi -\gamma} - T_{\alpha}T_{a-A}, \quad R_1 = T_{\xi^2} - T_{\alpha}T_{a}T_{A} - T_{\alpha} T_{\partial_yA}.
$$
In what follows, we will construct $a,A$ in the form described in Proposition~\ref{prop-PLDtN-Pf:S3} such that
\begin{equation}\label{eq-PLDtN-Pf:S3-CondSymb}
    \left\{
    \begin{aligned}
        \beta i \xi - \gamma =& \alpha (a-A), \\
        \xi^2 =& \alpha (a \# A) + \alpha \partial_y A,
    \end{aligned}
    \right.
\end{equation}
where $a \# A$ should be understood as
$$
a \# A = \sum_{|\alpha|+k+k'<s_0-1/2} \frac{1}{i^{|\alpha|}\alpha!} \partial_\xi^{\alpha} a^{(1-k)} \partial_x^{\alpha} A^{(1-k')}.
$$
Note that once such $a,A$ exist, we have
$$
R_0 = T_{\alpha(a-A)} - T_{\alpha}T_{a-A}, \quad R_1 = T_{\alpha (a \# A)} - T_{\alpha}T_{a}T_{A} + T_{\alpha \partial_y A} - T_{\alpha} T_{\partial_yA}
$$
Thus the desired estimate \eqref{eq-PLDtN-Pf:S3-Esti} can be obtained through the composition of paradifferential operators (see Proposition~\ref{prop:SymReg}). As for the dependence in $y$, since all the symbols are smooth in $y\in[0.5,1]$, the application of derivative $\partial_y$ has no impact in the following algebric arguments. 

In order to find a solution to the equation \eqref{eq-PLDtN-Pf:S3-CondSymb} in the form \eqref{eq-PLDtN-Pf:S3-symb}, we can compare the homengenity in $\xi$ and reduce to
\begin{equation*}
    \left\{
    \begin{aligned}
        \beta i \xi =& \alpha (a^{(1)} - A^{(1)}),  \quad \xi^2 = \alpha a^{(1)} A^{(1)},  \quad -\gamma = \alpha (a^{(0)} - A^{(0)}),  \\
        0 =& a^{(-j)} - A^{(-j)}, \quad 1 \leq j < s_0-3/2, \\
        0 =& \sum_{|\alpha|+k+k'=j+1} \frac{1}{i^{|\alpha|}\alpha!} \partial_\xi^{\alpha} a^{(1-k)} \partial_x^{\alpha} A^{(1-k')} + \partial_y A^{(1-j)}, \quad 0 \leq j < s_0-3/2
    \end{aligned}
    \right.
\end{equation*}
From the first two equations, one solves 
\begin{equation}\label{eq-PLDtN-Pf:S3-aA1}
    a^{(1)} = \frac{\rho+\eta}{1+y^2|\partial_x\eta|^2} |\xi| - \frac{y(\rho+\eta) \partial_x\eta}{1+y^2|\partial_x\eta|^2} i\xi, \quad A^{(1)} = \frac{\rho+\eta}{1+y^2|\partial_x\eta|^2} |\xi| + \frac{y(\rho+\eta) \partial_x\eta}{1+y^2|\partial_x\eta|^2} i\xi.
\end{equation}
Then, by applying this to the third equation as well as the last one with $j=0$, we obtain a linear system for $a^{(0)},A^{(0)}$:
\begin{equation}\label{eq-PLDtN-Pf:S3-aA0}
    \left\{
    \begin{aligned}
        -\gamma =& \alpha (a^{(0)} - A^{(0)}),  \\
        0 =& a^{(1)}A^{(0)} + A^{(1)} a^{(0)} + \frac{1}{i} \partial_\xi a^{(1)} \partial_x A^{(1)} + \partial_y A^{(1)}.
    \end{aligned}
    \right.
\end{equation}
Since the determinant equals $\alpha(A^{(1)}+a^{(1)})$, which is non-zero, one could solve directly $a^{(0)},A^{(0)}$ in the form~\eqref{eq-PLDtN-Pf:S3-symb}. Similarly, once $a^{(1)},A^{(1)},...a^{(-j'+1)},A^{(-j'+1)}$ have been determined ($j'<s_0-3/2$), the last two equations become a linear system of $a^{(-j')},A^{(-j')}$,
\begin{equation*}
    \left\{
    \begin{aligned}
        0 =& a^{(-j')} - A^{(-j')}, \\
        0 =& a^{(1)}A^{(-j')} + A^{(1)} a^{(-j')} +  \sum_{|\alpha|+k+k'=j'+1,\ |\alpha|\ge 1} \frac{1}{i^{|\alpha|}\alpha!} \partial_\xi^{\alpha} a^{(1-k)} \partial_x^{\alpha} A^{(1-k')} + \partial_y A^{(1-j')},
    \end{aligned}
    \right.
\end{equation*}
with the same non-zero determinant $\alpha(A^{(1)}+a^{(1)})$. In this way, $a^{(-j)},A^{(-j)}$'s can be solved iteratively in the form~\eqref{eq-PLDtN-Pf:S3-symb}.

To deduce the identity \eqref{eq-PLDtN-Pf:S3}, we apply the decomposition \eqref{eq-PLDtN-Pf:S3-Decomp} to the identity \eqref{eq-PLDtN-Pf:S1} obtained in the previous step. Based on Proposition~\ref{prop-PLDtN-Pf:S2}, the estimates \eqref{eq-PLDtN-Pf:S3-Esti}, as well as the regularity of $W,\partial_yW$ (see Lemma~\ref{lem-PLDtN-Pf:GoodUnk}), guarantee that 
$$
    T_\alpha(\partial_y + T_a)(\partial_y - T_A) W = r_{2,1}(\eta,\psi) \in \Ta^\infty_r(s_0;L^2_yH^{\bullet+s_0-3/2}_x).
$$
Recall that, in Lemma~\ref{lem-PLDtN-Pf:Coeff}, we have seen that $\alpha,\alpha^{-1}$ and their derivatives in $y\in[0.5,1]$ belong to $\Ta^\infty(s_0;H^{\bullet+1},\mathcal{M}^0_{\bullet-1/2})$ uniformly in $y\in[0.5,1]$. Through applying $T_{\alpha^{-1}}$ to both sides of the equation above, we conclude that
$$
    (\partial_y + T_a)(\partial_y - T_A) W = r_2(\eta,\psi) := (\Id - T_{\alpha}T_{\alpha^{-1}}) (\partial_y + T_a)(\partial_y - T_A) W + T_{\alpha^{-1}} r_{2,1}(\eta,\psi),
$$
where the operator $(\Id - T_{\alpha}T_{\alpha^{-1}})$ is of order $-(s_0-1/2)$ thanks to the composition of paradifferential operators (see Proposition~\ref{prop:SymReg}). Finally, we make use of the boundedness of paradifferential operators (Proposition~\ref{prop:PDReg}) and the regularity of $W$ (Lemma~\ref{lem-PLDtN-Pf:GoodUnk}) to complete the proof of \eqref{eq-PLDtN-Pf:S3}.
\end{proof}

\subsection{Boundary Regularity}

Equation \eqref{eq-PLDtN-Pf:S3} in the previous step can be re-written as the following paradifferential parabolic equation:
\begin{equation*} 
    \partial_y U + T_a U = r, \quad U|_{y=0.5} = U_0,
    \quad \text{where }U := \partial_yW - T_AW.
\end{equation*}
Here
\begin{equation}\label{eq-PLDtN-Pf:S4-Rev}
    r=r_2 \in \Ta^\infty_r(s_0;L^2_yH^{\bullet+s_0-3/2}_x),\quad U_0 = \partial_yW|_{y=0.5} - (T_AW)|_{y=0.5} \in \Ta^\infty_r(s_0;H^{\bullet-1/2}).
\end{equation}

The purpose of this step is to study the regularity of $U$ at the boundary $y=1$. More precisely, we will show that the radial derivative of the good unknown $W$ can be replaced by its axial derivatives with an error possessing $(s_0-3/2)$-more regularity than $W$ itself.
\begin{proposition}\label{prop-PLDtN-Pf:S4}
    Under the assumptions of Theorem~\ref{thm-PLDtN:Main}, we have that
    \begin{equation}\label{eq-PLDtN-Pf:S4}
       (\partial_y - T_A)W = r_0,
    \end{equation}
    where the remainder $r_0$ belongs to the class $\Ta^\infty_r(s_0;L^2_y(0.6,1;H^{\bullet+s_0-1/2}))$ and $\Ta^\infty_r(s_0;L^\infty_y(0.6,1;H^{\bullet+s_0-1}))$. See Notation~\ref{note:Reg} for the definition of $\Ta_r^\infty$.
\end{proposition}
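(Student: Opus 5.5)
We treat $U:=\partial_yW-T_AW$ as the solution of the backward-parabolic (in the sense of elliptic regularization in $y$) paradifferential equation $\partial_yU+T_aU=r$ on $y\in[0.5,1]$, with datum $U_0$ at $y=0.5$ as in \eqref{eq-PLDtN-Pf:S4-Rev}. The key structural fact is that the principal symbol $a^{(1)}$ in \eqref{eq-PLDtN-Pf:S3-aA1} has real part $\frac{\rho+\eta}{1+y^2|\partial_x\eta|^2}|\xi|\geq c|\xi|$, so $-T_a$ generates a smoothing semigroup in increasing $y$. We will establish the parabolic estimate: for a cut-off $\chi_1(y)$ vanishing near $y=0.5$ and equal to $1$ on $[0.6,1]$, the function $\chi_1U$ satisfies $\partial_y(\chi_1U)+T_a(\chi_1U)=\chi_1r+\chi_1'U$ with zero datum at $y=0.5$. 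The standard energy estimate (multiply by $\langle D_x\rangle^{2\sigma}$ applied to $\chi_1U$, use Gårding-type positivity $\RE\langle T_a v,v\rangle_{H^{\sigma}}\geq c\|v\|^2_{H^{\sigma+1/2}}-C\|v\|^2_{H^\sigma}$ obtained from Proposition~\ref{prop:SymReg} adjoint/composition formulas) yields the maximal regularity bound
$$
\|\chi_1U\|_{L^\infty_yH^{\sigma}_x}+\|\chi_1U\|_{L^2_yH^{\sigma+1/2}_x}\lesssim \|\chi_1r\|_{L^2_yH^{\sigma-1/2}_x}+\|\chi_1'U\|_{L^2_yH^{\sigma-1/2}_x},
$$
with constant depending only on the low norm $\|\eta\|_{H^{s_0+1}}$.

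We then bootstrap in $\sigma$. Initially Lemma~\ref{lem-PLDtN-Pf:GoodUnk} gives $U\in L^2_yH^{s-1}_x$, and $r\in L^2_yH^{s+s_0-3/2}_x$. Each application with a nested cut-off (supports shrinking from $[0.5,1]$ to $[0.6,1]$ through finitely many intermediate levels) gains $1/2$ derivative on the $\chi_1'U$ term, until the gain saturates at the regularity of $r$: $\sigma-1/2=s+s_0-3/2$, i.e. $\sigma=s+s_0-1$ in $L^\infty_y$ and $s+s_0-1/2$ in $L^2_y$. This is exactly the claimed $r_0=U$ regularity. The number of steps depends only on $s_0$, so finitely many cut-offs suffice.

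To obtain membership in $\Ta^\infty_r$ (smoothness in $(\eta,\psi)$ and tame estimates), we invoke Lemma~\ref{lem:SolReg} with $\mathscr{R}[\eta]$ the solution operator $f\mapsto U$ of $\partial_yU+T_aU=f$ (with prescribed initial datum, linear in $\psi$), $g(\eta)=a$, and $b(g,U)=T_gU$. The identity \eqref{eq:SolReg-Cond} is just the difference of the equations for $\eta$ and $\eta+\del\eta$; the tame bound on $\mathscr{R}[\eta]$ follows from the energy estimate above since the symbol seminorms of $a$ enter only through low norms, and higher norms of $\eta$ appear linearly via Proposition~\ref{prop:PDReg}. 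The $\psi$-linearity is clear. The main obstacle is keeping the tame structure through the bootstrap, namely showing that the commutators $[\langle D_x\rangle^\sigma,T_a]$ and the non-self-adjoint part of $T_a$ cost only $\|\eta\|_{H^{s_0+1}}$ times the current norm of $U$, plus $\|\eta\|_{H^{s+1}}$ times the low norm; we would handle this via the symbolic calculus estimates \eqref{eq:PDSymEsti} applied with $r=1$ and the regularity of $a$ from Lemma~\ref{lem-PLDtN-Pf:Coeff}.
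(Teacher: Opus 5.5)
Your proposal follows essentially the same route as the paper. $U=(\partial_y-T_A)W$ solves the paradifferential parabolic problem $\partial_yU+T_aU=r_2$, and ellipticity of $\RE a^{(1)}$ together with G{\aa}rding's inequality gives maximal regularity with interior smoothing; your nested cut-offs play the role of the paper's subdivision of $[y_0,1]$ in Lemma~\ref{lem-PLDtN-Pf:SolPara}. Smooth tame dependence then comes from Lemma~\ref{lem:SolReg} with $g=a$ and $b(g,U)=T_gU$, exactly as in the paper.

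The one point to tighten is that the energy estimate must be applied to the actual $U$. A priori $U$ is only known to lie in $L^2_yH^{s}_x$, and you may start the bootstrap there rather than at $H^{s-1}_x$. So you need either a Friedrichs regularization in $x$, or the existence-plus-uniqueness in the weaker class that the paper imports from \cite{ABZ2014}.
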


The key of the proof is to solve a paradifferential parabolic equation, which is standard. For example, as a consequence of Proposition 2.18 of \cite{ABZ2014}, we have the following result.
\begin{lemma}\label{lem-PLDtN-Pf:SolPara}
    Consider real numbers $\sigma_0\leq \sigma$, $0<r<1$, and an interval $I = [y_0,1]\subset\xR$. Let $p = p(y,x,\xi)\in \Gamma^1_{r}$, $q = q(y,x,\xi)\in \Gamma^0_0$ uniformly in $y\in I$, in the sense that 
    $$
    \sup_{y\in I}\mathcal{M}^1_r(p(y)) + \sup_{y\in I}\mathcal{M}^0_0(q(y)) < \infty.
    $$
    Assume that $p$ is elliptic, namely there exists constant $c>0$ such that, for all $y\in I$, $x\in\xT$, and $\xi \neq 0$,
    $$
    \RE p(y,x,\xi) \ge c|\xi|.
    $$
    Then, for all $r\in L^2_y(I;H^{\sigma-1/2}_x)$ and $U_0\in H_x^{\sigma_0}$, the equation
    \begin{equation}\label{eq-PLDtN-Pf:SolPara-Eq}
        \partial_y U + T_p U + T_q U = r, \quad U|_{y=y_0}=U_0
    \end{equation}
    admits a unique solution $U$ satisfying, for all $y_0<y_1 < 1$,
    \begin{equation}\label{eq-PLDtN-Pf:SolPara-Esti}
        \|U\|_{C^0_y(y_1,1;H_x^{\sigma})} + \|U\|_{L^2_y(y_1,1;H_x^{\sigma+1/2})} \leq K \Big( \|U_0\|_{H_x^{\sigma_0}} + \|r\|_{L^2_y(I;H^{\sigma-1/2}_x)} \Big),
    \end{equation}
    where $K$ is a positive constant depending on $\sigma,\sigma_0,r,y_0,y_1$ and the semi-norms $\sup_{y\in I}\mathcal{M}^1_r(p(y))$ and $\sup_{y\in I}\mathcal{M}^0_0(q(y))$. Moreover, the solution is unique in $C^0_y(I;H_x^{\sigma_1})\cap L^2_y(I;H_x^{\sigma_1+1/2})$ for any $\sigma_1\leq \sigma_0$. 
\end{lemma}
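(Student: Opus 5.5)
This is a standard paradifferential parabolic estimate, proved by energy estimates in the variable $y$, followed by a smoothing argument for the initial trace.

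\textbf{Step 1: a priori estimate.} Assume first that $U$ is smooth. Apply $\size[D_x]^{\sigma}$ to \eqref{eq-PLDtN-Pf:SolPara-Eq} and pair with $\size[D_x]^{\sigma}U$ in $L^2_x$. This gives
$$
\frac12\frac{\diff}{\diff y}\|U\|_{H^\sigma}^2+\RE\langle T_p\size[D_x]^\sigma U,\size[D_x]^\sigma U\rangle=\langle \size[D_x]^\sigma(r-T_qU),\size[D_x]^\sigma U\rangle+\langle[\size[D_x]^\sigma,T_p]U,\size[D_x]^\sigma U\rangle .
$$
The two error terms are handled as follows.
\begin{itemize}
\item The commutator $[\size[D_x]^\sigma,T_p]$ has order $\sigma+1-r$ by Proposition~\ref{prop:SymReg}. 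Its contribution is therefore bounded by $\|U\|_{H^{\sigma+1/2}}\|U\|_{H^{\sigma+1/2-r}}$.
\item The term $T_qU$ has order $0$ and is harmless.
\end{itemize}
For the main term we need a G\aa rding inequality: $\RE\langle T_pv,v\rangle\ge c'\|v\|_{H^{1/2}}^2-C\|v\|_{L^2}^2$. I would obtain it by writing $p=\RE p+i\IM p$. The adjoint formula of Proposition~\ref{prop:SymReg} shows that $T_{i\IM p}$ is anti-self-adjoint modulo order $1-r$. For the real part, I would factor $T_{\RE p}=T_{\sqrt{\RE p}}^{\,*}T_{\sqrt{\RE p}}$ modulo order $1-r$, again using the symbolic calculus with $r\in(0,1)$. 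Interpolating $H^{\sigma+1/2-r}$ between $H^\sigma$ and $H^{\sigma+1/2}$ then absorbs all errors, leaving
$$
\frac{\diff}{\diff y}\|U\|_{H^\sigma}^2+c\|U\|_{H^{\sigma+1/2}}^2\le C\|U\|_{H^\sigma}^2+C\|r\|_{H^{\sigma-1/2}}^2 .
$$
Gr\"onwall's inequality on $[y_0,1]$ then gives the estimate with $\sigma_0=\sigma$.

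\textbf{Step 2: gain from $\sigma_0$ to $\sigma$ (parabolic smoothing).} Fix a cutoff $\theta(y)$ vanishing near $y_0$ and equal to $1$ on $[y_1,1]$. Step 1 applied to $\theta U$ gives regularity at level $\sigma'$, with a new source $\theta'U$ that only requires $U\in L^2_yH^{\sigma'-1/2}$. Starting from $\sigma_0$, this raises regularity by $1/2$ at each step. After finitely many steps, on a chain of nested intervals $y_0<y^{(1)}<\dots<y_1$, we reach $\sigma$. This yields \eqref{eq-PLDtN-Pf:SolPara-Esti} with $K$ depending on $y_1-y_0$.

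\textbf{Step 3: existence and uniqueness.} Uniqueness in $C^0_yH^{\sigma_1}\cap L^2_yH^{\sigma_1+1/2}$ follows by applying the Step 1 estimate at level $\sigma_1$ to the difference of two solutions, whose data vanish. For existence I would regularize: replace $p$ by $p+\epsilon\xi^2$, or truncate frequencies with $S_N$, so that the equation becomes an ODE in $L^2$. The uniform estimates of Step 1 hold for the regularized problems. Passing to the weak limit and using the duality argument of Theorem~\ref{Fundamental}, Step 5 (with $y$ in place of $t$) gives a solution; continuity in $y$ follows as there.

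The main obstacle is the sharp G\aa rding step with only $C^r$, $r<1$, regularity in $x$. There the $1-r$ loss must be absorbed by interpolation, which is exactly why the lemma assumes $r>0$ rather than allowing $r=0$.
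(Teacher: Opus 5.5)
Your proposal is correct and follows essentially the paper's route. The paper imports the $\sigma_0=\sigma$ estimate from Proposition 2.18 of \cite{ABZ2014}; the core of that proposition is the G\aa rding-based energy inequality you derive, with the order-zero $T_q$ absorbed exactly as you do, and the paper then raises regularity from $\sigma_0$ to $\sigma$ on nested subintervals. The only cosmetic difference is that you localize with a cutoff $\theta(y)$, whereas the paper restarts the estimate from intermediate points where the solution is already more regular. Also, each of your iterations actually gains a full derivative, since $\theta' U\in L^2_yH^{\sigma'+1/2}$, not just $1/2$.
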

\begin{proof}
    Recall that Proposition 2.18 of \cite{ABZ2014} already covers the case with $y_0=y_1$, $q=0$, and $\sigma_0=\sigma$. 
    
    Firstly, we clarify that the presence of non-zero $q$ has no impact in the proof since it is of order $0$. In fact, since $q$ belongs to $\Gamma^0_0$, $T_q$ is of order $0$. This means that the G{\aa}rding's inequality (see, for example, Section 6.3.2 of \cite{MePise}) for $p+q$ and $p$ is the same: there exists $C_1,C_2>0$ such that
    $$
    \langle T_{p+q}U, U \rangle_{H^{\sigma}_x} \leq C_1 \|U\|_{H_x^{\sigma+1/2}}^2 - C_2 \|U\|_{H_x^{\sigma+(1-\rho)/2}}^2,
    $$
    which is the kernel of the proof of Proposition 2.18 of \cite{ABZ2014}. Then one may repeat exactly the same arguments as in \cite{ABZ2014} to conclude the desired result, which we omit for simplicity.

    As for the cases with $\sigma_0<\sigma$, we can decompose the interval $[y_0,0]$ into many subintervals and apply the estimate \eqref{eq-PLDtN-Pf:SolPara-Esti} on each subinterval, where the regularity of initial data increases until $H^{\sigma}_x$. As a consequence, the target estimate \eqref{eq-PLDtN-Pf:SolPara-Esti} only holds for a slightly smaller interval $[y_1,1]$ than $I$.
\end{proof}

Applying Lemma~\ref{lem-PLDtN-Pf:SolPara} with
$$
p = a^{(1)},\quad q = \sum_{0\leq j < s_0-2} a^{(-j)},
$$
it is clear that the quantity $(\partial_y-T_A)W$ is in $L^\infty_yH^{s+s_0-1}_x$ as $(\eta,\psi)\in H^{s+1}\times H^{s+1/2}$ and it remains to check the regularity in $(\eta,\psi)$. Alternatively, it suffices to check that the regularity is preserved when solving the paradifferential parabolic equations, which can be seen from Lemma~\ref{lem:SolReg}: simply take 
\begin{align*}
    &X^s = H_x^{s-1/2}\times L^2_yH_x^{s+s_0-3/2},\quad Y^s = \mathcal{M}^1_0,\quad Z^s = L_y^\infty H_x^{s+s_0-1}\cap L_y^2 H_x^{s+s_0-1/2}, \\
    &f = (U_0,r), \quad g = p+q = a,\quad b(g,w) = (0,T_{g}w), \\
    &\mathscr{R}[\eta]f = \mathscr{R}[\eta](U_0,r) := U,
\end{align*}
where $\mathscr{R}[\eta]$ is the resolution operator of \eqref{eq-PLDtN-Pf:SolPara-Eq}. The regularity of $f,g$ in $\eta$ has been proved (see \eqref{eq-PLDtN-Pf:S4-Rev}), while the estimate for the bilibear form $b$ follows from Proposition~\ref{prop:PDReg}. The estimate for $\mathscr{R}[\eta]$ is no more than the consequence of Lemma~\ref{lem-PLDtN-Pf:SolPara} above. To check the identity \eqref{eq:SolReg-Cond}, we use tilde to express the quantities perturbated in $\eta$. Then a direct calculus gives
$$
    \big(\partial_y+T_a\big) (\tilde{U}-U) = \tilde{r}-r - T_{\tilde{a}-a}\tilde{U}, \quad (\tilde{U}-U)|_{y=0.5} = \tilde{U}_0-U_0,
$$
implying that
$$
    \tilde{U}-U = \mathscr{R}[\eta]\Big( \big(\tilde{U}_0-U_0,\tilde{r}-r\big) - \big( 0, T_{\tilde{a}-a}\tilde{U} \big) \Big) = \mathscr{R}[\eta]\big( \tilde{f}-f - b(\tilde{a}-a,\tilde{U}) \big).
$$

In conclusion, the regularity of $r_0$ in~\eqref{eq-PLDtN-Pf:S4} follows from Lemma~\ref{lem:SolReg}.

\subsection{Paralinearization of Dirichlet-Neumann Operator}

In this step, we will complete the proof of Proposition~\ref{prop-PLDtN-Pf:Main}. Recall that, in terms of $\Phi$ and $\eta$, the Dirichlet-Neumann operator can be written as (see \eqref{eq:Formu-DtN})
\begin{equation*}
	G[\eta]\psi = \left.\left( \frac{1+|\partial_x\eta|^2}{\rho+\eta} \partial_y\Phi - \partial_x\eta\partial_x\Phi \right)\right|_{y=1}.
\end{equation*}
In order to obtain the desired paralinearization \eqref{eq-PLDtN-Pf:Goal}, the main difficulty is to replace the radial derivative $\partial_y\Phi$ can be replaced by axial ones, which has been done in the previous step (see Proposition~\ref{prop-PLDtN-Pf:S4}) for the good unknown $W = \Phi - T_{(\rho+\eta)^{-1}\partial_y\Phi}(y\eta)$. Thus, we need to express the Dirichlet-Neumann operator in terms of $W$ and $\eta$, up to smooth remainders.

For simplicity, in the following, we use the notation
$$
    u \sim v \Leftrightarrow (u-v)|_{y=1} \in \mathcal{T}^\infty_r(s_0;H^{\bullet+s_0-1}_x),
$$
to emphasize that the remainder is regular (recall Definition~\ref{def:Reg} and Notation~\ref{note:Reg} for the class $\Ta^\infty_r$) as in the conclusion of Proposition~\ref{prop-PLDtN-Pf:Main}.

Firstly, one observes that
$$
	G[\eta]\psi = \left.\left( (\rho+\eta)\alpha\partial_y\Phi + \frac{(\rho+\eta)\beta}{2}\partial_x\Phi \right)\right|_{y=1}.
$$
Thus, it suffices to paralinearize the right-hand side without taking trace. Through paraproduct decomposition, the right-hand side verifies
$$
	(\rho+\eta)\alpha\partial_y\Phi + \frac{(\rho+\eta)\beta}{2}\partial_x\Phi \sim T_{(\rho+\eta)\alpha}\partial_y\Phi + T_{\partial_y\Phi}\big((\rho+\eta)\alpha\big) + \frac{1}{2}T_{(\rho+\eta)\beta}\partial_x\Phi + \frac{1}{2}T_{\partial_x\Phi}\big( (\rho+\eta)\beta \big),
$$
since the difference of both sides equals
$$
	R_\PM\big( (\rho+\eta)\alpha, \partial_y\Phi \big) + \frac{1}{2}R_\PM\big( (\rho+\eta)\beta, \partial_x\Phi \big) \in \Ta^\infty_r(s_0;L^\infty_yH^{\bullet+s_0-2}_x),
$$
which follows from the regularity of $\alpha,\beta,\Phi$ (Lemma~\ref{lem-PLDtN-Pf:Coeff} and Proposition~\ref{prop-PLDtN:EllipReg}) and the boundedness of paraproduct (Proposition~\ref{prop:PMReg}). Now, we replace $\Phi$ by $W+ T_{(\rho+\eta)^{-1}\partial_y\Phi}(y\eta)$ and apply Proposition~\ref{prop-PLDtN-Pf:S4},
\begin{align*}
	(\rho+\eta)\alpha\partial_y\Phi + \frac{(\rho+\eta)\beta}{2}\partial_x\Phi \sim& \left(T_{(\rho+\eta)\alpha}T_A + \frac{1}{2}T_{(\rho+\eta)\beta}\partial_x \right) W + T_{(\rho+\eta)\alpha} \partial_y\big(T_{(\rho+\eta)^{-1}\partial_y\Phi}(y\eta)\big) \\
	&+ \frac{1}{2}T_{(\rho+\eta)\beta}\partial_x\big( T_{(\rho+\eta)^{-1}\partial_y\Phi}(y\eta) \big) + T_{\partial_y\Phi}\big((\rho+\eta)\alpha\big) + \frac{1}{2}T_{\partial_x\Phi}\big( (\rho+\eta)\beta \big) \\
	=:& I_1 + I_2 + I_3, 
\end{align*}
where
\begin{align*}
	I_1 =& \left(T_{(\rho+\eta)\alpha}T_A + \frac{1}{2}T_{(\rho+\eta)\beta}\partial_x \right) W, \\ 
	I_2 =& T_{(\rho+\eta)\alpha} \partial_y\big(T_{(\rho+\eta)^{-1}\partial_y\Phi}(y\eta)\big) + \frac{1}{2}T_{(\rho+\eta)\beta}\partial_x\big( T_{(\rho+\eta)^{-1}\partial_y\Phi}(y\eta) \big), \\
	I_3 =& T_{\partial_y\Phi}\big((\rho+\eta)\alpha\big) + \frac{1}{2}T_{\partial_x\Phi}\big( (\rho+\eta)\beta \big).
\end{align*}

In the following, we will simplify the three terms $I_1,I_2,I_3$ separately. To control $I_1$, it suffices to use the composition of paradifferential operators (Proposition~\ref{prop:SymReg}) and the regularity of $W$ (Lemma~\ref{lem-PLDtN-Pf:GoodUnk}),
\begin{equation}\label{eq-PLDtN-Pf:S5-PrSym}
	I_1 \sim T_{\tilde{\Lambda}} W, \quad  \tilde{\Lambda} = \frac{1+y^2|\partial_x\eta|^2}{\rho+\eta} A - y\partial_x\eta i\xi.
\end{equation}
As for the second term $I_2$, we compute directly the derivative in $x,y$ and then use again the composition of paradifferential operators (Proposition~\ref{prop:SymReg}),
\begin{align*}
	I_2 =& T_{(\rho+\eta)\alpha}T_{(\rho+\eta)^{-1}\partial_y^2\Phi}(y\eta) + T_{(\rho+\eta)\alpha} T_{(\rho+\eta)^{-1}\partial_y\Phi}\eta + \frac{1}{2}T_{(\rho+\eta)\beta}  T_{(\rho+\eta)^{-1}\partial_y\Phi}(y\partial_x\eta) \\
	&+ \frac{1}{2}T_{(\rho+\eta)\beta} T_{(\rho+\eta)^{-1}\partial_{y}\partial_x\Phi}(y\eta) - \frac{1}{2}T_{(\rho+\eta)\beta} T_{(\rho+\eta)^{-2}\partial_x\eta\partial_y\Phi}(y\eta) \\
	\sim& T_{\alpha\partial_y^2\Phi}(y\eta) + T_{\alpha\partial_y\Phi}\eta + \frac{1}{2}T_{\beta\partial_y\Phi}(y\partial_x\eta) + \frac{1}{2}T_{\beta\partial_{y}\partial_x\Phi}(y\eta) - \frac{1}{2}T_{(\rho+\eta)^{-1}\beta\partial_x\eta\partial_y\Phi}(y\eta).
\end{align*}
The treatment of $I_3$ requires a paralinearization (see Proposition~\ref{prop:PLReg}) for $(\rho+\eta)\alpha$,
$$
	(\rho+\eta)\alpha - \Big( - T_{(\rho+\eta)^{-2}(1+y^2|\partial_x\eta|^2)}\eta + 2T_{y^2(\rho+\eta)^{-1}\partial_x\eta}\partial_x\eta \Big) \in \mathcal{T}^{\infty}(s_0;L^\infty_yH_x^{\bullet+s_0-1/2}),
$$
which, combined with the composition of paradifferential operators (Proposition~\ref{prop:SymReg}), implies that
\begin{align*}
	I_3 \sim& T_{\partial_y\Phi} \Big( - T_{(\rho+\eta)^{-2}(1+y^2|\partial_x\eta|^2)}\eta + 2T_{y^2(\rho+\eta)^{-1}\partial_x\eta}\partial_x\eta \Big) - T_{\partial_x\Phi} (y\partial_x\eta) \\
	\sim& - T_{(\rho+\eta)^{-2}(1+y^2|\partial_x\eta|^2)\partial_y\Phi} \eta + 2T_{y^2(\rho+\eta)^{-1}\partial_x\eta\partial_y\Phi}\partial_x\eta - T_{\partial_x\Phi} (y\partial_x\eta) \\
	=& - T_{\alpha\partial_y\Phi} \eta - T_{\beta\partial_y\Phi}(y\partial_x\eta) - T_{\partial_x\Phi} (y\partial_x\eta).
\end{align*}

Now, from these simplifications of $I_j$'s, we are able to deduce that
\begin{align*}
	&\hspace{-2em}(\rho+\eta)\alpha\partial_y\Phi + \frac{(\rho+\eta)\beta}{2}\partial_x\Phi \sim I_1 + I_2 + I_3 \\
	\sim& T_{\tilde{\Lambda}}W + \Op^\PM\left( \alpha\partial_y^2\Phi + \frac{1}{2}\beta\partial_y\partial_x\Phi - \frac{1}{2}(\rho+\eta)^{-1}\beta\partial_x\eta\partial_y\Phi \right) (y\eta) - \Op^\PM\left( \frac{1}{2}\beta\partial_y\Phi + y\partial_x\Phi \right) \partial_x\eta,
\end{align*}
where, through the elliptic equation~\eqref{eq-PLDtN:Ellip} and the formulas~\eqref{eq-PLDtN:EllipCoeff} for the coefficients, the symbol of the first paradifferential operator reads
\begin{align*}
	&\hspace{-2em}\mathscr{L}[\eta]\Phi - \frac{1}{2}\beta\partial_y\partial_x\Phi - \partial_x^2\Phi + \left( \gamma + \frac{y|\partial_x\eta|^2}{(\rho+\eta)^2} \right)\partial_y\Phi \\
	=& - \frac{1}{2}\beta\partial_y\partial_x\Phi - \partial_x^2\Phi - \frac{1}{2}\partial_x\beta\partial_y\Phi - \frac{1}{y(\rho+\eta)^2}\partial_y\Phi \\
	=& -\partial_x\left( \partial_x\Phi + \frac{1}{2}\beta\partial_y\Phi \right)  - \frac{1}{y(\rho+\eta)^2}\partial_y\Phi.
\end{align*}
Therefore, we can conclude that
\begin{align*}
	&\hspace{-2em}(\rho+\eta)\alpha\partial_y\Phi + \frac{(\rho+\eta)\beta}{2}\partial_x\Phi \\
	\sim& T_{\tilde{\Lambda}}W - \Op^\PM\left( \partial_x\left( \partial_x\Phi + \frac{1}{2}\beta\partial_y\Phi \right) \right)(y\eta) - \Op^\PM\left( \frac{1}{2}\beta\partial_y\Phi + y\partial_x\Phi \right) \partial_x\eta  \\
	&- \Op^\PM\left((\rho+\eta)^{-2}\partial_y\Phi\right)\eta \\
	\sim& T_{\tilde{\Lambda}}W - \partial_x\left( \Op^\PM\left(\partial_x\Phi - (\rho+\eta)^{-1}\partial_x\eta\partial_y\Phi\right) \eta \right) - \Op^\PM\left((\rho+\eta)^{-2}\partial_y\Phi\right)\eta.
\end{align*}

Finally, we take the trace $y=1$ of both side and use the fact that
$$
    B = (\rho+\eta)\partial_y\Phi|_{y=1},\quad V = \partial_x\Phi|_{y=1} - \partial_x\eta(\rho+\eta)^{-1}\partial_y\Phi|_{y=1},
$$
which can be deduced from their relations \eqref{DN_V_B} with the Dirichlet-Neumann operator, to conclude the desired paralinearization formula
$$
    G[\eta]\psi - \left( T_{\tilde{\lambda}} - \partial_x(T_V\eta) - T_{(\rho+\eta)^{-1}B}\eta \right) \in \Ta^\infty_r(s_0;H^{\bullet+s_0-1}_x).
$$
Here the symbol $\tilde{\lambda}$ is given by $\tilde{\Lambda}|{y=1}$, whose principal and subprincipal part, due to~\eqref{eq-PLDtN-Pf:S5-PrSym}, \eqref{eq-PLDtN-Pf:S3-aA1}, and~\eqref{eq-PLDtN-Pf:S3-aA0}, read
\begin{equation*}
    \begin{aligned}
        \tilde{\lambda}^{(1)} =& \frac{1+y^2|\partial_x\eta|^2}{\rho+\eta} A^{(1)}|_{y=1} - y\partial_x\eta i\xi = |\xi|, \\
        \tilde{\lambda}^{(0)} =& \frac{1+y^2|\partial_x\eta|^2}{\rho+\eta} A^{(0)}|_{y=1} = - \frac{1}{2(\rho+\eta)} - \frac{\partial_x\eta }{2(\rho+\eta)}i\sgn(\xi).
    \end{aligned}
\end{equation*}
while the remaining parts take the form described in \eqref{eq-PLDtN-Pf:PrSym}.

\vspace{1em}

\noindent
\textbf{Acknowledgments.} The authors would like to warmly thank Thomas Alazard, Zexing Li, and Jean-Marc Delort for numerous discussions on this problem. This work is partially completed during the second author's PhD thesis at Universit{\'e} Paris-Salcay and Universit{\'e} Sorbonne Paris-Nord.

\bibliographystyle{abbrv}\bibliography{References}

\end{spacing}
\end{document}